\documentclass[10pt,letterpaper]{amsart}
\usepackage{dslihead}

\title{Courbes et fibr\'es vectoriels en th\'eorie de Hodge $z$-adique globale}

\author{Siyan Daniel Li-Huerta}
\address{Department of Mathematics\\Massachusetts Institute of Technology\\77 Massachusetts Avenue\\Cambridge, MA 02139}
\email{sdlh@mit.edu}

\usepackage{amsmath}
\usepackage{tikz-cd}

\swapnumbers
\theoremstyle{plain}

\newtheorem*{thm*}{Theorem}

\newtheorem*{lem*}{Lemma}
\newtheorem{prop}[subsection]{Proposition}
\newtheorem*{prop*}{Proposition}

\newtheorem*{conj*}{Conjecture}

\newtheorem*{cor*}{Corollary}

\newtheorem*{thmA}{Theorem A}
\newtheorem*{thmB}{Theorem B}
\newtheorem*{thmC}{Theorem C}
\newtheorem*{thmD}{Theorem D}
\newtheorem*{thmE}{Theorem E}
\newtheorem*{conjF}{Conjecture F}
\newtheorem*{thmG}{Theorem G}
\newtheorem*{thmH}{Theorem H}

\theoremstyle{definition}

\newtheorem*{defn*}{Definition}

\theoremstyle{remark}
\newtheorem{rem}[subsection]{Remark}
\newtheorem{rems}[subsection]{Remarks}
\newtheorem*{rem*}{Remark}
\newtheorem*{rems*}{Remarks}
\newtheorem{exmp}[subsection]{Example}

\theoremstyle{plain}

\theoremstyle{definition}

\theoremstyle{remark}

\DeclareMathOperator{\cond}{cond}
\DeclareMathOperator{\AnRing}{AnRing}
\DeclareMathOperator{\basic}{basic}
\DeclareMathOperator{\QCoh}{QCoh}
\DeclareMathOperator{\Isom}{Isom}
\DeclareMathOperator{\LocSht}{LocSht}
\newcommand{\sLocSht}{\sL\!\mathrm{ocSht}}
\newcommand{\sIsoc}{\sI\!\mathrm{soc}}

\renewcommand{\qc}{\operatorname{qc}}
\DeclareMathOperator{\fib}{fib}
\DeclareMathOperator{\AnSpec}{AnSpec}
\DeclareMathOperator{\pc}{pc}

\DeclareMathOperator{\Perfd}{Perfd}
\DeclareMathOperator{\Hig}{Hig}
\DeclareMathOperator{\qcoh}{qcoh}
\DeclareMathOperator{\WD}{WD}
\DeclareMathOperator{\ID}{ID}
\DeclareMathOperator{\Ig}{Ig}
\DeclareMathOperator{\lres}{res}
\DeclareMathOperator{\IndCoh}{IndCoh}

\newcommand*{\sheafhom}{\textnormal{H}\kern -.5pt om}

\newcommand{\Kal}{\tn{Kal}_{F}}

\newcommand\tn{\textnormal}

\begin{document}

\begin{abstract}
We study the global analogue of the Fargues--Fontaine curve over function fields $F$. We prove some foundational results about its moduli of $G$-bundles $\Bun_{G,F}$, which is a geometrization of the global Kottwitz set $B(F,G)$. For example, $\Bun_{G,F}$ plays the role of Igusa stacks over function fields. We use $\Bun_{G,F}$ to reformulate the global Langlands conjecture for $G$ over $F$ in terms of categorical local Langlands, refining conjectures of Arinkin--Gaitsgory--Kazhdan--Raskin--Rozenblyum--Varshavsky and Zhu. Finally, we verify this conjecture when $G$ is commutative. Along the way, we prove a GAGA theorem for smooth proper schemes over sousperfectoid spaces, which is of independent interest.
\end{abstract}

\maketitle
\tableofcontents

\section*{Introduction}
The title of this paper pays homage to the foundational work of Fargues--Fontaine \cite{FF18}. Starting from a nonarchimedean local field $F_v$ with residue field $\bF_q$ (and an auxiliary algebraically closed nonarchimedean field $K$ over $\ov\bF_q$), they constructed an adic space $X_{F_v,K}$ with remarkable properties. Since its introduction, the \emph{Fargues--Fontaine curve} $X_{F_v,K}$ has played a fundamental role in arithmetic geometry over $F_v$. For example, work of Fargues--Scholze \cite{FS21} shows that its moduli of $G$-bundles $\Bun_{G,F_v}$ is central to the Langlands correspondence for $G$ over $F_v$.

When $F_v$ is the function field $\bF_q\lp{z}$, the Fargues--Fontaine curve already appears in work of Hartl--Pink \cite{HP04}, so we also call it the \emph{Hartl--Pink curve}. It can be described as follows. The product $\Spa F_v\times\Spa K$ over $\bF_q$ exists as an adic space (in fact, it is isomorphic to the punctured open unit disk over $K$), and the absolute $q$-Frobenius automorphism $\Frob_K$ of $K$ acts freely and totally discontinuously on $\Spa F_v\times\Spa K$. Hence we can form the quotient adic space
\begin{align}\label{eq:FFcurve}
X_{F_v,K}\coloneqq(\Spa F_v\times\Spa K)/\Frob_K.\tag{$\dagger$}
\end{align}

The goal of this paper is to study a \emph{global} analogue of the above constructions.

\subsection*{$G$-bundles on the global Hartl--Pink curve}
Let $F$ be a global function field with field of constants $\bF_q$. By viewing schemes as adic spaces, one could try to define a \emph{global Hartl--Pink curve} by naively copying (\ref{eq:FFcurve}) and attempting to form 
\begin{align*}\label{eq:gHPcurve}
\text{``}(\Spec{F}\times\Spa K)/\Frob_K\text{''.}\tag{$\ddagger$}
\end{align*}
However, two obstacles arise:
\begin{enumerate}[1)]
\item It is unclear how to interpret ``$\Spec{F}\times\Spa K$'' in a well-behaved way.
\item In any reasonable interpretation of ``$\Spec{F}\times\Spa K$'', $\Frob_K$ will no longer act freely and totally discontinuously.
\end{enumerate}
We will primarily be interested in $G$-bundles on (\ref{eq:gHPcurve}), so we can circumvent 2) via descent. To circumvent 1), write $C$ for the geometrically connected smooth proper curve over $\bF_q$ associated with $F$. Then $\Spec{F}=\varprojlim_UU$, where $U$ runs over dense open subschemes of $C$, so formally we expect
\begin{align*}
\text{``}\Spec{F}\times\Spa K\text{''} = \text{``}\varprojlim_UU\times\Spa K\text{''.}
\end{align*}
Now $U\times\Spa{K}$ \emph{does} have a well-behaved interpretation: this product exists as an adic space and in fact is isomorphic to the analytification $U_K^{\an}$ of $U$ over $K$. Moreover, this interpretation works well in families, which leads to the following:
\begin{defn*}Let $G$ be a connected split\footnote{In the introduction, we work with split $G$ for simplicity. However, everything works for general connected reductive groups $G$ over $F$, and we work in this generality in the body of the paper.} reductive group over $F$. For all affinoid perfectoid spaces $S=\Spa(R,R^+)$ over $\ov\bF_q$,
  \begin{enumerate}[a)]
  \item Write $\Bun_{G,U}(S)$ for the groupoid of $G$-bundles $\sG$ on $U_S^{\an}$ equipped with an isomorphism $\phi:\sG\ra^\sim\Frob_S^*\sG$.
  \item Write $\Bun_{G,F}(S)$ for the groupoid $\varinjlim_U\Bun_{G,U}(S)$.
  \end{enumerate}
\end{defn*}
\begin{rems*}\hfill
\begin{enumerate}[1)]
\item One can recover $\Bun_{G,U}$ from $\Bun_{G,F}$; see Lemma \ref{ss:BunGpullback}. Therefore we will interchangeably use $\Bun_{G,U}$ and $\Bun_{G,F}$.
\item Amusingly, while we do not define \emph{the} global Hartl--Pink curve, nonetheless we can define its moduli of $G$-bundles $\Bun_{G,F}$. This is reminiscent of the situation for \emph{local} Hartl--Pink curves $X_{F_v,K}$, which depend on the auxiliary $K$.
\end{enumerate}
\end{rems*}

\subsection*{Geometry of $\Bun_{G,F}$}
We start with the following basic properties of $\Bun_{G,U}$.
\begin{thmA}
$\Bun_{G,U}$ is a small Artin v-stack over $\ov\bF_q$. It is $\ell$-cohomologically smooth over $\ov\bF_q$, and its dualizing complex with $\bF_\ell$-coefficients is isomorphic to $\bF_\ell$.
\end{thmA}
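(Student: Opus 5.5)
I would reduce $\Bun_{G,U}$ to the local Fargues--Fontaine/Hartl--Pink situation by comparing global shtuka-type data on $U^{\an}_S$ with data on small neighborhoods of the places outside $U$, in the spirit of Beauville--Laszlo gluing. Write $C\smallsetminus U=\{v_1,\dots,v_n\}$. Since $U^{\an}_S$ with its Frobenius is "complete" away from the punctured discs around the $v_i$, I expect the following. A $\phi$-equivariant $G$-bundle on $U^{\an}_S$ should be equivalent to a $G$-bundle on $C$ with a Frobenius structure, that is, a constant datum after $\ov\bF_q$-descent, together with local $\phi$-modules on the punctured discs $\Spa F_{v_i}\times S$, glued along $\Spa F_{v_i}\times S$.

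Concretely, I first aim to use GAGA (the theorem over sousperfectoid bases advertised in the abstract, applied to $C_S$ or to a compactification) to show that $\phi$-modules on the "interior" part are algebraic. Frobenius-fixed algebraic bundles on $C_{\ov\bF_q}$ descend, by Lang's theorem, to $G(F)$-torsor type data. The output I am aiming for is a presentation
\[\Bun_{G,U}\;\simeq\;\Big(\textstyle\prod_i \Bun_{G,F_{v_i}}\Big)\times_{\text{gluing}}[\ast/\text{discrete group}],\]
or more precisely a cartesian diagram exhibiting $\Bun_{G,U}$ as a quotient of a fiber product of the local stacks $\Bun_{G,F_{v_i}}$ by the locally profinite/discrete group $G(\cO(U))$-type object coming from global sections. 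This would be analogous to the uniformization $\Bun_{G,F}\leftarrow\prod_v\Bun_{G,F_v}$ that makes $\Bun_{G,F}$ play the role of Igusa stacks.

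Given such a presentation, the properties follow from Fargues--Scholze. Each $\Bun_{G,F_v}$ is a small Artin v-stack, $\ell$-cohomologically smooth of dimension $0$ over $\ov\bF_q$, with charts by $\cM_b$ and with dualizing complex trivial with $\bF_\ell$-coefficients, as is visible on the open strata $[\ast/\widetilde G_b]$ and globally via their results. Quotients by discrete groups preserve Artin-ness, cohomological smoothness and the dualizing complex étale-locally. The fiber products along the gluing maps are handled via the smoothness of the relevant morphisms of v-stacks, e.g.\ that the map recording the restriction to punctured discs is a torsor under a Banach--Colmez-type group diagram. Smallness follows because everything is assembled from small stacks by finite limits and quotients.

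The main obstacle is the first step: proving that the restriction/gluing map is an equivalence or at least a cohomologically smooth surjection. This needs both the GAGA statement and a control of the "$H^1$" of $\phi$-invariants on $U^{\an}_S$ as a Banach--Colmez-type space of dimension $0$. For that I would try first to compute $R\Gamma(U^{\an}_S,\cO)^{\phi=1}$ by a Mayer--Vietoris sequence between $C_S$ (algebraic, Frobenius invariants $=\bF_q$-points of a finite-type object) and the local Hartl--Pink pieces. The dimension contributions should cancel to give relative dimension $0$ and trivial dualizing complex. Even in its present form, this trivialization of the dualizing complex is only an expectation: it rests on this dimension count and on Fargues--Scholze's triviality for the local stacks, neither of which I have checked here.
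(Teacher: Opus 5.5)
There is a genuine gap, and it is in the step you flag as the main obstacle: no presentation of $\Bun_{G,U}$ as local stacks glued with a quotient by a discrete group exists.

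\textbf{Why the gluing fails.} To glue a $\phi$-bundle on $U^{\an}_S$ with data on the discs $\Spa\cO_z\times S$, the restriction to $\Spa F_z\times S$ must extend $\phi$-equivariantly over $\Spa\cO_z\times S$. That means it must come from $\Bun_{G,\cO_z}\cong\ast/\ul{G(\cO_z)}$, whose image lies in the open trivial stratum $\ast/\ul{G(F_z)}\subseteq\Bun_{G,F_z}$. So Beauville--Laszlo gluing without modifications only yields $\Bun_{G,C}\cong\Bun_{G,U}\times_{\prod_z\Bun_{G,F_z}}\prod_z\Bun_{G,\cO_z}$, which is a constant $0$-dimensional stack. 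It therefore only sees $\loc_Z^{-1}\big(\prod_z\ast/\ul{G(F_z)}\big)$ and says nothing over the other strata.

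\textbf{Why no presentation of your shape can hold.} Over a point $b_\bullet$ of $\prod_z\Bun_{G,F_z}$, the fiber of $\loc_Z$ is the diamond of an Igusa variety $\Ig^{b_\bullet}_G$. For non-basic $b_\bullet$ this can be positive-dimensional, so it is neither a discrete set nor a classifying stack of a discrete group. For the same reason $\loc_Z$ is not a torsor under a Banach--Colmez-type group.

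\textbf{Consequences.} The Artin property, smallness, smoothness and dualizing-complex computation you derive from that presentation have no foundation. The Mayer--Vietoris dimension count stays, as you say, an expectation. You also skip that being a v-stack at all needs v-descent for vector bundles on the non-perfectoid spaces $U^{\an}_S$. The paper gets this from a Frobenius-split perfectoid cover of $U^{\an}_S$ and Scholze--Weinstein's descent.

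\textbf{The missing idea: modifications at legs.} You need to allow modifications at legs in $\Spa\cO_z\times S$.
\begin{itemize}
\item Fargues--Scholze give charts $f:\coprod_{\mu}[\Gr^{\circ}_{G,\mu}/\ul{G(F_z)}]\ra\Bun_{G,F_z}\times\Spd\breve F_z$. These are cohomologically smooth v-covers with $f^!\bF_\ell\cong\bF_\ell(d_\mu)[2d_\mu]$.
\item The paper pulls these charts back along $\loc_Z$. The key input is the fiber product theorem (Theorem D), proved by gluing on $C^{\an}_S$ and then applying GAGA over sousperfectoid bases. It identifies the pullback with $[\varprojlim_m(\Sht^{I,\circ}_{G,\mu_\bullet,mZ})^\Diamond]/\ul{G(F_Z)}$ over the legs, i.e.\ global shtukas with infinite level at $Z$.
\item Global shtuka stacks are Deligne--Mumford and locally of finite type, and quasiprojective after truncation at deep level. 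Since the local charts are surjective, this shows $\loc_Z$ is representable in locally spatial diamonds, so $\Bun_{G,U}$ is Artin.
\item $\Sht^{I,\circ}$ is smooth over $\bF_q$, so the top-left corner is cohomologically smooth over the legs with dualizing complex $\bF_\ell(d_{\mu_\bullet})[2d_{\mu_\bullet}]$. This twist cancels against $g^!\bF_\ell\cong\bF_\ell(d_{\mu_\bullet})[2d_{\mu_\bullet}]$, where $g$ is the pulled-back chart.
\item One gets that $\Bun_{G,U}\times\prod_z\Spd\breve F_z$ is cohomologically smooth with trivial dualizing complex, and then the same for $\Bun_{G,U}$ by descent.
\end{itemize}
Your Lang-type argument suffices only when $Z=\varnothing$. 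There $\Bun_{G,C}$ is the constant stack attached to $\coprod_\alpha G_\alpha(F)\backslash G_\alpha(\bA)/G(\bO)$.
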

Theorem A mirrors results of Fargues--Scholze \cite[Theorem IV.1.19]{FS21} and Hamann--Imai \cite[Proposition 3.18]{HI25} in the local setting.

To state our next theorem, let us introduce the \emph{localization map}, which plays an important role throughout this paper. Write $Z$ for the closed complement $C\ssm U$. For all $z$ in $Z$, there is a natural map $\Spa F_z\ra U$, so by restricting $G$-bundles along these maps and using the presentation (\ref{eq:FFcurve}), we get a map of v-stacks
\begin{align*}
\loc_Z:\Bun_{G,U}\ra\prod_{z\in Z}\Bun_{G,F_z}.
\end{align*}

Recall that the local moduli $\Bun_{G,F_z}$ has an open substack $\Bun_{G,F_z}^{\semis}$ \cite[Theorem III.4.5]{FS21}, which is the locus where the corresponding $G$-bundle on the curve $X_{F_z,K}$ is semistable. By taking its preimage under $\loc_Z$, we define a \emph{global semistable locus} $\Bun_{G,U}^{\semis}$. While $\Bun_{G,U}^{\semis}$ is defined in terms of its local analogue $\Bun^{\semis}_{G,F_z}$, we prove that it admits the following intrinsic description.
\begin{thmB}
  $\Bun_{G,F}^{\semis}$ is an open substack of $\Bun_{G,F}$, and we have an isomorphism
  \begin{align*}
    \Bun^{\semis}_{G,F}\cong\coprod_{b\in B(F,G)_{\basic}}*/\ul{G_b(F)},
  \end{align*}
where $B(F,G)$ denotes the \emph{global Kottwitz set} \cite{Kot14}, $B(F,G)_{\basic}$ denotes its subset of \emph{basic} elements, and $G_b$ is the inner twist of $G$ associated with $b$.
\end{thmB}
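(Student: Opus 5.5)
Openness is formal: $\Bun_{G,U}^{\semis}=\loc_Z^{-1}\bigl(\prod_{z\in Z}\Bun_{G,F_z}^{\semis}\bigr)$, and each $\Bun_{G,F_z}^{\semis}$ is open by \cite[Theorem III.4.5]{FS21}. It then remains to check that this preimage is compatible with shrinking $U$, so that it defines an open substack of $\Bun_{G,F}=\varinjlim_U\Bun_{G,U}$. For this I would show that semistability at a point $z$ can be read off from the global object, independently of $Z$. The natural argument is via the slope (Harder--Narasimhan) vector: a $\phi$-bundle on $U_S^{\an}$ should have a locally constant global Newton point on geometric points. Over $S=\Spa K$, the pair $(\sG,\phi)$ corresponds, by a GAGA/Dieudonn\'e--Manin type classification, to a $\phi$-$G$-bundle over $F\otimes_{\bF_q}K$ in the limit. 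Such objects are classified by Kottwitz's global set $B(F,G)$, and every local Newton point is the image of the global one. Hence semistability at one $z$ forces semistability at all places, which gives the compatibility.

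For the isomorphism, I would first construct the map. Fix $b\in B(F,G)_{\basic}$, represented by an element of $G(F\otimes\ov\bF_q)$ (or of a suitable finite-level $G(\sO(U_{\ov\bF_q}))$ after shrinking $U$). It gives the trivial bundle with $\phi=b\Frob$, an object $\sG_b$ over $\ov\bF_q$. Its automorphism sheaf is $\ul{G_b(F)}$: automorphisms are $g$ with $b\Frob(g)b^{-1}=g$ in $G(\sO(U_S^{\an}))$. For connected $S$, the $\Frob$-fixed part of $\sO(U^{\an}_S)$ is $\sO(U)$, and passing to the colimit over $U$ gives $F$. This yields a map $\coprod_b */\ul{G_b(F)}\ra\Bun_{G,F}$, which lands in the semistable locus because basic $b$ has central Newton point locally at every $z$.

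It remains to show that this map is an isomorphism onto $\Bun^{\semis}_{G,F}$. Fully faithfulness is the automorphism computation above, applied also to $\Isom(\sG_b,\sG_{b'})$, which is empty unless $b=b'$ in $B(F,G)$. For essential surjectivity, take $(\sG,\phi)$ on $U_S^{\an}$ that is semistable at every $z\in Z$; I must show it is v-locally isomorphic to some $\sG_b$. Pointwise, I would use the classification of $\phi$-bundles over algebraically closed $K$ via $B(F,G)$, together with the fact that semistability at the places in $Z$ forces basicness; the latter uses that the global Newton point maps to the local ones. In families, the Newton point is locally constant on the semistable locus, so I would reduce to constant $b$. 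Then $\Isom(\sG_b,\sG)$ is a $\ul{G_b(F)}$-torsor, and I would show it is pro-étale locally trivial by spreading out from points, analogously to \cite[III.4]{FS21}.

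The main obstacle is this family step, namely proving that $\Isom(\sG_b,\sG)\ra S$ is surjective and a torsor, rather than merely having nonempty fibers. I would first try a lattice/integral-model argument: choose a $\phi$-stable $\sO$-lattice over a model of $U$ and compare with Hartl--Pink's local results at each $z$, combined with the GAGA theorem for smooth proper schemes over sousperfectoid spaces to algebraize the bundle on $C_S$.
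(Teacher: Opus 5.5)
\textbf{Main gap: the pointwise step.} The central gap is in the pointwise step of your essential surjectivity, and the parallel claim in your openness paragraph. No classification of $\phi$-$G$-bundles on $U^{\an}_K$ by $B(F,G)$ is available for a geometric point $\Spa(K,K^+)$:
\begin{itemize}
\item the Kottwitz/Drinfeld description is of $\Frob$-conjugacy classes in $G(F\otimes_{\bF_q}\ov\bF_q)$, and Theorem C only treats $\ov\bF_q$-points;
\item for general $K$ the paper explicitly does \emph{not} expect $\abs{\Bun_{G,F}}$ to equal $B(F,G)$, since non-basic Igusa varieties can be positive-dimensional;
\item GAGA does not help, because $U$ is not proper. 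To reach $C^{\an}_K$ you would have to extend $(\sG,\phi)$ $\phi$-equivariantly over $\Spa\cO_z\times\Spa K$, which is impossible when $b_z$ is a nontrivial basic class.
\end{itemize}
So ``semistable points come from basic global classes'' is the content of the theorem, not an input. Appealing to a global Newton point is circular, since that presupposes the point already comes from $B(F,G)$.

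The paper proves this step as follows:
\begin{enumerate}
\item Lift each $b_z$ via Beauville--Laszlo to a local shtuka with $\mu_z$ lifting $\kappa(b_z)$.
\item Use Theorem \ref{ss:fiberproductconj} to obtain a global shtuka over $C_K$.
\item From $\pi_0(\cB_G)=\pi_1(G)_{W_F}$, deduce that $\sum_z\mu_z$ vanishes in $\pi_1(G)_{W_F}$.
\item Invoke \cite[Proposition 15.6]{Kot14} to produce a basic $b$ with $\loc_z(b)=b_z$ and $\loc_u(b)=1$.
\item Twist by $b$ (Proposition \ref{ss:twistingb}) so that all local components become trivial.
\item Glue with $\Bun_{G_b,\cO_z}$ to land in $\Bun_{G_b,C}$.
\item Conclude with GAGA and \cite[Lemma 3.3 b)]{Var04}.
\end{enumerate}

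\textbf{Family step and openness.} The same twisting is the idea missing from your family step. After twisting, $\Bun^b_{G,F}$ becomes $\Bun^1_{G_b,F}$. Its openness and its identification with $*/\ul{G_b(F)}$ come from Theorem \ref{ss:triviallocus}, whose engine is the global Kedlaya--Liu equivalence (Theorem \ref{ss:Alocalsystems}). Slope zero at every $z$ lets one:
\begin{itemize}
\item trivialize pro-\'etale locally at $z$ by \cite[Theorem II.2.19]{FS21};
\item glue with the trivial object over $\cO_z$;
\item algebraize on $C_{S'}$ by GAGA;
\item apply Lafforgue's lemma.
\end{itemize}
Over strictly totally disconnected $S$ the object then becomes a $G$-torsor on $\pi_0(\abs{S})\times U$, which can be spread out.

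Two of your proposed tools fail without this. The $\phi$-stable lattice at $z$ in your integral-model idea does not exist before twisting. Local constancy of the local Newton points does not fix the global $b$, because local invariants need not determine a global basic class; so openness of each $\Bun^b$ is needed there too.

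Finally, in your openness paragraph, semistability at one place does \emph{not} force it at others. For $\mathrm{GL}_2$, take $\mathrm{diag}(b_1,b_1^{-1})$ with $b_1\in B(F,\bG_m)$ of slope $0$ at $z$ and nonzero slope at some $z'\in Z$. Compatibility with shrinking $U$ is instead formal: the new places $c\in U\ssm U'$ have local objects in $\Bun_{G,\cO_c}\cong */\ul{G(\cO_c)}$, which lands in the trivial locus (Lemma \ref{ss:BunGpullback}).
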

Theorem B mirrors a result of Fargues--Scholze \cite[Theorem III.4.5]{FS21} in the local setting. This is one incarnation of the idea that $\Bun_{G,F_z}$ is a geometrization of the local Kottwitz set $B(F_z,G)$, but there are other such incarnations. For example, Ansch\"utz \cite[Theorem 10]{Ans19} proved that the underlying topological space $\abs{\Bun_{G,F_z}}$ is naturally in bijection with $B(F_z,G)$.

We similarly regard $\Bun_{G,F}$ as a geometrization of the global Kottwitz set. However, for reasons explained in the next subsection, we do \emph{not} expect a natural bijection between $\abs{\Bun_{G,F}}$ and $B(F,G)$ in general. Nonetheless, we prove the following description of the $\ov\bF_q$-points of $\Bun_{G,F}$.
\begin{thmC}
  $\Bun_{G,F}(\ov\bF_q)$ is naturally equivalent to the groupoid quotient
  \begin{align*}
    G(F\otimes_{\bF_q}\ov\bF_q)/G(F\otimes_{\bF_q}\ov\bF_q),
  \end{align*}
where the action is given by $\Frob_{\ov\bF_q}$-conjugation. Consequently, the set of isomorphism classes in $\Bun_{G,F}(\ov\bF_q)$ is naturally in bijection with $B(F,G)$.
\end{thmC}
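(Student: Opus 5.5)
Here $S=\Spa\ov\bF_q$, and I would compute $\Bun_{G,U}(\ov\bF_q)$ for a fixed $U$ before passing to the colimit over $U$.

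\textbf{Step 1: analytification is harmless over a finite field.} For $S=\Spa(\ov\bF_q,\ov\bF_q)$ with the discrete topology, the space $U^{\an}_S$ is the analytification of the scheme $U_{\ov\bF_q}$ over a trivially valued field. I would identify $G$-bundles on it with algebraic $G$-bundles on $U_{\ov\bF_q}$. One route is a GAGA-type statement: the paper announces a GAGA theorem for smooth proper schemes over sousperfectoid spaces. I would apply it to $C$ and then restrict to $U$. This restriction needs care, since $U$ is not proper. The alternative is to compare directly, using that $U^{\an}_S$ over a discrete field is $\Spa(\cO(U_{\ov\bF_q}),\text{integral closure})$-like. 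In either case the output should be that
\[
\Bun_{G,U}(\ov\bF_q)\simeq\bigl\{(\sG,\phi): \sG \text{ a $G$-bundle on } U_{\ov\bF_q},\ \phi:\sG\ra^\sim\Frob^*\sG\bigr\},
\]
where $\Frob$ is the Frobenius of the $\ov\bF_q$ factor.

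\textbf{Step 2: pass to the colimit.} Take $\varinjlim_U$ of these groupoids. The claim is that the colimit is the groupoid of pairs $(\sG_F,\phi)$, with $\sG_F$ a $G$-torsor over $F\otimes_{\bF_q}\ov\bF_q=\varinjlim_U\cO(U_{\ov\bF_q})$. Objects spread out because $G$-torsors and their isomorphisms are of finite presentation. Morphisms agree on some $U$ by the same argument.

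\textbf{Step 3: trivialize and read off the quotient.} The ring $F\otimes_{\bF_q}\ov\bF_q$ is the function field $\ov F$ of $C_{\ov\bF_q}$, a field of cohomological dimension $\le1$ (Tsen). By Steinberg's theorem, every $G$-torsor over $\ov F$ is trivial for connected reductive $G$, so $H^1(\ov F,G)=1$.

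Choose a trivialization of $\sG_F$. Then $\phi$ becomes an element $g\in G(\ov F)$. Changing the trivialization by $h\in G(\ov F)$ replaces $g$ by $h^{-1}g\,\Frob(h)$, up to the ordering convention, and morphisms of pairs are exactly such $h$. This gives the equivalence with the $\Frob$-conjugation groupoid quotient $G(\ov F)/G(\ov F)$.

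For general non-split $G$, the Frobenius action on $G(\ov F)$ is through the $\ov\bF_q$ factor, and the same computation goes through.

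\textbf{Step 4: isomorphism classes.} The set of isomorphism classes is $G(\ov F)$ modulo $\sigma$-conjugacy, where $\ov F=F\otimes\ov\bF_q=F^{\mathrm{ur}}$ in Kottwitz's sense (the maximal constant field extension). By Kottwitz's definition in \cite{Kot14}, this set is $B(F,G)$: he uses $\varinjlim H^1(\langle\sigma\rangle,G(F\ov\bF_q))$, or an equivalent description that should reduce to this one for function fields.

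\textbf{Main obstacle.} I expect Step 1 to be the hardest, since $U$ is non-proper and the residue field is discrete. There I would first try the direct affinoid computation, and check that vector bundles on the analytification over a discrete field agree with algebraic ones; through Tannakian formalism this then gives the statement for $G$-bundles.
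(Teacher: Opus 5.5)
\textbf{Gap in Step 1.} Step 1 does not hold as you set it up, and it is where all of the content of Theorem C lies.

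You evaluate $\Bun_{G,U}$ on $S=\Spa(\ov\bF_q,\ov\bF_q)$ with the discrete topology. But $\Bun_{G,U}$ is a v-stack on $\Perf$, and its test objects are analytic: they have pseudouniformizers, which the construction $U^{\an}_S=\bigcup_\vpi U_{S,\vpi}$ uses. A discrete field is not Tate, so $\Spa(\ov\bF_q,\ov\bF_q)$ is not a test object. Theorem H, which needs a sousperfectoid Tate base, does not apply to it either.

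The $\ov\bF_q$-points in Theorem C are morphisms $\Spd\ov\bF_q\ra\Bun_{G,F}$, i.e.\ values of $\Bun^{\red}_{G,F}$ as in \ref{ss:BunGred}. They are computed by v-descent along $\Spa\ov\bF_q\lp{t^{1/q^\infty}}\ra\Spd\ov\bF_q$. An object is a $G$-torsor with Frobenius structure on $U^{\an}_{\Spa\ov\bF_q\lp{t^{1/q^\infty}}}$, together with a descent datum along (\ref{eqn:UBresolution}). The higher terms of that datum live over the non-quasicompact spaces $\Spa\ov\bF_q\lp{t^{1/q^\infty}}\times_{\Spd\ov\bF_q}\Spa\ov\bF_q\lp{t^{1/q^\infty}}$.

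Under your reading Step 1 is essentially formal, since a fibre product with a discrete base is just the scheme $U_{\ov\bF_q}$ viewed as an adic space. That is why your final answer comes out right. But the actual assertion is never addressed: that such analytic descent data are effective and come uniquely from algebraic pairs $(\sG,\phi)$ on $U_{\ov\bF_q}$. This is a genuine theorem, just as its local analogue \cite[Theorem 5.3]{Ans23} is.

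\textbf{What is missing.} You need Theorem \ref{ss:BunUschemepoints}; combined with Proposition \ref{ss:Tannakian}, this gives Theorem \ref{ss:BunGred}.

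Full faithfulness comes from \cite[Theorem 4.9]{Kim24}, applied to the affinoids $U_{S,1}$ for affine $U=\Spec A$. Their rings of functions are $(A\otimes_{\bF_q}\ov\bF_q)\lp{t_1^{1/q^\infty},\dotsc,t_m^{1/q^\infty}}$.

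For essential surjectivity, non-properness of $U$ is handled by \emph{extending} to $C$, not by restricting from it:
\begin{enumerate}
\item For each $z\in Z$, \cite[Theorem 3.15]{GIZ25} shows that the restriction to $\Spa F_z\times(-)$ comes from a vector bundle on $\Spec F_z\wh\otimes_{\bF_q}\ov\bF_q$. This bundle is trivial by \cite[Theorem 6.1]{Iva23}.
\item You can therefore glue in trivial bundles on $\Spa\cO_z\times(-)$ and obtain descent data on $C^{\an}$.
\item Theorem H, applied over $\ov\bF_q\lp{t^{1/q^\infty}}$ and over the rational pieces $\ang{t_1^n/t_2,t_2^n/t_1}$ of the self-products, makes everything algebraic.
\item \cite[Theorem 4.9]{Kim24}, applied on an affine open cover of $C$, descends the result to $C_{\ov\bF_q}$.
\end{enumerate}

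With this in place your Steps 2 and 3 are fine. For generic triviality over the imperfect field $F\otimes_{\bF_q}\ov\bF_q$, cite Borel--Springer's extension of Steinberg's theorem rather than Steinberg's theorem itself.

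In Step 4, for global fields Kottwitz defines $B(F,G)$ via Galois gerbes. Its identification with $\Frob$-conjugacy classes in $G(F\otimes_{\bF_q}\ov\bF_q)$ is a theorem, \cite[Theorem 12.2]{Iak22}, not a definition.
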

Theorem C mirrors a result of Ansch\"utz \cite[Theorem 5.3]{Ans23} in the local setting. Actually, we prove a generalization of Theorem C that fully describes the \emph{reduction} of $\Bun_{G,F}$; see Theorem \ref{ss:BunGred}. The more general Theorem \ref{ss:BunGred} mirrors a result of Gleason--Ivanov--Zillinger \cite[Theorem 7.14.(1)]{GIZ25} in the local setting.

\subsection*{Relation with shtukas}
It turns out that $\Bun_{G,U}$ plays the role of \emph{Igusa stacks} \cite{Zha23} over function fields. Igusa stacks capture the geometry of Shimura varieties, so let us recall the function field analogue of the latter.  Let $I$ be a finite set, and for all $i$ in $I$, let $V_i$ be a representation of the Langlands dual group $\wh{G}$. Write $V$ for the representation $\boxtimes_{i\in I}V_i$ of $\wh{G}^I$. Associated with $I$ and $V$, there is a moduli space $\Sht^I_{G,V}$ of \emph{global shtukas} bounded by $V$ \cite{Var04}, which is a Deligne--Mumford stack that is separated and locally of finite type over $C^I$.

There is also a local version of $\Sht^I_{G,V}$. Let $\{I_z\}_{z\in Z}$ be a partition of $I$, and for all $z$ in $Z$, write $V_z$ for the representation $\boxtimes_{i\in I_z}V_i$ of $\wh{G}^{I_z}$. Associated with $I_z$ and $V_z$, there is a moduli space $\LocSht^{I_z}_{G,V_z}$ of \emph{local shtukas} bounded by $V_z$ \cite{SW20}, which is a small v-stack over $(\Spd\cO_z)^{I_z}$.

We prove the following function field analogue of Scholze's \emph{fiber product conjecture} \cite[Conjecture 1.1]{Zha23}, which is the defining feature of Igusa stacks:
\begin{thmD}
There is a natural cartesian square of small v-stacks
\begin{align*}
  \xymatrixcolsep{2cm}
  \xymatrix{(\Sht^I_{G,V})^\Diamond|_{\prod_{z\in Z}(\Spd\cO_z)^{I_z}_{\ov\bF_q}}\ar[r]^-{\pi_{\HT}}\ar[d] & \displaystyle\prod_{z\in Z}\!\big(\!\LocSht^{I_z}_{G,V_z}\!\big)_{\ov\bF_q}\ar[d]^{\BL} \\
  \Bun_{G,U}\ar[r]^-{\loc_Z} & \displaystyle\prod_{z\in Z}\Bun_{G,F_z},}
\end{align*}
where $\BL$ denotes the product of the \emph{Beauville--Laszlo maps} $(\LocSht^{I_z}_{G,V_z})_{\ov\bF_q}\ra\Bun_{G,F_z}$ \cite[p.~98]{FS21}, and $\pi_{\HT}$ is a function field analogue of the Hodge--Tate period map.
\end{thmD}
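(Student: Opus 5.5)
The plan is to construct both horizontal maps explicitly, using Beauville--Laszlo gluing along the legs, and then to check cartesianness by writing down an inverse functor on $S$-points. Let $S=\Spa(R,R^+)$ be an affinoid perfectoid space over $\ov\bF_q$ with a map to $\prod_{z\in Z}(\Spd\cO_z)^{I_z}$, i.e.\ untilts $x_i$ of $S$ over $\cO_z$ for $i\in I_z$. Such untilts give points of the relative curve $C\times S$ (via $\Spa R^\sharp\ra\Spd\cO_z\ra C$). An $S$-point of $(\Sht^I_{G,V})^\Diamond$ over this locus is a $G$-bundle $\sE$ on $C_S$ (relative analytic curve, using GAGA over sousperfectoid bases from later in the paper to pass between algebraic and analytic $G$-bundles on $C\times S$) with a modification $\phi:\sE|_{C_S\ssm\bigcup\Gamma_{x_i}}\ra^\sim\Frob^*\sE|_{\dots}$ bounded by $V$.

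\textbf{Step 1 (the maps).} The left vertical map is restriction to $U_S^{\an}$. Since all legs lie over $Z$, the graphs $\Gamma_{x_i}$ are disjoint from $U_S^{\an}$, so $\phi$ is an honest isomorphism there and we get a point of $\Bun_{G,U}$. For $\pi_{\HT}$, we complete along $z$: restrict $\sE$ to the formal neighbourhood $\Spa \cO_z\times S$ (Spd-wise, $\Spd\cO_z\times S$), giving a $G$-torsor over $\Spa W_{\cO_z}(R^+)$-type ring with a Frobenius meromorphic at the $x_i$, $i\in I_z$. This is exactly a local shtuka in the sense of \cite{SW20} bounded by $V_z$. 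Commutativity of the square holds because both composites restrict $(\sE,\phi)$ to the punctured disc $\Spa F_z\times S$ and quotient by Frobenius. For $\BL$ this is by the construction in \cite[p.~98]{FS21}, and for $\loc_Z$ it is by definition.

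\textbf{Step 2 (inverse functor).} Given a point of $\Bun_{G,U}(S)$, local shtukas $(\sP_z,\phi_z)$, and isomorphisms of their images in $\Bun_{G,F_z}(S)$, we glue. Concretely, we glue the $\Frob$-equivariant bundle on $U_S^{\an}$ with the bundles $\sP_z$ on the open-disc neighbourhoods $\bD_{z,S}$ of $z$, along the punctured discs $\bD^*_{z,S}=\Spa F_z\times S$. This produces a $G$-bundle on $C_S^{\an}$ with a Frobenius modification supported at the legs, bounded by $V$. One subtlety is that the identification in $\Bun_{G,F_z}$ is only on the Frobenius quotient. We first pull back to $\Spa F_z\times S$, where a $\Frob$-equivariant isomorphism of bundles on the quotient is the same as an isomorphism commuting with $\phi$ on the punctured disc near $0$ (i.e.\ on $\{|z|$ small$\}$ region). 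We extend it over the whole punctured disc by $\phi$-equivariance, using that $\Frob$ contracts toward the boundary. Here the legs lie in a bounded region, so we may need to shrink and propagate, which is the standard argument in \cite[\S 23]{SW20}. Then we check that the two functors are mutually quasi-inverse; this is formal from descent for $G$-bundles along the analytic cover $U_S^{\an}\cup\bigcup_z\bD_{z,S}$ of $C_S^{\an}$.

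\textbf{Main obstacle.} I expect the main obstacle to be the comparison between analytic and algebraic objects: an $S$-point of $(\Sht^I_{G,V})^\Diamond$ is algebraic on $C\times\Spec R$ (or its $\Diamond$-version), while the gluing produces an analytic bundle on $C_S^{\an}$ with no a priori boundedness or quasi-compactness control. To handle this, I would invoke the GAGA theorem for smooth proper schemes over sousperfectoid spaces to algebraize $G$-bundles on $C_S^{\an}$ and the modification $\phi$, reducing to vector bundles via a faithful representation and Tannakian formalism. Boundedness by $V$ is local at the legs, so it transfers directly from the local shtukas. A secondary check is v-descent: both sides are v-stacks, so it suffices to work with strictly totally disconnected $S$, where the gluing and the extension of Frobenius-equivariant isomorphisms are cleanest.
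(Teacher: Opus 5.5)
Your overall route is the paper's: construct both maps, glue on an analytic cover of $C^{\an}_S$, then algebraize with Proposition \ref{ss:Tannakian} and Theorem \ref{ss:appliedGAGA}. However, Step 1 rests on a false claim, and this breaks both the left vertical map and the gluing in Step 2 as you describe them.

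\textbf{The legs do not avoid $U^{\an}_S$.} An $S$-point of $\Spd\cO_z$ sends the uniformizer $\pi$ to a topologically nilpotent $a\in R$. Whenever $a$ is invertible, the composite $S\to C$ factors through $\Spec F\to U$, so $\Gamma_{x_i}$ lies in the punctured disc $\Spa F_z\times S\subseteq U^{\an}_S$. This happens, for instance, for every $S$ over $\Spd F_z$, which is exactly the situation used later over $\Spd\breve{F}_z$ and $\Spa\bC_z$. This has three consequences:
\begin{enumerate}
\item Restricting $(\sE,\phi)$ to $U^{\an}_S$ does not give an object of $\Bun_{G,U}$, since $\phi$ still has poles there.
\item Your commutativity argument fails for the same reason: the Beauville--Laszlo map does not restrict $\phi_z$ to the whole punctured disc.
\item You cannot glue along all of $\Spa F_z\times S$. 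There the Frobenius structure of the $\Bun_{G,U}$-object is an isomorphism, while $\phi_z$ of the local shtuka $(\sG_z,\phi_z)$ has poles at the legs. Moreover, the given isomorphism in $\Bun_{G,F_z}(S)$ identifies the former with the Beauville--Laszlo bundle, which agrees with $\sG_z$ only away from the legs.
\end{enumerate}
Propagating by $\phi$-equivariance cannot fix this. Frobenius continuation from the leg-free region reproduces the Beauville--Laszlo bundle, not $\sG_z$.

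\textbf{The missing ingredient} is built into the paper's definition of the map $(\Sht^I_{G,V})^\Diamond\to\Bun_{G,U}$.
\begin{enumerate}
\item Using quasicompactness of $|S|$, choose closed discs $D_z\subseteq\Spa\cO_z\times_{\bF_z}S$ centred at the origin containing the legs over $z$. Set $D_z^{\cup}=\coprod_{r\in\bZ/\deg z}\Frob_S^{-r}(D_z)\subseteq\Spa\cO_z\times S$. The translates are needed because, for $\deg z>1$, $\Spa\cO_z\times S$ splits into $\deg z$ copies of $\Spa\cO_z\times_{\bF_z}S$ cyclically permuted by $\Frob_S$, whereas the local shtuka is a $\Frob_S^{\deg z}$-structure on one copy. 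Your Step 1 also glosses over this point.
\item The map to $\Bun_{G,U}$ restricts $(\sE,\phi)$ to $U^{\an}_S\ssm\bigcup_zD_z^{\cup}$, where $\phi$ is an isomorphism. It then continues by Frobenius; the Frobenius translates of this region exhaust $U^{\an}_S$. This is the same recipe as the Beauville--Laszlo map, which is why the square commutes.
\item The inverse glues along the cover $\{U^{\an}_S\ssm\bigcup_zD_z^{\cup}\}\cup\{\Spa\cO_z\times S\}_z$ of $C^{\an}_S$. Its overlaps $\Spa F_z\times S\ssm D_z^{\cup}$ contain no legs, so both Frobenius structures are isomorphisms there. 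The isomorphism in $\Bun_{G,F_z}(S)$ therefore restricts to a compatible identification.
\end{enumerate}
With this correction, your final GAGA step goes through as you describe.
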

Actually, we prove a generalization of Theorem D that allows level structures; see Theorem \ref{ss:fiberproductconj}. Previously, Hartl--Viehmann \cite{HV23} studied a version of the period map $\pi_{\HT}$ for the function field analogue of Rapoport--Zink spaces.
\begin{rems*}\hfill
  \begin{enumerate}[1)]
  \item Theorem D implies that $\Bun_{G,U}$ encodes the function field analogue of \emph{Igusa varieties}; see Proposition \ref{ss:igusavarsfiber}. Because Igusa varieties associated with non-basic elements of the Kottwitz set can be positive-dimensional in general, this explains why we do not expect a natural bijection between $\abs{\Bun_{G,F}}$ and $B(F,G)$.
  \item Gleason--Ivanov--Zillinger \cite[Theorem 7.14.(2)]{GIZ25} proved a natural description of the $\ov\bF_q$-points of $\LocSht^{I_z}_{G,V_z}$. By combining this description with Theorem C and the aforementioned result of Ansch\"utz \cite[Theorem 5.3]{Ans23}, Theorem D immediately implies the \emph{Langlands--Rapoport conjecture} for moduli spaces of global shtukas with arbitrary (in particular, colliding) legs. This generalizes results of Arasteh Rad--Hartl \cite[Theorem 3.21]{AH23}, who proved this when the legs are disjoint.
  \end{enumerate}
\end{rems*}

\subsection*{Relation with Langlands}
In the local setting, $\Bun_{G,F_z}$ plays a central role in the Langlands correspondence for $G$ over $F_z$. For example, let $\La$ be one of $\ov\bQ_\ell$, $\ov\bZ_\ell$, or $\ov\bF_\ell$, and write $D(\Bun_{G,F_z},\La)$ for the associated derived category of \'etale sheaves.\footnote{Strictly speaking, when $\La\neq\ov\bF_\ell$ we use (a base change of) motivic sheaves \cite{Sch24, Sch25}; see \ref{ss:sheafexplain}.} Associated with $I_z$ and $V_z$, Fargues--Scholze \cite{FS21} constructed a \emph{Hecke operator}
\begin{align*}
T_{V_z}:D(\Bun_{G,F_z},\La)\ra D(\Bun_{G,F_z},\La),
\end{align*}
which they used to define the automorphic-to-Galois direction of the Langlands correspondence for $G$ over $F_z$, up to semisimplification.

Using Theorem D, we prove that applying Hecke operators to the object
\begin{align*}
\loc_{Z,!}\!\La\in D\,\big(\!\prod_{z\in Z}\Bun_{G,F_z},\La\big) = \bigotimes_{z\in Z}D(\Bun_{G,F_z},\La)
\end{align*}
recovers the cohomology of the moduli of global shtukas at infinite level. Write
\begin{align*}
\pi_{\infty Z}:(\Sht^I_{G,V,\infty Z})_{\ov\bF_q}\ra U^I_{\ov\bF_q}
\end{align*}
for the moduli of global shtukas with infinite level at $Z$, which has an action of $G(F_Z)\coloneqq\prod_{z\in Z}G(F_z)$, and write $\cS_V$ for its intersection cohomology sheaf. Forthcoming work of Eteve--Gaitsgory--Genestier--Lafforgue will endow $\pi_{\infty Z,!}\cS_V$ with the structure of a representation of the Weil group $W_U^I$ on the level of derived categories.
\begin{thmE}
After restricting to $\prod_{z\in Z}W_{F_z}^{I_z}$, we have a natural isomorphism
\begin{align*}
\pi_{\infty Z,!}\cS_V \cong i_1^*(T_V(\loc_{Z,!}\!\La)),
\end{align*}
where $i_1$ denotes the product of the natural open embeddings $*/\ul{G(F_z)}\hra\Bun_{G,F_z}$, and $T_V$ is the endofunctor $\bigotimes_{z\in Z}T_{V_z}$ of $\bigotimes_{z\in Z}D(\Bun_{G,F_z},\La)$.
\end{thmE}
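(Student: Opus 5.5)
The plan is to deduce Theorem E from the cartesian square of Theorem D, upgraded to infinite level at $Z$, together with proper base change. First, I would use the level-structure version (Theorem \ref{ss:fiberproductconj}) to get a cartesian square. Its top row is $(\Sht^I_{G,V,\infty Z})^\Diamond|_{\prod_z(\Spd F_z)^{I_z}}\to\prod_z\big(\LocSht^{I_z}_{G,V_z,\infty}\big)$. The right vertical map goes from the infinite-level local shtukas to $\Bun_{G,F_z}$. In the local theory, $\LocSht^{I_z}_{G,V_z,\infty}$ restricted to the generic fiber is the Hecke stack: it is the fiber over $*\to*/\ul{G(F_z)}\subset\Bun_{G,F_z}$ of the Hecke correspondence $\Bun_{G,F_z}\xleftarrow{h_1}\mathrm{Hck}^{I_z}_{G,V_z}\xrightarrow{h_2}\Bun_{G,F_z}\times(\Spd \breve F_z)^{I_z}$. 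Here one leg is the trivial bundle and the other is the modification bounded by $V_z$. Pasting squares, $(\Sht^I_{G,V,\infty Z})^\Diamond$ over the product of punctured discs is identified with the pullback of $\Bun_{G,U}\times_{\prod_z\Bun_{G,F_z}}\prod_z\mathrm{Hck}^{I_z}$ along $i_1$ (on the trivial-bundle side). The $G(F_Z)$-action matches the action of $\prod_z\mathrm{Aut}$ of the trivial bundle.

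Second, I would compute $T_V(\loc_{Z,!}\La)=h_{2,!}(h_1^*\loc_{Z,!}\La\otimes\cS'_V)$, where $\cS'_V$ is the Satake sheaf on the Hecke stack. By proper base change along $\loc_Z$, this becomes $h_{2,!}$ of $\cS'_V$ pulled back to the fiber product above. Restricting via $i_1^*$ and using base change once more gives $\pi_{\infty Z,!}$ of the pullback of $\cS'_V$ to the infinite-level shtuka space, as sheaves on $\prod_z(\Spd \breve F_z)^{I_z}$. This is the same as a $\prod_z W_{F_z}^{I_z}$-representation. Two further ingredients are needed. First, the pullback of the Satake sheaf along $\pi_{\HT}$ agrees with the intersection cohomology sheaf $\cS_V$; this holds because the local model of $\Sht$ is the Beilinson--Drinfeld affine Grassmannian, compatibly with the local Hecke stack, and geometric Satake is compatible. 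Second, the comparison of étale (or motivic) cohomology of $\Sht^\Diamond$ with the algebraic $\pi_{\infty Z,!}$ requires the GAGA/comparison results for the diamondification of schemes locally of finite type, including at infinite level via colimits over levels.

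The main obstacle I expect is matching the Weil group actions and the compactly supported pushforward. On the global side, the $W_U^I$-structure comes from partial Frobenii (Eteve--Gaitsgory--Genestier--Lafforgue). On the local side, $T_{V_z}$ lands in $W_{F_z}^{I_z}$-equivariant objects via descent from $\Spd\breve F_z$ (equivalently $\Div^1$). I would first check that the partial Frobenius on $\Sht$ corresponds, under $\pi_{\HT}$ and the cartesian square, to the Frobenius descent datum on $\prod_z(\Spd\breve F_z)^{I_z}$ used by Fargues--Scholze. Both should arise from the $\phi$-structure on $\Bun_{G,U}$, since legs at different $z$ are disjoint and partial Frobenius at $I_z$ acts only through the $z$-component. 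There is also a subtlety in the infinite-level $!$-pushforward: $\Bun_{G,U}$ is an Artin v-stack, so $\loc_{Z,!}$ must be taken in the sense of Theorem A's $\ell$-cohomological smoothness framework. Infinite level $\pi_{\infty Z,!}$ I would define as the colimit over compact open levels. Commuting these with base change is where I would invest most care, by reducing to finite level $K_Z$ and using that $*/\ul{K_Z}\to*/\ul{G(F_Z)}$ is proper and étale.
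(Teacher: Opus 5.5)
Your geometric skeleton matches the paper's. Theorem \ref{ss:fiberproductconj} with \ref{ss:BdRgrassmannian} identifies $(\Sht|_{\prod_z\Spa\bC_z})^\Diamond$ with the pullback of $\Bun_{G,U}\times_{\prod_z\Bun_{G,F_z}}\prod_z\Hck_{G,V_z}$ to the trivial-bundle locus. The Hecke square is cartesian by Fargues--Scholze, and proper base change writes $i_1^*T_V(\loc_{Z,!}\La)$ as $\pi^\Diamond_{\infty Z,!}$ of the pulled-back Satake sheaf. The paper notes, however, that this ``ideal world'' argument only suffices for $\La=\bF_\la$ with the \'etale formalism of \cite{Sch17}.

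\textbf{The gap is your second ingredient.} For $\La\in\{L,\cO_L\}$, $\loc_{Z,!}$, $T_V$ and the base change are computed in $D(-,\La)=D_{\mot}(-)\otimes_{D_{\mot}(*)}D(\La)$, a base change of Berkovich motives. The Satake sheaf is the motivic one of \cite[Theorem 5.7]{Sch25}, and $D(\Spa\bC_z,\La)$ is not $D(\La)$: motives on $\Spa C$ are modules over $f_*\bZ[\frac1q]$. Huber-type comparison results \cite[\S 27]{Sch17} only relate \'etale sheaves with torsion (hence completed $\ell$-adic) coefficients on a variety and on its diamond. They give no functor from classical $\ell$-adic sheaves on $\Sht$ into motives; motivic IC sheaves on $\Sht$ are only known for split $G$, and would still need comparing with Berkovich motives. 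Nor do they control a motivic $!$-pushforward. Likewise, your first ingredient only makes sense after $\ell$-adic realization. As written, nothing connects the motivic object $\pi^\Diamond_{\infty Z,!}\cS_V$ to the classical $\pi_{\infty Z,!}\cS_{\dot V}$.

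\textbf{The missing idea} is the detour through Zariski-constructible motives and motivic nearby cycles (Appendix \ref{s:berkovichmotives}).
\begin{enumerate}
\item $\pi^\Diamond_{\infty Z,!}\cS_V$ lies in $D_{\mot}(*/\ul{G(F_Z)})\otimes_{D_{\mot}(*)}\bigotimes_z D_{\mot}(\Spa\bC_z)$. Applying $\bigotimes_z\Psi_z$ recovers the image of $i_1^*T_V(\loc_{Z,!}\La)$ (Proposition \ref{ss:nearbycycles}).
\item $\cS_V$ is Zariski-constructible relative to $U^I$, so the pushforward can be computed in $\Ind D^{U^I\bs\prod_z\Spa\bC_z}_{\mot}$. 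There $\ell$-adic realization commutes with $!$-pushforward (Proposition \ref{prop:ladicrealizationvarieties}) and sends $\cS_V$ to $\rho(\cS_{\dot V})$, so the realization is $\rho(\pi_{\infty Z,!}\cS_{\dot V})$.
\item On $\Spa\bC_z$, Zariski-constructible motives are exactly the compact ones (Proposition \ref{ss:controlSpaC}), so $\Ups$ is an equivalence there, and $r_{\ell,\Spa\bC_z}$ is identified with $\Psi_z$.
\end{enumerate}
Step 3 is what matches $D(\Spd\breve F_z,\La)\cong D_{\qcoh}((\Spec\La)/\ID_{F_z})$ with restricting a constructible sheaf along $\Spec\breve F_z\to U_{\ov\bF_q}$ via Grothendieck's monodromy theorem. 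Your claim that sheaves on $\Spd\breve F_z$ ``are the same as'' $W_{F_z}$-representations hides exactly this.

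You also need Theorem \ref{ss:EGGL} twice. First, to reduce to singleton $I_z$, so you work over one $\Spa\bC_z$ per $z$. Second, to know that $\pi_{\infty Z,!}\cS_{\dot V}$ lies in the external tensor product subcategory. The partial-Frobenius matching you call the main obstacle is comparatively formal.

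Two smaller slips. $*/\ul{K}\to*/\ul{G(F_Z)}$ is not proper; its $!$-pushforward is $\cInd_K^{G(F_Z)}$. And $\loc_{Z,!}$ exists because $\loc_Z$ is compactifiable and representable in locally spatial diamonds (Proposition \ref{ss:localizationproperties}), not because of cohomological smoothness.
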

\begin{rems*}\hfill
\begin{enumerate}[1)]
\item If one is only interested in Theorem E after taking cohomology groups, it suffices to use work of Xue \cite{Xue20b} instead of the forthcoming work of Eteve--Gaitsgory--Genestier--Lafforgue. For example, this is the case for the next remark.
\item One can use Theorem E to recover all of the representation-theoretic results in \cite{LH23}; see Remark \ref{rems:gl_fs}.2).
\end{enumerate}
\end{rems*}
In light of Theorem E, it is natural to ask how to interpret $\loc_{Z,!}\!\La$ on the Galois side of the Langlands correspondence. Write $\LS_{\wh{G},F_z}$ for the moduli of continuous $\wh{G}$-valued representations of $W_{F_z}$ over $\La$ \cite{DHKM25, FS21, Zhu20}, which is an algebraic stack of finite type over $\La$. When the order of $\pi_0(Z(G)_{\ov{F}})$ is invertible in $\La$, Fargues--Scholze \cite[Conjecture X.3.5]{FS21} conjecture that there is a canonical equivalence of categories
\begin{align*}
\bL_{\psi_z}:D(\Bun_{G,F_z},\La)\ra^\sim\Ind D_{\coh}^{\qc}(\LS_{\wh{G},F_z})_{\Nilp},
\end{align*}
where $\Ind D_{\coh}^{\qc}(\LS_{\wh{G},F_z})_{\Nilp}$ is the derived category of \emph{ind-coherent sheaves} on $\LS_{\wh{G},F_z}$ with \emph{nilpotent singular support}, and $\bL_{\psi_z}$ depends on the choice of a maximal unipotent subgroup $N$ of $G$ along with a generic continuous character $\psi_z:N(F_z)\ra\La^\times$.

The following conjecture was suggested by Scholze. Let $\psi:N(\bA)\ra\La^\times$ be a generic continuous character trivial on $N(F)$, and for all $z$ in $Z$, take $\psi_z=\psi|_{N(F_z)}$.

\begin{conjF}
Assume Fargues--Scholze's conjecture, so that $\bigotimes_{z\in Z}\bL_{\psi_z}$ yields 
\begin{align*}
\bigotimes_{z\in Z}D(\Bun_{G,F_z},\La)\ra^\sim\bigotimes_{z\in Z}\Ind D_{\coh}^{\qc}(\LS_{\wh{G},F_z})_{\Nilp} = \Ind D^{\qc}_{\coh}\,\big(\!\prod_{z\in Z}\LS_{\wh{G},F_z}\big)_{\Nilp}.
\end{align*}
Under this equivalence, $\loc_{Z,!}\!\La$ corresponds to $\lres_{Z,*}(\om_{\LS_{\wh{G},U}})$, where $\LS_{\wh{G},U}$ denotes the moduli of continuous $\wh{G}$-valued representations of $W_U$ over $\La$ \cite{Zhu20}, and
\begin{align*}
\lres_Z:\LS_{\wh{G},U}\ra\prod_{z\in Z}\LS_{\wh{G},F_z}
\end{align*}
denotes the restriction map.
\end{conjF}
\begin{rems*}\hfill
\begin{enumerate}[1)]
\item For $U=C$ and $\La=\ov\bQ_\ell$, Conjecture F specializes to asking for a natural isomorphism $C_c(G(F)\bs G(\bA)/G(\bO),\ov\bQ_\ell)\cong\Ga(\LS_{\wh{G},C},\om_{\LS_{\wh{G},C}})$; see Example \ref{exmp:AGKRRV}. Previously, Arinkin--Gaitsgory--Kazhdan--Raskin--Rozenblyum--Varshavsky \cite{AGKRRV20a} conjectured this, and recently Gaitsgory--Raskin \cite{GR25} proved a weak version thereof.

More generally, we use Theorem E to prove that Conjecture F implies a conjecture of Zhu \cite[Conjecture 4.49]{Zhu20} after restricting to $\prod_{z\in Z}W_{F_z}^{I_z}$; see Proposition \ref{ss:spectralcohomologyformula}.

\item Given the previous remark, we regard Conjecture F as \emph{the} Langlands conjecture for $G$ over $F$ when ramification is allowed at places in $Z$. This fulfills a suggestion of Fargues--Scholze \cite[\S7]{FS25} that the global Langlands conjecture ought to study an object that encodes automorphic forms on $G_b$ for all $b$ in $B(F,G)$.

  In the number field setting, Fargues \cite[section 7]{Far25} has already emphasized the importance of studying the analogous object $\ov{\pi}_{\HT,!}\La$, though there are more restrictions on the $b$ in $B(\bQ,G)$ that appear. A more detailed study of $\ov\pi_{\HT,!}\La$ will appear in forthcoming work of Caraiani--Hamann--Zhang.
  
\item Conjecture F fits well with the interpretation of Langlands as an isomorphism of \emph{$4$-dimensional topological quantum field theories}. In particular, it fits well with the recent interpretation of relative Langlands as an identification of \emph{boundary theories} \cite{BZSV24}; we will investigate this in future work.

\item For a version of $\Bun_{G,F}$ adapted to the \emph{global Kaletha set} \cite{Dillery23b} instead of $B(F,G)$, see Appendix C, written by Peter Dillery. This builds on work of Fargues \cite{Fargues} that adapts $\Bun_{G,F_z}$ to the \emph{local Kaletha set} \cite{Dil23a}.
\end{enumerate}
\end{rems*}
We conclude by proving Conjecture F in the commutative case:
\begin{thmG}
When $G$ is a (not necessarily split) torus, Conjecture F holds.
\end{thmG}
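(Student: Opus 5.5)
Throughout, $G=T$ is a torus over $F$. We use the local categorical Langlands for tori, $\bL_{\psi_z}$, which is known (Fargues--Scholze, Zou). The plan is to compute both sides of Conjecture F explicitly and then match them using global class field theory.

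\emph{Automorphic side.} For a torus every element of $B(F,T)$ is basic. So Theorem B should give $\Bun_{T,F}=\Bun_{T,F}^{\semis}\cong\coprod_{b\in B(F,T)}*/\ul{T(F)}$, since $T_b=T$. Likewise $\Bun_{T,F_z}\cong\coprod_{b_z\in B(F_z,T)}*/\ul{T(F_z)}$, with $B(F_z,T)\cong X_*(T)_{\Ga_z}$.

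Under these identifications, $\loc_Z$ is the coproduct over $b$ of the maps $*/\ul{T(F)}\ra *_{b_Z}/\prod_z\ul{T(F_z)}$. Here we use the description of $\Bun_{T,U}$ (via Lemma \ref{ss:BunGpullback}) as $\coprod_b */\ul{T(\cO_{U})}$-type groups, after taking units away from $Z$. Consequently $\loc_{Z,!}\La$ is the compactly induced representation $\bigoplus_b c\text{-}\mathrm{Ind}_{T(\cO_U)}^{T(F_Z)}\La$ (with appropriate twists), placed on the components $b_Z$. Equivalently, it is the sheaf given by $\La$-valued compactly supported functions on $T(F)\bs T(\bA)/\prod_{v\notin Z}T(\cO_v)$, graded by Kottwitz invariants. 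I would check this carefully, including the Kottwitz map $B(F,T)\ra\bigoplus_z B(F_z,T)$ and its fibers, using global Tate--Nakayama (Kottwitz \cite{Kot14}).

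\emph{Spectral side.} For a torus, $\LS_{\wh T,F_z}$ and $\LS_{\wh T,U}$ are commutative group stacks. Each is the product of $B\wh T^{\Ga}$-type classifying factors with the character-variety part of $H^1$. Moreover $\Ind D^{\qc}_{\coh}(\LS_{\wh T,F_z})_{\Nilp}=\IndCoh(\LS_{\wh T,F_z})$, because nilpotent singular support is automatic for tori. Local Langlands $\bL_{\psi_z}$ is then a Fourier--Mukai/Mellin transform identifying $D(\Bun_{T,F_z},\La)\cong\bigoplus_{b_z}D(\mathrm{Rep}\,T(F_z))$ with $\IndCoh$ on the dual stack. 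Under this transform, characters of $T(F_z)$ correspond to points via local class field theory, and $B(F_z,T)=X^*(\wh T)_{\Ga_z}$ corresponds to the characters of the $B\wh T^{\Ga_z}$ factor. The Whittaker datum is irrelevant since $N$ is trivial.

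\emph{Matching.} Mellin duality sends pushforward along a homomorphism of group stacks to pullback, or $!$-pushforward, along the dual homomorphism, up to shifts/duality. Applied to $\loc_Z$ and $\lres_Z$, this gives the statement. Concretely, I would argue as follows:
\begin{enumerate}[1)]
\item Global class field theory (Artin reciprocity for $T$, i.e.\ Langlands' correspondence for tori) identifies $T(F)\bs T(\bA)/\prod_{v\notin Z}T(\cO_v)$ with the character group dual to the group of Weil parameters $W_U\ra{}^L T$. Poitou--Tate/Kottwitz duality identifies $B(F,T)$, together with its localization map, with the dual of $\pi_0Z(\wh T)^{\Ga}$-type data on $\LS_{\wh T,U}$.
\item $\lres_Z$ is then the Cartier/Mellin dual of $\loc_Z$.
\item $\om_{\LS_{\wh T,U}}$ corresponds to the constant sheaf, since the relevant duality exchanges $\cO$ and $\om$ up to the dualizing complex.
\end{enumerate}
Theorem A gives dualizing complex $\La$ on $\Bun_{T,U}$, which matches the shift. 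This step, after step 1, is formal.

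I expect the main obstacle to be the derived/integral structure of $\LS_{\wh T,U}$ when $\La=\ov\bZ_\ell$ or $\ov\bF_\ell$. That stack is derived (via $H^2(W_U,-)$) and may be non-reduced at $\ell$-torsion. So one must check that Mellin duality correctly matches $\om$ with the full cohomology of the global unit and class groups, including torsion, and not merely on reduced loci. I would first handle the split case with $\La=\ov\bQ_\ell$, then reduce general $T$ via a flasque/$z$-extension resolution $1\ra T\ra R\ra Q\ra1$ by induced tori, where Shapiro's lemma reduces to $\mathbb{G}_m$ over finite extensions of $F$.
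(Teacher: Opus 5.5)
The automorphic side of your proposal is essentially the paper's, but the spectral side has a genuine gap, and the ``formal'' Mellin step is misstated.

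\textbf{Automorphic side.} Since every element of $B(F,T)$ is basic, Theorem \ref{ss:basiclocus} and Lemma \ref{ss:BunGpullback} make the $b_\bullet$-factor of $\loc_{Z,!}\La$ equal to $\bigoplus_{b'}C_c^\infty(T(F)\bs T(\bA)/T(\bO_U),\La)$. By \cite[Proposition 15.6]{Kot14} the set of $b'$ is nonempty exactly when $(\ka(b_z))_z$ lies in $\ker(\bigoplus_zX_*(T)_{W_{F_z}}\to X_*(T)_{W_F})$, and by \cite[p.~83]{Kot14} it is then a $\ker^1(F,T)$-torsor. Your first formula, a sum over $b$ of compact inductions from the global units, forgets the $T(F_Z)$-orbits indexed by $T(F)\bs T(\bA)/T(\bO_U)T(F_Z)$; your second description is the correct one.

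\textbf{The gap: the structure of $\LS_{\prescript{L}{}T,U}$.} You never determine $\LS_{\prescript{L}{}T,U}$ as a derived stack over $\La$. You flag its derived and torsion structure as the main obstacle and leave it open. Yet $\om\cong\sO$, the splitting by characters of the band, and the value of $\lres_{Z,*}\om$ all depend on it. Your step 1 uses Langlands' transfer isomorphism $H_1(W_{\wt{F}/F},X_*(T))\cong\bT$ only on points. The missing idea is Theorem \ref{ss:LStorus}, which runs as follows.
\begin{enumerate}
\item Every continuous cocycle $W_U\to\wh{T}(A)$ factors through $W_{\wt{F}/F}/\bO_{\wt{U}}^\times K$ for some $\Gal(\wt{F}/F)$-stable pro-$p$ open $K\subseteq\wt{F}_{\wt{Z}}^\times$. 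So the classical truncation is a union of clopen finite type pieces $\LS_{\prescript{L}{}T,K}$.
\item By the transfer isomorphism (with $\Te(P)$ the image of the norm) and fpqc sheafification, each piece is a gerbe banded by $\wh{T}^{\Gal(\wt{F}/F)}$ over the Cartier dual of $\bT/\bO_U^\times\Nm_{\wt{F}/F}(K\otimes_\bZ X_*(T))$. This is a finitely generated group of rank $\dim\wh{T}^{\Gal(\wt{F}/F)}$, so each piece has relative dimension $0$.
\item The virtual dimension is also $0$, so \cite[Proposition B.0.1]{Han23} forces $\LS_{\prescript{L}{}T,U}$ to be classical.
\end{enumerate}

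Your fallback does not replace this. Nothing in it passes from $\ov\bQ_\ell$ to $\ov\bZ_\ell$ or $\ov\bF_\ell$, which is exactly where derived or torsion phenomena would appear. Moreover, Conjecture F is not evidently compatible with a resolution $1\to T\to R\to Q\to1$. The base change of $\loc^R_Z$ along $\prod_z\Bun_{T,F_z}\to\prod_z\Bun_{R,F_z}$ only trivializes the $Q$-bundle at the places of $Z$, not on $U$, so it is not $\Bun_{T,U}$. You would also need compatibility of the $\bL_{\psi_z}$ and of $\lres_Z$ with maps of tori.

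\textbf{The matching step.} Cartier/Mellin duality is contravariant, so the dual of $\loc_Z$ is a map $\widehat{\loc_Z}$ out of $\prod_z\LS_{\prescript{L}{}T,F_z}$, whereas $\lres_Z$ points the same way as $\loc_Z$. ``Pushforward becomes pullback along the dual'' turns $\loc_{Z,!}\La$ into the pullback of a skyscraper along $\widehat{\loc_Z}$. That equals $\lres_{Z,*}\sO$ only once $\LS_{\prescript{L}{}T,U}$ is identified with the fiber of $\widehat{\loc_Z}$, equivalently with the dual of the cofiber of $\loc_Z$ (whose neutral part is roughly $*/\ul{\bT/\bO_U^\times}$). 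This identification is a global duality statement over $\La$, not a formality.

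Making it precise is exactly the structure theorem above together with the integral Mellin equivalences the paper imports from \cite{Zou24}:
\begin{enumerate}
\item the decomposition of $D^{\qc}_{\perf}$ of such a gerbe by characters $\chi\in X_*(T)_{W_F}$ of the band. The stack is a gerbe, not the product with $B\wh{T}^{\Ga}$ you assert. The twisted sectors are matched with the neutral one by line bundles, since every character of the band extends to $\wh{T}$;
\item the equivalence $D^{\qc}_{\perf}(\ul{\Hom}(M,\bG_m))\simeq D(*/\ul{M},\La)^\om$;
\item the local analogues of both.
\end{enumerate}
After these, one checks factor by factor that $\lres_{Z,*}\om$ is $C_c^\infty(\bT/\bO_U^\times,\La)$ on the $\chi_\bullet$ in $\ker(\bigoplus_zX_*(T)_{W_{F_z}}\to X_*(T)_{W_F})$ and $0$ elsewhere. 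This matches the automorphic side via $1\to T(F)\bs T(\bA)\to\bT\to\ker^1(F,T)\to1$.

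Finally, $\om\cong\sO$ comes from the gerbe structure (commutative band, relative dimension $0$), not from Theorem A. Its dualizing complex plays no role, since no duality enters $\loc_{Z,!}\La$.
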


\subsection*{Proof sketches}
We start with part of Theorem A. We prove that $\Bun_{G,U}$ and $\Bun_{G,F}$ are small v-stacks by locally choosing a Frobenius-splitting on $U$ and analyzing affinoid charts of $U^{\an}_S$, which reduces the problem to a v-descent result of Scholze--Weinstein \cite[Lemma 17.1.8]{SW20} for vector bundles on perfectoid spaces.

Next, we turn to Theorem D. The $S$-points of $(\Sht^I_{G,V})^\Diamond$ involve $G$-bundles on the algebraic curve $C_R$, while one can glue in the analytic topology to show that $S$-points of the fiber product in Theorem D involve $G$-bundles on the analytification $C^{\an}_S$. To reconcile this, we prove the following GAGA result in Appendix \ref{s:GAGA}:
\begin{thmH}
Let $S=\Spa(R,R^+)$ be a sousperfectoid space, and let $X$ be a smooth proper scheme over $R$. Then the analytification functor
\begin{align*}
\{\mbox{vector bundles on X}\}\ra\{\mbox{vector bundles on }X^{\an}_S\}
\end{align*}
is an equivalence of categories.
\end{thmH}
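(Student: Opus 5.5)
We would follow the classical strategy of Serre's GAGA, adapted to the sousperfectoid setting: first prove GAGA for coherent cohomology of (twists of) line bundles on projective space, then deduce full faithfulness for vector bundles on $X$, and finally essential surjectivity by a separate argument. Since $X$ is smooth proper over $R$ but not assumed projective, we reduce where possible to the projective case, or else work locally on $S$ and use that vector bundles are coherent.

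\textbf{Step 1: cohomology of $\bP^n$.} Let $X=\bP^n_R$. We compute $H^i(\bP^{n,\an}_S,\cO(d))$ via the \v{C}ech complex for the standard affinoid cover, or more precisely for the cover by closed polydisks $\{|x_j|\ge|x_k|\ \forall k\}$. Each term is a Tate algebra $R\langle T_1,\dots,T_n\rangle$ (with inverted variables on overlaps), and sousperfectoidness of $R$ guarantees these are sheafy and that the structure presheaf has vanishing higher \v{C}ech cohomology on rational covers. The resulting complex is the completion of the algebraic \v{C}ech complex of $\cO(d)$. Because the algebraic cohomology is computed by a direct-sum decomposition into monomials, the completed complex splits into a finite-rank algebraic part and a null-homotopic ``tail''. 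Hence $H^i(\bP^{n}_R,\cO(d))\to H^i(\bP^{n,\an}_S,\cO(d))$ is an isomorphism for all $i,d$.

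\textbf{Step 2: full faithfulness.} For vector bundles $\sE,\sE'$ on $X$ we need $\Hom(\sE,\sE')\cong\Hom(\sE^{\an},\sE'^{\an})$, i.e.\ $H^0(X,\sH)\cong H^0(X^{\an}_S,\sH^{\an})$ for $\sH=\sheafhom(\sE,\sE')$. Working locally on $\Spec R$ (the claim is Zariski-local on $R$, and analytification is compatible with rational localization), we use Chow's lemma to pick a projective $X'\to X$ that is birational and surjective. Better, if $X$ is projective, $\sH$ has a resolution by sums of $\cO(d)$'s, and by five-lemma descending induction on $i$ (as in Serre) Step 1 yields $H^i$-comparison for all coherent sheaves that are finitely presented and flat over $R$. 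The flatness needed for base change holds because $X$ is smooth and $\sH$ is locally free. For general proper $X$, we apply the same argument with Chow's lemma and induction on dimension, as in Grothendieck's proof of GAGA (SGA1 XII). Here we must be careful that coherent sheaves on $X^{\an}_S$ behave well over a sousperfectoid base; to avoid non-noetherian issues, we only ever analytify sheaves that are finitely presented and $R$-flat.

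\textbf{Step 3: essential surjectivity.} We expect this to be the main obstacle, since over a non-noetherian base one cannot freely use coherent subsheaves and Serre's theorem A on $X^{\an}_S$. Our first approach is to approximate. Write $R$ locally as a completed filtered colimit and reduce via v-descent (vector bundles on sousperfectoid spaces satisfy analytic descent) to the case where $S$ is a affinoid perfectoid or a small neighborhood of a point. For $X=\bP^n$ we then prove an analytic Serre vanishing/generation statement: for $\sV$ a vector bundle on $\bP^{n,\an}_S$, $\sV(d)$ is globally generated with $H^1=0$ for $d\gg0$, with $d$ uniform locally on $S$. We would prove this by induction on $n$ via the hyperplane sequence, together with a compactness argument on $|S|$ in the style of Kiehl's proof of properness, which gives finiteness of the cohomology modules as perfect complexes over $R$. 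Then $\sV$ is a cokernel of a map $\cO(-d_1)^{a}\to\cO(-d_2)^{b}$ of analytified bundles, and this map is algebraic by Step 2. Thus $\sV$ is the analytification of a finitely presented sheaf, which is locally free by a fiberwise rank and flatness check. For general smooth proper $X$, we reduce to the projective case locally on $S$ using Chow's lemma and descent along $X'\to X$, or we avoid this by using that smooth proper curves and the cases relevant in this paper are projective.
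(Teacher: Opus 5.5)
There is a genuine gap. A Serre/Grothendieck-style argument needs an abelian category of coherent sheaves on $X^{\an}_S$ and an exact analytification functor, and over a non-noetherian sousperfectoid $R$ you have neither.

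This is fatal for non-projective $X$, which the statement allows. Smooth proper schemes need not be projective even over a field, and the paper's Heuer--Xu application uses arbitrary smooth proper $X$, so your fallback to projective $X$ proves a strictly weaker theorem. The SGA1 XII d\'evissage via Chow's lemma uses sheaves such as the kernel and cokernel of $\sF\ra g_*g^*\sF$ for $g\colon X'\ra X$, and sheaves supported on the exceptional locus. These are neither $R$-flat nor, over a non-coherent ring, finitely presented, so your own restriction to finitely presented $R$-flat sheaves excludes them. ``Induction on dimension'' also has no evident meaning over such a base. Worse, $X'$ is not smooth, so there is no guarantee that $X'^{\an}_S$ exists as an adic space at all. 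Over a sousperfectoid base, existence of $X^{\an}_S$ is only available for smooth $X$, since sheafiness of quotients $R\ang{T_1,\dotsc,T_n}/I$ is not automatic. The same problem affects $X'\times_XX'$, so ``descent along $X'\ra X$'' along a non-flat map cannot even be set up, let alone shown effective.

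Even for projective $X$, the decisive steps are asserted rather than proved.
\begin{enumerate}
\item Serre's descending induction in Step 2 and the presentation $\cO(-d_1)^a\ra\cO(-d_2)^b\ra\sV\ra0$ in Step 3 both analytify sheaves that are not locally free: the syzygy sheaves of the resolution on $\bP^n_R$, and the algebraic cokernel $\sF$. This requires $\mathrm{Tor}$-vanishing for $R[T_1,\dotsc,T_n]\ra R\ang{T_1,\dotsc,T_n}$ on such modules, and Tate acyclicity for the associated presheaves on rational subsets. Neither is standard for non-noetherian $R$, and you do not address them.
\item The uniform Serre vanishing and generation on $\bP^{n,\an}_S$ ``by a Kiehl-style argument'' is a substantial relative finiteness theorem over a non-noetherian base that you only name.
\item Reducing to perfectoid $S$ would need v-descent for vector bundles on the non-perfectoid spaces $X^{\an}_S$, which analytic descent does not give.
\item The closing ``fiberwise rank and flatness check'' is circular. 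The fibral criterion needs $\sF$ to be $R$-flat. The natural way to get this is to compare $H^0(\sF(d))$ with $H^0(\sV(d))$, but your Step 2 provides that comparison only for sheaves already known to be $R$-flat.
\end{enumerate}

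The paper avoids all of this by working derivedly with solid quasi-coherent sheaves.
\begin{enumerate}
\item For any proper $X$, it shows via the valuative criterion that $X^{\an}\ra X^{\alg}$ is an isomorphism of analytic stacks. So GAGA holds for all solid quasi-coherent complexes, with derived tensor products built in and no $\mathrm{Tor}$ issues.
\item It proves a Grauert-type statement: $\pi_*=\pi_!$ preserves pseudocoherence, using that $\pi^!$ commutes with uniformly bounded below direct sums.
\item It deduces $D_{\pc}(X)\simeq D_{\pc}(X^{\an})$, after blowing up so that the complement of an affine open is a Cartier divisor.
\item It recovers vector bundles on both sides as the perfect objects $C$ for which $-\otimes C$ is $t$-exact.
\end{enumerate}
No projectivity is used, and local freeness of the algebraization follows from the last characterization.
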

\begin{rem*}
While we were finalizing this paper, Theorem H was proved independently by Wang \cite[Theorem 4.3.1]{Wan26} along very similar lines; evidently, it is of independent interest. For example, as an application we prove that algebraic stacks of Higgs bundles agree with their analytification, answering a question of Heuer--Xu \cite[Remark 8.1.2]{HX24}; see Theorem \ref{ss:HX}. This was also proved independently by Wang.
\end{rem*}
Theorem H provides a lot of control. In addition to proving Theorem D, we use it to prove Theorem C as follows. An $\ov\bF_q$-point of $\Bun_{G,U}$ involves a $G$-bundle $\sG$ on $U^{\an}_{\Spa\ov\bF_q\lp{t^{1/q^\infty}}}$ equipped with a descent datum with respect to the diagram
\begin{align*}
&U_{\Spa{\ov\bF_q\lp{t^{1/q^\infty}}}\times_{\ov\bF_q}\Spa{\ov\bF_q\lp{t^{1/q^\infty}}}\times_{\ov\bF_q}\Spa{\ov\bF_q\lp{t^{1/q^\infty}}}}^{\an}\\
    &\rrra U_{\Spa{\ov\bF_q\lp{t^{1/q^\infty}}}\times_{\ov\bF_q}\Spa{\ov\bF_q\lp{t^{1/q^\infty}}}}^{\an}\rightrightarrows U_{\Spa{\ov\bF_q\lp{t^{1/q^\infty}}}}^{\an}.
\end{align*}
A result of Gleason--Ivanov--Zillinger \cite[Theorem 3.15]{GIZ25} implies that the restriction of $\sG$ along $\Spa F_z\ra U$ arises uniquely from a $G$-bundle on $\breve{F}_z$. The latter are all trivial, so we can glue in the analytic topology to assume that $U=C$. Finally, we can apply Theorem H and a result of D. Kim \cite[Theorem 4.9]{Kim24} to conclude that $\sG$ arises uniquely from a $G$-bundle on $C_{\ov\bF_q}$.

We now turn to Theorem B. By proving a description of pro-\'etale $\ul{\sO_U}$-local systems analogous to Kedlaya--Liu's description \cite[Theorem 8.5.12]{KL15} of pro-\'etale $\ul{\bQ_p}$-local systems, we show that $\Bun_{G,F}$ has a natural open substack isomorphic to $*/\ul{G(F)}$. From here, twisting and Theorem C let us show that $\Bun_{G,F}$ has an open substack isomorphic to the disjoint union in Theorem B. To prove that this is all of $\Bun^{\semis}_{G,F}$, we use Beauville--Laszlo uniformization \cite[Proposition III.3.1]{FS21} to lift geometric points from $\Bun^{\semis}_{G,F_z}$ to $\LocSht^{I_z}_{G,V_z}$ for some $V_z$. Combining this with Theorem D yields geometric points of $\Sht^I_{G,V}$, and we conclude using a standard nonemptiness criterion for $\Sht^I_{G,V}$. Similarly, we finish proving Theorem A by using the charts provided by Beauville--Laszlo uniformization and Theorem D.

In an ideal world, Theorem E would follow immediately from Theorem D. For $\La=\ov\bF_\ell$, one can successfully execute this using the \'etale sheaf theory of \cite{Sch17}, but for general $\La$ one has to compare the motivic sheaf theory of \cite{Sch24} with classical \'etale $\ell$-adic sheaf theories for algebraic varieties. We develop some tools for facilitating these comparisons in Appendix \ref{s:berkovichmotives}.

Finally, we prove Theorem G by using work of Langlands \cite{Lan97} on his conjectures for tori $T$ to prove an explicit description of $\LS_{\prescript{L}{}T,U}$.

\subsection*{Outline}
In \S\ref{s:vectorbundles}, we study vector bundles on the global Hartl--Pink curve. In \S\ref{s:Gbundles}, we use the Tannakian description of $G$-bundles to convert results from \S\ref{s:vectorbundles} into part of Theorem A, part of Theorem B, and Theorem C. In \S\ref{s:shtukas}, we introduce shtukas and prove Theorem D. In \S\ref{s:geometric}, we use Theorem D to finish proving Theorem A and Theorem B, as well as explain the relation with Igusa varieties. In \S\ref{s:sheaf}, we state a forthcoming result of Eteve--Gaitsgory--Genestier--Lafforgue and prove Theorem E. Finally, in \S\ref{s:langlands} we discuss Conjecture F and prove Theorem G.

In Appendix \ref{s:GAGA}, we prove Theorem H, which is used throughout the paper. In Appendix \ref{s:berkovichmotives}, we prove basic results about the (overconvergent) motivic sheaves of \cite{Sch24, Sch25}, which we use in \S\ref{s:sheaf} and \S\ref{s:langlands}. Finally, in Appendix \ref{s:dillery}, written by Peter Dillery, we consider a version of $\Bun_{G,F}$ adapted to the global Kaletha set.

\subsection*{Notation}
Except for in Appendix \ref{s:GAGA}, all rings and stacks are classical (i.e. not derived) unless otherwise specified.

By nonarchimedean field, we mean a topological field complete with respect to a rank-$1$ nonarchimedean valuation. For any ring homomorphism $A\ra B$, write $A^\sim$ for the integral closure of the image of $A$ in $B$. For any scheme $X$ and affine group $G$ over $X$, write $\Rep_X{G}$ for the category of representations of $G$ in vector bundles on $X$.

We view schemes as adic spaces via the fully faithful functor sending $\Spec{A}$ to $\Spa(A,\bZ^\sim)$ for all rings $A$, where $A$ is endowed with the discrete topology \cite[p.~64]{Sch20}. For any adic space $X$ over $\Spa\bZ_p$, write $X^\Diamond$ for the associated small v-sheaf over $\bF_p$ as in \cite[Lemma 18.1.1]{SW20}.

Unless otherwise specified, all products are over $\bF_q$. For any prestack $X$ on
\begin{align*}
\{\mbox{affine schemes over }\bF_q\},
\end{align*}
write $\Frob_X:X\ra X$ for its absolute $q$-Frobenius endomorphism, and write $X^{\perf}$ for its limit perfection. The transition morphisms for our ind-schemes are required to be closed embeddings. Write $\Perf$ for the category of affinoid perfectoid spaces over $\bF_p$, and for any small v-stack $X$ over $\bF_q$, write $\Frob_X:X\ra X$ for its absolute $q$-Frobenius endomorphism. 

We view derived categories as stable $\infty$-categories, and we view all functors between derived categories as derived functors. Write $\Sym$ for the $\infty$-category of presentably closed symmetric monoidal stable $\infty$-categories.

For any ring $A$, write $D(A)$ for the derived category of $A$-modules, and write $D_{\perf}(A)\subseteq D(A)$ for the full subcategory of perfect objects. For any derived algebraic stack $X$, write $D_{\qcoh}(X)$ for its derived category of quasicoherent sheaves, and write $D_{\perf}(X)\subseteq D(X)$ for the full subcategory of perfect objects. Write $X^{\cl}$ for the underlying classical algebraic stack of $X$.

For any topological space $X$, write $X$ for the associated condensed set as in \cite[Proposition 1.7]{Sch19}.

\subsection*{Acknowledgements}
We thank Peter Scholze for many helpful suggestions about this project; for example, he suggested Conjecture F, as well as the proof strategy of Theorem H. We also thank Peter Dillery for suggesting that a global analogue of (\ref{eq:FFcurve}) should exist, for his collaboration in an early stage of this project, and for providing Appendix C. Our intellectual debt to Laurent Fargues and Xinwen Zhu should be clear. Finally, we thank Arnaud Eteve, Ian Gleason, Linus Hamann, Pol van Hoften, Alexander Petrov, Juan Esteban Rodr\'iguez Camargo, Ziquan Yang, and Zhiwei Yun for useful discussions.

During the completion of this work, the author was partially supported by NSF Grant \#DMS2303195 and the Max Planck Institute for Mathematics. We thank MPIM for its hospitality and excellent working conditions.

\section{Vector bundles on the global Hartl--Pink curve}\label{s:vectorbundles}
In this section, we introduce and study vector bundles on the global Hartl--Pink curve. They are defined using analytifications, so we start by discussing the latter in terms of adic spaces. For example, we prove v-descent for vector bundles on analytifications. This entire discussion applies to any smooth scheme over $\bF_q$.

We then specialize to smooth curves $U$ over $\bF_q$, which are the main case of interest in this paper. We prove a description of pro-\'etale $\ul{\sO_U}$-local systems in terms of vector bundles on the global Hartl--Pink curve, analogous to Kedlaya--Liu's description of pro-\'etale $\ul{\bQ_p}$-local systems. Finally, we prove a vector bundle version of Theorem C.

\subsection{}\label{ss:Huberanalytification}
Analytification has the following interpretation in the category of adic spaces.
\begin{defn*}
  Let $X$ be a scheme. Let $S$ be an adic space over $X$, and let $Y$ be a scheme over $X$. Write $Y^{\an}_S$, if it exists, for the fiber product $Y\times_XS$ in the category of adic spaces.
\end{defn*}
By replacing $X$ and $Y$ with affine open covers, the $\Spec$-global sections adjunction implies that $Y^{\an}_S$ equals the ``fiber product'' $Y\times_XS$ in the sense of \cite[(3.8)]{Hub94}.

\subsection{}\label{ss:nosheafification}
We will use the following generalization of \cite[Lemma 4.4]{LH23}, which describes the functor of points of analytifications without needing to sheafify in the analytic topology. Let $D$ be a noetherian ring. Let $S$ be an adic space over $D$, and let $Y$ be a quasiprojective scheme over $D$.
\begin{lem*}
For all adic spaces $\Spa(A,A^+)$ over $S$, morphisms $\Spa(A,A^+)\ra Y^{\an}_S$ over $S$ are equivalent to morphisms $\Spec{A}\ra Y$ over $D$.
\end{lem*}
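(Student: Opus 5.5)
Since $Y^{\an}_S=Y\times_{\Spec D}S$ in adic spaces, a morphism $\Spa(A,A^+)\ra Y^{\an}_S$ over $S$ is the same as a morphism of adic spaces $\Spa(A,A^+)\ra Y$ over $\Spec D$, where $Y$ is regarded as an adic space. The statement therefore reduces to the following claim: for quasiprojective $Y$ over $D$, morphisms of adic spaces $\Spa(A,A^+)\ra Y$ over $D$ correspond bijectively to morphisms of schemes $\Spec A\ra Y$ over $D$. The point is that no sheafification on $\Spa(A,A^+)$ is needed, even though $Y$ is not affine.

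The plan has three steps.

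\emph{Step 1: affine $Y$.} Take $Y=\Spec B$ with $B$ of finite type over $D$. The $\Spec$--global sections adjunction recalled in \ref{ss:Huberanalytification}, or equivalently Huber's description of maps into $\Spa(B,\bZ^\sim)$, gives
\begin{align*}
\Hom(\Spa(A,A^+),\Spa(B,\bZ^\sim))=\Hom(B,\sO(\Spa(A,A^+))).
\end{align*}
The right side consists of ring maps $B\ra A$; continuity is automatic since $B$ is discrete, and the condition on $\bZ^\sim$ is automatic since its image lies in $A^+$. This is the case covered by \cite[Lemma 4.4]{LH23}. If $\Spa(A,A^+)$ is an adic space without being sheafy-affinoid, we use instead that global sections of its structure sheaf equal $A$.

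\emph{Step 2: projective space.} Now consider $Y=\bP^n_D$. A morphism $\Spa(A,A^+)\ra\bP^n$ is a line bundle $\sL$ on $\Spa(A,A^+)$ with $n+1$ generating sections, and on the algebraic side we have the analogous data on $\Spec A$. Two inputs are needed:
\begin{enumerate}[1)]
\item Vector bundles on $\Spa(A,A^+)$ correspond to finite projective $A$-modules. This is Kedlaya--Liu for sheafy $A$, and holds more generally wherever it is available in the generality implicit in the lemma.
\item Sections generate $\sL$ at every point of $\Spa(A,A^+)$ if and only if they generate the corresponding projective module $P$. If they failed to generate $P$, they would all lie in $\fm P$ for some maximal ideal $\fm$ of $A$. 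One then needs a point of $\Spa(A,A^+)$ whose support lies over $\fm$. Such a point exists because every maximal ideal of a Huber ring is closed, hence is the support of some valuation in $\Spa(A,A^+)$.
\end{enumerate}
To avoid relying on Kedlaya--Liu in bad generality, I would argue more hands-on. A map $f:\Spa(A,A^+)\ra\bP^n$ is given by the open cover $f^{-1}(D_+(x_i))$, the rational-type loci where $|x_j/x_i|$ makes sense, together with functions glued by cocycles. The task is to show that the ideal generated by the relevant elements is the unit ideal in $A$, and then glue on $\Spec A$ in the Zariski topology. Concretely, one writes down the locus where the $i$-th coordinate is a unit, which is a Zariski-type open of $\Spa(A,A^+)$.

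\emph{Step 3: quasiprojective $Y$.} Embed $Y$ as a locally closed subscheme $Y\hra\bP^n_D$, say an open $Y\subseteq\ov Y$ with $\ov Y$ closed in $\bP^n_D$. Closed conditions are vanishing of homogeneous equations, which can be checked on $A$-points after Step 2, since a section of a line bundle vanishes if and only if it vanishes locally. Open conditions require that the image avoids $V(J)$. This can be checked on the supports of points of $\Spa(A,A^+)$, and the supports cover all maximal ideals, hence all of $\Spec A$ after the same maximal-ideal argument as in Step 2.

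The main obstacle is the surjectivity of supports onto the maximal ideals of $A$, together with the descent of line bundles and the generation condition from $\Spa(A,A^+)$ to $\Spec A$. I expect this is also exactly where quasiprojectivity is used: it lets us work with a single ample line bundle rather than an arbitrary Zariski cover of $Y$, whose preimage in $\Spa(A,A^+)$ need not be refinable by a Zariski cover of $\Spec A$.
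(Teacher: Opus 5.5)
Your main line is correct and is essentially the paper's proof. You reduce via the universal property to maps into $Y\subseteq\bP^d_D$, encode these as a line bundle on $\Spa(A,A^+)$ with generating sections subject to the closed and open conditions, and transfer to $\Spec A$ using Kedlaya--Liu (the paper cites \cite[Theorem 1.4.2]{Ked19}); your argument that every maximal ideal is the support of a point spells out the generation and open-condition checks that the paper folds into that citation.

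One caveat: your ``hands-on'' alternative in Step 2 cannot replace Kedlaya--Liu. Until you know $\sL\cong\widetilde{P}$ for a projective $A$-module $P$, the loci $V_i=f^{-1}(D_+(x_i))$ are not known to be preimages of Zariski opens of $\Spec A$. Likewise, the cocycle lives in $\sO(V_i\cap V_j)$ rather than in localizations of $A$. So you should keep input 1) rather than that sketch.
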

\begin{proof}
  Since $D$ is noetherian and $Y$ is quasiprojective over $D$, there exist finitely many homogeneous polynomials $f_1,\dotsc,f_l$ and $g_1,\dotsc,g_m$ in $D[T_0,\dotsc,T_d]$ such that $Y$ is the locus in $\bP_D^d$ where $f_a(T_0,\dotsc,T_d)$ vanishes for all $1\leq a\leq l$ and $g_b(T_0,\dotsc,T_d)$ vanishes nowhere for all $1\leq b\leq m$.

By the universal property of $Y^{\an}_S$, morphisms $\Spa(A,A^+)\ra Y^{\an}_S$ over $S$ are equivalent to morphisms $\Spa(A,A^+)\ra Y$ over $D$. The $\Spec$-global sections adjunction implies that the latter are equivalent to the data of a line bundle $\sL$ on $\Spa(A,A^+)$ equipped with sections $s_0,\dotsc,s_d$ generating $\sL$ such that $f_a(s_0,\dotsc,s_d)$ vanishes for all $1\leq a\leq l$ and $g_b(s_0,\dotsc,s_d)$ vanishes nowhere for all $1\leq b\leq m$. Finally, \cite[Theorem 1.4.2]{Ked19} shows these are equivalent to morphisms $\Spec{A}\ra Y$ over $D$.
\end{proof}

\subsection{}\label{ss:sheafytensorproduct}
The following lemma lets us compute the global sections of analytifications.
\begin{lem*}
  Let $Y=\Spa(A,A^+)$ be a smooth affinoid adic space over a nonarchimedean field $K$, let $\Spa(R,R^+)$ be an affinoid sousperfectoid adic space over $\Spa{K}$, and write $(A\wh\otimes_KR,(A^+\wh\otimes_{K^\circ}R^+)^\sim)$ for the base change of $(K,K^\circ)\ra(A,A^+)$ to $(R,R^+)$. Then $\Spa(A\wh\otimes_KR,(A^+\wh\otimes_{K^\circ}R^+)^\sim)$ is a sousperfectoid adic space.
\end{lem*}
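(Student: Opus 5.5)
Since $Y$ is smooth over $K$, it is locally étale over a closed polydisk, so I would reduce to the case of the closed unit polydisk and then use stability of sousperfectoid spaces under étale maps and rational localizations.

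\emph{Localize.} Sousperfectoidness is local on $\Spa(A\wh\otimes_KR,\dots)$: by definition, a sousperfectoid adic space is one that is locally $\Spa$ of a sousperfectoid Huffman pair. So we may cover $Y$ by finitely many rational subsets $Y_i=\Spa(A_i,A_i^+)$. The base change of each rational localization $Y_i\subseteq Y$ is a rational localization of $\Spa(A\wh\otimes_KR,\dots)$. Since all these base changes are taken in the category of Huber pairs, the covering is preserved. Hence we may replace $Y$ by any $Y_i$.

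\emph{Use the local structure of smooth spaces.} By Huber's local structure theorem for smooth morphisms \cite[Corollary 1.6.10]{Hub96}, we may choose the $Y_i$ so that each admits an étale morphism $Y_i\ra\bB^n_K=\Spa(K\langle T_1,\dotsc,T_n\rangle,K^\circ\langle T_1,\dotsc,T_n\rangle)$. Locally, such a map is a composition of a rational open embedding and a finite étale map.

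\emph{Treat the polydisk.} The base change of the polydisk is $\Spa(R\langle T_1,\dotsc,T_n\rangle,R^+\langle T_1,\dotsc,T_n\rangle)$. This is sousperfectoid by \cite[Proposition 6.3.3]{SW20}: if $R\ra\wt R$ is a split injection to a perfectoid ring, then $R\langle T\rangle\ra\wt R\langle T^{1/p^\infty}\rangle$ is a split injection of topological $R\langle T\rangle$-modules, with splitting given by projecting onto the integral exponents and then applying the splitting for $R$.

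\emph{Transport along étale maps.} Next, base change each $Y_i\ra\bB^n_K$ along $\Spa(R,R^+)\ra\Spa K$. The result is étale over the sousperfectoid space $\bB^n_{(R,R^+)}$, so it is sousperfectoid by \cite[Proposition 7.3.3]{SW20}. Concretely, sousperfectoid pairs are stable under rational localization and under finite étale extension, since the splitting $B\ra B\otimes_A\wt A$ persists through the trace pairing. One must check that the completed tensor product for $A_i$ agrees with the base change of the finite étale $\bB^n$-algebra. For a finite étale algebra the completion is automatic, because the algebra is finite projective, and rational localizations are compatible with completed tensor products.

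\emph{Main obstacle.} I expect the main difficulty to be the bookkeeping that identifies $(A\wh\otimes_KR,(A^+\wh\otimes_{K^\circ}R^+)^\sim)$ locally with these étale-over-polydisk base changes. In particular, one needs that the completed tensor product commutes with rational localization and finite étale extension, and that the integral closure ${}^\sim$ matches the $+$-rings. The $+$-ring only affects the adic spectrum and not sousperfectoidness, which is a condition on $A\wh\otimes_KR$, so taking integral closures is harmless.
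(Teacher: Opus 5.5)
Your local analysis matches the paper's. You cover $Y$ by rational subsets $Y_i=\Spa(A_i,A_i^+)$ that are rational opens in finite \'etale covers of closed polydisks. Then each $A_i\wh\otimes_KR$ is a rational localization of a finite \'etale $R\langle T_1,\dotsc,T_d\rangle$-algebra, hence sousperfectoid by \cite[Proposition 6.3.3]{SW20}.

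\textbf{The gap.} It is in your first step, where you say sousperfectoidness is local and replace $Y$ by one $Y_i$. That step presupposes that $\Spa(A\wh\otimes_KR,(A^+\wh\otimes_{K^\circ}R^+)^\sim)$ is already an adic space, i.e.\ that this Huber pair is sheafy. That is a genuine part of the assertion, and it is what the lemma is used for: computing global sections, e.g.\ $\Gamma(Y^{\an}_S,\sO_{Y^{\an}_S})=\bigcap_\vpi B_{S,\vpi}$ in Proposition \ref{ss:Frobeniusfixedglobalsections}.

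Sheafiness of the pieces does not give sheafiness of the whole. It only gives the sheaf condition on rational subsets contained in a single $\Spa(A_i\wh\otimes_KR,\dots)$. It says nothing about whether $A\wh\otimes_KR$ is the equalizer of $\prod_iA_i\wh\otimes_KR\rightrightarrows\prod_{i,j}A_{ij}\wh\otimes_KR$, where $A_{ij}$ is the ring of $Y_i\cap Y_j$. It also says nothing about rational covers of rational subsets not contained in any one piece. Sheafifying would produce an adic space, but possibly with the wrong ring of global sections. Your argument also does not show that $A\wh\otimes_KR$ itself is sousperfectoid, which would give sheafiness for free, since the local splittings do not glue to a global one.

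\textbf{The missing ingredient.} You need a gluing criterion, which is the second half of the paper's proof.
\begin{enumerate}
\item The \v{C}ech complex of the base-changed cover $\{\Spa(A_i\wh\otimes_KR,\dots)\}$ is obtained by applying $-\wh\otimes_KR$ to the \v{C}ech complex of $\{Y_i\}$.
\item The latter is exact with cohomology $A$ in degree $0$, because $A$ is sheafy.
\item Exactness of $-\wh\otimes_KR$ over the field $K$ then shows the base-changed \v{C}ech complex has cohomology $A\wh\otimes_KR$ in degree $0$ and $0$ elsewhere.
\item Sousperfectoid rings are strongly sheafy in the sense of \cite[Definition 4.1]{HK25}, so \cite[Corollary 4.5]{HK25} shows the global pair is sheafy.
\end{enumerate}
Only at that point does your local argument finish the proof. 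By comparison, the bookkeeping you flagged as the main obstacle is routine: compatibility of $\wh\otimes$ with rational localization and finite \'etale extension, and matching the $+$-rings.
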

\begin{proof}
  Since $Y$ is smooth over $K$, \cite[(2.2.8)]{Hub96} implies that there is a finite rational open cover $\{Y_i\}_{i=1}^m$ of $Y$ that are rational open subspaces of finite \'etale covers of closed unit polydisks over $K$. For all $1\leq i\leq m$, write $Y_i=\Spa(A_i,A_i^+)$. Then $A_i\wh\otimes_KR$ is a rational localization of a finite \'etale algebra over $R\ang{T_1,\dotsc,T_d}$, so \cite[Proposition 6.3.3]{SW20} shows that $A_i\wh\otimes_KR$ is sousperfectoid and hence strongly sheafy as in \cite[Definition 4.1]{HK25}.

Write $(A_i\wh\otimes_KR,(A_i^+\wh\otimes_{K^\circ}R^+)^\sim)$ for the base change of $(K,K^\circ)\ra(A_i,A^+_i)$ to $(R,R^+)$. Then $\{\Spa(A_i\wh\otimes_KR,(A_i^+\wh\otimes_{K^\circ}R^+)^\sim)\}_{i=1}^m$ is a finite rational open cover of
  \begin{align*}
    \Spa(A\wh\otimes_KR,(A^+\wh\otimes_{K^\circ}R^+)^\sim),
  \end{align*}
and its associated \v{C}ech complex is obtained by applying $-\wh\otimes_KR$ to the \v{C}ech complex associated with $\{Y_i\}_{i=1}^m$. Because $A$ is sheafy, the latter has cohomology $A$ in zeroth degree and $0$ elsewhere. The exactness of $-\wh\otimes_KR$ implies that the \v{C}ech complex associated with $\{\Spa(A_i\wh\otimes_KR,(A_i^+\wh\otimes_{K^\circ}R^+)^\sim)\}_{i=1}^m$ has cohomology $A\wh\otimes_KR$ in zeroth degree and $0$ elsewhere, so the result follows from \cite[Corollary 4.5]{HK25}\footnote{While \cite{HK25} works over $\bQ_p$, the proof of \cite[Corollary 4.5]{HK25} does not use this assumption.}.
\end{proof}

\subsection{}\label{ss:analytification}
We will analytify smooth varieties as follows. Let $Y=\Spec K[T_1,\dotsc,T_d]/I$ be a smooth affine scheme over a nonarchimedean field $K$, and let $S=\Spa(R,R^+)$ be an affinoid sousperfectoid space over $\Spa{K}$. For all pseudouniformizers $\vpi$ of $K$, write $B_{S,\vpi}$ for the topological ring $R\ang{\vpi T_1,\dotsc,\vpi T_d}/I$, and write $B^+_{S,\vpi}$ for the integral closure in $B_{S,\vpi}$ of the image of $R^+\ang{\vpi T_1,\dotsc,\vpi T_d}$.
\begin{prop*}
The pre-adic space $Y_{S,\vpi}\coloneqq\Spa(B_{S,\vpi},B^+_{S,\vpi})$ is a sousperfectoid adic space. Moreover, $Y^{\an}_S$ equals $\varinjlim_\vpi Y_{S,\vpi}$, where $\vpi$ runs over pseudouniformizers of $K$ and the transition morphisms are open embeddings. Consequently, $Y^{\an}_S$ exists and is sousperfectoid.
\end{prop*}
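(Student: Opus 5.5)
We reduce to Lemma \ref{ss:sheafytensorproduct} and then identify the colimit with the Huber fiber product by the usual argument for analytifying affine varieties.

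\emph{Step 1: sousperfectoidness of $Y_{S,\vpi}$.} Consider the affinoid
\begin{align*}
Y_\vpi\coloneqq\Spa(K\ang{\vpi T_1,\dotsc,\vpi T_d}/I,\ (K^\circ\ang{\vpi T_1,\dotsc,\vpi T_d}/I)^\sim).
\end{align*}
This is the intersection of $Y^{\an}_K$ (in the sense of \cite{Hub94}) with the closed polydisk of radius $\abs{\vpi}^{-1}$. It is smooth over $K$, because the Jacobian criterion for $Y$ is preserved under completion. Base change along $(K,K^\circ)\ra(R,R^+)$ gives the pair
\begin{align*}
(K\ang{\vpi T}/I\wh\otimes_KR,\ \dotsb),
\end{align*}
and this equals $(B_{S,\vpi},B^+_{S,\vpi})$. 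The completed tensor product $K\ang{\vpi T}\wh\otimes_KR$ is $R\ang{\vpi T}$. The quotient by $I$ commutes with $\wh\otimes_KR$: the ideal $I\cdot K\ang{\vpi T}$ is closed with a strict quotient, and $\wh\otimes_KR$ is exact on Banach $K$-modules, as already used in Lemma \ref{ss:sheafytensorproduct}. Applying Lemma \ref{ss:sheafytensorproduct} then shows that $Y_{S,\vpi}$ is a sousperfectoid adic space.

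\emph{Step 2: the transition maps are open embeddings.} Suppose $\abs{\vpi'}\le\abs{\vpi}$. Then $Y_{S,\vpi}\subseteq Y_{S,\vpi'}$ is the rational subset $\{\abs{\vpi T_i}\le1\ \forall i\}$. To verify this, check that the rational localization $B_{S,\vpi'}\ang{\vpi T_i}$ is $B_{S,\vpi}$, using $R\ang{\vpi'T}\ang{\vpi T}=R\ang{\vpi T}$ modulo $I$. Check also that the integral closures match.

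\emph{Step 3: the colimit $\varinjlim_\vpi Y_{S,\vpi}$ is the fiber product $Y\times_{\Spec K}S$.} Let $T=\Spa(A,A^+)$ be an affinoid adic space over $S$. By the $\Spec$-global sections adjunction (as in Lemma \ref{ss:nosheafification}), a morphism $T\ra Y$ over $K$ is a $K$-algebra map $K[T]/I\ra A$, that is, elements $a_1,\dotsc,a_d\in A$ killed by $I$. Each $a_i$ is power-bounded up to a scalar, so $\vpi a_i\in A^\circ$ for some $\vpi$. By quasicompactness of $T$ we may take $\vpi a_i\in A^+$ after shrinking $\vpi$ further; this uses that $T$ is covered by finitely many rational subsets on which the valuations of the $a_i$ are bounded.

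The universal property of $\Spa$ of a completed ring then yields a unique morphism $T\ra Y_{S,\vpi}$ over $S$. These morphisms are compatible under the transition maps. A general adic space over $S$ is glued from affinoids, and this gives the universal property of the colimit. Hence $Y^{\an}_S$ exists, and it is sousperfectoid, being an increasing union of sousperfectoid open subspaces.

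The main obstacle is the bookkeeping in Step 1: one must confirm that the base change in Lemma \ref{ss:sheafytensorproduct} literally computes $B_{S,\vpi}$, with the right topology and the right integral closure. A further point is the input smoothness of $Y_\vpi$: as a rational open of the Huber analytification of a smooth scheme, it is smooth in the sense of \cite{Hub96}.
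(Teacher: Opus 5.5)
Your proposal is correct and follows the paper's proof. Step 1 is exactly the paper's argument: smoothness of $\Spa(A,A^+)$ for $A=K\ang{\vpi T_1,\dotsc,\vpi T_d}/I$, identification of $(B_{S,\vpi},B^+_{S,\vpi})$ with its base change to $(R,R^+)$, then Lemma \ref{ss:sheafytensorproduct}; your Steps 2 and 3 faithfully expand the paper's one-line appeal to the universal property of $Y^{\an}_S$ (and in Step 3 no quasicompactness is needed to get $\vpi a_i\in A^+$, since $\vpi A^\circ\subseteq A^{\circ\circ}\subseteq A^+$).
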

\begin{proof}
Write $A$ for $K\ang{\vpi T_1,\dotsc,\vpi T_d}/I$, and write $A^+$ for the integral closure in $A$ of the image of $K^\circ\ang{\vpi T_1,\dotsc,\vpi T_d}$. Since $Y$ is smooth over $K$, the affinoid adic space $\Spa(A,A^+)$ is smooth over $K$. Hence the first statement follows from Lemma \ref{ss:sheafytensorproduct} and $(B_{S,\vpi},B_{S,\vpi}^+)$ being the base change of $(K,K^\circ)\ra(A,A^+)$ to $(R,R^+)$. The second statement follows from the universal property of $Y^{\an}_S$.
\end{proof}

\subsection{}\label{ss:analytificationperfection}
Next, we specialize to smooth varieties over $\bF_q$. Let $Y=\Spec\bF_q[T_1,\dotsc,T_d]/I$ be a smooth affine scheme over $\bF_q$, and let $S=\Spa(R,R^+)$ be an affinoid perfectoid space over $\bF_q$. Any pseudouniformizer $\vpi$ of $R$ induces a morphism
\begin{align*}
S\ra\Spa\bF_q\lp{\vpi^{1/q^\infty}}.
\end{align*}
By applying Proposition \ref{ss:analytification} to $Y_{\bF_q\lp{\vpi^{1/q^\infty}}}$, we see that $Y^{\an}_S$ exists and equals $\bigcup_\vpi Y_{S,\vpi}$, where $\vpi$ runs over pseudouniformizers of $R$.

To prove v-descent for vector bundles on the analytification of $Y$, we use Frobenius to construct the following natural perfectoid cover. Note that $(\Frob_Y)_S^{\an}$ sends $Y_{S,\vpi^{1/q}}$ to $Y_{S,\vpi}$. Write $B^{+,\perf}_{S,\vpi}$ for the $\vpi$-adic completion of $\varinjlim_iB^+_{S,\vpi^{1/q^i}}$, where $i$ runs over non-negative integers and the transition maps are given by $(\Frob_Y)_S^{\an,*}$. Write $B^{\perf}_{S,\vpi}$ for $B^{+,\perf}_{S,\vpi}[\textstyle\frac1\vpi]$. Finally, write $(Y^{\an}_S)^{\perf}$, if it exists, for the limit $\varprojlim_iY^{\an}_S$ in the category of uniform analytic adic spaces, where the transition maps are given by $(\Frob_Y)^{\an}_S$.
\begin{prop*}
  The pre-adic space $Y^{\perf}_{S,\vpi}\coloneqq\Spa(B^{\perf}_{S,\vpi},B^{\perf,+}_{S,\vpi})$ is affinoid perfectoid. Moreover, $Y^{\perf}_{S,\vpi}$ equals the fiber product $Y_{S,\vpi}\times_{Y^{\an}_S}(Y^{\an}_S)^{\perf}$ in the category of uniform analytic adic spaces, so $(Y^{\an}_S)^{\perf}=\bigcup_\vpi Y^{\perf}_{S,\vpi}$ exists and is perfectoid. Finally, the map $B_{S,\vpi}\ra B_{S,\vpi}^{\perf}$ of topological $B_{S,\vpi}$-modules splits.
\end{prop*}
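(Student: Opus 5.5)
The plan is to make $B^{+,\perf}_{S,\vpi}$ explicit, deduce perfectoidness from the perfectness of its reduction, identify the fiber product directly, and obtain the splitting from a Frobenius trace on the smooth algebra.

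\textbf{Perfectoidness.} The transition maps $(\Frob_Y)^{\an,*}_S:B^+_{S,\vpi^{1/q^i}}\ra B^+_{S,\vpi^{1/q^{i+1}}}$ are given on the coordinates $T_j$ by $T_j\mapsto T_j^q$. Since $R$ is perfect, $\Frob_Y\times\id_S$ differs from the absolute Frobenius of $Y^{\an}_S$ only by the automorphism $\Frob_S$. Consequently the colimit $\varinjlim_i B^+_{S,\vpi^{1/q^i}}$ is, up to twisting by $\Frob_R$, the colimit of $B^+_{S,\vpi}$ along its absolute Frobenius. Hence its $\vpi$-adic completion $B^{+,\perf}_{S,\vpi}$ is $\vpi$-adically complete, $\vpi$-torsionfree (as a colimit of torsionfree rings) and perfect mod $\vpi$. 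A perfect, $\vpi$-complete, $\vpi$-torsionfree $\bF_p$-algebra is integral perfectoid. It then remains to check that $B^{+,\perf}_{S,\vpi}$ is integrally closed in $B^{\perf}_{S,\vpi}$, up to replacing it by its integral closure, which only changes it by almost-zero elements. This shows $Y^{\perf}_{S,\vpi}$ is affinoid perfectoid.

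\textbf{Fiber product and gluing.} For a uniform analytic affinoid $\Spa(C,C^+)$, a map to $Y_{S,\vpi}\times_{Y^{\an}_S}(Y^{\an}_S)^{\perf}$ is a compatible system of maps to $Y^{\an}_S$ along Frobenius whose $0$-th term lands in $Y_{S,\vpi}$. Because $(\Frob_Y)^{\an}_S$ sends $Y_{S,\vpi^{1/q^i}}$ onto $Y_{S,\vpi^{1/q^{i-1}}}$ with preimage exactly $Y_{S,\vpi^{1/q^i}}$, the $i$-th term must land in $Y_{S,\vpi^{1/q^i}}$. By Lemma \ref{ss:nosheafification} together with uniformity and completeness of $C$, such a system corresponds to a continuous map $\varinjlim_i B_{S,\vpi^{1/q^i}}\ra C$ sending $B^+$ into $C^+$. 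Since $C^+$ is $\vpi$-adically complete, this is the same as a map $(B^{\perf}_{S,\vpi},B^{\perf,+}_{S,\vpi})\ra(C,C^+)$. The spaces $Y^{\perf}_{S,\vpi}$ are compatible under the open embeddings $Y_{S,\vpi}\subseteq Y_{S,\vpi'}$, so they glue to a perfectoid space covering everything by Proposition \ref{ss:analytificationperfection}. This glued space satisfies the universal property of $(Y^{\an}_S)^{\perf}$.

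\textbf{Splitting (main obstacle).} I would construct the splitting over $\bF_q$ first and then base change. Since $Y$ is smooth over the perfect field $\bF_q$, it is Frobenius split: locally, and globally on the affine $Y$ after choosing the Cartier-operator splitting attached to étale coordinates, $\bF_q[T]/I\ra\Frob_*(\bF_q[T]/I)$ admits an $\cO_Y$-linear retraction. For arbitrary smooth affine $Y$ I would use that $\Frob_*\cO_Y$ is locally free and that $\cO_Y\ra\Frob_*\cO_Y$ is locally split, hence split because $Y$ is affine. Composing these retractions gives compatible $\cO_Y$-linear maps $\cO_Y^{1/q^i}\ra\cO_Y$.

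These maps must then be made bounded with respect to the $\vpi$-adic norms, which is the delicate step. I would check on generators that $\vpi^{1/q^i}$-integral elements map to $\vpi$-integral elements up to a uniform constant. The point is that a retraction defined over $\bF_q$ sends monomials $T^{a/q^i}$ to polynomial combinations of degree at most $a/q^i$ times a constant, because the splitting is given by finitely many polynomial formulas. Tensoring with $R$ and completing then gives a continuous $B_{S,\vpi}$-linear retraction of $B_{S,\vpi}\ra B^{\perf}_{S,\vpi}$, with the uniform bound ensuring continuity on the completion. If the degree estimate fails, the fallback is to use the trace map of the finite locally free extension $B_{S,\vpi}\ra B_{S,\vpi^{1/q}}$ normalized via the Cartier isomorphism, and bound it using finiteness of $\Frob_*$.
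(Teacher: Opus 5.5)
Your perfectoidness and fiber-product steps are the paper's argument. The paper reduces perfectoidness to perfectness via \cite[Proposition 3.5]{Sch17}, using exactly your observation that $\Frob_S$ identifies $Y_{S,\vpi}\cong Y_{S,\vpi^{1/q}}$, so that the transition maps become absolute Frobenius. It gets the fiber product from the cartesian square expressing $Y_{S,\vpi^{1/q}}$ as the preimage of $Y_{S,\vpi}$ under $(\Frob_Y)^{\an}_S$. Your integral-closure caveat is unnecessary: each $B^+_{S,\vpi^{1/q^i}}$ is integrally closed, hence so are the colimit and its completion.

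The splitting is where you genuinely diverge. Write $A=\bF_q[T_1,\dotsc,T_d]/I$. The paper notes that the square with rows $A\ra\Frob_{Y,*}A$ and $B_{S,\vpi}\ra B_{S,\vpi^{1/q}}$ is a pushout of rings. Since $\Frob_{Y,*}A$ is finite over $A$ and $f(T^q)=f(T)^q$ for $f\in I$, one has $B_{S,\vpi^{1/q}}=B_{S,\vpi}\otimes_A\Frob_{Y,*}A$, with no completion needed. So an $A$-linear retraction base changes directly to a $B_{S,\vpi}$-linear retraction. That retraction is automatically continuous as a map of finite $B_{S,\vpi}$-modules, and no estimates in the quotient norms of $R\ang{\vpi T_1,\dotsc,\vpi T_d}/I$ are needed at any finite level.

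Your proposal rightly flags, and the paper leaves implicit, the need for a bound uniform in $i$ to pass to the completed colimit. In the paper's setup this follows from the Frobenius twist you already used. If every one-step retraction is the base change of the same $A$-linear one, then the $j$-th is $\Frob_S^j$-conjugate to the first. A bound $\vpi^{-c}$ for the first therefore becomes $\vpi^{-c/q^j}$ for the $j$-th, and all composites are bounded by $\vpi^{-cq/(q-1)}$.

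In your monomial bookkeeping the same fact reads as follows. A monomial of degree $e$ (in $q^i$-th roots, measured in units of $T$) maps to something of degree at most $e+D(1+q^{-1}+q^{-2}+\cdots)$. Here $D$ bounds the degrees of the images of the finitely many $A$-module generators of $\Frob_{Y,*}A$. You should state the estimate in this additive form, since a multiplicative constant on the degree would not give boundedness. You should also justify uniformity in $i$ by this geometric series rather than by ``finitely many formulas''.

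Finally, your fallback must mean the duality trace, i.e. the Cartier operator. The naive trace of the purely inseparable extension $A\ra\Frob_{Y,*}A$ vanishes in positive dimension.
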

\begin{proof}
  For the first statement, note that $(B^{\perf}_{S,\vpi},B^{+,\perf}_{S,\vpi})$ is the colimit of
  \begin{align*}
    \{(B_{S,\vpi^{1/q^i}},B^+_{S,\vpi^{1/q^i}})\}_i
  \end{align*}
in the category of uniform Tate Huber pairs. Therefore \cite[Proposition 3.5]{Sch17} indicates that it suffices to check that $B^{\perf}_{S,\vpi}$ is perfect. Because $S$ is perfectoid, $\Frob_S$ induces an isomorphism $Y_{S,\vpi}\ra^\sim Y_{S,\vpi^{1/q}}$. Composing this isomorphism with $(\Frob_Y)^{\an}_S:Y_{S,\vpi^{1/q}}\ra Y_{S,\vpi}$ yields the absolute $q$-Frobenius endomorphism, which implies that $B_{S,\vpi}^{\perf}$ is indeed perfect.

For the second statement, the above shows that $Y^{\perf}_{S,\vpi}$ equals the limit $\varprojlim_i Y_{S,\vpi^{1/q^i}}$ in the category of uniform analytic adic spaces. The universal property of $Y_S^{\an}$ implies that the square
\begin{align*}
  \xymatrixcolsep{2cm}
  \xymatrix{Y_{S,\vpi^{1/q}}\ar[r]^-{(\Frob_Y)_S^{\an}}\ar@{^{(}->}[d] & Y_{S,\vpi}\ar@{^{(}->}[d]\\
  Y^{\an}_S\ar[r]^-{(\Frob_Y)_S^{\an}} & Y^{\an}_S}
\end{align*}
is cartesian, so taking $\varprojlim_i$ in the category of uniform analytic adic spaces yields the desired result.

For the third statement, the smoothness of $Y$ implies that $\Frob_{Y,*}\bF_q[T_1,\dotsc,T_d]/I$ is a finite projective $\bF_q[T_1,\dotsc,T_d]/I$-module \cite[Lemma 1.1.1]{BK05}\footnote{While \cite{BK05} works over an algebraically closed field, the proofs of \cite[Lemma 1.1.1]{BK05} and \cite[Proposition 1.1.6]{BK05} do not use this assumption.}. Since $Y$ is also affine, \cite[Proposition 1.1.6]{BK05} shows that $Y$ is Frobenius-split.

Note that the square
\begin{align*}
  \xymatrixcolsep{1.5cm}
  \xymatrix{\bF_q[T_1,\dotsc,T_d]/I\ar[r]^-{\Frob_Y^*}\ar[d] & \Frob_{Y,*}\bF_q[T_1,\dotsc,T_d]/I\ar[d]\\
  B_{S,\vpi}\ar[r]^-{(\Frob_Y)_S^{\an,*}} & B_{S,\vpi^{1/q}}}
\end{align*}
is a pushout in the category of rings. Therefore applying $-\otimes_{\bF_q[T_1,\dotsc,T_d]/I}B_{S,\vpi}$ to an $\bF_q[T_1,\dotsc,T_d]/I$-module splitting of the top arrow yields a topological $B_{S,\vpi}$-module splitting of the bottom arrow. Finally, taking the completed colimit of $\{B_{S,\vpi^{1/q^i}}\}_i$ yields a topological $B_{S,\vpi}$-module splitting of $B_{S,\vpi}\ra B^{\perf}_{S,\vpi}$.
\end{proof}

\subsection{}\label{ss:vectorbundlesdescent}
We now prove v-descent for vector bundles on (open subspaces of) analytifications. Let $Y$ be a smooth scheme over $\bF_q$, and let $V_S$ be an open subspace of $Y^{\an}_S$. For all affinoid perfectoid spaces $S'$ over $S$, write $V_{S'}$ for the preimage of $V_S$.
\begin{thm*}
The presheaf of categories on $\Perf_S$ given by
  \begin{align*}
    S'\mapsto\{\mbox{vector bundles on }V_{S'}\}
  \end{align*}
satisfies v-descent.
\end{thm*}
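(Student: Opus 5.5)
The plan is to reduce to the situation of Proposition \ref{ss:analytificationperfection} and then invoke the v-descent result of Scholze--Weinstein \cite[Lemma 17.1.8]{SW20} for vector bundles on perfectoid spaces, using the Frobenius splitting to pass from the perfection back to $Y^{\an}_S$.

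\emph{Reduction to affinoid pieces.} Vector bundles on analytic adic spaces form an analytic stack, and the assignment $S'\mapsto V_{S'}$ is compatible with open covers of $Y$ and of $V_S$. It therefore suffices to treat the case where $Y=\Spec\bF_q[T_1,\dotsc,T_d]/I$ is affine and smooth, and $V_S$ is a rational open subspace of some $Y_{S,\vpi}$. Since the $Y_{S,\vpi}$ exhaust $Y^{\an}_S$ by Proposition \ref{ss:analytificationperfection}, such rational opens cover $V_S$. For any $S'\ra S$ the preimage is the corresponding rational open of $Y_{S',\vpi}$. So fix $V_S=\Spa(B,B^+)$, a sousperfectoid affinoid by Proposition \ref{ss:analytification}, and write $V_{S'}=\Spa(B_{S'},B_{S'}^+)$.

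\emph{Pass to the perfection.} Let $V^{\perf}_{S'}$ be the preimage of $V_{S'}$ in $(Y^{\an}_{S'})^{\perf}$. This is an affinoid perfectoid space, namely a rational open of $Y^{\perf}_{S',\vpi}$, and its formation is compatible with base change in $S'$. For a v-cover $S''\ra S'$, the map $V^{\perf}_{S''}\ra V^{\perf}_{S'}$ is a v-cover of perfectoid spaces, because it is the base change of $S''\ra S'$ along $V^{\perf}_{S'}\ra S'$. Hence \cite[Lemma 17.1.8]{SW20} gives v-descent for vector bundles on $V^{\perf}_{(-)}$.

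\emph{Descending from the perfection to $V_{S'}$.} By Proposition \ref{ss:analytificationperfection}, $B_{S'}\ra B^{\perf}_{S'}$ admits a topological $B_{S'}$-module splitting. This splitting is obtained by base change from an algebraic Frobenius splitting of $Y$, so it is functorial in $S'$ and compatible with rational localization. The splitting makes $V^{\perf}_{S'}\ra V_{S'}$ behave like a descendable (split) cover, so vector bundles on $V_{S'}$ are equivalent to vector bundles on $V^{\perf}_{S'}$ with descent data along $V^{\perf}_{S'}\times_{V_{S'}}V^{\perf}_{S'}$. One establishes this by exact arguments on finite projective modules using the split injection $B\ra B^{\perf}$ together with strong sheafiness. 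The two facts are then combined by a standard double-descent argument. Given a v-cover $S''\ra S'$ and a bundle on $V_{S''}$ with descent datum, pull it back to $V^{\perf}_{S''}$, descend it to $V^{\perf}_{S'}$, and then check that the descended bundle carries descent data for $V^{\perf}_{S'}\ra V_{S'}$. Full faithfulness follows similarly: the split injection shows that sections of $\sE$ on $V_{S'}$ equal the equalizer of sections over the perfections.

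\emph{Expected obstacle.} The main difficulty is justifying descent along $V^{\perf}\ra V$. The fibre product $V^{\perf}\times_VV^{\perf}$ is not perfectoid and may not obviously be a sheafy adic space. To avoid this, I would instead form the \v{C}ech nerve through $V^{\perf}_{S'}\times_{S'}S''$-type perfectoid spaces: pull back along the v-cover and work entirely within perfectoid spaces mapping to $V$. I would then use the splitting only to show that $B_{S'}$ is the equalizer of $B^{\perf}_{S''}\rightrightarrows B^{\perf}_{S''\times_{S'}S''}$. Concretely, this is v-descent for $\sO$ on perfectoid spaces \cite[Proposition 8.8]{Sch17} combined with the retract, and the same argument applies to finite projective modules. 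Essential surjectivity then reduces to showing that a finite projective $B^{\perf}_{S'}$-module with descent data to $B_{S'}$ descends. Here I would try to exhibit the descended module as the retract (via the splitting) of $M^{\perf}$ and check projectivity by a Beauville--Laszlo/Kedlaya--Liu style argument. This is where I expect the real work to lie.
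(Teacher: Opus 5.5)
The overall structure is the same as the paper's: reduce to $Y$ affine and $V_S$ affinoid inside some $Y_{S,\vpi}$, pass to the perfectoid spaces $V^{\perf}_{(-)}$, apply \cite[Lemma 17.1.8]{SW20}, then come back down using the splitting of Proposition \ref{ss:analytificationperfection} via a double descent. The gap is the last step, effective descent along $V^{\perf}_{S'}\ra V_{S'}$. You assert it (``exact arguments on finite projective modules \dots\ together with strong sheafiness'') but then explicitly leave it open, and the workaround you propose cannot work.

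On perfectoid test objects, $V_{S'}$ and $V^{\perf}_{S'}$ have the same points. A map from a perfectoid space $T$ to $V_{S'}$ lifts uniquely through each relative Frobenius, because $\Frob_T$ is invertible. Hence no \v{C}ech nerve built only from perfectoid spaces can distinguish bundles on $V_{S'}$ from bundles on $V^{\perf}_{S'}$. Concretely, the equalizer of $B^{\perf}_{S''}\rightrightarrows B^{\perf}_{S''\times_{S'}S''}$ is $B^{\perf}_{S'}$, not $B_{S'}$. Your retract trick does show that $B_{S'}$ is the equalizer of $B_{S''}\rightrightarrows B_{S''\times_{S'}S''}$, which gives full faithfulness, but it gives nothing for essential surjectivity.

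The missing point is that the non-perfectoid fibre products $V^{\perf}_{S'}\times_{V_{S'}}V^{\perf}_{S'}$ cannot be avoided, but they are harmless. The paper uses them only as affinoid pre-adic spaces, so a descent datum there is just an isomorphism of modules over the corresponding tensor-product ring, and sheafiness never enters. Since $B_{S',\vpi}\ra B^{\perf}_{S',\vpi}$ splits as a map of topological modules, so does its rational localization $B_{S'}\ra B^{\perf}_{S'}$. In particular this map is universally injective, so descent of modules along it is effective and finite projectivity descends \cite[Tag 08XA, Tag 08XD]{stacks-project}; this is the citation the paper uses.

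Your ``retract'' instinct is in fact the mechanism. Let $\sigma$ be the splitting and $\delta\colon N\ra B^{\perf}_{S'}\otimes_{B_{S'}}N$ the coaction determined by the descent datum. Then $(\sigma\otimes\id_N)\circ\delta$ is an idempotent on $N$, and its image $M$ satisfies $B^{\perf}_{S'}\otimes_{B_{S'}}M\cong N$. Finite generation and projectivity of $M$ follow from the same splitting, by applying $\id\otimes\sigma$ to a lifted section. No Beauville--Laszlo or Kedlaya--Liu input is needed. The descent datum along $V^{\perf}_{S'}\ra V_{S'}$ on the bundle descended from $V^{\perf}_{S''}$ is obtained, as you indicate, by descending the tautological datum on the pullback from $V_{S''}$ along the v-cover.
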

\begin{proof}
  By replacing $Y$ and $V_S$ with open covers, we can assume that
  \begin{align*}
    Y=\Spec\bF_q[T_1,\dotsc,T_d]/I
  \end{align*}
  is affine and that $V_S=\Spa(A,A^+)$ is affinoid. Let $S'\ra S$ be an affinoid perfectoid v-cover. Since $Y^{\an}_S=\bigcup_\vpi Y_{S,\vpi}$, we see that $V_S$ lies in $Y_{S,\vpi}$ for some pseudouniformizer $\vpi$ of $R$. Write $V^{\perf}_S$ for the preimage of $V_S$ in $Y^{\perf}_{S,\vpi}$.

  Let $\sE'$ be a vector bundle on $V_{S'}$ with descent datum $\al$ with respect to
  \begin{align*}
    V_{S'\times_SS'\times_SS'}\rrra V_{S'\times_SS'}\rightrightarrows V_{S'}.
  \end{align*}
By Proposition \ref{ss:analytificationperfection}, pullback yields a vector bundle $\wt\sE'$ on $V_{S'}^{\perf}$ with commuting descent data $\wt\al$ with respect to the adic spaces
  \begin{align*}
    V^{\perf}_{S'\times_SS'\times_SS'}\rrra V^{\perf}_{S'\times_SS'}\rightrightarrows V^{\perf}_{S'}
  \end{align*}
  and $\be'$ with respect to the affinoid pre-adic spaces
  \begin{align*}
   V^{\perf}_{S'}\times_{V_{S'}}V^{\perf}_{S'}\times_{V_{S'}}V^{\perf}_{S',\vpi} \rrra V^{\perf}_{S'}\times_{V_{S'}}V^{\perf}_{S'}\rightrightarrows V^{\perf}_{S'}.
  \end{align*}
  Proposition \ref{ss:analytificationperfection} indicates that the $V^{\perf}_{(-)}$ are perfectoid, so \cite[Lemma 17.1.8]{SW20} enables us to descend $(\wt\sE',\wt\al)$ and $(\be,\wt\al)$ to a vector bundle $\wt\sE$ on $V^{\perf}_S$ with descent datum $\be$ with respect to the affinoid pre-adic spaces
  \begin{align*}
V^{\perf}_S\times_{V_S}V^{\perf}_S\times_{V_S}V^{\perf}_S \rrra V^{\perf}_S\times_{V_S}V^{\perf}_S\rightrightarrows V^{\perf}_S.
  \end{align*}
Finally, Proposition \ref{ss:analytificationperfection} shows that the map $B_{S,\vpi}\ra B^{\perf}_{B,\vpi}$ of topological $B_{S,\vpi}$-modules splits, so the map $A\ra A\wh\otimes_{B_{S,\vpi}}B^{\perf}_{B,\vpi}$ of topological $A$-modules also splits. Hence \cite[Tag 08XA]{stacks-project} and \cite[Tag 08XD]{stacks-project} enable us to descend $(\wt\sE,\be)$ to a vector bundle $\sE$ on $V_S$, as desired.
\end{proof}

\subsection{}\label{ss:Frobeniusfixedglobalsections}
The Frobenius-invariants of global sections of analytifications are given as follows. Write $A$ for the global sections of $Y$, endowed with the discrete topology.
\begin{prop*}
The natural map $\ul{A}(S)\ra\Ga(Y^{\an}_S,\sO_{Y^{\an}_S})^{\Frob_S^*=1}$ is an isomorphism.
\end{prop*}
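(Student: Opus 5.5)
We will reduce to affine space via Noether normalization and then compare Taylor coefficients. The statement is local on $Y$ in the following sense: both sides are compatible with restriction to affine opens, and $\Frob_S^*$ commutes with these restrictions. So we work with $Y=\Spec A$ affine and smooth over $\bF_q$, as in \ref{ss:analytificationperfection}. The non-affine case would follow by checking injectivity and gluing on a finite affine cover (assuming $Y$ quasicompact, as in the intended application to curves). Injectivity of $\ul{A}(S)\ra\Ga(Y^{\an}_S,\sO)$ is clear, since $\ul{A}(S)=A\otimes_{\bF_q}C(\abs{S},\bF_q)\subseteq A\otimes_{\bF_q}R$ and $A\otimes_{\bF_q}R$ embeds into every $B_{S,\vpi}$. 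The content is surjectivity.

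\emph{Step 1: reduction to affine space.} We may assume $Y$ is connected, hence equidimensional of some dimension $m$. By Noether normalization, choose a finite morphism $Y\ra\bA^m_{\bF_q}$. Since $Y$ is smooth, hence Cohen--Macaulay, and $\bA^m$ is regular, miracle flatness makes $A$ a finite projective $P\coloneqq\bF_q[T_1,\dotsc,T_m]$-module. By Quillen--Suslin it is free, say $A=\bigoplus_iPe_i$.

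Because $Y\ra\bA^m$ is finite, $Y^{\an}_S=Y\times_{\bA^m}(\bA^m)^{\an}_S$. Using the description of \ref{ss:analytification} by the affinoids $Y_{S,\vpi}$, which are finite free over the corresponding polydisk-type affinoids $(\bA^m)_{S,\vpi'}$ (possibly after adjusting radii), we get
\begin{align*}
\Ga(Y^{\an}_S,\sO)=\bigoplus_i\Ga((\bA^m)^{\an}_S,\sO)\,e_i.
\end{align*}
Here $\Frob_S^*$ acts coefficientwise, since it is the identity on the $e_i\in A$. The same decomposition holds for $\ul{A}(S)=\bigoplus_i\ul{P}(S)e_i$. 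This reduces the statement to $Y=\bA^m$.

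\emph{Step 2: the case $Y=\bA^m$.} By \ref{ss:analytificationperfection}, a global function $f$ on $(\bA^m)^{\an}_S$ is a power series $f=\sum_na_nT^n$ with $a_n\in R$. It converges on every $Y_{S,\vpi}$, so $a_n\vpi^{-\abs{n}}\ra0$ for every pseudouniformizer $\vpi$. The pullback $\Frob_S^*$ acts by $\sum a_nT^n\mapsto\sum a_n^qT^n$, so $\Frob_S^*f=f$ forces $a_n^q=a_n$ for all $n$.

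In a perfectoid (in particular reduced) ring $R$, the solutions of $a^q=a$ are exactly the locally constant functions $\abs{S}\ra\bF_q$. Indeed, $\prod_{c\in\bF_q}(a-c)=0$, and the idempotents of $R$ correspond to clopen subsets of $\abs S$. Each nonzero such $a_n$ has $\abs{a_n(x)}=1$ at some point $x$. Hence the condition $a_n\vpi^{-\abs n}\ra0$ implies that only finitely many $a_n$ are nonzero. More precisely, pointwise decay gives this on a neighbourhood of each point, and quasicompactness of $\abs S$ makes it global. Therefore $f\in C(\abs S,\bF_q)[T_1,\dotsc,T_m]=\ul{P}(S)$.

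\emph{Main obstacle.} I expect Step 1 to be the delicate point. One must check that analytification along the finite free map $Y\ra\bA^m$ really yields $\Ga(Y^{\an}_S)=\Ga((\bA^m)^{\an}_S)\otimes_PA$ compatibly with $\Frob_S^*$. This uses that $B_{S,\vpi}$ is a finite free module over the polydisk algebra, with the radii of the two exhaustions cofinal in each other. If the radii bookkeeping is awkward, I would instead argue with the norm bound of Step 2 applied to coordinates of a closed embedding $Y\subseteq\bA^d$. In that approach, Frobenius-invariance together with integrality of $f$ over $\Ga((\bA^m)^{\an}_S)$, via its characteristic polynomial whose coefficients are $\Frob_S$-invariant, lets one deduce algebraicity of $f$ from the $\bA^m$ case.
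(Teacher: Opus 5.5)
Your proof is correct, but it sets up the coefficientwise computation differently from the paper.

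The paper keeps the presentation $A=\bF_q[T_1,\dotsc,T_d]/I$ and picks an $\bF_q$-basis $\{e_m\}_m$ of $A$ consisting of images of monomials. It identifies each $B_{S,\vpi}$ with the completed direct sum $\wh\bigoplus_mR\cdot e_m$ (rescaling $T_i\mapsto\vpi T_i$) and notes that $\Frob_S^*$ acts coordinatewise. It then cites \cite[Corollary 3.1.4]{KL15} for $\Cont(\abs{S},\bF_q)\cong R^{\Frob_S^*=1}$ and intersects over $\vpi$.

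You instead use Noether normalization, miracle flatness and Quillen--Suslin to make $A$ finite free over $P=\bF_q[T_1,\dotsc,T_m]$. (Being a direct summand of a free $P$-module would already suffice, so Quillen--Suslin can be dropped.) The only completed-direct-sum description you then need is the tautological one for $R\ang{\vpi T_1,\dotsc,\vpi T_m}$ in terms of monomials of $P$.

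The price is the finite-morphism bookkeeping you flagged, and it does go through. The generators of $A$ are integral over $P$, hence bounded on preimages of bounded sets. So the preimage of a polydisk is a rational subset of some $Y_{S,\vpi'}$, and its ring is $R\ang{\vpi T}\otimes_PA$: power-boundedness and the universal properties give mutually inverse maps. These preimages are cofinal with the $Y_{S,\vpi}$.

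What you gain is that you avoid the paper's tacit claim that a monomial basis of $A$ remains a topological basis of $R\ang{\vpi T_1,\dotsc,\vpi T_d}/I$ after rescaling. That is a Gröbner-type statement which depends on choosing the basis well. Your CRT argument for $a^q=a$ also replaces the appeal to Kedlaya--Liu.

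One sentence needs fixing. ``Pointwise decay gives this on a neighbourhood of each point'' does not work as stated, because the neighbourhoods depend on $n$. Use the convergence in $R$ directly: $a_n\vpi^{-\abs{n}}\to0$ forces $a_n\in\vpi R^\circ$ for $\abs{n}\gg0$. Then $a_n=a_n^{q^k}\to0$ as $k\to\infty$, so $a_n=0$.

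The characteristic-polynomial fallback is unnecessary. As stated it would not suffice anyway, since integrality over $\ul{P}(S)$ does not by itself place $f$ in $\ul{A}(S)$. Your quasicompactness caveat in the gluing step is appropriate: for infinitely many components, $\Cont(\abs{S},A)$ with $A$ discrete does not glue.
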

\begin{proof}
  By replacing $Y$ with an open cover, we can assume that
  \begin{align*}
    Y = \Spec\bF_q[T_1,\dotsc,T_d]/I
  \end{align*}
is affine. Let $\{e_m\}_m$ be an $\bF_q$-basis of $A$ consisting of images of monomials. For all pseudouniformizers $\vpi$ of $R$, this induces an identification $\wh\bigoplus_mR\cdot e_m\ra^\sim B_{S,\vpi}$ via sending $T_i$ to $\vpi T_i$ for all $1\leq i\leq d$. Under this identification, $\Frob_S^*:B_{\vpi,S}\ra^\sim B_{\vpi^q,S}$ acts coordinatewise, so $\bigoplus_mR^{\Frob_S^*=1}\cdot e_m\ra^\sim B_{S,\vpi}^{\Frob_S^*=1}$. Now \cite[Corollary 3.1.4]{KL15}\footnote{While \cite[Corollary 3.1.4]{KL15} is stated for $q=p$, the proof applies verbatim for general $q$.} implies that $\Cont(\abs{S},\bF_q)\ra^\sim R^{\Frob_S^*=1}$, so the quasicompactness of $\abs{S}$ shows that
  \begin{align*}
    \ul{A}(S)=\Cont(\abs{S},A)=\Cont\big(\abs{S},\bigoplus_m\bF_q\cdot e_m\big)\ra^\sim\bigoplus_mR^{\Frob_S^*=1}\cdot e_m.
  \end{align*}
Finally, note that the global sections of $Y^{\an}_S$ are given by $\bigcap_\vpi B_{S,\vpi}$, where $\vpi$ runs over pseudouniformizers of $R$. Therefore taking $\bigcap_\vpi$ yields the claim.
\end{proof}

\subsection{}
Finally, we specialize to smooth curves over $\bF_q$, which are the main case of interest in this paper. Let $C$ be a geometrically connected smooth proper curve over $\bF_q$. Write $F$ for its function field, and write $\bA$ for its adele ring. Let $U$ be a dense open subscheme of $C$, and write $\bO_U$ for its ring of integral adeles. When $U$ equals $C$, we omit it from our notation.

For all closed points $v$ of $C$, write $\bF_v$ for its residue field, fix an embedding $\bF_v\ra\ov\bF_q$ over $\bF_q$, write $\cO_v$ for the completed local ring at $v$, and write $F_v$ for the fraction field of $\cO_v$. Let $A_v$ be one of $\{\cO_v,F_v\}$. By applying \cite[Proposition II.1.1]{FS21} to $\Spa{\bF_v}\times S$, we see that $\Spa{A_v}\times S$ is a sousperfectoid adic space.

We define vector bundles on the \emph{global} and \emph{local Hartl--Pink curves} as follows.
\begin{defn*}\hfill
  \begin{enumerate}[a)]
  \item Write $\Bun_U(S)$ for the category of vector bundles $\sE$ on $U^{\an}_S$ equipped with an isomorphism $\phi:\sE\ra^\sim\Frob^*_S\sE$.
  \item Write $\Bun_F(S)$ for the category $\varinjlim_U\Bun_U(S)$, where $U$ runs over dense open subschemes of $C$.
  \item Write $\Bun_{A_v}(S)$ for the category of vector bundles $\sE$ on $\Spa{A_v}\times S$ equipped with an isomorphism $\phi:\sE\ra^\sim\Frob^*_S\sE$.
  \end{enumerate}
\end{defn*}
Theorem \ref{ss:vectorbundlesdescent} indicates that $\Bun_U$ satisfies v-descent. Proposition \ref{ss:Frobeniusfixedglobalsections} implies that the transition morphisms in b) are faithful, so $\Bun_F$ also satisfies v-descent.

\subsection{}\label{ss:localbundles}
Usually, the local Hartl--Pink curve for $F_v$ is defined using $q^{\deg{v}}$-Frobenius instead of $q$-Frobenius. We now relate these two definitions. Write $X_{S,F_v}$ for the relative Fargues--Fontaine curve for $F_v$ as in \cite[Definition I.2.2]{FS21}.
\begin{prop*}
  The presheaf of categories $\Bun_{A_v}$ satisfies v-descent, and its base change to $\Perf_{\bF_v}$ is naturally isomorphic to 
  \begin{align*}
    S\mapsto\left\{
    \begin{tabular}{c}
      vector bundles $\sE$ on $\Spa{A_v}\times_{\bF_v}S$ equipped \\
      with an isomorphism $\phi:\sE\ra^\sim(\Frob_S^{\deg{v}})^*\sE$
    \end{tabular}
    \right\}.
  \end{align*}
  Consequently, the base change of $\Bun_{F_v}$ to $\Perf_{\bF_v}$ is naturally isomorphic to
  \begin{align*}
  S\mapsto\{\mbox{vector bundles on }X_{S,F_v}\}.
\end{align*}
\end{prop*}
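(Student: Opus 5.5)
The plan is to first treat the base change to $\Perf_{\bF_v}$ by decomposing the product over $\bF_q$, and then deduce v-descent from the classical local results. Write $d=\deg v$. For $S$ over $\bF_v$ we have $\bF_v\otimes_{\bF_q}\bF_v\cong\prod_{\sigma\in\Gal(\bF_v/\bF_q)}\bF_v$. Since $A_v$ is an $\bF_v$-algebra, this gives a decomposition
\begin{align*}
\Spa A_v\times S=\Spa A_v\times_{\bF_v}(\Spa\bF_v\times S)\cong\coprod_{i\in\bZ/d}\Spa A_v\times_{\bF_v,\Frob^i}S,
\end{align*}
where the $i$-th component uses the structure map of $S$ twisted by $\Frob^i_{\bF_v}$. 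The automorphism $\Frob_S$ (absolute $q$-Frobenius on the $S$-factor only) is $\bF_v$-semilinear, so it permutes these components cyclically, sending component $i$ to component $i+1$ or $i-1$ depending on conventions.

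Given $(\sE,\phi)$ in $\Bun_{A_v}(S)$, write $\sE=\coprod_i\sE_i$. The isomorphism $\phi$ is a collection of isomorphisms $\sE_i\ra^\sim\Frob_S^*\sE_{i\pm1}$. Composing these $d$ of them expresses every $\sE_i$ in terms of $\sE_0$, and leaves $\sE_0$ with an isomorphism $\sE_0\ra^\sim(\Frob_S^{d})^*\sE_0$. This gives a functor to the category in the statement. The inverse functor sends $(\sE_0,\phi_0)$ to $\coprod_i(\Frob_S^{i})^*\sE_0$, with $\phi$ given by identity maps on all components except one, where it is $\phi_0$. Checking that these functors are inverse, and natural in $S$, is the standard induction-from-a-cyclic-subgroup argument. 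The only care needed is to identify $(\Frob_S^i)^*$ of the component $\Spa A_v\times_{\bF_v}S$ with the correct twisted component.

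For the consequence, specialize to $A_v=F_v$. Then $\Spa F_v\times_{\bF_v}S=Y_{S,F_v}$ is the punctured open disk over $S$. By \cite[Proposition II.2.5]{FS21} (or simply the definition of $X_{S,F_v}$ as the quotient by the $q^{d}$-Frobenius $\varphi$, which acts freely and totally discontinuously), vector bundles with an isomorphism $\sE\ra^\sim\varphi^*\sE$ on $Y_{S,F_v}$ are exactly vector bundles on $X_{S,F_v}$.

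It remains to prove v-descent. First, $\Spa\bF_v\times S\ra S$ is finite étale, so v-descent for $\Bun_{A_v}$ over $\Perf_{\bF_q}$ reduces to v-descent after base change to $\Perf_{\bF_v}$. In detail, a descent datum over $S$ pulls back to one over $S_{\bF_v}=\Spa\bF_v\times S$, and descending back along the finite étale Galois cover $S_{\bF_v}\ra S$ is formal since both sides are presheaves of categories. The isomorphism above then reduces the problem to v-descent for vector bundles on $\Spa A_v\times_{\bF_v}S$, with $\phi$ carried along formally. For $A_v=F_v$ this is v-descent for vector bundles on $Y_{S,F_v}$, which is \cite[Proposition II.2.1]{FS21} (via \cite[Lemma 17.1.8]{SW20}) together with the usual gluing over the cover by $Y_{S,[a,b]}$. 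For $A_v=\cO_v$ I would cite the analogous statement for $Y_{S,[0,\infty)}$ and the sousperfectoid rings $\cO_v\wh\otimes R$.

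I expect this last step for $A_v=\cO_v$ to be the main obstacle. Here $\Spa\cO_v\times S$ is not analytic along the locus where the uniformizer vanishes, so I would reduce to the analytic case by covering with $\Spa\cO_v\times S$ restricted to $\{|\varpi|\le|z|^{n}\}$. I would handle these pieces by the sousperfectoid v-descent of \cite[Proposition 17.1.9]{SW20}, in the same way the proof of Theorem \ref{ss:vectorbundlesdescent} handles its affinoids.
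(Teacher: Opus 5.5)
Your argument for the second and third statements is the paper's. The cyclic decomposition of $\Spa A_v\times S$ for $S\in\Perf_{\bF_v}$, and the reduction of a $\Frob_S$-structure to a $\Frob_S^{\deg v}$-structure on one component, are what the paper cites via \cite[Lemma 5.12]{LH23}. The last statement is the quotient description of $X_{S,F_v}$.

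The v-descent paragraph has two flaws, neither fatal.

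First, descent along $\Spa\bF_v\times S\ra S$ is not ``formal'': a presheaf of categories need not satisfy finite \'etale descent. Here it amounts to descent of vector bundles along the finite \'etale cover $\Spa A_v\times(\Spa\bF_v\times S)\ra\Spa A_v\times S$. This is true, since locally it is descent along the finite free extension $B\ra B\otimes_{\bF_q}\bF_v$, but it must be said. More to the point, the detour is unnecessary, and the paper avoids it. For a v-cover $S'\ra S$ in $\Perf_{\bF_q}$, the map $\Spa\bF_v\times S'\ra\Spa\bF_v\times S$ is already an affinoid perfectoid v-cover over $\bF_v$, compatible with fiber products, and $\Spa A_v\times S=\Spa A_v\times_{\bF_v}(\Spa\bF_v\times S)$. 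So the descent problem for the underlying bundles is literally one for vector bundles on $\Spa A_v\times_{\bF_v}(-)$, which is \cite[Proposition 19.5.3]{SW20}, and $\phi$ descends by full faithfulness. In particular, v-descent does not need the second statement at all.

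Second, the obstacle you anticipate for $A_v=\cO_v$ is not there. For $S=\Spa(R,R^+)$ one has $\Spa\cO_v\times_{\bF_v}S=\Spa R^+\lb{\pi}\ssm V(\varpi)$, where $\varpi$ is a pseudouniformizer of $S$. On this space $\varpi$ is a topologically nilpotent unit, so the space (the open unit disk over $S$) is analytic everywhere. The locus $\pi=0$ is just the zero section, a copy of $S$. Non-analytic points live only in $V(\varpi,\pi)$, which has been removed, exactly as for $\Spa W(R^+)\ssm V([\varpi])$ in mixed characteristic. The paper notes this space is sousperfectoid via \cite[Proposition II.1.1]{FS21}, and \cite[Proposition 19.5.3]{SW20} treats $\cO_v$ and $F_v$ uniformly, so you can simply cite it.

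Moreover, with $z$ the uniformizer, your proposed pieces $\{|\varpi|\leq|z|^n\}$ bound $|z|$ from below. They miss the zero section, and their union is just the $n=1$ piece, so they do not cover $\Spa\cO_v\times S$. If you wanted to reprove the cited result, the correct exhaustion is by the increasing closed disks $\{|z|^n\leq|\varpi|\}$. The perfectoid-cover-and-splitting argument of Theorem \ref{ss:vectorbundlesdescent} applies to these.
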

\begin{proof}
  For the first statement, let $S'\ra S$ be an affinoid perfectoid v-cover. Then $\Spa\bF_v\times S'\ra\Spa\bF_v\times S$ is also an affinoid perfectoid v-cover, so the first statement follows from \cite[Proposition 19.5.3]{SW20}. The second statement follows from arguing as in the proof of \cite[Lemma 5.12]{LH23}. Finally, the third statement follows from 
  \begin{gather*}
    X_{S,F_v}=(\Spa{F_v}\times_{\bF_v}S)/\Frob^{\deg{v}}_S.\qedhere
  \end{gather*}
\end{proof}

\subsection{}\label{ss:ArtinSchreierWitt}
Thanks to Artin--Schreier--Witt, $\Bun_{\cO_v}$ enjoys the following description.
\begin{prop*}
The presheaf of categories $\Bun_{\cO_v}$ is naturally isomorphic to
\begin{align*}
  S\mapsto\{\mbox{pro-\'etale }\ul{\cO_v}\mbox{-local systems on }S\}.
\end{align*}
\end{prop*}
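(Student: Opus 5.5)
We will construct mutually inverse functors, working v-locally on $S$ and using the v-descent of $\Bun_{\cO_v}$ from Proposition \ref{ss:localbundles}. Pro-\'etale $\ul{\cO_v}$-local systems also satisfy v-descent, by \cite[Proposition 14.7]{Sch17} applied to each $\ul{\cO_v/\varpi_v^n}$ together with passage to the limit. It therefore suffices to produce a natural fully faithful functor on affinoid perfectoid $S$ and to show that every object in the target lies in its essential image v-locally.

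\textbf{The functor.} Given a pro-\'etale $\ul{\cO_v}$-local system $\bL$ on $S$, we set $\sE\coloneqq\bL\otimes_{\ul{\cO_v}}\sO_{\Spa\cO_v\times S}$ and equip it with the Frobenius $\phi$ induced by $\Frob_S^*$ on the second factor. The result is a vector bundle by v-descent, since $\bL$ is v-locally trivial.

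\textbf{Full faithfulness.} This reduces to computing Frobenius invariants:
\[
\Ga(\Spa\cO_v\times S,\sO)^{\Frob_S^*=1}=\ul{\cO_v}(S).
\]
To compute the left side, write $\Ga(\Spa\cO_v\times S,\sO)=\varprojlim_n(R\otimes_{\bF_q}\cO_v/\varpi_v^n)$. Each $\cO_v/\varpi_v^n$ is a finite-dimensional $\bF_q$-vector space, so the Frobenius invariants of $R\otimes_{\bF_q}\cO_v/\varpi_v^n$ are $\Cont(\abs{S},\bF_q)\otimes_{\bF_q}\cO_v/\varpi_v^n=\Cont(\abs S,\cO_v/\varpi_v^n)$ by \cite[Corollary 3.1.4]{KL15}, exactly as in the proof of Proposition \ref{ss:Frobeniusfixedglobalsections}. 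Taking the limit over $n$ gives $\Cont(\abs S,\cO_v)$, which is the right side. Applying this to the internal Hom $\sE^\vee\otimes\sE'$ shows the functor is fully faithful.

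\textbf{Essential surjectivity (main obstacle).} This is the Artin--Schreier--Witt step. Let $(\sE,\phi)$ lie in $\Bun_{\cO_v}(S)$. Since $\Spa\cO_v\times S$ is $\varpi_v$-adically complete with reductions $\Spa(R\otimes\cO_v/\varpi_v^n)$, the pair $(\sE,\phi)$ amounts to a compatible system of finite projective $R\otimes_{\bF_q}\cO_v/\varpi_v^n$-modules $M_n$ with semilinear isomorphisms $\phi_n$.

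We argue by induction on $n$. After localizing on $S$ we may assume $M_1$ is free over $R\otimes\cO_v/\varpi_v$. Viewing $M_1$ as a free $R$-module of rank $r\cdot\dim_{\bF_q}(\cO_v/\varpi_v)$, the Frobenius $\phi_1$ becomes a matrix $A\in\GL_r(R)$. Trivializing means solving $X=A\,\Frob(X)$, which is a finite \'etale Lang-torsor equation over $R$. Hence $M_1$ is trivialized after a finite \'etale cover, yielding an $\cO_v/\varpi_v$-local system, with the $\cO_v/\varpi_v$-structure coming from the compatibility of $\phi_1$ with that action.

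For the step from $n$ to $n+1$, lifting a trivialization from $M_n$ to $M_{n+1}$ amounts to solving the Artin--Schreier equation $x-\Frob(x)=c$ on the kernel $\varpi_v^nM_{n+1}$. This is again solvable after a finite \'etale cover. The resulting tower of finite \'etale covers is a pro-\'etale cover of $S$, over which $(\sE,\phi)$ becomes isomorphic to $(\cO_v^r\otimes\sO,\Frob_S^*)$.

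Finally, the local system $\bL\coloneqq\varprojlim_n M_n^{\phi_n=1}$ is a pro-\'etale $\ul{\cO_v}$-local system, and $\bL\otimes_{\ul{\cO_v}}\sO\to\sE$ is an isomorphism because it is one pro-\'etale locally. This completes essential surjectivity and shows the functor is an equivalence, naturally in $S$.
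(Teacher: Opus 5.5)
\noindent There is a genuine gap, in the sentence ``Since $\Spa\cO_v\times S$ is $\varpi_v$-adically complete with reductions $\Spa(R\otimes\cO_v/\varpi_v^n)$, the pair $(\sE,\phi)$ amounts to a compatible system $(M_n,\phi_n)$.'' The space $\Spa\cO_v\times S$ is the \emph{open} unit disc over $\Spa\bF_v\times S$, i.e.\ the locus $\varpi\neq0$ in the $\Spa$ of $(R^+\otimes_{\bF_q}\bF_v)\lb{\pi}$. It is not quasicompact. Its ring of functions is not $\varprojlim_n R\otimes_{\bF_q}\cO_v/\varpi_v^n$; it is the proper subring of $(R\otimes_{\bF_q}\bF_v)\lb{\pi}$ consisting of power series that converge on every closed subdisc $\{\abs{\pi}\leq\abs{\varpi}^s\}$.

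Restricting to the closed subspaces $V(\pi^n)$ does produce $(M_n,\phi_n)$, and your Lang/Artin--Schreier d\'evissage does trivialize that system pro-\'etale locally. But this only trivializes the $\pi$-adic completion of $(\sE,\phi)$ along $\pi=0$. Elements of $\varprojlim_n M_n^{\phi_n=1}$ are sections of $\varprojlim_n M_n$, not of $\sE$ over the disc. So the map $\bL\otimes_{\ul{\cO_v}}\sO\ra\sE$ in your last paragraph has not been constructed, and the claim that $(\sE,\phi)$ becomes trivial over your tower is unjustified. This is not a formality. Already when $S$ is a point, the completion functor from bundles on the open disc to modules over $(R\otimes\bF_v)\lb{\pi}$ is not full, because the inclusion of global functions into formal power series is not surjective. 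An isomorphism of completions therefore need not come from the disc.

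What is missing is an argument, using $\phi$ essentially, that a $\phi$-equivariant trivialization of the completion comes from one over the open disc. One route: bound the growth of the coefficients produced by your Artin--Schreier recursion, to see that the formal $\phi$-invariant sections converge on some small closed disc $\{\abs{\pi}\leq\abs{\varpi}^s\}$. Then use that $\Frob_S$ carries $\{\abs{\pi}\leq\abs{\varpi}^{qs}\}$ isomorphically onto $\{\abs{\pi}\leq\abs{\varpi}^s\}$, so $\phi$-invariance propagates convergence, and invertibility of the resulting frame, to the whole open disc. The paper does not redo this. It runs the argument of \cite[Proposition 3.7]{LH23} and \cite[Theorem 3.12]{LH23} with $(-)\lb{\pi}$ replaced by $-\wh\otimes_{\bF_q}\bF_v\lb{\pi}$, and your d\'evissage supplies only the Artin--Schreier--Witt part of that argument.

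Your full faithfulness step survives, even though it rests on the same incorrect formula for global sections. A $\Frob_S^*$-invariant power series has coefficients in $\Cont(\abs{S},\bF_v)\subseteq R^+\otimes_{\bF_q}\bF_v$, so it converges on the open disc automatically, and the invariants are still $\Cont(\abs{S},\cO_v)$.
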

\begin{proof}
After replacing $(-)\lb{\pi}$ with $-\wh\otimes_{\bF_q}\bF_v\lb{\pi}$, where $\pi$ denotes a uniformizer of $\cO_v$, the proof proceeds as in \cite[Proposition 3.7]{LH23} and \cite[Theorem 3.12]{LH23}.
\end{proof}

\subsection{}
The following relationship between vector bundles on the global and local Hartl--Pink curves plays an important role in this paper. Write $Z$ for the closed complement $C\ssm U$. For all closed points $u$ of $U$, we have a natural morphism of adic spaces $\Spa{\cO_u}\ra U$, and for all $z$ in $Z$, we have a natural morphism of adic spaces $\Spa{F_z}\ra U$. Therefore pullback yields morphisms
\begin{align*}
  \loc_u:\Bun_U\ra\Bun_{\cO_u}\mbox{ and }\loc_z:\Bun_U\ra\Bun_{F_z}.
\end{align*}
\begin{defn*}\hfill
  \begin{enumerate}[a)]
  \item Write
    \begin{align*}
      \loc:\Bun_U\ra\prod_u\Bun_{\cO_u}\times\prod_z\Bun_{F_z}
    \end{align*}
    for the morphism induced by the $\{\loc_u\}_u$ and $\{\loc_z\}_z$, where $u$ runs over closed points of $U$, and $z$ runs over $Z$.
  \item Write $\Bun_\bA$ for the presheaf of categories $\varinjlim_U\textstyle\prod_u\Bun_{\cO_u}\times\textstyle\prod_z\Bun_{F_z}$, where $U$ runs over dense open subschemes of $C$.
  \item Write $\loc:\Bun_F\ra\Bun_\bA$ for the morphism obtained by taking $\varinjlim_U$ of a).
  \end{enumerate}
\end{defn*}
Proposition \ref{ss:localbundles} implies that $\Bun_\bA$ satisfies v-descent.

\subsection{}\label{ss:localcharts}
We will use the following lemma to reduce to the situation where $\Pic{U}=1$. Because $U$ is Dedekind, $\Pic{U}=1$ if and only if all vector bundles on $U$ are trivial. Since $U$ is a smooth curve, this implies that $U$ is affine.
\begin{lem*}
There exists an open cover $\{C_1,C_2\}$ of $C$ satisfying $\Pic{C_1}=\Pic{C_2}=1$ and hence an open cover $\{U_1,U_2\}$ of $U$ satisfying $\Pic{U_1}=\Pic{U_2}=1$.
\end{lem*}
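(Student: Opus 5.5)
We need open cover $\{C_1,C_2\}$ of $C$ with $\Pic C_i=1$. The plan is to take $C_i=C\ssm D_i$ for finite sets $D_1,D_2$ of closed points whose degrees have gcd $1$, and such that the classes of the points in $D_i$ generate $\Pic C$. Then $C_1\cup C_2=C$ as soon as $D_1\cap D_2=\emptyset$.

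The key input is the excision sequence
\begin{align*}
\bigoplus_{x\in D}\bZ\cdot[x]\ra\Pic C\ra\Pic(C\ssm D)\ra0.
\end{align*}
It shows that $\Pic(C\ssm D)=1$ if and only if the classes $[x]$ for $x\in D$ generate $\Pic C$. Recall also that $\Pic^0 C=\mathrm{Jac}_C(\bF_q)$ is a finite group, since $\bF_q$ is finite. By F.\ K.\ Schmidt, the degree map $\Pic C\ra\bZ$ is surjective, so $\Pic C$ is finitely generated.

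\emph{Step 1.} Choose finitely many closed points whose classes generate $\Pic C$. One way: every divisor class is a difference of effective divisors, so classes of closed points generate $\Pic C$. Since $\Pic C$ is finitely generated, finitely many points suffice; call this set $D_1$. Then $\Pic(C\ssm D_1)=1$. Note that $C\ssm D_1$ is affine once $D_1$ is nonempty.

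\emph{Step 2.} Find a second such set $D_2$ disjoint from $D_1$. For each generator $[x]$ with $x\in D_1$, I would find closed points outside $D_1$ representing enough classes. The cleanest route is a Chebotarev-type density statement for $\Pic C$: for every class $c\in\Pic C$ of degree $n$, there are infinitely many closed points (for $n$ large) and, more usefully, effective divisors in $c$ supported away from any given finite set. Concretely, for a divisor $E$ with $\deg E\ge 2g+1+\deg(\text{stuff})$, Riemann--Roch gives a section of $\sO(E)$ whose divisor avoids the finite set $D_1$. Since $\bF_q$ is finite, this needs a counting argument rather than Bertini, so I would instead use the standard fact that every class of sufficiently large degree contains an effective divisor supported on points of large degree. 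Points of degree $>\max_{x\in D_1}\deg x$ automatically avoid $D_1$.

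So I pick generators $c_1,\dotsc,c_r$ of $\Pic C$ and shift each $c_j$ to $c_j+N[x_0]$, so that it has large degree. I then add $[x_0]$-type classes realized by points of large degree as well, ensuring the whole collection still generates $\Pic C$. Each resulting class is represented by an effective divisor whose support consists of closed points of degree larger than anything in $D_1$. Let $D_2$ be the union of these supports. Then $D_2\cap D_1=\emptyset$, and the classes of points of $D_2$ generate $\Pic C$, because each chosen class is a $\bZ$-combination of them.

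\emph{Step 3.} The consequence for $U$ follows by setting $U_i=U\cap C_i=C_i\ssm Z$. Removing further points keeps $\Pic$ trivial, since $\Pic C_i\twoheadrightarrow\Pic U_i$ by the same exact sequence.

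The main obstacle is Step 2: producing effective representatives of prescribed classes that avoid a given finite set over a finite field. I would handle it by counting. The number of effective divisors of degree $n$ in a fixed class is about $q^{n-g+1}$ for $n\ge 2g-1$. Those whose support meets the finite set $D_1$ number at most $|D_1|\,q^{n-g+1-\min\deg}$ plus lower order terms, so for large $n$ some representative avoids $D_1$.
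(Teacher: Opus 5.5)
Your overall structure matches the paper's: remove a finite set $D_1$ of closed points whose classes generate $\Pic C$, find a second such set $D_2$ disjoint from $D_1$, and intersect with $U$. Steps 1 and 3 are fine. The gap is in Step 2, which you correctly identify as the crux but do not close.

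\textbf{The counting argument fails as written.} The effective divisors of degree $n$ in a fixed class that contain a given point $x$ are in bijection with the effective divisors in a class of degree $n-\deg x$. So the union bound estimates the bad divisors by roughly $\sum_{x\in D_1}q^{-\deg x}$ times the total number. This ratio does not go to zero with $n$; it is independent of $n$. It exceeds $1$ as soon as $D_1$ contains $q+1$ points of degree $1$, which Step 1 does not exclude. So the claim about ``lower order terms'' is false, and taking $n$ large gains nothing.

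Your fallback, that every class of large degree contains an effective divisor supported on points of large degree, is true. But proving it is the same avoidance problem, with $D_1$ replaced by the finite set of points of degree $\le M$, so it cannot serve as the justification. What does work is inclusion--exclusion. For $n\geq\deg D_1+2g-1$, Riemann--Roch shows the number of effective divisors in the class avoiding $D_1$ is exactly $q^{n-g+1}\prod_{x\in D_1}(1-q^{-\deg x})/(q-1)$, which is positive.

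\textbf{Effectivity is unnecessary, and dropping it is the paper's route.} The excision sequence only needs each generator of $\Pic C$ to be a $\bZ$-linear combination of classes of points of $D_2$, with arbitrary signs. Take any divisor $E$ representing a generator. By the approximation theorem, the map $F^\times\to\bigoplus_{x\in D_1}\bZ$, $f\mapsto(\mathrm{ord}_x f)_x$, is surjective. So there is $f$ with $\mathrm{ord}_x f=-\mathrm{ord}_x E$ for all $x\in D_1$. Then $E+\mathrm{div}(f)$ is a possibly non-effective divisor in the same class, supported away from $D_1$. Let $D_2$ be the union of the supports of these divisors over a finite generating set. Then $D_1\cap D_2=\emptyset$ and $\Pic(C\ssm D_2)=1$, with no counting or density input at all.
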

\begin{proof}  
The finitude of $\bF_q$ implies that $\Pic{C}$ is finitely generated. Let $\sL_1,\dotsc,\sL_g$ be generators of $\Pic{C}$, and let $C_1$ be a dense open subscheme of $C$ such that $\sL_i|_{C_1}$ is trivial for all $1\leq i\leq g$. Then excision for class groups implies that $\Pic{C_1}=1$. Because $C\ssm C_1$ is finite, weak approximation shows that the order-of-vanishing map $F^\times\ra\bZ^{\oplus(C\ssm C_1)}$ is surjective, so for all $1\leq i\leq g$, there exists a divisor $D_i$ supported on $C_1$ such that $\sL_i\cong\sO_C(D_i)$. By taking $C_2=C\ssm\bigcup_{i=1}^g\supp{D_i}$, excision for class groups again implies that $\Pic{C_2}=1$. Finally, the analogous statements hold for $U_1=U\cap C_1$ and $U_2=U\cap C_2$ by excision for class groups.
\end{proof}

\subsection{}
We will use the following notion of local systems on $S$ with coefficients in $\sO_U$, which is local both in $S$ and in $U$. For any $\sE$ in $\QCoh(U)$, write $\ul{\sE}$ for the constant $\QCoh(U)$-valued sheaf on the site $*_{\text{pro-\'et}}$ induced by $\sE$.
\begin{defn*}
A \emph{pro-\'etale $\ul{\sO_U}$-local system on $S$} is an $\ul{\sO_U}$-module on the site $S_{\text{pro-\'et}}$ that, locally in $S_{\text{pro-\'et}}$, is of the form $\ul{\sE}$ for some vector bundle $\sE$ on $U$.
\end{defn*}
When $\Pic{U}=1$, note that this agrees with the notion of pro-\'etale $\ul{A}$-local systems, where $A$ denotes the global sections of $U$ endowed with the discrete topology.

\subsection{}\label{ss:Alocalsystems}
We now prove a description of pro-\'etale $\ul{\sO_U}$-local systems in terms of vector bundles on the global Hartl--Pink curve; this is the global analogue of a theorem of Kedlaya--Liu \cite[Theorem 8.5.12]{KL15}.

We say that an object in $\Bun_{F_v}(S)$ \emph{has slope zero} if, under the identification from Proposition \ref{ss:localbundles}, its image in $\Bun_{F_v}(\Spa\bF_v\times S)$ has slope zero Harder--Narasimhan polygon as in \cite[p.~74]{FS21}. Note that this agrees with the usual notion when $S$ lies over $\bF_v$. Moreover, for all $(\sE,\phi)$ in $\Bun_{G,\cO_v}(S)$, its image in $\Bun_{G,F_v}(S)$ has slope zero.
\begin{thm*}
We have an exact tensor equivalence of categories
  \begin{align*}
    \{\mbox{pro-\'etale }\ul{\sO_U}\mbox{-local systems on }S\}\ra^\sim\left\{
    \begin{tabular}{c}
      objects in $\Bun_U(S)$ whose image in \\
      $\Bun_{F_z}(S)$ has slope zero for all $z$ in $Z$
    \end{tabular}
    \right\}
  \end{align*}
given by $\bL\mapsto(\bL\otimes_{\ul{\sO_U}}\sO_{U^{\an}_S},\bL\otimes_{\ul{\sO_U}}(\Frob^{-1}_S)^*)$.
\end{thm*}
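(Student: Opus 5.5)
First I check that the functor is well defined. Both sides satisfy v-descent: the left side because pro-\'etale local systems are sheaves, the right side by Theorem \ref{ss:vectorbundlesdescent} and Proposition \ref{ss:localbundles}. Both sides are also Zariski-local on $U$. Lemma \ref{ss:localcharts} gives a cover $\{U_1,U_2\}$ of $U$ with $\Pic U_i=1$, and $U_1\cap U_2$ also has trivial Picard group by excision. So we may assume $U=\Spec A$ with every vector bundle on $U$ trivial, in which case a pro-\'etale $\ul{\sO_U}$-local system is a pro-\'etale $\ul A$-local system. If $\bL\cong\ul{A}^n$ pro-\'etale locally, then $\bL\otimes\sO_{U^{\an}_S}$ is locally $\sO^n$ with $\phi$ the standard Frobenius structure. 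This descends by v-descent to an object of $\Bun_U(S)$. Its image in $\Bun_{F_z}(S)$ is locally trivial with the standard Frobenius, hence has slope zero.

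Full faithfulness reduces, by taking internal Homs on both sides, to computing invariants. For a local system $\bL$, the $\phi$-invariant global sections of $\bL\otimes\sO_{U^{\an}_S}$ should equal $\bL(S)$. Pro-\'etale locally $\bL$ is trivial, so this reduces to $\ul A(S)\cong\Ga(U^{\an}_S,\sO)^{\Frob_S^*=1}$, which is Proposition \ref{ss:Frobeniusfixedglobalsections}. We then glue by descent. Exactness and tensor compatibility are clear from the formula.

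Essential surjectivity is the main step. Let $(\sE,\phi)$ be an object of $\Bun_U(S)$ with slope zero at every $z\in Z$. I would set $\bL\coloneqq(\sE)^{\phi=1}$ as a pro-\'etale sheaf on $S$ and show that it is a local system. This is v-local on $S$, so we may take $S$ strictly totally disconnected, or even a product of points. For each closed point $u\in U$, $\loc_u(\sE)$ is an object of $\Bun_{\cO_u}(S)$, hence by Proposition \ref{ss:ArtinSchreierWitt} a pro-\'etale $\ul{\cO_u}$-local system. For each $z\in Z$, the slope-zero hypothesis together with \cite[Theorem 8.5.12]{KL15} (via Proposition \ref{ss:localbundles}) gives an $\ul{F_z}$-local system. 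After a pro-\'etale cover of $S$ these become trivial. The question is then whether the $\phi$-invariants of $\sE$ form an $A$-lattice whose completions recover these local lattices. This is the analogue of the Beauville--Laszlo gluing $F$-lattices $\leftrightarrow$ adelic lattices.

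My approach to that step is to work fiberwise over a geometric point $\Spa(K,K^+)$ first, and then spread out using that $S$ is a product of points together with finiteness. Over a point, $\sE$ is a $\phi$-bundle on $U^{\an}_K$ whose localizations at all places of $U$ are trivial, and at $Z$ the local bundle is slope zero with a trivialization. I expect to use a GAGA argument, gluing the analytic lattices at the points of $Z$ to extend $\sE$ to $C^{\an}_K$. By Theorem H it then becomes an algebraic bundle $\sE^{\rm alg}$ on $C_K$ with a $\phi$-structure, i.e.\ a Drinfeld-type $\phi$-sheaf. Lang's theorem/Katz's correspondence (Drinfeld's lemma) then shows $\sE^{\rm alg}\cong\sE_0\otimes_{\bF_q}K$ for a vector bundle $\sE_0$ on $C$, and restricting to $U$ shows that the $\phi$-invariants give a vector bundle on $U$ with the right rank. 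In families, I would deduce local triviality from the upper semicontinuity of the rank of invariants together with the fiberwise result, using Proposition \ref{ss:Frobeniusfixedglobalsections} to identify $\bL\otimes\sO\to\sE$ as an isomorphism. This family version is where I expect the most difficulty. I would first try to handle it by bounding the lattice at $Z$ uniformly and applying Theorem H over $S$ itself, avoiding a pointwise argument.
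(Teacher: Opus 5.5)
Your reduction to $\Pic U=1$, the well-definedness check, and the full faithfulness argument via Proposition \ref{ss:Frobeniusfixedglobalsections} all match the paper. The gap is in essential surjectivity, at the step you flag yourself. Knowing that $(\sE,\phi)$ is trivial at every geometric point does not give triviality on a pro-\'etale neighborhood, nor uniformly over a product of points. Neither the paper nor your sources supply a semicontinuity result that would turn fiberwise triviality of $\phi$-invariants into local constancy of $\sE^{\phi=1}$. Such a spreading-out is essentially the content of the theorem, so the pointwise route is unproven as it stands.

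You already have the only family input needed: \cite[Theorem II.2.19]{FS21}, the family form of \cite[Theorem 8.5.12]{KL15}, applied at each $z\in Z$. It gives an affinoid perfectoid pro-\'etale cover $S'=\Spa(R',R'^+)\ra S$ on which every $\loc_z(\sE,\phi)|_{S'}$ is trivial. After that no lattice needs to be bounded, and the rest of the argument can be run directly over $S'$:
\begin{enumerate}
\item Glue $(\sE,\phi)|_{S'}$ on $U^{\an}_{S'}$ with the trivial object of $\Bun_{\cO_z}(S')$ along $\Spa F_z\times S'$. This gives an object of $\Bun_C(S')$.
\item Apply Theorem \ref{ss:appliedGAGA} over $S'$ itself. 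This yields a vector bundle $\sE'^{\alg}$ on $C_{R'}$ with $\phi'^{\alg}:\sE'^{\alg}\ra^\sim\Frob_{R'}^*\sE'^{\alg}$.
\item Use the relative form of Lang's theorem, \cite[Lemme 3 du paragraphe I.3]{Laf97}, instead of its version over an algebraically closed field (Drinfeld's lemma is not the relevant tool). After a clopen cover of $\Spec R'$, $(\sE'^{\alg},\phi'^{\alg})$ is isomorphic to $\sE_0\otimes_{\bF_q}R'$ with the standard Frobenius, for some vector bundle $\sE_0$ on $C$.
\item Since $\Pic U=1$, $\sE_0|_U$ is trivial, so $(\sE,\phi)|_{S'}$ is trivial.
\end{enumerate}
Full faithfulness and pro-\'etale descent then finish the proof. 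This also shows directly that $\sE^{\phi=1}$ is a local system, with no fiberwise analysis needed.
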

\begin{proof}
  By replacing $U$ with the open cover $\{U_1,U_2\}$ from Lemma \ref{ss:localcharts}, we can assume that $\Pic{U}=1$. Note that the above functor preserves tensor products and duals. Hence taking internal homs reduces full faithfulness to proving that, for all $\ul{A}$-local systems $\bL$ on $S$, the map
  \begin{align*}
    \Hom_{\ul{A}}(\ul{A},\bL)\ra\Hom_{\sO_{U^{\an}_S}}((\sO_{U^{\an}_S},(\Frob_S^{-1})^*),(\bL\otimes_{\ul{A}}\sO_{U^{\an}_S},\bL\otimes_{\ul{A}}(\Frob_S^{-1})^*))
  \end{align*}
  is a bijection. By replacing $S$ with a pro-\'etale cover and using Theorem \ref{ss:vectorbundlesdescent}, we can assume that $\bL=\ul{A}$. Then the result follows from Proposition \ref{ss:Frobeniusfixedglobalsections}.

For essential surjectivity, let $(\sE,\phi)$ be an object in $\Bun_U(S)$. By full faithfulness and pro-\'etale descent, it suffices to prove that there exists an affinoid perfectoid pro-\'etale cover $S'\ra S$ such that $(\sE,\phi)|_{S'}$ is trivial. Since $\loc_z(\sE,\phi)$ has slope zero for all $z$ in $Z$, \cite[Theorem II.2.19]{FS21} yields an affinoid perfectoid pro-\'etale cover $S'=\Spa(R',R'^+)\ra S$ such that, for all $z$ in $Z$, the base change $\loc_z(\sE,\phi)|_{S'}$ is trivial. Now $\{U^{\an}_{S'}\}\cup\{\Spa\cO_z\times S'\}_z$ is an open cover of $C^{\an}_{S'}$, so we can glue $(\sE,\phi)|_{S'}$ with the trivial object in $\Bun_{\cO_z}(S')$ for all $z$ in $Z$ to obtain an object $(\sE',\phi')$ in $\Bun_C(S')$. Theorem \ref{ss:appliedGAGA} shows that $(\sE',\phi')$ is the analytification of a vector bundle $\sE'^\alg$ on $C_{R'}$ equipped with an isomorphism $\phi'^{\alg}:\sE'^{\alg}\ra^\sim\Frob_{R'}^*\sE'^{\alg}$. After replacing $\Spec{R'}$ with a clopen cover, \cite[Lemme 3 du paragraphe I.3]{Laf97} indicates that $(\sE'^{\alg},\phi'^{\alg})$ is isomorphic to $(\sE^{\alg}|_{R'},\id_{\sE^{\alg}}\otimes_{\bF_q}(\Frob_{R'}^{-1})^*)$ for some vector bundle $\sE^{\alg}$ on $C$. Finally, $\Pic{U}=1$ implies that $\sE^{\alg}|_U$ is trivial.
\end{proof}

\subsection{}\label{ss:BunUschemepoints}
Let $B$ be a perfect $\bF_q$-algebra, endowed with the discrete topology. We conclude this section by proving that, in many situations, vector bundles on $U_B$ are equivalent to vector bundles on ``$U_{\Spd B}$''.

What do we mean by ``$U_{\Spd B}$''? Note that $\Spa{B\lp{t^{1/q^\infty}}}\ra\Spd{B}$ is surjective and representable in perfectoid spaces, so we have a natural diagram of perfectoids
\begin{align*}\label{eqn:SpdBresolution}
  \tag{$\star$}
  \begin{split}&\Spa{B\lp{t^{1/q^\infty}}}\times_{\Spd{B}}\Spa{B\lp{t^{1/q^\infty}}}\times_{\Spd{B}}\Spa{B\lp{t^{1/q^\infty}}}\\
    &\rrra\Spa{B\lp{t^{1/q^\infty}}}\times_{\Spd{B}}\Spa{B\lp{t^{1/q^\infty}}}\rightrightarrows\Spa{B\lp{t^{1/q^\infty}}}.\end{split}
\end{align*}
\begin{thm*}
  Assume that each connected component of $\Spec{B}$ is a valuation ring. Then analytification induces an exact tensor equivalence from
  \begin{align*}
    \{\mbox{vector bundles on }U_B\}
  \end{align*}
to vector bundles on $U_{\Spa{B\lp{t^{1/q^\infty}}}}^{\an}$ with descent datum with respect to
\begin{align*}\label{eqn:UBresolution}
  \tag{$\star\star$}\begin{split}&U_{\Spa{B\lp{t^{1/q^\infty}}}\times_{\Spd{B}}\Spa{B\lp{t^{1/q^\infty}}}\times_{\Spd{B}}\Spa{B\lp{t^{1/q^\infty}}}}^{\an}\\
  &\rrra U_{\Spa{B\lp{t^{1/q^\infty}}}\times_{\Spd{B}}\Spa{B\lp{t^{1/q^\infty}}}}^{\an}\rightrightarrows U_{\Spa{B\lp{t^{1/q^\infty}}}}^{\an}.\end{split}
\end{align*}
\end{thm*}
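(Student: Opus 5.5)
Write $S_B=\Spa B\lp{t^{1/q^\infty}}$. The plan is to reduce to the case $U=C$, where we can use GAGA (Theorem H, in the form of Theorem \ref{ss:appliedGAGA}), and then to apply the known description of vector bundles on $\Spd B$ in terms of vector bundles on $\Spec B$.

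First, functoriality. Analytification gives a tensor exact functor from vector bundles on $U_B$ to vector bundles on $U^{\an}_{S_B}$. The descent datum comes from the fact that the two composites $S_B\times_{\Spd B}S_B\rightrightarrows S_B\to\Spd B\to\Spec B$ agree. Both sides are Zariski-local on $U$: the left side because it is algebraic, and the right side because analytification is compatible with open immersions. Using Lemma \ref{ss:localcharts} and a Čech argument with $U_1$, $U_2$ and $U_1\cap U_2$, we may therefore assume $\Pic U=1$ (and that $U$ is affine) whenever convenient. We may also work locally on $\Spec B$ for clopen decompositions. Since each connected component is a valuation ring, a compactness/limit argument should let us reduce to $B$ a perfect valuation ring; I would treat this reduction as routine.

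Next, reduce to $U=C$ by gluing at the points of $Z$. For a descended bundle on $U^{\an}$, restrict it along $\Spa F_z\times S_B\to U^{\an}_{S_B}$ for $z\in Z$. The result is a vector bundle on the punctured disc over $\Spd B$, i.e.\ on ``$\Spa F_z\times\Spd B$''. By \cite[Theorem 3.15]{GIZ25}, a result of Gleason--Ivanov--Zillinger for valuation-ring $B$, such a bundle arises uniquely from a vector bundle on $\Spec(F_z\wh\otimes B)$, in the appropriate sense. The same holds for the integral disc $\cO_z$, where we can use the corresponding statement for $\Spd B$ together with $\cO_z\lb{}$-lattices. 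Choose a lattice extending the bundle over $\cO_z\wh\otimes B$; this is possible after localizing on $B$ because $B$ is a valuation ring, so finitely generated torsion-free modules are free. Gluing then produces a descended bundle on $C^{\an}_{S_B}$. On the algebraic side, Beauville--Laszlo gluing for $U_B\subset C_B$ along $\cO_z\wh\otimes B$ gives the matching construction. Both constructions are compatible with analytification, so it suffices to prove the statement for $C$, including compatibility with the modifications at $Z$.

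For $U=C$, Theorem \ref{ss:appliedGAGA} (GAGA for smooth proper schemes over sousperfectoid bases) identifies vector bundles on $C^{\an}_{S}$ with vector bundles on $C_{R}$ for every term $S=\Spa(R,R^+)$ in (\ref{eqn:SpdBresolution}). These terms are affinoid perfectoid, so the theorem applies, and the identification is compatible with pullbacks. Hence a descended analytic bundle is the same as a vector bundle on $C\times\Spd B$ in the algebraic sense, i.e.\ a compatible family over the rings $R$ of (\ref{eqn:SpdBresolution}). Because $C$ is flat and projective over $\bF_q$, the descent problem can be phrased module by module. I would then invoke \cite[Theorem 4.9]{Kim24}, D.~Kim's theorem that vector bundles on $\Spec B$ are equivalent to vector bundles on $\Spd B$ for such $B$. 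The natural way to use it is through the projective description: write a bundle as a cokernel of maps between sums of twists $\sO(n)$, and use $\Gamma(C_R,\sE(n))$ for large $n$, which commutes with flat base change. Alternatively, apply Kim's result to the finite projective $B$-modules $H^0(\sE(n))$, which carry a graded module structure over $\bigoplus_n H^0(C,\sO(n))$.

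The main obstacle I expect is the gluing step at $Z$. There we need the uniqueness and existence of algebraic models on the punctured disc over $\Spd B$, and the construction must be compatible with the descent data. The remaining steps are formal consequences of GAGA and Kim's theorem.
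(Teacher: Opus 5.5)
Your outline of essential surjectivity matches the paper's: Gleason--Ivanov--Zillinger on the punctured discs, gluing across $Z$ to reach $C$, GAGA, then Kim. However, the step you treat as easy is the one that needs a real input.

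\textbf{The lattice at $z$.} You need a finite projective $\cO_z\wh\otimes_{\bF_q}B=(\bF_z\otimes_{\bF_q}B)\lb{\pi}$-lattice in the finite projective $F_z\wh\otimes_{\bF_q}B$-module $\sE_z$. Freeness of finitely generated torsion-free $B$-modules says nothing about this. The ring $(\bF_z\otimes_{\bF_q}B)\lb{\pi}$ is not a valuation ring, and the naive span of generators need not be projective. For example, if $B$ is a valuation ring and $0\neq\vpi$ lies in its maximal ideal, the $B\lb{\pi}$-span of the generators $\vpi,\pi$ of $B\lp{\pi}$ is the ideal $(\vpi,\pi)$. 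This is a non-principal ideal of the local domain $B\lb{\pi}$, hence not projective. The paper instead invokes \cite[Theorem 6.1]{Iva23}: after a clopen cover of $\Spec B$, $\sE_z$ is trivial, so one glues with the trivial bundle on $\Spa\cO_z\times\Spa B\lp{t^{1/q^\infty}}$.

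Your reduction of full faithfulness to $C$ needs the same input, in order to extend bundles from $U_B$ to $C_B$. The paper avoids this entirely. It takes $U=\Spec A$ affine and restricts to the subspace $U_{S,1}$, whose global sections over the terms of (\ref{eqn:SpdBresolution}) are $(A\otimes_{\bF_q}B)\lp{t_1^{1/q^\infty}}$, $(A\otimes_{\bF_q}B)\lp{t_1^{1/q^\infty},t_2^{1/q^\infty}}$, $\dotsc$. It applies \cite[Theorem 4.9]{Kim24} there, using that morphisms on $U^{\an}_S$ are determined by their restrictions to $U_{S,1}$.

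\textbf{The fiber products are not affinoid.} The double and triple products in (\ref{eqn:SpdBresolution}) are not affinoid perfectoid. For instance, $\Spa B\lp{t_1^{1/q^\infty}}\times_{\Spd B}\Spa B\lp{t_2^{1/q^\infty}}$ is the increasing union over $n$ of the rational pieces $\Spa B\lb{t_1^{1/q^\infty},t_2^{1/q^\infty}}\ang{t_1^n/t_2,t_2^n/t_1}[1/t_1t_2]$, and similarly for the triple product. So there is no single ring $R$ attached to these terms. Theorem \ref{ss:appliedGAGA} only yields a bundle on $C_{B\lp{t_1^{1/q^\infty}}}$ with descent isomorphisms over each piece. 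You must then pass to $\varprojlim_n$ to obtain descent data over the rings appearing in Kim's theorem, as the paper does with (\ref{eqn:algCBresolution}).

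With that correction, your idea of applying Kim to $H^0(\sE(m))$ is a sound alternative to the paper's restriction to an affine cover $\{C_1,C_2\}$ of $C$. For $m\gg0$ these modules are finite projective and commute with base change, so the descent isomorphisms pass to $\varprojlim_n$. The cost is checking that the descended graded module defines a vector bundle on $C_B$.

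\textbf{Reduction to one valuation ring.} This reduction is not routine: rings like $B\lp{t^{1/q^\infty}}$ do not commute with the colimit over clopen neighborhoods in $\pi_0(\Spec B)$. It is also unnecessary, since the paper applies the results of Gleason--Ivanov--Zillinger, Ivanov and Kim directly to such $B$, after clopen covers.
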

\begin{proof}
  For full faithfulness, let $\sE_1$ and $\sE_2$ be vector bundles on $U_B$, write $(\sE'_1,\al_1)$ and $(\sE'_2,\al_2)$ for the corresponding vector bundles on $U_{\Spa{B\lp{t^{1/q^\infty}}}}^{\an}$ with descent data with respect to (\ref{eqn:UBresolution}), and let $f':\sE'_1\ra\sE'_2$ be a morphism of vector bundles compatible with the descent data. By replacing $U$ with an open cover, we can assume that $U=\Spec{A}$ is affine. Write $A=\bF_q[T_1,\dotsc,T_d]/I$. For all affinoid perfectoid spaces $S=\Spa(R,R^+)$ over $\bF_q$, Lemma \ref{ss:sheafytensorproduct} shows that
  \begin{align*}
    U_{S,1}\coloneqq\Spa(R\ang{T_1,\dotsc,T_d}/I,(R^+\ang{T_1,\dotsc,T_d})^\sim)
  \end{align*}
  is an open subspace of $U^{\an}_S$. For all affinoid perfectoid spaces $S'$ over $S$, note that the preimage of $U_{S,1}$ in $U_{S'}^{\an}$ equals $U_{S',1}$, so restricting (\ref{eqn:UBresolution}) to $U_{\Spa{B\lp{t^{1/q^\infty}}},1}$ yields
\begin{align*}
&U_{\Spa{B\lp{t^{1/q^\infty}}}\times_{\Spd{B}}\Spa{B\lp{t^{1/q^\infty}}}\times_{\Spd{B}}\Spa{B\lp{t^{1/q^\infty}}},1}\\
&\rrra U_{\Spa{B\lp{t^{1/q^\infty}}}\times_{\Spd{B}}\Spa{B\lp{t^{1/q^\infty}}},1}\rightrightarrows U_{\Spa{B\lp{t^{1/q^\infty}}},1}.
\end{align*}
Since $I$ is finitely generated, for all positive integers $m$, we have
  \begin{align*}
   B\lp{t_1^{1/q^\infty},\dotsc,t_m^{1/q^\infty}}\ang{T_1,\dotsc,T_d}/I = \big(B[T_1,\dotsc,T_d]/I\big)\lp{t_1^{1/q^\infty},\dotsc,t_m^{1/q^\infty}},
  \end{align*}
so taking global sections yields
  \begin{align*}
    (A\otimes_{\bF_q}B)\lp{t_1^{1/q^\infty},t_2^{1/q^\infty},t_3^{1/q^\infty}}\llla(A\otimes_{\bF_q}B)\lp{t_1^{1/q^\infty},t_2^{1/q^\infty}}\leftleftarrows(A\otimes_{\bF_q}B)\lp{t_1^{1/q^\infty}}.
  \end{align*}
Therefore \cite[Theorem 4.9]{Kim24} indicates that the restriction of $f'$ to $U_{\Spa B\lp{t^{1/q^\infty}},1}$ arises uniquely from a morphism $f:\sE_1\ra\sE_2$ of vector bundles on $U_B$. Since morphisms of vector bundles on $U^{\an}_S$ are determined by their restrictions to $U_{S,1}$, this yields the desired result.

For essential surjectivity, let $\sE'$ be a vector bundle on $U^{\an}_{\Spa{B\lp{t^{1/q^\infty}}}}$ with descent datum $\al$ with respect to (\ref{eqn:UBresolution}). For all $z$ in $Z$, pullback yields a vector bundle $\sE'_z$ on $\Spa{F_z}\times\Spa{B\lp{t^{1/q^\infty}}}$ with descent datum $\al_z$ with respect to the evaluation of $\Spa{F_z}\times(-)$ on (\ref{eqn:SpdBresolution}), and applying \cite[Theorem 3.15]{GIZ25} and \cite[Remark 3.14]{GIZ25} to $\bF_v\otimes_{\bF_q}B$ shows that $(\sE_z',\al_z)$ arises uniquely from a vector bundle $\sE_z$ on $\Spec{F_z\wh\otimes_{\bF_q}B}$. After replacing $\Spec{B}$ by a clopen cover, \cite[Theorem 6.1]{Iva23} indicates that $\sE_z$ is trivial. Now $\{U^{\an}_{(-)}\}\cup\{\Spa{\cO_z}\times(-)\}_z$ is an open cover of $C^{\an}_{(-)}$, so we can glue $(\sE',\al)$ to the trivial vector bundle on $\Spa\cO_z\times\Spa{B\lp{t^{1/q^\infty}}}$ with descent datum with respect to the evaluation of $\Spa{\cO_z}\times(-)$ on (\ref{eqn:SpdBresolution}). This yields a vector bundle $\wh\sE'$ on $C^{\an}_{\Spa{B\lp{t^{1/q^\infty}}}}$ with descent datum $\wh\al$ with respect to 
   \begin{align*}
  &C_{\Spa{B\lp{t^{1/q^\infty}}}\times_{\Spd{B}}\Spa{B\lp{t^{1/q^\infty}}}\times_{\Spd{B}}\Spa{B\lp{t^{1/q^\infty}}}}^{\an}\\
    &\rrra C_{\Spa{B\lp{t^{1/q^\infty}}}\times_{\Spd{B}}\Spa{B\lp{t^{1/q^\infty}}}}^{\an}\rightrightarrows C_{\Spa{B\lp{t^{1/q^\infty}}}}^{\an}.
  \end{align*}
  Theorem \ref{ss:appliedGAGA} shows that this corresponds to a vector bundle $\wh\sE'^{\alg}$ on $C_{B\lp{t_1^{1/q^{\infty}}}}$ with, for all positive integers $n$, a descent datum $\wh\al_n$ with respect to
   \begin{align*}\label{eqn:algCBresolution}
  \tag{$\star\star\star$}\begin{split}&C_{B\lb{t_1^{1/q^\infty},t_2^{1/q^\infty},t_3^{1/q^\infty}}\ang{t_1^n/t_2t_3,t_2^n/t_1t_3,t_3^n/t_1t_2}[1/t_1t_2t_3]}\\
    &\rrra C_{B\lb{t_1^{1/q^\infty},t_2^{1/q^\infty}}\ang{t_1^n/t_2,t_2^n/t_1}[1/t_1t_2]}\rightrightarrows C_{B\lp{t_1^{1/q^\infty}}}.\end{split}
  \end{align*}  
  Let $\{C_1,C_2\}$ be an affine open cover of $C$. For all $i$ in $\{1,2\}$, write $A_i$ for the global sections of $C_i$, and note that $A_i$ is free over $\bF_q$. Hence restricting (\ref{eqn:algCBresolution}) to $(C_i)_{B\lp{t_1^{1/q^\infty}}}$, taking global sections, and taking $\varprojlim_n$ yields
  \begin{align*}
    A_i\otimes_{\bF_q}B\lp{t_1^{1/q^\infty},t_2^{1/q^\infty},t_3^{1/q^\infty}}\llla A_i\otimes_{\bF_q}B\lp{t_1^{1/q^\infty},t_2^{1/q^\infty}}\leftleftarrows A_i\otimes_{\bF_q}B\lp{t_1^{1/q^\infty}}.
  \end{align*}
Therefore \cite[Theorem 4.9]{Kim24} indicates that the restriction of $(\wh\sE',\wh\al)$ to $(C_i)_{(-),1}$ arises uniquely from a vector bundle $\wh\sE_i$ on $(C_i)_B$. Now $\{(C_1)_{(-),1},(C_2)_{(-),1}\}$ is an open cover of $C_{(-)}^{\an}$ and $\{(C_1)_B,(C_2)_B\}$ is an open cover of $C_B$, so $(\wh\sE',\wh\al)$ arises uniquely from a vector bundle $\wh\sE$ on $C_B$. Finally, restrict $\wh\sE$ to $U_B$.
\end{proof}

\section{$G$-bundles}\label{s:Gbundles}
In this section, we introduce $G$-bundles on the global Hartl--Pink curve. After defining the moduli stack $\Bun_{G,F}$ thereof, we define its \emph{localization map} to a product of local moduli stacks $\Bun_{G,F_z}$, which plays an important role in this paper. For example, we use the localization map to define the global semisimple locus $\Bun_{G,F}^{\semis}$ in $\Bun_{G,F}$. Using results from \S\ref{s:vectorbundles}, we prove part of Theorem A, part of Theorem B (though we postpone the official statement of Theorem B to \S\ref{s:geometric}), and Theorem C.

\subsection{}\label{ss:Tannakian}
Let $G$ be a parahoric group scheme over $C$ as in \cite[Definition 2.18]{Ric16}. We also write $G$ for its base change to $U$, $F$, $\cO_v$, or $F_v$ for any closed point $v$ of $C$.

We have the following standard Tannakian description of $G$-torsors. Let $Y$ be a sousperfectoid space over $C$, and let $\sG$ be an \'etale $G$-torsor on $Y$. For all $V$ in $\Rep_C{G}$, write $\sG(V)$ for the locally free $\sO_{Y,\et}$-module $\sG\times^G(V\otimes_{\sO_C}\sO_{Y,\et})$, which arises uniquely from a vector bundle $\sG(V)$ on $Y$ by \cite[Theorem 8.2.22 (d)]{KL15}.
\begin{prop*}
The above induces an equivalence of categories between
  \begin{enumerate}[a)]
  \item \'etale $G$-torsors on $Y$,
  \item exact tensor functors $\Rep_C{G}\ra\{\mbox{vector bundles on }Y\}$.
  \end{enumerate}
  Moreover, when $Y=\Spa(T,T^+)$ is affinoid, all \'etale $G$-torsors on $Y$ are representable by adic spaces, and the above are naturally equivalent to
  \begin{enumerate}[a)]
  \setcounter{enumi}{2}
  \item \'etale $G$-torsors on $\Spec{T}$,
  \item exact tensor functors $\Rep_C{G}\ra\{\mbox{finite projective }T\mbox{-modules}\}$.
  \end{enumerate}
\end{prop*}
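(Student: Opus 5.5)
The equivalence a)$\Leftrightarrow$b) follows the classical Tannakian argument, run on the étale site of $Y$. The key input is that vector bundles on the sousperfectoid space $Y$ are the same as locally free $\sO_{Y,\et}$-modules, by \cite[Theorem 8.2.22 (d)]{KL15}.

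\textbf{Step 1: a)$\Rightarrow$b).} Given $\sG$, the assignment $V\mapsto\sG(V)$ is an exact tensor functor. Exactness and tensor compatibility can be checked étale locally, where $\sG$ is trivial and the functor is just $V\mapsto V\otimes_{\sO_C}\sO_Y$. Note that this is exact because objects of $\Rep_C G$ are vector bundles on $C$.

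\textbf{Step 2: the inverse b)$\Rightarrow$a).} Given an exact tensor functor $\omega$, consider the étale sheaf $\underline{\Isom}^\otimes(\omega_0,\omega)$, where $\omega_0$ is the fiber functor $V\mapsto V\otimes\sO_Y$. Reconstruction over $C$ gives $\underline{\Aut}^\otimes(\omega_0)=G$, so this sheaf is a $G$-pseudotorsor. It remains to show it is étale-locally nonempty.

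\textbf{Step 3: the affinoid case, c)$\Leftrightarrow$d) and d)$\Leftrightarrow$b).} This step reduces everything to schemes. For $Y=\Spa(T,T^+)$, Kedlaya--Liu's theorem identifies vector bundles on $Y$ with finite projective $T$-modules, compatibly with tensor products and exactness, which gives b)$\Leftrightarrow$d). The equivalence c)$\Leftrightarrow$d) is the Tannakian description of $G$-torsors over the scheme $\Spec T\to C$. Because $G$ is affine, flat and of finite type over the Dedekind scheme $C$, it has enough representations in vector bundles (it is linear, as a parahoric over a Dedekind base). Broshi's theorem then gives the equivalence for fpqc torsors. Since $G$ is smooth, these torsors are étale locally trivial.

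\textbf{Step 4: c)$\Rightarrow$a) and representability.} A torsor $P\to\Spec T$ is affine and étale-locally trivial, so we may take an étale cover $T\to T'$ trivializing it. By Lemma \ref{ss:nosheafification}, with $P$ affine hence quasiprojective, $P^{\an}_Y$ exists as an adic space over $Y$ and represents a $G$-torsor for the étale topology on $Y$; étale maps of rings induce étale maps of adic spaces after analytification. This gives the functor c)$\to$a) and the representability assertion. Combining with Step 3 shows that every exact tensor functor is locally realized by a torsor, which answers the nonemptiness question in Step 2 for general $Y$ by working on an affinoid cover. Full faithfulness of a)$\to$b) is then étale-local and reduces to $\Aut^\otimes(\omega_0)=G$.

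\textbf{Main obstacle.} I expect the hardest step to be Step 3 together with the comparison a)$\leftrightarrow$c). One has to ensure that étale covers of $\Spec T$ suffice to trivialize torsors on $Y$, and conversely that étale $G$-torsors on $Y$ descend to $\Spec T$. The first thing I would try is to avoid a direct comparison and route everything through the exact tensor functor categories b) and d), where the Kedlaya--Liu equivalence applies. The torsor categories are then compared only through their Tannakian descriptions.
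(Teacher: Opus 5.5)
Your proposal is correct and is essentially the paper's argument: the paper's proof cites Broshi's Lemma 3.1 (that $\sO_G$ is a filtered colimit of objects of $\Rep_C G$, which is exactly what makes $\ul\Aut^\otimes(\omega_0)=G$ and the scheme-theoretic equivalence c)$\Leftrightarrow$d) work), and then runs the same Tannakian argument through Kedlaya--Liu as in the proof of \cite[Proposition 6.10]{LH24}. One small correction: Lemma \ref{ss:nosheafification} presupposes that the analytification exists and needs a noetherian base, so it does not give existence of $P^{\an}_Y$; instead use that $P$ is smooth and affine over $T$, so its analytification over the sousperfectoid $Y$ exists (as in Appendix \ref{s:GAGA}, via \cite[Proposition IV.4.9]{FS21}).
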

\begin{proof}
By \cite[Lemma 3.1]{Bro13}, $\sO_G$ is a filtered colimit of objects in $\Rep_C{G}$. Hence the result follows from the proof of \cite[Proposition 6.10]{LH24}.
\end{proof}

\subsection{}\label{ss:BunG}
We now define the stack of $G$-bundles on the global and local Hartl--Pink curves. Let $A_v$ be one of $\{\cO_v,F_v\}$.
\begin{defn*}\hfill
  \begin{enumerate}[a)]
  \item Write $\Bun_{G,U}(S)$ for the groupoid of $G$-torsors $\sG$ on $U^{\an}_S$ equipped with an isomorphism $\phi:\sG\ra^\sim\Frob_S^*\sG$.
  \item Write $\Bun_{G,F}(S)$ for the groupoid $\varinjlim_U\Bun_{G,U}(S)$, where $U$ runs over dense open subschemes of $C$.
  \item Write $\Bun_{G,A_v}(S)$ for the groupoid of $G$-torsors $\sG$ on $\Spa{A_v}\times S$ equipped with an isomorphism $\phi:\sG\ra^\sim\Frob_S^*\sG$.
  \end{enumerate}
\end{defn*}
Using Proposition \ref{ss:Tannakian}, Theorem \ref{ss:vectorbundlesdescent} indicates that $\Bun_{G,U}$ is a v-stack. Proposition \ref{ss:Frobeniusfixedglobalsections} implies that the transition morphisms in b) are faithful, so $\Bun_{G,F}$ is also a v-stack. Finally, Proposition \ref{ss:localbundles} indicates that $\Bun_{G,A_v}$ is a v-stack, and the proof of \cite[Proposition III.1.3]{FS21} implies that all of these v-stacks are small.

\begin{exmp}\label{exmp:BunGC}
We claim that $\Bun_{G,C}$ is naturally isomorphic to the constant stack over $\bF_q$ associated with the groupoid
\begin{align*}\label{eq:automorphicspace}
\tag{$\heartsuit$}\coprod_{\al\in\ker^1(F,G)}G_\al(F)\bs G_\al(\bA)/G(\bO),
\end{align*}
where $\ker^1(F,G)$ denotes the kernel of the localization map
\begin{align*}
(\loc_v)_v:H^1(F,G)\ra\prod_vH^1(F_v,G), 
\end{align*}
and $G_\al$ over $F$ denotes the inner twist of $G$ associated with $\al$. To see this, write $\cB_G$ for the smooth algebraic stack over $\bF_q$ of $G$-bundles on $C$. Then Theorem \ref{ss:appliedGAGA} identifies $S$-points of $\Bun_{G,C}$ with $\Spec{R}$-points of the algebraic stack $(\cB_G)^{\Frob_{\cB_G}}$ of $\Frob_{\cB_G}$-fixed points of $\cB_G$. By \cite[Lemma 3.3 b)]{Var04}, the latter is naturally isomorphic to the constant stack over $\bF_q$ associated with the groupoid $\cB_G(\bF_q)$, and \cite[Remarque 12.2]{Laf16} identifies $\cB_G(\bF_q)$ with (\ref{eq:automorphicspace}). Since its automorphism groups are finite, the claim follows from the natural equivalence $(\Spec{R})_{\text{f\'et}}\ra^\sim S_{\text{f\'et}}$. 
\end{exmp}

\subsection{}\label{ss:localBunG}
In a manner analogous to Proposition \ref{ss:localbundles}, the trivial locus in the stack of $G$-bundles on the Fargues--Fontaine curve descends to our setting as follows. Write $\Bun^1_{G,F_v}$ for the substack of $\Bun_{G,F_v}(S)$ whose $S$-points consist of objects such that, for all geometric points $\ov{s}$ of $S$, its image in $\Bun_{G,F_v}(\ov{s})$ is trivial. Consider the morphism $*\ra\Bun_{G,F_v}$ corresponding to the trivial object for all $S$, which factors through a morphism $*\ra\Bun^1_{G,F_v}$. One can identify $\ul{G(F_v)}$ with $*\times_{\Bun_{G,F_v}^1}*$ as group v-sheaves, so descent yields a morphism $*/\ul{G(F_v)}\ra\Bun^1_{G,F_v}$.
\begin{prop*}
The substack $\Bun^1_{G,F_v}\subseteq\Bun_{G,F_v}$ is open, and $*/\ul{G(F_v)}\ra\Bun^1_{G,F_v}$ is an isomorphism. Moreover, $\Bun_{G,\cO_v}$ is naturally isomorphic to $*/\ul{G(\cO_v)}$. Finally, the base change of $\Bun_{G,F_v}$ to $\bF_v$ is naturally isomorphic to the stack of $G$-bundles on the Fargues--Fontaine curve for $F_v$ as in \cite[Definition III.1.3]{FS21}.
\end{prop*}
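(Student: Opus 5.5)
The strategy is to reduce everything to the case $S$ over $\bF_v$, where the statements are known from \cite{FS21}, and then descend along $\Spa\bF_v\times S\ra S$. I would prove the last statement first, since the others follow from it.

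\textbf{Step 1: the last statement.} By Proposition \ref{ss:Tannakian}, a $G$-torsor on $\Spa F_v\times S$ with Frobenius structure is the same as an exact tensor functor from $\Rep_C G$ to $\Bun_{F_v}(S)$. Over $\Perf_{\bF_v}$, Proposition \ref{ss:localbundles} identifies $\Bun_{F_v}(S)$ with vector bundles on $X_{S,F_v}$, compatibly with tensor products and exactness. I expect this, because the identification is constructed as in \cite[Lemma 5.12]{LH23} by restricting to one of the $\deg v$ components of $\Spa\bF_v\times S\cong\coprod S$. Exact tensor functors $\Rep_{F_v}G\ra\{\mbox{vector bundles on }X_{S,F_v}\}$ are exactly $G$-bundles on $X_{S,F_v}$ in the sense of \cite[Definition III.1.3]{FS21}. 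To pass from $\Rep_C G$ to $\Rep_{F_v}G$, I would use that every representation over $F_v$ is a subquotient of base changes of objects of $\Rep_C G$, which follows from \cite[Lemma 3.1]{Bro13}.

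\textbf{Step 2: openness and $*/\ul{G(F_v)}\cong\Bun^1_{G,F_v}$.} Both questions are v-local on $S$. Since $S_{\bF_v}\coloneqq\Spa\bF_v\times S\ra S$ is a finite \'etale cover, it suffices to check after base change to $S_{\bF_v}$. Here I must verify that the trivial locus upstairs coincides with the preimage of $\Bun^1_{G,F_v}$. This holds because a geometric point $\ov s$ of $S$ lifts to $\deg v$ geometric points of $S_{\bF_v}$, which are permuted by Frobenius, and triviality at one implies triviality at all of them. Over $\bF_v$, Step 1 together with \cite[Theorem III.2.2]{FS21} (openness of the semistable locus, and $\Bun^1_G\cong[*/\ul{G(F_v)}]$ in \cite[Proposition III.4.1]{FS21}) gives both claims. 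The map $*/\ul{G(F_v)}\ra\Bun^1_{G,F_v}$ is compatible with this identification, since $*\times_{\Bun^1}*\cong\ul{G(F_v)}$ holds downstairs by Proposition \ref{ss:Frobeniusfixedglobalsections}-type invariants ($\Ga(\Spa F_v\times S,\sO)^{\Frob_S^*=1}=\ul{F_v}(S)$). An isomorphism after a v-cover then descends.

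\textbf{Step 3: $\Bun_{G,\cO_v}\cong*/\ul{G(\cO_v)}$.} I would again use Proposition \ref{ss:Tannakian}, now combined with Proposition \ref{ss:ArtinSchreierWitt}. An object of $\Bun_{G,\cO_v}(S)$ is an exact tensor functor from $\Rep_C G$ (restricted to $\cO_v$) to pro-\'etale $\ul{\cO_v}$-local systems on $S$, that is, a pro-\'etale $\ul{G(\cO_v)}$-torsor. The point to check is that such a torsor is pro-\'etale locally trivial. I would get this by applying Proposition \ref{ss:Tannakian} c) to the affinoid pieces: the torsor over $\Spec\cO_v\lb{\ldots}$-type rings is smooth, so it has sections modulo $\pi^n$ after an \'etale cover, and these sections lift by Hensel's lemma.

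\textbf{Main obstacle.} I expect the hardest step to be the careful comparison in Step 1 between the Tannakian categories for the parahoric $G$ over $\cO_v$ and over $F_v$, together with the compatibility of Frobenius structures ($q$ versus $q^{\deg v}$). If that becomes delicate, I would first try defining the functor directly, sending $(\sG,\phi)$ to its restriction to the distinguished component with $\phi^{\deg v}$, and checking that it is an equivalence exactly as in the vector bundle case.
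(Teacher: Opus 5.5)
Steps 1 and 2 are fine. Step 1 is how the paper gets the last statement (from Proposition \ref{ss:localbundles}). Step 2 differs from the paper, which reruns the proof of \cite[Theorem III.2.4]{FS21} directly over $\bF_q$ with $-\wh\otimes_{\bF_q}\bF_v\lb{\pi}$ in place of $(-)\lb{\pi}$. You instead deduce the first statement from three inputs: the last statement, \cite[Theorem III.2.4]{FS21} used as a black box, and descent along the finite \'etale v-cover $\Spd\bF_v\ra\Spd\bF_q$. That is legitimate and spares you redoing any analysis. Two small corrections. The result you need from \cite{FS21} is Theorem III.2.4 (openness of the trivial locus and its identification with $*/\ul{G(F_v)}$), not the results you cite. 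The Frobenius-permutation remark is also unnecessary: triviality at $\ov{s}$ depends only on the pullback along $\ov{s}\ra S$, and the identification of Step 1 preserves the trivial object for every choice of $\bF_v$-structure on $\ov{s}$.

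The gap is in Step 3, and it is not the point you flag as the main obstacle. The reduction to exact tensor functors from $\Rep_C{G}$ to pro-\'etale $\ul{\cO_v}$-local systems is fine. You also correctly isolate the crux: such a functor is a $\ul{G(\cO_v)}$-torsor only if it is pro-\'etale locally isomorphic to the standard one. But your argument for this uses only the smoothness of $G$, and the claim is false for smooth groups with disconnected special fiber. Take the constant group scheme attached to a nontrivial finite group $\Gamma$, and let $S=\ov{s}$ be a geometric point. Then $\ul{\cO_v}$-local systems on $\ov{s}$ are finite free $\cO_v$-modules, so the fiber functors are $\Gamma$-torsors over $\Spec\cO_v$, classified by $H^1(\bF_v,\Gamma)$, i.e.\ by conjugacy classes in $\Gamma$. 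Your sketch applies verbatim here, which shows where it breaks:
\begin{itemize}
\item If the torsor you mean lives over the coefficient ring $\ul{\cO_v}(S)=\Cont(\pi_0(\abs{S}),\cO_v)$, with $S$ strictly totally disconnected, then the \'etale cover of that ring over which you find sections does not come from a pro-\'etale cover of $S$. For $S=\ov{s}$ it is an unramified extension of $\cO_v$, whereas pro-\'etale covers of $\ov{s}$ split.
\item If you mean $\sG$ on $\Spa\cO_v\times S$, then local sections of $\sG$ are not the issue. You need trivializations compatible with $\phi$, i.e.\ solutions of an equation of the form $h^{-1}g\,\Frob_S^*(h)=1$, and Hensel's lemma applied to $\sG$ produces none.
\end{itemize}

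What is missing is that the special fiber of the parahoric $G$ is connected, used through Lang's theorem. For instance, take $S$ strictly totally disconnected, so the fiber functor yields a $G$-torsor $P$ over $\Spec\Cont(\pi_0(\abs{S}),\cO_v)$. The ring $\Cont(\pi_0(\abs{S}),\bF_v)$ is a filtered colimit of finite products of copies of $\bF_v$, and $P$ is of finite presentation. Hence the reduction of $P$ modulo $\pi$ descends to finitely many $G_{\bF_v}$-torsors over $\bF_v$, which are trivial by Lang's theorem. The resulting section modulo $\pi$ then lifts to a section of $P$ by smoothness and $\pi$-adic completeness. Alternatively, work with $(\sG,\phi)$ directly. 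The Weil restrictions of $G\otimes\cO_v/\pi^n$ to $\bF_q$ are smooth and connected, so their Lang maps are finite \'etale surjective. The paper gets the second statement from the proofs of \cite[Proposition 3.8]{LH23} and \cite[Theorem 3.12]{LH23}. Your Tannakian route through Proposition \ref{ss:ArtinSchreierWitt} is a reasonable alternative, but only once this input is added.
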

\begin{proof}
After replacing $(-)\lb{\pi}$ with $-\wh\otimes_{\bF_q}\bF_v\lb{\pi}$, where $\pi$ denotes a uniformizer of $\cO_v$, the first statement follows from the proof of \cite[Theorem III.2.4]{FS21}, and the second statement follows from the proof of \cite[Proposition 3.8]{LH23} and \cite[Theorem 3.12]{LH23}. The third statement follows from Proposition \ref{ss:localbundles}.
\end{proof}

\subsection{}\label{ss:BunGdefn}
The following \emph{localization maps} play an important role in this paper. For all closed points $u$ in $U$ and $z$ in $Z$, pullback yields morphisms
\begin{align*}
  \loc_u:\Bun_{G,U}\ra\Bun_{G,\cO_u}\mbox{ and }\loc_z:\Bun_{G,U}\ra\Bun_{G,F_z}.
\end{align*}
\begin{defn*}\hfill
  \begin{enumerate}[a)]
  \item Write
    \begin{align*}
      \loc:\Bun_{G,U}\ra\prod_u\Bun_{G,\cO_u}\times\prod_z\Bun_{G,F_z}
    \end{align*}
    for the morphism induced by the $\{\loc_u\}_u$ and $\{\loc_z\}_z$, where $u$ runs over closed points of $U$, and $z$ runs over $Z$.
  \item Write $\Bun_{G,\bA}$ for the prestack $\varinjlim_U\textstyle\prod_u\Bun_{G,\cO_u}\times\textstyle\prod_z\Bun_{G,F_z}$ on $\Perf_{\bF_q}$, where $U$ runs over dense open subschemes of $C$.
  \item Write $\loc:\Bun_{G,F}\ra\Bun_{G,\bA}$ for the morphism obtained by taking $\varinjlim_U$ of a).
  \end{enumerate}
\end{defn*}
Proposition \ref{ss:localBunG} implies that the transition morphisms in b) are faithful, so $\Bun_{G,\bA}$ is a small v-stack.

\subsection{}\label{ss:BunGpullback}
When studying $\Bun_{G,U}$, one has the following mechanism for changing $U$.
\begin{lem*}
  For all dense open subschemes $U'$ of $U$, the square
  \begin{align*}
    \xymatrix{\Bun_{G,U}\ar[r]^-{\loc}\ar[d] & \displaystyle\prod_u\Bun_{G,\cO_u}\times\prod_z\Bun_{G,F_z}\ar[d]\\
    \Bun_{G,U'}\ar[r]^-{\loc} & \displaystyle\prod_{u'}\Bun_{G,\cO_{u'}}\times\prod_{z'}\Bun_{G,F_{z'}}}
  \end{align*}
  is cartesian, where $u$ runs over closed points of $U'$, and $z'$ runs over $C\ssm U'$. Similarly, the square
  \begin{align*}
    \xymatrix{\Bun_{G,U}\ar[r]^-{\loc}\ar[d] & \displaystyle\prod_u\Bun_{G,\cO_u}\times\prod_z\Bun_{G,F_z}\ar[d]\\
    \Bun_{G,F}\ar[r]^-{\loc} & \Bun_{G,\bA}}
  \end{align*}
  is cartesian.
\end{lem*}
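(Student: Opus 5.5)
The first square is a Beauville--Laszlo type gluing statement for $G$-torsors with Frobenius structure. Write $Z'=C\ssm U'$, so that $Z'=Z\sqcup(U\ssm U')$. The right vertical arrow forgets nothing at points of $U'$. At $z\in Z$ it is the identity on $\Bun_{G,F_z}$. At $u\in U\ssm U'$ it is the restriction $\Bun_{G,\cO_u}\ra\Bun_{G,F_u}$. So an $S$-point of the fiber product consists of three pieces of data:
\begin{enumerate}[1)]
\item an object $(\sG',\phi')$ of $\Bun_{G,U'}(S)$;
\item for each $u\in U\ssm U'$, an object $(\sG_u,\phi_u)$ of $\Bun_{G,\cO_u}(S)$;
\item isomorphisms $\loc_u(\sG',\phi')\cong(\sG_u,\phi_u)|_{\Spa F_u\times S}$.
\end{enumerate}
The strategy is to show that this datum is equivalent to an object of $\Bun_{G,U}(S)$ by gluing on $U^{\an}_S$ along an open cover.

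The key geometric input is that $U^{\an}_S$ is covered by $U'^{\an}_S$ together with the opens $\Spa\cO_u\times S$ for $u\in U\ssm U'$, and that the pairwise overlaps are $\Spa F_u\times S$. The same fact was already used in the proofs of Theorem \ref{ss:Alocalsystems} and Theorem \ref{ss:BunUschemepoints} for $C$ versus $U$, and it holds for the pair $U,U'$ for the same reason. To check it, one can work Zariski locally on $U$ near a point $u$. Choose a uniformizer $t\in\sO_U$ at $u$. The analytic neighborhood $\{\abs{t}<1\}$ of $u$ in $U^{\an}_S$ identifies with $\Spa\cO_u\times S$; this uses the étale local coordinate and Lemma \ref{ss:nosheafification} to compare functors of points. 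Its complement in this neighborhood, $\{0<\abs{t}<1\}$, is $\Spa F_u\times S$. All these identifications are compatible with $\Frob_S$.

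Given the cover, I proceed in three steps.
\begin{enumerate}[1)]
\item By Proposition \ref{ss:Tannakian}, $G$-torsors on sousperfectoid spaces are equivalent to exact tensor functors into vector bundles. Vector bundles glue along analytic open covers, so the functor sending $\Bun_{G,U}(S)$ to the fiber product is an equivalence of groupoids. The Frobenius isomorphisms $\phi$ glue as well, since they are morphisms of the glued bundles.
\item Since both sides are v-stacks, it suffices to check this on affinoid perfectoid $S$, and there it is exactly the gluing above.
\item For the second square, write $\Bun_{G,F}=\varinjlim_{U'}\Bun_{G,U'}$ and $\Bun_{G,\bA}=\varinjlim_{U'}(\cdots)$ over dense opens $U'\subseteq U$, which is cofinal. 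Filtered colimits of groupoids commute with finite limits, so the second square is the colimit over $U'$ of the first squares. Each of these is cartesian, hence so is the colimit. Sheafification is not needed here: the transition maps are faithful, and the prestack colimit agrees with the v-stack, as remarked after Definitions \ref{ss:BunG} and \ref{ss:BunGdefn}.
\end{enumerate}

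The main obstacle is making the open cover claim precise in the relative adic setting. This means showing that the tube $\{\abs{t}<1\}$ around $u$ in $U^{\an}_S$ really is $\Spa\cO_u\times S$, with the residue field $\bF_u$ handled correctly. I would prove this first for $\bA^1$ and the point $0$, where it is a direct computation with the charts $Y_{S,\vpi}$ of Proposition \ref{ss:analytification}. I would then transport it to $U$ using an étale map to $\bA^1$ that is an isomorphism on completions at $u$. The nonrational residue field would be handled by applying \cite[Proposition II.1.1]{FS21} to $\Spa\bF_u\times S$.
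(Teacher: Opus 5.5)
Your proposal is correct and is the paper's argument: the paper simply notes that $U'^{\an}_S$ together with the $\Spa\cO_c\times S$ for $c\in U\ssm U'$ form an open cover of $U^{\an}_S$, deduces the first square by gluing, and obtains the second by taking $\varinjlim_{U'}$. One minor caution concerns your justification of the cover: on an affine neighborhood of $u$, the locus $\{\abs{t}<1\}$ can also contain punctured discs around zeros of $t$ lying outside that neighborhood (e.g.\ $t=T(T-1)$ on $\bA^1\ssm\{1\}$ near $u=0$), so the tube around $u$ should be cut out inside a bounded chart, or taken as the clopen piece over $u$ of the preimage of the disc under your \'etale coordinate; this does not affect the argument.
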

\begin{proof}
Note that $\{U'^{\an}_S\}\cup\{\Spa\cO_c\times S\}_c$ is an open cover of $U^{\an}_S$, where $c$ runs over $U\ssm U'$. Therefore the first statement follows from gluing, and the second statement follows from taking $\varinjlim_{U'}$ of the first statement.
\end{proof}

\subsection{}\label{ss:globalsemistable}
We \emph{use} the semistable locus in the stack of $G$-bundles on the local Hartl--Pink curve to define a global analogue as follows. Write $\Bun^{\semis}_{G,F_v}(S)$ for the full subcategory of objects in $\Bun_{G,F_v}(S)$ whose image in $\Bun_{G,F_v}(\Spa{\bF_v}\times S)$, under the identification from Proposition \ref{ss:localBunG}, lies in the semistable locus as in \cite[Section III.4.2]{FS21}. Then \cite[Theorem III.4.5]{FS21} implies that $\Bun_{G,F_v}^{\semis}$ is an open substack of $\Bun_{G,F_v}$, and the base change of $\Bun^{\semis}_{G,F_v}$ to $\bF_v$ equals the semistable locus as in \cite[Section III.4.2]{FS21}.
\begin{defn*}\hfill
  \begin{enumerate}[a)]
  \item Write $\Bun_{G,U}^{\semis}$ for the preimage of $\prod_u\Bun_{G,\cO_u}\times\prod_z\Bun_{G,F_z}^{\semis}$ in $\Bun_{G,U}$ under $\loc$.
  \item Write $\Bun_{G,F}^{\semis}$ for the prestack $\varinjlim_U\Bun_{G,U}^{\semis}$ on $\Perf_{\bF_q}$.
  \end{enumerate}
\end{defn*}
As in Definition \ref{ss:BunG}.b), $\Bun^{\semis}_{G,F}$ is a small v-stack. Since $\Bun_{G,U}^{\semis}\subseteq\Bun_{G,U}$ is an open substack, we see that $\Bun_{G,F}^{\semis}\subseteq\Bun_{G,F}$ is an open substack as well. Proposition \ref{ss:localBunG} shows that $\Bun_{G,\cO_v}\ra\Bun_{G,F_v}$ factors through $\Bun_{G,F_v}^{\semis}$, so for all dense open subschemes $U'$ of $U$, Lemma \ref{ss:BunGpullback} implies that the preimage of $\Bun^{\semis}_{G,U'}$ in $\Bun_{G,U}$ equals $\Bun^{\semis}_{G,U}$.

\subsection{}\label{ss:triviallocus}
The stack of $G$-bundles on the global Hartl--Pink curve also has a trivial locus in the following sense. Write $\Bun^1_{G,U}$ for the substack of $\Bun_{G,U}$ whose $S$-points consist of objects such that, for all geometric points $\ov{s}$ of $S$, its image in $\Bun_{G,U}(\ov{s})$ is trivial. Consider the morphism $*\ra\Bun_{G,U}$ corresponding to the trivial object for all $S$, which factors through a morphism $*\ra\Bun_{G,U}^1$. Proposition \ref{ss:Frobeniusfixedglobalsections} identifies $\ul{G(A)}$ with $*\times_{\Bun_{G,U}^1}*$ as group v-sheaves, so descent yields a morphism
\begin{align*}
*/\ul{G(A)}\ra\Bun_{G,U}^1.
\end{align*}
\begin{thm*}
The substack $\Bun^1_{G,U}\subseteq\Bun_{G,U}$ is open, and $*/\ul{G(A)}\ra\Bun_{G,U}^1$ is an isomorphism.
\end{thm*}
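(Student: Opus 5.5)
We deduce both statements from the vector bundle result, Theorem \ref{ss:Alocalsystems}, combined with the Tannakian description in Proposition \ref{ss:Tannakian}, in the spirit of the proof of \cite[Theorem III.2.4]{FS21} for the local analogue. Here $A$ denotes the global sections of $U$; when $\Pic U\neq1$ we interpret $\ul{G(A)}$-torsors via pro-\'etale $\ul{\sO_U}$-local systems, and reduce along the cover $\{U_1,U_2\}$ of Lemma \ref{ss:localcharts} when convenient.

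\textbf{Step 1: identify $\Bun_{G,U}$-points with $G$-local systems.} Let $\cL$ be the substack of $\Bun_{G,U}$ whose objects $(\sG,\phi)$ have $\loc_z(\sG,\phi)\in\Bun^1_{G,F_z}$ for all $z$ in $Z$. Since each $\Bun^1_{G,F_z}$ is open by Proposition \ref{ss:localBunG}, $\cL$ is open in $\Bun_{G,U}$. Apply Theorem \ref{ss:Alocalsystems} objectwise to the exact tensor functors $\Rep_C G\ra\Bun_U(S)$ given by Proposition \ref{ss:Tannakian}. This identifies $\cL(S)$ with exact tensor functors from $\Rep_C G$ to pro-\'etale $\ul{\sO_U}$-local systems on $S$. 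The slope-zero condition holds because triviality at geometric points of the local $G$-bundle forces every $\sG(V)$ to be slope zero; the exactness and tensor compatibility of the equivalence in Theorem \ref{ss:Alocalsystems} ensure this is an equivalence. Pro-\'etale locally on $S$, such a tensor functor becomes a $G$-torsor over the profinite set $\pi_0$ with values in $U$. We trivialize it by a further pro-\'etale cover, using that $G$ is smooth over $U$ and that $\ul{G(A)}$-torsors on a profinite set can be split by a pro-\'etale cover. This identifies $\cL$ with $*/\ul{G(A)}$, and the map is the one constructed above since the trivial object corresponds to the trivial torsor. Proposition \ref{ss:Frobeniusfixedglobalsections} guarantees that the automorphism sheaves agree.

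\textbf{Step 2: show $\cL=\Bun^1_{G,U}$.} The inclusion $\cL\supseteq\Bun^1_{G,U}$ is clear, since a trivial global object is trivial locally at each $z$. Conversely, an object of $\cL(S)$ is pro-\'etale locally trivial by Step 1, so its restriction to any geometric point $\ov{s}$ is trivial; for a geometric point, pro-\'etale covers split, so the object itself is trivial there. Hence $\Bun^1_{G,U}=\cL$ is open, and $*/\ul{G(A)}\ra\Bun^1_{G,U}$ is an isomorphism.

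\textbf{Main obstacle.} The hardest part is Step 1's passage from exact tensor functors into local systems to genuine $\ul{G(A)}$-torsors, especially in the parahoric, non-split setting, where $G$ is a group scheme over $U$ rather than a constant group. I would first treat this after reducing to $\Pic U=1$, using affine $U$ and \cite[Proposition 6.10]{LH24}-style arguments over $\ul{A}$. Separately, one must check that the slope-zero hypothesis of Theorem \ref{ss:Alocalsystems} follows from pointwise triviality of $\loc_z$; this should follow from the local description of slopes at geometric points.
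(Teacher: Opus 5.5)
There is a genuine gap: Step~1's identification of your open substack $\mathcal{L}=\loc_Z^{-1}\big(\prod_{z\in Z}\Bun^1_{G,F_z}\big)$ with $*/\ul{G(A)}$ is false in general. Step~2's equality $\mathcal{L}=\Bun^1_{G,U}$ inherits the error. After passing to a strictly totally disconnected $S$, Theorem \ref{ss:Alocalsystems}, Proposition \ref{ss:Tannakian}, and gluing along $\{U_1,U_2\}$ turn an object of $\mathcal{L}(S)$ into an \'etale $G$-torsor $\sF$ on $\pi_0(\abs{S})\times U$. It is not a $\ul{G(A)}$-torsor on $\pi_0(\abs{S})$.

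The fiber of $\sF$ over a point of $\pi_0(\abs{S})$ is an arbitrary class in $H^1_{\et}(U,G)$, and no pro-\'etale cover of $S$ can kill it. Such covers only refine the $\pi_0(\abs{S})$-direction. Smoothness of $G$ only yields trivializations \'etale locally on $\pi_0(\abs{S})\times U$, and those covers do not come from covers of $S$.

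For a concrete counterexample, take $G=\GL_n$ with $\Pic U\neq1$ and a nontrivial vector bundle $\sE$ on $U$. The object $(\sE^{\an}_S,\id_{\sE}\otimes(\Frob_S^{-1})^*)$ has trivial localization at each $z$, since $\sE$ becomes free over $F_z$, so it lies in $\mathcal{L}$. By the full faithfulness in Theorem \ref{ss:Alocalsystems}, it is not trivial at any geometric point, because $\ul{\sE}\not\cong\ul{\sO_U^n}$ over a connected base.

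Reducing to $\Pic U_i=1$ does not rescue this. Such an $\sE$ is trivial on $U_1$ and $U_2$, and its nontriviality lives in the gluing on $U_1\cap U_2$. Even when $\Pic U=1$, non-split or parahoric $G$ can have $H^1_{\et}(U,G)\neq1$. Relatedly, rank-$n$ $\ul{\sO_U}$-local systems are $\ul{\GL_n(A)}$-torsors only when $\Pic U=1$.

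What is missing is an argument that, inside $\mathcal{L}$, the locus where the fibers of $\sF$ are trivial is open. The paper gets this by spreading out. For a geometric point $\ov{s}$ one has $\abs{\ov{s}}\times U\cong\varprojlim_N(N\times U)$, where $N$ runs over compact neighborhoods of the image of $\ov{s}$ in $\pi_0(\abs{S})$ and the transition maps are affine. Hence a trivialization of $\sF|_{\abs{\ov{s}}\times U}$ extends to $\sF|_{N\times U}$ for some $N$, which makes $(\sG,\phi)$ trivial over the preimage of $N$ in $S$.

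This gives the openness, and it also shows that $*\ra\Bun^1_{G,U}$ is a pro-\'etale cover. Combined with $\ul{G(A)}\cong*\times_{\Bun^1_{G,U}}*$ from Proposition \ref{ss:Frobeniusfixedglobalsections}, descent then yields $*/\ul{G(A)}\cong\Bun^1_{G,U}$. Your outline only works in special cases such as $G=\GL_n$ with $\Pic U=1$, where every such $\sF$ is already trivial.
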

\begin{proof}
Let $(\sG,\phi)$ be an object in $\Bun_{G,U}(S)$. We claim that
  \begin{align*}
    \big\{\ov{s}\in\abs{S}\mid\mbox{the image of }(\sG,\phi)\mbox{ in }\Bun_{G,U}(\ov{s})\mbox{ is trivial}\big\}
  \end{align*}
  is an open subset of $\abs{S}$. To see this, by replacing $S$ with a pro-\'etale cover, we can assume that $S$ is strictly totally disconnected. Note that the above subset lies in
  \begin{align*}
\big\{\ov{s}\in\abs{S}\mid\mbox{the image of }(\sG,\phi)\mbox{ in }\Bun_{G,F_z}(\ov{s})\mbox{ is trivial for all }z\mbox{ in }Z\big\},
  \end{align*}
  which is an open subset of $\abs{S}$ by \cite[Theorem III.2.4]{FS21}. Therefore, by replacing $S$ with an open subspace, we can assume that the image of $(\sG,\phi)$ in $\Bun_{G,F_z}(\ov{s})$ is trivial for all $z$ in $Z$ and geometric points $\ov{s}$ of $S$. For all $V$ in $\Rep_C{G}$, this implies that the image of $(\sG(V),\phi(V))$ in $\Bun_{F_z}(S)$ has slope zero for all $z$ in $Z$, so Theorem \ref{ss:Alocalsystems} shows that $(\sG,\phi)$ corresponds to an exact tensor functor
  \begin{align*}
    \Rep_C{G}\ra\{\mbox{pro-\'etale }\ul{\sO_U}\mbox{-local systems on }S\}.
  \end{align*}
  
  Let $\{U_1,U_2\}$ be the open cover of $U$ from Lemma  \ref{ss:localcharts}. For all $i$ in $\{1,2\}$, write $A_i$ for the global sections of $U_i$, and write $A_{12}$ for the global sections of $U_1\cap U_2$. Because $\Pic{U_i}=1$, rank-$n$ $\ul{\sO_{U_i}}$-local systems are equivalent to $\ul{\GL_n(A_i)}$-torsors. Hence \cite[Lemma III.2.6]{FS21} implies that $\ul{\sO_{U_i}}$-local systems on $S$ are trivial, so $(\sG,\phi)$ corresponds to two exact tensor functors
  \begin{align*}
    \Big\{\Rep_C{G}\ra\{\mbox{finite free }\ul{A_i}(S)\mbox{-modules}\}\Big\}_i
  \end{align*}
along with an isomorphism between them after postcomposing with
\begin{align*}
\{\mbox{finite free }\ul{A_i}(S)\mbox{-modules}\}\ra\{\mbox{finite free }\ul{A_{12}}(S)\mbox{-modules}\}.
\end{align*}
Since $A_i$ is discrete, we have $\Spec\ul{A_i}(S)=\Spec\Cont(\pi_0(\abs{S}),A_i) = \pi_0(\abs{S})\times U_i$. Therefore Proposition \ref{ss:Tannakian} and gluing show that $(\sG,\phi)$ corresponds to an \'etale $G$-torsor $\sF$ on $\pi_0(\abs{S})\times U$.

For all $\ov{s}$ in $\abs{S}$, note that $\abs{\ov{s}}\times U\ra^\sim\textstyle\varprojlim_N\big(N\times U\big)$, where $N$ runs over compact neighborhoods of the image of $\ov{s}$ in $\pi_0(\abs{S})$, and the transition morphisms are affine. Therefore if $\sF|_{\abs{\ov{s}}\times U}$ is trivial, then $\sF|_{N\times U}$ is trivial for some compact neighborhood $N$ of the image of $\ov{s}$ in $\pi_0(\abs{S})$. Hence the image of $(\sG,\phi)$ in $\Bun_{G,U}(\wt{N})$ is trivial, where $\wt{N}$ denotes the preimage of $N$ in $S$. As $\ov{s}$ varies, this yields the claim.

  The claim yields the first statement. For the second statement, the above shows that $*\ra\Bun^1_{G,U}$ is a pro-\'etale cover, so descent yields the desired result.
\end{proof}

\subsection{}\label{ss:twistingb}
Let us recall some notation on the Kottwitz set. In this subsection, we work over $\Spd\ov\bF_q$. Let $K$ be one of $\{F,F_v\}$, write $B(K,G)$ for the Kottwitz set as in \cite[Subsection 10.2]{Kot14}, and write $B(K,G)_{\basic}$ for its subset of basic elements as in \cite[Definition 10.2]{Kot14}. For all $b$ in $B(K,G)$, write $G_b$ for the associated connected reductive group over $K$ as in \cite[Subsection 2.6]{Kot14}. Write $\loc_v:B(F,G)\ra B(F_v,G)$ for the localization map as in \cite[(10.7)]{Kot14}, and recall that $B(F,G)$ is naturally isomorphic to the set of $\Frob_{\ov\bF_q}$-conjugacy classes in $G(F\otimes_{\bF_q}\ov\bF_q)$ \cite[Theorem 12.2]{Iak22}. 

Similarly to the local setting \cite[Corollary III.4.3]{FS21}, in the global setting we can pass between the trivial locus and other basic loci by twisting as follows. Let $b$ be an element in $G(U_{\ov\bF_q})$ whose image in $B(F,G)$ is basic. Write $\Bun^b_{G,U}$ for the substack of $\Bun_{G,U}$ whose $S$-points consist of objects such that, for all geometric points $\ov{s}$ of $S$, its image in $\Bun_{G,U}(\ov{s})$ is isomorphic to $(G,(\Frob_S^{-1})^*\circ b)$. Twisting $G$ by $\ad{b}$ over $U_{\ov\bF_q}$ yields a parahoric group scheme $G_b$ over $U$, and because $b$ is basic, \cite[(5.2)]{HK21} shows that the generic fiber of $G_b$ agrees with the above notion.
\begin{prop*}
This induces a natural isomorphism $\Bun_{G,U}\ra^\sim\Bun_{G_b,U}$ such that the image of $\Bun_{G,U}^b$ equals $\Bun_{G_b,U}^1$.
\end{prop*}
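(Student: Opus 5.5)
The plan is to construct the isomorphism by twisting, following \cite[Corollary III.4.3]{FS21}. Everything is over $\Spd\ov\bF_q$, so every affinoid perfectoid $S=\Spa(R,R^+)$ comes with a map $\Spec R\ra\Spec\ov\bF_q$. Hence $b\in G(U_{\ov\bF_q})$ pulls back to a section $b_S\in G(U^{\an}_S)$ via the morphism $U^{\an}_S\ra U_{\ov\bF_q}$.

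\textbf{Step 1: the functor.} Write $\sG_b$ for the $G$-torsor $G$ on $U^{\an}_S$ with Frobenius structure $(\Frob_S^{-1})^*\circ b_S$. Given $(\sG,\phi)$ in $\Bun_{G,U}(S)$, send it to
\begin{align*}
\sI(\sG,\phi)\coloneqq\sheafhom_G\big(\sG_b,(\sG,\phi)\big),
\end{align*}
the sheaf of $G$-equivariant isomorphisms $\sG_b\ra\sG$ on $U^{\an}_S$, equipped with the Frobenius structure induced by $\phi$ and by that of $\sG_b$.

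\textbf{Step 2: the group acting.} I would identify $\underline{\Aut}(\sG_b)$, as a group over $U^{\an}_S$ compatibly with Frobenius, with $(G_b)^{\an}_S$ carrying its Frobenius descent datum. By definition, $G_b$ is the form of $G$ over $U$ obtained by descending $G_{U_{\ov\bF_q}}$ along the Frobenius twisted by $\ad b$. Its analytification over $S$ (living over $\ov\bF_q$) is therefore $G^{\an}_S$ with the Frobenius-semilinear automorphism $\ad(b_S)\circ(\Frob_S^{-1})^*$. This is exactly the automorphism group of $\sG_b$. Since $G_b$ is parahoric, hence smooth affine, Proposition \ref{ss:Tannakian} applies to it, and $\sI(\sG,\phi)$ is an étale $G_b$-torsor on $U^{\an}_S$ with Frobenius structure, i.e.\ an object of $\Bun_{G_b,U}(S)$.

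\textbf{Step 3: isomorphism and loci.} The inverse sends a $G_b$-torsor $\sP$ to $\sP\times^{G_b}\sG_b$. Both constructions are compatible with pullback in $S$, so they define mutually inverse morphisms of v-stacks. For a geometric point $\ov s$, the point $(\sG,\phi)$ is isomorphic to $\sG_b$ if and only if $\sI(\sG,\phi)$ admits a Frobenius-equivariant section, i.e.\ is trivial in $\Bun_{G_b,U}(\ov s)$. This gives $\Bun^b_{G,U}\mapsto\Bun^1_{G_b,U}$ pointwise, and both are defined by geometric-point conditions, so the images agree.

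\textbf{Main obstacle: descending $G_b$ to $U$.} The step I expect to be delicate is making Step 2 precise: that the group scheme over $U_{\ov\bF_q}$ with the twisted Frobenius descends to a parahoric group scheme $G_b$ over $U$ and not merely over $F$. Over $\ov\bF_q$, Frobenius-twisted descent is effective for affine schemes via Lang-type arguments, exactly as in the paper's definition. The point needing care is that $\ad b$ is defined integrally on all of $U_{\ov\bF_q}$, which holds because $b\in G(U_{\ov\bF_q})$. Basicness, via \cite[(5.2)]{HK21}, is only needed to match the generic fiber with Kottwitz's $G_b$, not for the isomorphism itself.
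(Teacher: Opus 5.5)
Your proposal is correct and is essentially the paper's proof. The paper uses $\ul\Isom(\sG,G)$, made Frobenius-equivariant by precomposing with $\phi^{-1}$ and postcomposing with $(\Frob_S^{-1})^*\circ b$, which is your $\sheafhom_G(\sG_b,\sG)$ up to inverting isomorphisms, and it concludes exactly as in your Step 2, from $(G_b)_{U_{\ov\bF_q}}=G_{U_{\ov\bF_q}}$ and the fact that everything lives over $\Spd\ov\bF_q$.

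One correction to your last paragraph, which concerns the setup rather than the proof. Descending the semilinear datum $\ad(b)\circ\Frob_{\ov\bF_q}$ along $U_{\ov\bF_q}\ra U$ is not a formal Lang-type fact: it needs $\ad\big(b\,\Frob_{\ov\bF_q}(b)\cdots\Frob_{\ov\bF_q}^{m-1}(b)\big)$ to be trivial for suitable $m$. This fails, e.g., for the non-basic $b=\mathrm{diag}(t,1)\in\GL_2$ with $t$ a nonconstant unit on $U$. So basicness does enter in obtaining $G_b$ over $U$, even though the isomorphism of v-stacks over $\Spd\ov\bF_q$ only uses $G_{U_{\ov\bF_q}}$ with its twisted Frobenius.
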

\begin{proof}
For all objects $(\sG,\phi)$ in $\Bun_{G,U}(S)$, we have a natural isomorphism 
\begin{align*}
\ul\Isom(\sG,G)\ra^\sim\ul\Isom(\Frob^*_S\sG,\Frob_S^*G) = \Frob_S^*\ul\Isom(\sG,G),
\end{align*}
where the left arrow denotes precomposing with $\phi^{-1}:\Frob_S^*\sG\ra^\sim\sG$ and postcomposing with $(\Frob_S^{-1})^*\circ b:G\ra^\sim\Frob^*_SG$. Since $(G_b)_{U_{\ov\bF_q}}=G_{U_{\ov\bF_q}}$ and we work over $\Spd\ov\bF_q$, we see $\ul\Isom(-,G)$ induces the desired isomorphism $\Bun_{G,U}\ra^\sim\Bun_{G_b,U}$.
\end{proof}
\subsection{}\label{ss:BunGred}
For any small v-stack $X$, write $X^{\red}$ its \emph{reduction}, i.e. the perfect v-stack given by $\Spec{B}\mapsto\Hom(\Spd{B},X)$ for all perfect $\bF_p$-algebras $B$ \cite[Proposition 3.7]{Gle20}. We conclude by proving the following generalization of Theorem C.
\begin{thm*}
  The perfect v-stack $\Bun_{G,U}^{\red}$ over $\bF_q$ is the v-sheafification of
  \begin{align}\label{eq:BunGred}
    \tag{$\ast$}\Spec{B}\mapsto\left\{
    \begin{tabular}{c}
      $G$-torsors $\sG$ on $U_B$ equipped with\\
      an isomorphism $\phi:\sG\ra^\sim\Frob_B^*\sG$
    \end{tabular}
    \right\}.
  \end{align}
Moreover, when each connected component of $\Spec{B}$ is a valuation ring, no sheafification is needed.
\end{thm*}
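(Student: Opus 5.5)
We plan to first prove the second statement, where each connected component of $\Spec{B}$ is a valuation ring, and then deduce the first by v-descent. By definition, $\Bun_{G,U}^{\red}(\Spec{B})=\Hom(\Spd{B},\Bun_{G,U})$. Since $\Spa{B\lp{t^{1/q^\infty}}}\ra\Spd{B}$ is a v-cover representable in perfectoid spaces and $\Bun_{G,U}$ is a v-stack (\ref{ss:BunG}), this groupoid is equivalent to objects of $\Bun_{G,U}(\Spa{B\lp{t^{1/q^\infty}}})$ equipped with a descent datum with respect to (\ref{eqn:SpdBresolution}). Unwinding, such an object is a $G$-torsor $\sG'$ on $U^{\an}_{\Spa{B\lp{t^{1/q^\infty}}}}$, together with a descent datum with respect to (\ref{eqn:UBresolution}) and a compatible Frobenius isomorphism $\phi'$. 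The Frobenius $\Frob_S$ of the perfectoid space is compatible with $\Frob_B$ under $\Spd{B}$, so $\phi'$ is itself a morphism of descended objects.

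Next we pass from $G$-torsors to exact tensor functors by Proposition \ref{ss:Tannakian}. On the analytic side we use form b). On the algebraic side, $G$-torsors on $U_B$ are exact tensor functors $\Rep_C{G}\ra\{\mbox{vector bundles on }U_B\}$ by the analogous algebraic Tannakian statement, proved via \cite[Lemma 3.1]{Bro13} as before. Theorem \ref{ss:BunUschemepoints} gives an exact tensor equivalence between vector bundles on $U_B$ and vector bundles on $U^{\an}_{\Spa{B\lp{t^{1/q^\infty}}}}$ with descent datum for (\ref{eqn:UBresolution}). Composing with this equivalence identifies exact tensor functors, hence $G$-torsors, on both sides. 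Because the equivalence is natural in $B$, it intertwines pullback along $\Frob_B$ with pullback along $\Frob_S$. Applying full faithfulness to the Frobenius isomorphism then identifies pairs $(\sG',\phi')$ with descent data with pairs $(\sG,\phi)$ as in (\ref{eq:BunGred}). Naturality in $B$ gives the equivalence of functors on the full subcategory of such $B$.

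For the general statement, (\ref{eq:BunGred}) maps naturally to $\Bun_{G,U}^{\red}$ by analytification along $\Spa{B\lp{t^{1/q^\infty}}}\ra\Spd{B}$. The target is a v-stack on perfect schemes by \cite[Proposition 3.7]{Gle20}, so this map factors through the v-sheafification. To show the factored map is an isomorphism, it suffices to check on a basis of the v-topology on perfect $\bF_q$-algebras. Products of valuation rings, or more precisely $B$ whose connected components are valuation rings (for instance $w$-local rings with valuation ring components), form such a basis: every perfect $B$ admits a v-cover by one, and fiber products of such can again be covered. On this basis both sides agree by the previous paragraph, so the sheafifications agree.

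The main obstacle is the bookkeeping in the first step. We must verify that $\Hom(\Spd{B},\Bun_{G,U})$ really unwinds to descent data along (\ref{eqn:UBresolution}) of Frobenius-equipped torsors, and that the analytification of $U$ over the fiber products in (\ref{eqn:SpdBresolution}) is the fiber product of the analytifications. The analytic input itself is already packaged in Theorem \ref{ss:BunUschemepoints}. A secondary point is confirming that the valuation-ring-component algebras form a v-basis. For this we would cite the standard argument from \cite{Gle20} or \cite{GIZ25}, where every perfect scheme is v-covered by a disjoint union of spectra of perfect valuation rings, passed to $w$-localization.
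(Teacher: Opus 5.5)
Your proposal is correct and follows the paper's route. You reduce to $B$ whose connected components are valuation rings; the paper gets this basis from \cite[Lemma 6.2]{BS17} after taking perfections, rather than from \cite{Gle20} or \cite{GIZ25}. You then compute $\Hom(\Spd{B},\Bun_{G,U})$ by descent along $\Spa{B\lp{t^{1/q^\infty}}}\ra\Spd{B}$ and conclude by combining Proposition \ref{ss:Tannakian} with Theorem \ref{ss:BunUschemepoints}. Your additional bookkeeping (Frobenius compatibility with the \v{C}ech nerve, the algebraic Tannakian step, and the comparison-lemma argument for sheafification) spells out what the paper's short proof leaves implicit.
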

\begin{proof}
  By using \cite[Lemma 6.2]{BS17} and taking perfections, we see that $\Spec{B}$ as in the second statement form a basis for the v-topology on $\{\mbox{affine perfect schemes}\}$. Hence it suffices to prove that the restriction of $\Bun^{\red}_{G,U}$ to such $\Spec{B}$ is naturally equivalent to (\ref{eq:BunGred}). Because $\Spa{B\lp{t^{1/q^\infty}}}\ra\Spd{B}$ is surjective and representable in perfectoid spaces, this follows from Proposition \ref{ss:Tannakian} and Theorem \ref{ss:BunUschemepoints}.
\end{proof}

\section{Shtukas and the fiber product conjecture}\label{s:shtukas}
Our goal in this section is to prove Theorem D. First, we recall the algebraic stack of global shtukas, and we prove that its associated v-stack takes a simple form. Next, we recall the v-stack of local shtukas. Finally, we define the function field analogue of the Hodge--Tate period map, and we prove Theorem D.

\subsection{}
We start with some more notation concerning $G$ over $F$. Fix a separable closure $\ov{F}$ of $F$, and write $W_F$ for the associated absolute Weil group of $F$. Then global class field theory yields an isomorphism $F^\times\bs\bA^\times\ra^\sim W_F^{\ab}$ that sends uniformizers to geometric Frobenii. Let $T$ be a maximal subtorus of $G$ over $F$, let $B\subseteq G_{\ov{F}}$ be a Borel subgroup containing $T_{\ov{F}}$, and write $X_*^+(T)$ for the associated set of dominant cocharacters of $T_{\ov{F}}$. Write $\pi_1(G)$ for the cocharacter group of $T_{\ov{F}}$ quotiented by the subgroup generated by the coroots.

\subsection{}\label{ss:globalshtuka}
Let us recall the stack of global shtukas. Let $I$ be a finite set, and let $\mu_\bullet=(\mu_i)_{i\in I}$ be in $X_*^+(T)^I$. For all $i$ in $I$, write $C_i$ for the normalization of $C$ in the field of definition of $\mu_i$. For all affine schemes $\Spec{B}$ over $\bF_q$ and morphisms
\begin{align*}
x:\Spec{B}\ra C_i
\end{align*}
over $\bF_q$, write $\Ga_x\subseteq C_B$ for the graph of the corresponding morphism $\Spec{B}\ra C$ over $\bF_q$. Let $N$ be a finite closed subscheme of $C$, and write $N_i$ for the preimage of $N$ in $C_i$.
\begin{defn*}
Write $\Sht_{G,\mu_\bullet,N}^I|_{\prod_{i\in I}C_i\ssm N_i}$ for the prestack on $\{\mbox{affine schemes over }\bF_q\}$ whose $\Spec{B}$-points parametrize data consisting of
  \begin{enumerate}[i)]
  \item for all $i$ in $I$, a morphism $x_i:\Spec{B}\ra C_i\ssm N_i$ over $\bF_q$,
  \item a $G$-torsor $\sG$ on $C_B$,
  \item an isomorphism of $G$-torsors
    \begin{align*}
      \phi:\sG|_{C_B\ssm\sum_{i\in I}\Ga_{x_i}}\ra^\sim\Frob_B^*\sG|_{C_B\ssm\sum_{i\in I}\Ga_{x_i}}
    \end{align*}
    whose relative position along $\{\Ga_{x_i}\}_{i\in I}$ is bounded by $\mu_\bullet$,
  \item an isomorphism of $G$-torsors $\psi:\sG|_{N_B}\ra^\sim G$ such that the square
    \begin{align*}
      \xymatrixcolsep{1.5cm}
      \xymatrix{\sG|_{N_B}\ar[r]^-{(\phi)_N}\ar[d]^-\psi & \Frob_B^*\sG|_{N_B}\ar[d]^-\psi\\
      G\ar[r]^-{(\Frob_B^*)^{-1}} &\Frob_B^*G}
    \end{align*}
    commutes.
  \end{enumerate}
  Write $\Sht_{G,\mu_\bullet,N}^{I,\circ}|_{\prod_{i\in I}C_i\ssm N_i}$ for the open substack of $\Sht_{G,\mu_\bullet,N}^I|_{\prod_{i\in I}C_i\ssm N_i}$ whose $\Spec{B}$-points consist of $(\{x_i\}_{i\in I},\sG,\phi,\psi)$ such that the isomorphism $\phi$ has relative position along $\{\Ga_{x_i}\}_{i\in I}$ equal to $\mu_\bullet$.
\end{defn*}
Recall that $\Sht^I_{G,\mu_\bullet,N}|_{\prod_{i\in I}C_i\ssm N_i}$ is a Deligne--Mumford stack that is separated and locally of finite type over $\prod_{i\in I}C_i\ssm N_i$ \cite[Proposition 5.6]{LH23}, and its open substack $\Sht^{I,\circ}_{G,\mu_\bullet,N}|_{\prod_{i\in I}C_i\ssm N_i}$ is smooth over $\prod_{i\in I}C_i\ssm N_i$.

For all finite closed subschemes $N'\supseteq N$ of $C$, write $K^{N'}_N$ for the kernel of $G(\sO_{N'})\ra G(\sO_N)$. Recall that $\Sht^I_{G,\mu_\bullet,N'}|_{\prod_{i\in I}C_i\ssm N'_i}\ra\Sht^I_{G,\mu_\bullet,N}|_{\prod_{i\in I}C_i\ssm N_i'}$ is finite Galois with group $K^{N'}_N$ \cite[Proposition 5.5]{LH23}.

\subsection{}\label{ss:globalshtukatruncation}
For convenience, we package the Harder--Narasimhan stratification for global shtukas into a single parameter $t$ as follows. By \cite[Proposition 2.2(b)]{AH13}, there exists an $\SL_h$-bundle $\sV$ on $C$ along with a closed embedding $\io:G^{\ad}\ra\ul\Aut(\sV)$ of group schemes over $C$ such that $\ul\Aut(\sV)/G^{\ad}$ satisfies \cite[(2.1)]{AH13}. Let $t$ be a positive integer.
\begin{defn*}
  Write $\Sht^{I,\leq t}_{G,\mu_\bullet,N}|_{\prod_{i\in I}C_i\ssm N_i}$ for the open substack of
  \begin{align*}
    \Sht^I_{G,\mu_\bullet,N}|_{\prod_{i\in I}C_i\ssm N_i}
  \end{align*}
  whose $\Spec{B}$-points consist of $(\{x_i\}_{i\in I},\sG,\phi,\psi)$ such that the $\SL_h$-torsor $\io_*(\sG^{\ad})$ on $C_B$ has Harder--Narasimhan polygon bounded by $t2\rho^\vee$, where $2\rho^\vee$ denotes the sum of positive coroots in $\SL_h$.
\end{defn*}
Recall that, when $\deg{N}$ is large enough, $\Sht^{I,\leq t}_{G,\mu_\bullet,N}|_{\prod_{i\in I}C_i\ssm N_i}$ is a disjoint union of quasiprojective schemes over $\prod_{i\in I}C_i\ssm N_i$ \cite[Proposition 5.6]{LH23}.

\subsection{}\label{defn:shtukadiamond}
We now convert the moduli of global shtukas into a v-stack.
\begin{defn*}
  Write $(\Sht^I_{G,\mu_\bullet,N}|_{\prod_{i\in I}C_i\ssm N_i})^\Diamond$ and $(\Sht^{I,\leq t}_{G,\mu_\bullet,N}|_{\prod_{i\in I}C_i\ssm N_i})^\Diamond$ for the prestack on $\Perf_{\bF_q}$ whose $S$-points are given by
  \begin{align*}
    (\Sht^I_{G,\mu_\bullet,N}|_{\prod_{i\in I}C_i\ssm N_i})(R)\mbox{ and }(\Sht^{I,\leq t}_{G,\mu_\bullet,N}|_{\prod_{i\in I}C_i\ssm N_i})(R),
  \end{align*}
respectively.
\end{defn*}

\begin{prop}\label{ss:shtukadiamond}\hfill
  \begin{enumerate}[1)]
  \item When $\deg{N}$ is large enough, $(\Sht^{I,\leq t}_{G,\mu_\bullet,N}|_{\prod_{i\in I}C_i\ssm N_i})^\Diamond$ equals $(-)^\Diamond$ of the adic space $\Sht^{I,\leq t}_{G,\mu_\bullet,N}|_{\prod_{i\in I}C_i\ssm N_i}$ over $\bF_q$.
  \item For any increasing family $\{N_a\}_{a\geq1}$ of finite closed subschemes of $C$ such that $\deg{N_a}\to\infty$ as $a\to\infty$, the morphism
  \begin{align*}
    \varprojlim_{a\geq1}(\Sht^I_{G,\mu_\bullet,N_a}|_{\prod_{i\in I}C_i\ssm(N_a)_i})^\Diamond\ra\varprojlim_{a\geq1}\big(\prod_{i\in I}C_i\ssm(N_a)_i\big)^\Diamond
  \end{align*}
  is separated, representable in locally spatial diamonds, and locally of finite $\dimtrg$. 

\item The prestack $(\Sht^I_{G,\mu_\bullet,N}|_{\prod_{i\in I}C_i\ssm N_i})^\Diamond$ is a small v-stack,
  \begin{align*}
  (\Sht^{I,\leq t}_{G,\mu_\bullet,N}|_{\prod_{i\in I}C_i\ssm N_i})^\Diamond\subseteq(\Sht^I_{G,\mu_\bullet,N}|_{\prod_{i\in I}C_i\ssm N_i})^\Diamond
  \end{align*}
  is an open substack, and for all finite closed subschemes $N'\supseteq N$ of $C$,
  \begin{align*}
    (\Sht^I_{G,\mu_\bullet,N'}|_{\prod_{i\in I}C_i\ssm N'_i})^{\Diamond}\ra(\Sht^I_{G,\mu_\bullet,N}|_{\prod_{i\in I}C_i\ssm N_i'})^{\Diamond}
  \end{align*}
is finite Galois with group $K^{N'}_N$.
  \end{enumerate}
\end{prop}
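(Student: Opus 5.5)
For 1), I will use that for $\deg N$ large, $\Sht^{I,\leq t}_{G,\mu_\bullet,N}|_{\prod_{i\in I}C_i\ssm N_i}$ is a disjoint union of quasiprojective schemes $Y_j$ over $\prod_{i\in I}C_i\ssm N_i$ \cite[Proposition 5.6]{LH23}, hence over $\bF_q$. It then suffices to treat a single quasiprojective $Y=Y_j$ over $D=\bF_q$. For $S=\Spa(R,R^+)$ affinoid perfectoid, Lemma \ref{ss:nosheafification} (applied with $S$ over $\bF_q$) says that morphisms $S\ra Y^{\an}_S$ over $S$, i.e.\ morphisms $S\ra Y$ of adic spaces, are the same as morphisms $\Spec R\ra Y$ over $\bF_q$. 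Since $Y^\Diamond(S)$ is by definition $\Hom(S,Y)$ (with $Y$ viewed as an adic space over $\bF_q$ via $\Spa(-,\bF_q^\sim)$), this identifies the prestack in Definition \ref{defn:shtukadiamond} with $Y^\Diamond$. Compatibility with disjoint unions is clear because $\Spec R$ and $S$ have the same clopen decompositions.

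For 3), I would first show that the prestack is a small v-stack. Every $\Spec R$-point lands in some $\Sht^{I,\leq t}$ by quasicompactness of $\Spec R$, since the Harder--Narasimhan polygon is upper semicontinuous. Using the finite Galois covers $\Sht_{N'}\ra\Sht_N$, I reduce to $\deg N$ large, where 1) gives a diamond. For smaller $N$, I take the quotient by the finite group $K^{N'}_N$ acting on the $N'$-level object, which is a v-stack. Next, openness of $\leq t$ follows from 1), because the corresponding subscheme is open. For the Galois statement, I pass to $\Spec R$-points: the algebraic morphism is a finite étale $K^{N'}_N$-torsor \cite[Proposition 5.5]{LH23}, and finite étale covers of $\Spec R$ match those of $S$ via $(\Spec R)_{\text{fét}}\simeq S_{\text{fét}}$. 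Hence the base change to any $S$ is the $\Diamond$ of a finite étale $K^{N'}_N$-torsor.

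For 2), I would write each term, after restricting to $\leq t$ and taking $a$ large, as $Y_a^\Diamond$ with $Y_a$ locally of finite type over $\prod_i C_i\ssm(N_a)_i$. The transition maps are finite étale, so the inverse limit is a limit along finite étale maps of $\Diamond$'s of schemes separated and locally of finite type over the base. Taking $\Diamond$ of a separated finite-type morphism of schemes gives a separated morphism representable in locally spatial diamonds of finite $\dimtrg$. Pulling back finite étale maps to the limit base preserves this at each stage, and a cofiltered limit along finite étale (hence proper, quasi-pro-étale) maps stays representable in locally spatial diamonds with the same $\dimtrg$. Separatedness passes to limits. Finally, the union over $t$ is an increasing union of opens, which preserves being locally spatial, locally of finite $\dimtrg$, and separated.

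I expect the main obstacle to be 2): checking that $(-)^\Diamond$ of a morphism of schemes locally of finite type has finite $\dimtrg$ in the relative sense, and that the limit over $a$ commutes with this. I would handle it via the quasiprojective charts, embedding in $\bP^d$ over the base, so that $\dimtrg\leq d$ uniformly along the tower.
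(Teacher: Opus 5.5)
Your parts 1) and 2) follow the paper's proof: Lemma \ref{ss:nosheafification} together with quasiprojectivity of the truncations at large level, then the $\leq t$ towers over the limit base with finite \'etale transitions, and finally the increasing union over $t$.

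The gap is in 3). The map that is finite Galois with group $K^{N'}_N$ is $\Sht^I_{G,\mu_\bullet,N'}|_{\prod_{i\in I}C_i\ssm N'_i}\to\Sht^I_{G,\mu_\bullet,N}|_{\prod_{i\in I}C_i\ssm N'_i}$, because a level-$N'$ structure only makes sense when the legs avoid $N'$. So quotienting the level-$N'$ object by $K^{N'}_N$ only recovers $(\Sht^I_{G,\mu_\bullet,N}|_{\prod_{i\in I}C_i\ssm N'_i})^\Diamond$. Your reduction ``to $\deg N$ large'' therefore says nothing over the part of the leg space where some leg meets $N'\ssm N$.

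If $N\neq\varnothing$ you can avoid this by taking $N'=mN$, which has the same support, so the leg locus does not shrink. For $N=\varnothing$, however, every $N'\supsetneq N$ genuinely cuts down the leg locus; this case is needed later, e.g.\ in Proposition \ref{ss:igusavarsfiber}. So, as written, you have not shown that $(\Sht^I_{G,\mu_\bullet,N}|_{\prod_{i\in I}C_i\ssm N_i})^\Diamond$ is a small v-stack. Nor have you shown that the $\leq t$ loci are open in it, since part 1) is only available at large level and for small $N$ openness must also come through the quotient description.

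The missing step is to localize on the leg space, raising the level at one auxiliary closed point $v$ at a time.

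\begin{enumerate}
\item The locus $\prod_{i\in I}C_i\ssm(N+mv)_i=\prod_{i\in I}C_i\ssm(N+v)_i$ does not depend on $m$, while $\deg(N+mv)\to\infty$.
\item For $m\gg0$, part 1) at level $N+mv$ together with $(\Spec R)_{\text{f\'et}}\simeq S_{\text{f\'et}}$ identifies the restriction of $(\Sht^{I,\leq t}_{G,\mu_\bullet,N})^\Diamond$ to this locus with the quotient of a diamond by $K^{N+mv}_N$.
\item This gives the v-stack property and openness in $t$ over that locus. For $v\notin N'$ it also gives the Galois statement over $\prod_{i\in I}C_i\ssm(N'+v)_i$.
\item A point of $\prod_{i\in I}C_i\ssm N_i$ has finitely many legs and $C$ has infinitely many closed points. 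Hence the opens $\prod_{i\in I}C_i\ssm(N+v)_i$ cover $\prod_{i\in I}C_i\ssm N_i$.
\item Taking the union over $v$ and $t$ is exactly how the paper concludes.
\end{enumerate}

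Your direct pointwise argument for the Galois statement is fine once both sides are known to be v-stacks.
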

\begin{proof}
Part 1) follows from Lemma \ref{ss:nosheafification} and \ref{ss:globalshtukatruncation}. For part 2), it follows from part 1) that, for all positive integers $t$, the morphism
  \begin{align*}
    \varprojlim_{a\geq1}(\Sht^{I,\leq t}_{G,\mu_\bullet,N_a}|_{\prod_{i\in I}C_i\ssm(N_a)_i})^\Diamond\ra\varprojlim_{a\geq1}\big(\prod_{i\in I}C_i\ssm(N_a)_i\big)^\Diamond
  \end{align*}
  is separated, representable in locally spatial diamonds, and locally of finite $\dimtrg$. Recall that $\Sht^I_{G,\mu_\bullet,N}|_{\prod_{i\in I}C_i\ssm N_i}$ is the increasing union $\bigcup_{t\geq1}\Sht^{I,\leq t}_{G,\mu_\bullet,N}|_{\prod_{i\in I}C_i\ssm N_i}$. Since the preimage of $\Sht^{I,\leq t}_{G,\mu_\bullet,N}|_{\prod_{i\in I}C_i\ssm N'_i}$ in $\Sht^I_{G,\mu_\bullet,N'}|_{\prod_{i\in I}C_i\ssm N'_i}$ equals
\begin{align*}
 \Sht^{I,\leq t}_{G,\mu_\bullet,N'}|_{\prod_{i\in I}C_i\ssm N'_i}
\end{align*}
and $(-)^\Diamond$ preserves open embeddings, taking $\bigcup_{t\geq1}$ yields the desired result.

For part 3), let $v$ be a closed point of $C\ssm N'$, and let $t$ be a positive integer. For all positive integers $m$, combining the natural equivalence $(\Spec{R})_{\text{f\'et}}\ra^\sim S_{\text{f\'et}}$ with \ref{ss:globalshtuka} implies that $S$-points of $(\Sht^{I,\leq t}_{G,\mu_\bullet,N}|_{\prod_{i\in I}C_i\ssm(N+v)_i})^\Diamond$ parametrize $K^{N+mv}_N$-torsors $S'$ on $S$ equipped with a $K^{N+mv}_N$-equivariant morphism
\begin{align*}
  S'\ra(\Sht^{I,\leq t}_{G,\mu_\bullet,N+mv}|_{\prod_{i\in I}C_i\ssm(N+v)_i})^\Diamond.  
\end{align*}
By letting $m$ be large, part 1) indicates that $(\Sht^{I,\leq t}_{G,\mu_\bullet,N}|_{\prod_{i\in I}C_i\ssm(N+v)_i})^\Diamond$ is a small v-stack, the morphism
\begin{align*}
(\Sht^{I,\leq t}_{G,\mu_\bullet,N'}|_{\prod_{i\in I}C_i\ssm(N'+v)_i})^\Diamond\ra(\Sht^{I,\leq t}_{G,\mu_\bullet,N}|_{\prod_{i\in I}C_i\ssm(N'+v)_i})^\Diamond
\end{align*}
is finite Galois with group $K^{N'}_N$, and varying $t$ yields open embeddings. Hence taking $\bigcup_{t\geq1}$ shows the desired statements after restricting to $\prod_{i\in I}C_i\ssm(N+v)_i$. Finally, because $\prod_{i\in I}C_i\ssm N_i$ is covered by open subspaces of the form $\prod_{i\in I}C_i\ssm(N+v)_i$, taking $\bigcup_v$ yields the desired result.
\end{proof}

\subsection{}\label{ss:shtukadiamondred}
Conversely, we can recover the moduli of global shtukas (up to perfection) from its associated v-stack:
\begin{cor*}\hfill
  \begin{enumerate}[1)]
  \item When $N$ is large enough, $(\Sht^{I,\leq t}_{G,\mu_\bullet,N}|_{\prod_{i\in I}C_i\ssm N_i})^{\Diamond,\red}$ is naturally isomorphic to the perfect scheme $(\Sht^{I,\leq t}_{G,\mu_\bullet,N}|_{\prod_{i\in I}C_i\ssm N_i})^{\perf}$ over $\bF_q$.
  \item The perfect v-stack $(\Sht^I_{G,\mu_\bullet,N}|_{\prod_{i\in I}C_i\ssm N_i})^{\Diamond,\red}$ is naturally isomorphic to
    \begin{align*}
      (\Sht^I_{G,\mu_\bullet,N}|_{\prod_{i\in I}C_i\ssm N_i})^{\perf}.
    \end{align*}
  \end{enumerate}
\end{cor*}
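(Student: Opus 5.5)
The plan is to reduce both parts to a general principle about the functor $X\mapsto X^\Diamond$ on schemes locally of finite type over $\bF_q$. Concretely, for such a scheme $X$, the reduction of $X^\Diamond$ should be the perfection $X^{\perf}$. Once this is available for the quasiprojective pieces, part 2) follows by an exhaustion and descent argument.

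For part 1), take $\deg N$ large enough that Proposition \ref{ss:shtukadiamond}.1) applies. Then $(\Sht^{I,\leq t}_{G,\mu_\bullet,N}|_{\prod_{i\in I}C_i\ssm N_i})^\Diamond$ is $(-)^\Diamond$ of the adic space attached to a disjoint union of quasiprojective schemes over $\bF_q$. So it suffices to show that for a scheme $X$ locally of finite type over $\bF_q$, viewed as an adic space via $\Spa(A,\bZ^\sim)$, we have $(X^\Diamond)^{\red}\cong X^{\perf}$. By definition, $(X^\Diamond)^{\red}(\Spec B)=\Hom(\Spd B,X^\Diamond)$. For discrete $X$ (schemes with the discrete topology), Gleason's result \cite[Proposition 3.7]{Gle20}, or the standard computation $\Hom(\Spd B, \Spd A)=\Hom(A,B)$ for perfect $B$ and discrete $A$ of finite type, identifies this with $\Hom(\Spec B, X)=X^{\perf}(\Spec B)$. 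To get this in our setting, I would work Zariski locally on affine $X=\Spec A$. I would check the identification on $\Spec B$ whose connected components are valuation rings, using the v-cover $\Spa B\lp{t^{1/q^\infty}}\ra\Spd B$ and the fact that maps from $\Spa(B\lp{t^{1/q^\infty}}, \dots)$ to $\Spa(A,\bZ^\sim)$ compatible with the descent datum are ring maps $A\ra B\lp{t^{1/q^\infty}}$ equalized by the two maps to $B\lp{t_1^{1/q^\infty},t_2^{1/q^\infty}}$, i.e. landing in $B$. Alternatively, Lemma \ref{ss:nosheafification} also gives maps $A\ra$ global sections. Then I would conclude by v-descent, as in the proof of Theorem \ref{ss:BunGred}. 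Finally, I would glue over an affine cover, using that $(-)^\Diamond$ and $(-)^{\red}$ preserve open embeddings.

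For part 2), I first choose an auxiliary closed point $v\notin N$ and $m$ large. As in the proof of Proposition \ref{ss:shtukadiamond}.3), the restriction to $\prod_i C_i\ssm(N+v)_i$ of the $\leq t$ locus is the quotient of the level $N+mv$ one by the finite group $K^{N+mv}_N$. Reduction commutes with this finite étale quotient. This is because $(\Spec B)_{\text{f\'et}}\simeq(\Spd B)_{\text{f\'et}}$, so $K$-torsors over $\Spd B$ are the same as $K$-torsors over $\Spec B$. The same quotient description holds for the algebraic stack $\Sht^{I,\leq t}_{G,\mu_\bullet,N}$ by \cite[Proposition 5.5]{LH23}, and perfection commutes with it. So part 1) for level $N+mv$ gives the isomorphism on this piece. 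Then I take the union over $t$ (open embeddings, compatible on both sides) and over $v$ (an open cover of the base). Naturality of all identifications ensures these glue.

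I expect the main obstacle to be the core identification $(X^\Diamond)^{\red}=X^{\perf}$ for non-proper $X$ with the discrete adic structure. In particular, one must make sure that $\Spd B$-points of $\Spa(A,\bZ^\sim)$ see only $B$ and not some larger ring, and that v-sheafification on the perfect side does not change $X^{\perf}$, which is a v-sheaf on perfect schemes by \cite{BS17}. I would handle this via the valuation-ring basis and the computation of $B\lp{t^{1/q^\infty}}$-descent used in Theorem \ref{ss:BunUschemepoints}.
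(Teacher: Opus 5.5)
Your proposal is correct and is essentially the paper's proof. Part 2) matches it step for step: an auxiliary point $v$, the quotient of level $N+mv$ by $K^{N+mv}_N$ via $(\Spec B)_{\text{f\'et}}\simeq(\Spd B)_{\text{f\'et}}$, part 1) at level $N+mv$, and then $\bigcup_v\bigcup_t$. In part 1) the paper likewise uses Proposition \ref{ss:shtukadiamond}.1) to write the v-sheaf as $(-)^\Diamond$ of the perfect scheme glued from the $\Spa(B,\bZ^\sim)$, but it then simply cites \cite[Theorem 2.32]{Gle20} for $(X^\Diamond)^{\red}\cong X^{\perf}$ instead of reproving it.

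If you do reprove it, your equalizer computation already settles affine $X$ for every perfect $B$. The gluing to non-affine $X$, however, needs more than ``$(-)^{\red}$ preserves open embeddings''. You must show that, for $B$ whose connected components are valuation rings, the pullback of an affine open cover of $X^\Diamond$ along $\Spd B\ra X^\Diamond$ is refined by a clopen cover of $\Spec B$. That is where the valuation-ring basis is genuinely used.
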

\begin{proof}
  For part 1), Proposition \ref{ss:shtukadiamond}.1) shows that $(\Sht^{I,\leq t}_{G,\mu_\bullet,N}|_{\prod_{i\in I}C_i\ssm N_i})^\Diamond$ equals $(-)^\Diamond$ of the adic space $(\Sht^{I,\leq t}_{G,\mu_\bullet,N}|_{\prod_{i\in I}C_i\ssm N_i})^{\perf}$ over $\bF_q$.\footnote{We warn that this is \emph{not} glued from $\Spa{B}$ as $\Spec{B}$ runs over affine open subspaces of the perfect scheme $(\Sht^{I,\leq t}_{G,\mu_\bullet,N}|_{\prod_{i\in I}C_i\ssm N_i})^{\perf}$. Instead, this is glued from $\Spa(B,\bZ^\sim)$.} Therefore the desired result follows from \cite[Theorem 2.32]{Gle20}.

For part 2), let $v$ be a closed point of $C$. For all perfect $\bF_p$-algebras $B$, the functor $\Spd$ induces a natural equivalence $(\Spec{B})_{\text{f\'et}}\ra^\sim(\Spd{B})_{\text{f\'et}}$ \cite[Corollary 5.4]{Kim24}, so Proposition \ref{ss:shtukadiamond}.3) implies that, for all positive integers $m$, the $\Spec{B}$-points of $(\Sht^{I,\leq t}_{G,\mu_\bullet,N}|_{\prod_{i\in I}C_i\ssm(N+v)_i})^{\Diamond,\red}$ parametrizes $K^{N+mv}_N$-torsors $\Spec{B'}$ on $\Spec{B}$ equipped with a $K^{N+mv}_N$-equivariant morphism
\begin{align*}
\Spec{B'}\ra(\Sht^{I,\leq t}_{G,\mu_\bullet,N+mv}|_{\prod_{i\in I}C_i\ssm(N+v)_i})^{\Diamond,\red}.
\end{align*}
By letting $m$ be large, part 1) and \ref{ss:globalshtuka} indicate that $(\Sht^{I,\leq t}_{G,\mu_\bullet,N}|_{\prod_{i\in I}C_i\ssm(N+v)_i})^{\Diamond,\red}$ is naturally isomorphic to $(\Sht^{I,\leq t}_{G,\mu_\bullet,N}|_{\prod_{i\in I}C_i\ssm(N+v)_i})^{\perf}$. Finally, taking $\bigcup_v\bigcup_{t\geq1}$ yields the desired result.
\end{proof}

\subsection{}\label{ss:localshtuka}
Next, let us recall the moduli of local shtukas. For all closed points $v$ of $C$, fix a separable closure $\ov{F}_v$ of $F_v$, and write $W_{F_v}$ for the associated absolute Weil group of $F_v$. Write $\bC_v$ for the completion of $\ov{F}_v$. Fix an embedding $\ov{F}\ra\ov{F}_v$ over $F\ra F_v$, which induces a homomorphism $W_{F_v}\ra W_F$.

Let $\{I_z\}_z$ be a partition of $I$, where $z$ runs over $Z$. For all $z$ in $Z$, write $\mu_z$ for the projection of $\mu_\bullet$ to $X_*^+(T)^{I_z}$, and for all $i$ in $I_z$, write $z_i$ for the closed point of $C_i$ over $z$ induced by $\ov{F}\ra\ov{F}_z$. For all affinoid perfectoid spaces $S=\Spa(R,R^+)$ over $\bF_z$ and morphisms $x:S\ra\Spa{\cO_{z_i}}$ over $\bF_z$, write $\Ga_x\subseteq\Spa\cO_z\times_{\bF_z}S$ for the graph of the corresponding morphism $S\ra\Spa\cO_z$ over $\bF_z$. Write $\prod_{i\in I_z}\Spa\cO_{z_i}$ for the product of the $\Spa\cO_{z_i}$ over $\bF_z$, and write $\prod_{i\in I_z}\Spa{F_{z_i}}$ for the product of the $\Spa{F_{z_i}}$ over $\bF_z$. Let $n_z$ be a positive integer.

\begin{defn*}\footnote{This definition does not agree with the notation in \cite{LH23}, for which we apologize. This definition is the one that \emph{should} be denoted by $\LocSht$, since it parametrizes local shtukas. The definition in \cite{LH23} additionally parametrizes a framing (up to isogeny), so it is akin to a Rapoport--Zink space.}\hfill
  \begin{enumerate}[a)]
  \item Write $\LocSht^{I_z}_{G,\mu_z}|_{\prod_{i\in I_z}\Spd\cO_{z_i}}$ for the prestack on $\Perf_{\bF_z}$ whose $S$-points parametrize data consisting of
  \begin{enumerate}[i)]
  \item for all $i$ in $I_z$, a morphism $x_i:S\ra\Spa\cO_{z_i}$ over $\bF_z$,
  \item a $G$-torsor $\sG$ on $\Spa\cO_z\times_{\bF_z}S$,
  \item an isomorphism of $G$-torsors
    \begin{align*}
      \phi:\sG|_{\Spa\cO_z\times_{\bF_z}S\ssm\sum_{i\in I_z}\Ga_{x_i}}\ra^\sim(\Frob_S^{\deg{z}})^*\sG|_{\Spa\cO_z\times_{\bF_z}S\ssm\sum_{i\in I_z}\Ga_{x_i}}
    \end{align*}
    whose relative position along $\{\Ga_{x_i}\}_{i\in I_z}$ is bounded by $\mu_z$,
  \end{enumerate}
\item Write $\LocSht^{I_z}_{G,\mu_z,n_z}|_{\prod_{i\in I_z}\Spd{F_{z_i}}}$ for the prestack on $\Perf_{\bF_z}$ whose $S$-points parametrize data consisting of an $S$-point $(\{x_i\}_{i\in I_z},\sG,\phi)$ of $\LocSht^{I_z}_{G,\mu_z}|_{\prod_{i\in I_z}\Spd{F_{z_i}}}$ equipped with
  \begin{enumerate}[i)]
    \setcounter{enumii}{3}
  \item an isomorphism of $G$-torsors $\psi:\sG|_{n_zz\times_{\bF_z}S}\ra^\sim G$ such that the square
    \begin{align*}
      \xymatrixcolsep{1.5cm}
      \xymatrix{\sG|_{n_zz\times_{\bF_z}S}\ar[r]^-{(\phi)_{n_zz}}\ar[d]^-\psi & (\Frob_S^{\deg{z}})^*\sG|_{n_zz\times_{\bF_z}S}\ar[d]^-\psi\\
      G\ar[r]^-{(\Frob_S^*)^{-\deg{z}}} & G}
    \end{align*}
    commutes.
  \end{enumerate}
\end{enumerate}
Write $\LocSht^{I_z,\circ}_{G,\mu_z,n_z}|_{\prod_{i\in I_z}\Spd{F_{z_i}}}$ for the open substack of $\LocSht^{I_z}_{G,\mu_z,n_z}|_{\prod_{i\in I_z}\Spd{F_{z_i}}}$ consisting of $(\{x_i\}_{i\in I_z},\sG,\phi,\psi)$ such that the isomorphism $\phi$ has relative position along $\{\Ga_{x_i}\}_{i\in I_z}$ equal to $\mu_\bullet$.
\end{defn*}
Using \cite[Proposition 19.5.3]{SW20} and the proof of \cite[Proposition III.1.3]{FS21}, we see that $\LocSht^{I_z}_{G,\mu_z}|_{\prod_{i\in I_z}\Spd\cO_{z_i}}$ and $\LocSht^{I_z}_{G,\mu_z,n_z}|_{\prod_{i\in I_z}\Spd{F_{z_i}}}$ are small v-stacks.

\subsection{}
In the local and global settings, shtukas induce bundles on the Hartl--Pink curve in the following way. Recall that choosing a uniformizer of $\cO_z$ identifies $\Spa{\cO_z}\times_{\bF_z}S$ with the open unit disk over $S$. For all objects $(\{x_i\}_{i\in I_z},\sG,\phi)$ in
\begin{align*}
(\LocSht^{I_z}_{G,\mu_z}|_{\prod_{i\in I_z}\Spd\cO_{z_i}})(S),
\end{align*}
the quasicompactness of $\abs{S}$ shows that there exists a closed disk $D\subseteq\Spa{\cO_z}\times_{\bF_z}S$ over $S$ centered at the origin such that $D$ contains $\Ga_{x_i}$ for all $i$ in $I_z$. Hence $\phi$ restricts to an isomorphism $\sG|_{\Spa{F_z}\times_{\bF_z}S\ssm D}\ra^\sim(\Frob_S^{\deg{z}})^*\sG|_{\Spa{F_z}\times_{\bF_z}S\ssm D}$. Via continuation by Frobenius and Proposition \ref{ss:localBunG}, this corresponds to an object in $\Bun_{G,F_z}(S)$, so altogether this construction yields a morphism
\begin{align*}
\LocSht^{I_z}_{G,\mu_z}|_{\prod_{i\in I_z}\Spd{\cO_{z_i}}}\ra\Bun_{G,F_z}.
\end{align*}

Write $\prod_{i\in I}\Spa\cO_{z_i}$ for $\prod_{z\in Z}\big(\prod_{i\in I_z}\Spa\cO_{z_i}\big)$, and write $\prod_{i\in I}^\circ\Spa\cO_{z_i}$ for its open subspace $\big(\prod_{i\in I}\Spa\cO_{z_i}\big)\times_{\prod_{i\in I}C_i}\big(\prod_{i\in I}C_i\ssm N_i\big)$. Form the fiber product
\begin{align*}
  \xymatrix{(\Sht^I_{G,\mu_\bullet,N}|_{\prod_{i\in I}^\circ\Spa\cO_{z_i}})^\Diamond\ar[r]\ar[d] & \displaystyle\prod_{i\in I}\!^\circ\Spa\cO_{z_i}\ar[d]\\
  (\Sht^I_{G,\mu_\bullet,N}|_{\prod_{i\in I}C_i\ssm N_i})^\Diamond\ar[r] & \big(\displaystyle\prod_{i\in I}C_i\ssm N_i\big)^\Diamond.}
\end{align*}
By arguing as above for all $z$ in $Z$, we also obtain a morphism
\begin{align*}
(\Sht^I_{G,\mu_\bullet,N}|_{\prod_{i\in I}^\circ\Spa\cO_{z_i}})^\Diamond\ra\Bun_{G,U}.
\end{align*}

\subsection{}
We now describe the analogue of the Hodge--Tate period map for global shtukas. Write $n_v$ for the multiplicity of $v$ in $N$, and write $K_v^{n_v}$ for the kernel of $G(\cO_v)\ra G(\sO_{n_vv})$. Write $\LocSht^I_{G,\mu_\bullet,N}|_{\prod^\circ_{i\in I}\Spd\cO_{z_i}}$ for the product
\begin{align*}
\prod_{\substack{z\in Z\\n_z=0}}\LocSht^{I_z}_{G,\mu_z}|_{\prod_{i\in I_z}\Spd\cO_{z_i}}\times\prod_{\substack{z\in Z\\n_z\geq1}}\LocSht^{I_z}_{G,\mu_z,n_z}|_{\prod_{i\in I_z}\Spd{F_{z_i}}},
\end{align*}
and write $K_{N\cap U}$ for the kernel of $G(\bO_U)\ra G(\sO_{N\cap U})$.

Let $(\{x_i\}_{i\in I},\sG,\phi,\psi)$ be an object in $(\Sht^I_{G,\mu_\bullet,N}|_{\prod_{i\in I}^\circ\Spa\cO_{z_i}})^\Diamond(S)$. For all $z$ in $Z$, arguing as in the proof of \cite[Lemma 5.12]{LH23} shows that
\begin{align*}
(\{x_i\}_{i\in I_z},\sG|_{\Spa\cO_z},\phi|_{\Spa\cO_z})
\end{align*}
corresponds to an object in $(\LocSht^{I_z}_{G,\mu_z}|_{\prod_{i\in I_z}\Spd\cO_{z_i}})(S)$. If $n_z$ is positive, then $(\psi)_{n_zz}$ corresponds to a lift of this object to $(\LocSht^{I_z}_{G,\mu_z,n_z}|_{\prod_{i\in I_z}\Spd{F_{z_i}}})(S)$, so altogether we obtain a morphism
\begin{align*}
(\Sht^I_{G,\mu_\bullet,N}|_{\prod_{i\in I}^\circ\Spa\cO_{z_i}})^\Diamond\ra\LocSht^I_{G,\mu_\bullet,N}|_{\prod^\circ_{i\in I}\Spd\cO_{z_i}}
\end{align*}
over $\prod^\circ_{i\in I}\Spd\cO_{z_i}$. For all closed points $u$ of $U$, Proposition \ref{ss:localBunG} indicates that
\begin{align*}
(\sG|_{\Spa\cO_u},\phi|_{\Spa\cO_u})
\end{align*}
corresponds to an object in $(*/\ul{G(\cO_u)})(S)$. If $u$ lies in $N$, then $(\psi)_{n_uu}$ corresponds to a lift of this object to $(*/\ul{K_u^{n_u}})(S)$, so altogether we also obtain a morphism
\begin{align*}
(\Sht^I_{G,\mu_\bullet,N}|_{\prod_{i\in I}^\circ\Spa\cO_{z_i}})^\Diamond\ra*/\ul{K_{N\cap U}}.
\end{align*}

\subsection{}\label{ss:fiberproductconj}
Finally, we arrive at Theorem D.
\begin{thm*}
  The square of small v-stacks
    \begin{align*}
      \xymatrix{(\Sht^I_{G,\mu_\bullet,N}|_{\prod_{i\in I}^\circ\Spa\cO_{z_i}})^\Diamond\ar[r]\ar[d] & \ast/\ul{K_{N\cap U}}\times\LocSht^I_{G,\mu_\bullet,N}|_{\prod^\circ_{i\in I}\Spd\cO_{z_i}}\ar[d]\\
      \Bun_{G,U}\ar[r]^-{\loc} & \ast/\ul{G(\bO_U)}\times\displaystyle\prod_{z\in Z}\Bun_{G,F_z}}
  \end{align*}
is cartesian.
\end{thm*}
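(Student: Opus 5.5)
We construct a morphism from the shtuka side into the fiber product, then build an inverse $S$-pointwise by gluing along the open cover of $C^{\an}_S$, and finally algebraize the glued object by GAGA. Everything is v-local on $S$, so we may work with affinoid perfectoid $S=\Spa(R,R^+)$ over $\bF_q$ and compare groupoids of $S$-points. The morphism from $(\Sht^I_{G,\mu_\bullet,N}|_{\prod^\circ_{i\in I}\Spa\cO_{z_i}})^\Diamond$ to the fiber product is the one already constructed: $(\{x_i\},\sG,\phi,\psi)$ maps to the restriction of $(\sG^{\an},\phi)$ to $U^{\an}_S$, to the local shtukas at $z\in Z$ (with level $\psi$ at $z$ when $n_z\geq1$), and to the $K_{N\cap U}$-reduction at points of $U$. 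The canonical identifications in $\prod_z\Bun_{G,F_z}$ and $*/\ul{G(\bO_U)}$ come from restricting $\sG^{\an}$ to $\Spa F_z\times S$ and $\Spa\cO_u\times S$, using Proposition \ref{ss:localBunG}.

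For the inverse, take an $S$-point of the fiber product. It consists of an object $(\sG_U,\phi_U)$ of $\Bun_{G,U}(S)$, local shtukas $(\{x_i\}_{i\in I_z},\sG_z,\phi_z)$ (with level structures) for $z\in Z$, a $K_{N\cap U}$-structure, and isomorphisms in $\Bun_{G,F_z}(S)$ and $*/\ul{G(\bO_U)}(S)$. The key point is that $\phi_z$ is an isomorphism away from the legs, and the legs lie in a closed disk $D$. By continuation by Frobenius, the identification in $\Bun_{G,F_z}(S)$ therefore becomes an honest isomorphism $\sG_z|_{\Spa F_z\times S\ssm D}\cong\sG_U|_{\Spa F_z\times S\ssm D}$ compatible with Frobenius structures. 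This is the inverse of the construction preceding the theorem, argued as in \cite[Lemma 5.12]{LH23}; we work over $\bF_z$ and descend via Proposition \ref{ss:localbundles}. We then glue $\sG_U$ on $U^{\an}_S$ with $\sG_z$ on $\Spa\cO_z\times S$ along these overlaps, using the open cover $\{U^{\an}_S\}\cup\{\Spa\cO_z\times S\}_z$ of $C^{\an}_S$. The result is a $G$-torsor $\sG^{\an}$ on $C^{\an}_S$ together with an isomorphism $\phi$ off $\bigcup_i\Ga_{x_i}$, whose relative position is bounded by $\mu_\bullet$ at the legs, since boundedness is local and is inherited from the $\phi_z$.

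Next we algebraize. By Theorem \ref{ss:appliedGAGA} together with the Tannakian Proposition \ref{ss:Tannakian}, $G$-torsors on $C^{\an}_S$ are analytifications of $G$-torsors $\sG$ on $C_R$, and morphisms agree. The main obstacle is to algebraize $\phi$, which is only defined off the graphs. The first approach to try is Tannakian: for each $V\in\Rep_C G$, the bound $\mu_\bullet$ gives a divisor $D_V=\sum_i m_{V}\Ga_{x_i}$ such that $\phi(V)$ extends to a morphism $\sG(V)\ra\Frob^*_S\sG(V)(D_V)$, and the inverse extends similarly. These are morphisms of vector bundles on $C^{\an}_S$, so GAGA full faithfulness algebraizes them compatibly with tensor structures. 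This yields $\phi$ on $C_R\ssm\sum\Ga_{x_i}$ with the same relative position; relative position can be checked on completions along graphs, which agree algebraically and analytically. The level structure $\psi$ on $N_R$ is read off from the $K_{N\cap U}$-data at $u\in N\cap U$ and the $n_z$-level data at $z\in Z$, using that $N_S$ is finite so algebraic and analytic restrictions coincide. The points $x_i$ land in $\prod^\circ_{i\in I}\Spa\cO_{z_i}$ by construction.

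Finally, we check that the two constructions are mutually inverse on groupoids. Full faithfulness follows from GAGA full faithfulness and the sheaf property of gluing, and essential surjectivity is the construction above. Compatibility with v-descent is automatic, since both sides are small v-stacks by Proposition \ref{ss:shtukadiamond}.3) and \S\ref{ss:BunGdefn}.
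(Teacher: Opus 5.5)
You have the paper's overall strategy (glue on $C^{\an}_S$, then algebraize via Proposition \ref{ss:Tannakian} and Theorem \ref{ss:appliedGAGA}), but the gluing step fails as stated. With the cover $\{U^{\an}_S\}\cup\{\Spa\cO_z\times S\}_z$, the overlaps are the full punctured disks $\Spa F_z\times S$. As you note yourself, the Frobenius-continued identification coming from $\Bun_{G,F_z}(S)$ only exists on $\Spa F_z\times S\ssm D$.

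In general it does not extend over the whole punctured disk. For $S$-points lying over the generic fibres $\Spd F_{z_i}$, the legs $\Ga_{x_i}$ lie inside $\Spa F_z\times S\subseteq U^{\an}_S$. There $\phi_U$ is an isomorphism but $\phi_z$ is not, so no isomorphism $\sG_z\cong\sG_U$ intertwining the two Frobenius structures can exist across the legs. Put differently, the shtuka torsor $\sG^{\an}$ does \emph{not} restrict to $\sG_U$ on $U^{\an}_S$; the two agree only outside disks containing the legs.

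The same conflation affects your forward map. The restriction $(\sG^{\an},\phi)|_{U^{\an}_S}$ is not an object of $\Bun_{G,U}(S)$, because $\phi$ fails to be an isomorphism at legs lying in $U^{\an}_S$. The correct object, as in the construction preceding the theorem, is obtained by restricting to the complement of such disks and continuing by Frobenius.

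The fix is the paper's cover. Set $D_z^\cup=\coprod_{r\in\bZ/\deg z}\Frob_S^{-r}(D_z)\subseteq\Spa\cO_z\times S$. The Frobenius translates are needed because $\Spa\cO_z\times S$ splits into $\deg z$ copies of $\Spa\cO_z\times_{\bF_z}S$, and the local shtuka, with its $\deg z$-power Frobenius, is defined on one copy. Then glue $\sG_U|_{U^{\an}_S\ssm\bigcup_z D_z^\cup}$ with the $\sG_z$ along the cover $\{U^{\an}_S\ssm\bigcup_z D_z^\cup\}\cup\{\Spa\cO_z\times S\}_z$. Its overlaps $\Spa F_z\times S\ssm D_z^\cup$ are exactly where the Frobenius-compatible identification exists, so $\phi^{\an}$ glues from $\phi_U$ and $\phi_z$.

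With this correction, the rest of your argument matches the paper's final step: algebraizing $\sG^{\an}$ and $\phi^{\an}$ by Tannakian formalism plus GAGA, and reading off $\psi$ and the legs. Your twist by $\sO(D_V)$ to control the poles of $\phi(V)$ is a reasonable way to spell out what the paper leaves implicit.
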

\begin{proof}
For all $z$ be in $Z$ and closed disks $D_z\subseteq\Spa\cO_z\times_{\bF_z}S$ over $S$ centered at the origin, write $D_z^{\cup}$ for the closed subspace
  \begin{align*}
    \coprod_{r\in\bZ/\!\deg{z}}\Frob_S^{-r}(D_z) \subseteq\coprod_{r\in\bZ/\!\deg{z}}\Spa\cO_z\times_{\bF_z,r}S\ra^\sim\Spa\cO_z\times S,
  \end{align*}
  where $\Spa\cO_z\times_{\bF_z,r}S$ denotes the product of $\Spa\cO_z$ and 
  \begin{align*}
    \xymatrixcolsep{1cm}
    \xymatrix{S\ar[r] & \Spa\bF_z\ar[r]^-{\Frob_{\bF_z}^r}& \Spa\bF_z}
  \end{align*}
  over $\bF_z$, and the isomorphism is from \cite[Lemma 5.11]{LH23}. Then
\begin{align*}
  \Big\{U^{\an}_S\ssm\bigcup_{z\in Z}D_z^{\cup}\Big\}\cup\{\Spa\cO_z\times S\}_{z\in Z}
\end{align*}
is an open cover of $C^{\an}_S$, so we can glue an object in $\Bun_{G,U}(S)$ to an object in $(\ast/\ul{K_{N\cap U}}\times\LocSht^I_{G,\mu_\bullet,N}|_{\prod^\circ_{i\in I}\Spd\cO_{z_i}})(S)$ to obtain
\begin{enumerate}[i)]
\item an $S$-point $\{x_i\}_{i\in I}$ of $\prod_{i\in I}^\circ\Spa\cO_{z_i}$,
\item a $G$-torsor $\sG^{\an}$ on $C^{\an}_S$,
  \item an isomorphism of $G$-torsors
    \begin{align*}
      \phi^{\an}:\sG|_{C^{\an}_S\ssm\sum_{i\in I}\Ga_{x_i}}\ra^\sim\Frob_S^*\sG^{\an}|_{C^{\an}_S\ssm\sum_{i\in I}\Ga_{x_i}}
    \end{align*}
    whose relative position along $\{\Ga_{x_i}\}_{i\in I}$ is bounded by $\mu_\bullet$,
  \item an isomorphism of $G$-torsors $\psi^{\an}:\sG^{\an}|_{N_S^{\an}}\ra^\sim G$ such that the square
    \begin{align*}
      \xymatrixcolsep{1.5cm}
      \xymatrix{\sG^{\an}|_{N^{\an}_S}\ar[r]^-{(\phi^{\an})_N}\ar[d]^-{\psi^{\an}} & \Frob_S^*\sG^{\an}|_{N^{\an}_S}\ar[d]^-{\psi^{\an}}\\
      G\ar[r]^-{(\Frob_S^{-1})^*} &\Frob_S^*G}
    \end{align*}
    commutes.
  \end{enumerate}
Finally, Proposition \ref{ss:Tannakian} and Theorem \ref{ss:appliedGAGA} show that the above corresponds to an object in $(\Sht^I_{G,\mu_\bullet,N}|_{\prod_{i\in I}^\circ\Spa\cO_{z_i}})^\Diamond(S)$.
\end{proof}

\section{Geometric consequences}\label{s:geometric}
In this section, we harvest some of the fruits of Theorem D. We start by deducing an algebraic analogue of Theorem D via taking reductions, and we explain how this implies the Langlands--Rapoport conjecture for global shtukas. Next, we use the charts provided by Theorem D and Beauville--Laszlo uniformization to prove Theorem A and Theorem B. Finally, we conclude by explaining the relation with Igusa varieties.

\subsection{}
For all $z$ in $Z$, write $\prod_{i\in I_z}z_i$ for the product of the $z_i$ over $\bF_z$. Let us recall the following algebraic analogues of $\Bun_{G,F_z}$ and $\LocSht^{I_z}_{G,\mu_z}|_{\prod_{i\in I_z}\Spd\cO_{z_i}}$.
\begin{defn*}\hfill
  \begin{enumerate}[a)]
  \item Write $\sIsoc_{G,F_z}$ for the prestack on $\{\mbox{affine perfect schemes over }\bF_q\}$ whose $\Spec{B}$-points parametrize data consisting of $G$-torsors $\sG$ on $\Spec F_z\wh\otimes_{\bF_q}B$ equipped with an isomorphism $\phi:\sG\ra^\sim\Frob_B^*\sG$.
  \item Write $\sLocSht^{I_z}_{G,\mu_z}|_{\prod_{i\in I_z}z_i}$ for the prestack on $\{\mbox{affine perfect schemes over }\bF_z\}$ whose $\Spec{B}$-points parametrize data consisting of
    \begin{enumerate}[i)]
    \item for all $i$ in $I_z$, a morphism $x_i:\Spec{B}\ra z_i$ over $\bF_z$,
    \item a $G$-torsor $\sG$ on $\Spec{\cO_z\wh\otimes_{\bF_z}B}$,
    \item an isomorphism of $G$-torsors $\phi:\sG|_{\Spec{F_z\wh\otimes_{\bF_z}B}}\ra^\sim(\Frob_B^{\deg{z}})^*\sG|_{\Spec{F_z\wh\otimes_{\bF_z}B}}$ whose relative position along $\{\Ga_{x_i}\}_{i\in I_z}$ is bounded by $\mu_z$,
    \end{enumerate}
  \end{enumerate}
\end{defn*}
Note that \cite[Proposition 5.9]{Iva23} and \cite[Remark 4.2]{BS17} imply that $\sIsoc_{G,F_z}$ and $\sLocSht^{I_z}_{G,\mu_z}|_{\prod_{i\in I_z}z_i}$ are perfect v-stacks.

\subsection{}\label{ss:fiberproductred}
Write $\prod_{i\in I}z_i$ for $\prod_{z\in Z}\big(\prod_{i\in I_z}z_i\big)$. By taking reductions, we recover the following algebraic analogue of Theorem \ref{ss:fiberproductconj}:
\begin{thm*}
  Assume that $N$ and $Z$ are disjoint. Then there is a natural cartesian square of perfect v-stacks
  \begin{align*}
      \xymatrix{(\Sht^I_{G,\mu_\bullet,N}|_{\prod_{i\in I}z_i})^{\perf}\ar[r]\ar[d] & \ast/\ul{K_N}\times\displaystyle\prod_{z\in Z}\sLocSht^{I_z}_{G,\mu_z}|_{\prod_{i\in I_z}z_i}\ar[d]\\
      \Bun_{G,U}^{\red}\ar[r] & \ast/\ul{G(\bO_U)}\times\displaystyle\prod_{z\in Z}\sIsoc_{G,F_z}.}
  \end{align*}
\end{thm*}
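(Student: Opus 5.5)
The plan is to obtain this square from Theorem \ref{ss:fiberproductconj} by base changing to the closed subspace $\prod_{i\in I}z_i\hookrightarrow\prod^\circ_{i\in I}\Spa\cO_{z_i}$ and then applying the reduction functor $(-)^{\red}$. Since $N\cap Z=\emptyset$, all $n_z=0$, so the upper right corner of Theorem \ref{ss:fiberproductconj} is $\ast/\ul{K_{N\cap U}}\times\prod_{z}\LocSht^{I_z}_{G,\mu_z}|_{\prod_{i\in I_z}\Spd\cO_{z_i}}$. Here $K_{N\cap U}=K_N$. Restricting this cartesian square along $\prod_{i\in I}\Spd z_i\to\prod_{i\in I}^\circ\Spd\cO_{z_i}$ keeps it cartesian, because the bottom row does not involve the legs.

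The second step is to apply $(-)^{\red}$. It is defined by $\Spec B\mapsto\Hom(\Spd B,-)$, so it is a right adjoint on presheaves and commutes with fiber products. The cartesian square of v-stacks therefore gives a cartesian square of perfect v-stacks. It then remains to identify each of the four corners, compatibly with the maps between them.
\begin{enumerate}[1)]
\item For the upper left corner, Corollary \ref{ss:shtukadiamondred}.2), with the legs restricted to $\prod z_i$, gives $(\Sht^I_{G,\mu_\bullet,N}|_{\prod_{i\in I}z_i})^{\perf}$. The restriction commutes with $\red$ because $(\Spd z_i)^{\red}=\Spec\bF_{z_i}$.
\item For the bottom left corner there is nothing to identify: it is $\Bun_{G,U}^{\red}$, which Theorem \ref{ss:BunGred} describes.
\item For $\ast/\ul{K_N}$ and $\ast/\ul{G(\bO_U)}$, the reduction of $\ast/\ul{H}$ for a profinite group $H$ is $\ast/\ul{H}$ on perfect schemes. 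This follows from $(\Spec B)_{\text{f\'et}}\simeq(\Spd B)_{\text{f\'et}}$ \cite[Corollary 5.4]{Kim24} together with $\ul{H}(\Spd B)=\Cont(\abs{\Spec B},H)$.
\item For the local terms, we need $\Bun_{G,F_z}^{\red}\cong\sIsoc_{G,F_z}$ and $(\LocSht^{I_z}_{G,\mu_z}|_{\prod\Spd z_i})^{\red}\cong\sLocSht^{I_z}_{G,\mu_z}|_{\prod z_i}$.
\end{enumerate}

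For the local identifications in 4), it suffices to check them on the v-basis of $B$ whose connected components are valuation rings, as in the proof of Theorem \ref{ss:BunGred}. There one uses the cover $\Spa B\lp{t^{1/q^\infty}}\to\Spd B$ and \cite[Theorem 3.15]{GIZ25}, which descends $G$-bundles on $\Spa F_z\times\Spa B\lp{t^{1/q^\infty}}$ with descent data along (\ref{eqn:SpdBresolution}) to $G$-bundles on $\Spec F_z\wh\otimes B$, and similarly with $\cO_z$ in place of $F_z$; these are the results of Ansch\"utz and Gleason--Ivanov--Zillinger cited in the introduction. For $\LocSht$ with legs at $z_i$, the graphs $\Ga_{x_i}$ become the divisor $\{\pi=0\}$. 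The boundedness condition by $\mu_z$ matches the algebraic one, since the relevant affine Schubert varieties are perfect schemes whose diamonds have the expected reductions.

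Finally, the maps in the square must match: the reduction of $\loc$ is the algebraic localization map, and the reduction of $\pi_{\HT}$ restricts the algebraic shtuka to the formal neighborhood of $z$, both via Theorem \ref{ss:BunUschemepoints}. I expect the main obstacle to be step 4) for $\LocSht$: checking that the reduction of the v-stack of local shtukas with legs at the closed point agrees with the perfect algebraic local shtuka stack, including the bound. I would first try to deduce it from \cite[Theorem 7.14]{GIZ25}.
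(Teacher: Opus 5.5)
Your proposal is correct and is essentially the paper's proof. The paper likewise base changes the top arrow of Theorem \ref{ss:fiberproductconj} to $\prod_{i\in I}z_i$, uses that $(-)^{\red}$ preserves limits, and identifies the corners via Corollary \ref{ss:shtukadiamondred}.2), \cite[Corollary 5.4]{Kim24} for the classifying stacks, and \cite[Theorem 7.14]{GIZ25} parts (1) and (2) for $\sIsoc_{G,F_z}$ and $\sLocSht^{I_z}_{G,\mu_z}|_{\prod_{i\in I_z}z_i}$. Because the paper cites that theorem directly for both local terms, the descent argument you sketch in step 4) is not needed.
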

\begin{proof}
Because $(-)^{\red}$ preserves limits, base changing the top arrow of Theorem \ref{ss:fiberproductconj} to $\prod_{i\in I}z_i$ and taking $(-)^{\red}$ yields a cartesian square of perfect v-stacks. The classifying stacks are preserved by \cite[Corollary 5.4]{Kim24}, the top left is identified by Corollary \ref{ss:shtukadiamondred}.2), the top right is identified by \cite[Theorem 7.14.(2)]{GIZ25}, and the bottom right is identified by \cite[Theorem 7.14.(1)]{GIZ25}.
\end{proof}
\begin{rems}\hfill
  \begin{enumerate}[1)]
  \item By evaluating Theorem \ref{ss:fiberproductred} on $\ov\bF_q$ and using the description of $\Bun_{G,U}^{\red}(\ov\bF_q)$ from Theorem \ref{ss:BunGred}, we immediately deduce the Langlands--Rapoport conjecture for moduli spaces of global shtukas with arbitary (in particular, colliding) legs.
  \item If one takes the v-sheafification of (\ref{eq:BunGred}) as the definition of $\Bun_{G,U}^{\red}$, one can directly prove Theorem \ref{ss:fiberproductred} using Beauville--Laszlo gluing. However, without using Theorem \ref{ss:BunGred}, it is unclear why no sheafification is needed when evaluating $\Bun^{\red}_{G,U}$ on $\Spec{B}$ for which each connected component is a valuation ring.
  \end{enumerate}
\end{rems}

\subsection{}\label{ss:BdRgrassmannian}
Next, we recall some facts about affine Grassmannians. \textbf{For the rest of this paper, we work over $\ov\bF_q$.} For all $z$ in $Z$, write $\Gr_G$ for the $B_{\dR}$-affine Grassmannian over $\Spd{\breve{F}_z}$ as in \cite[Definition 20.2.1]{SW20}.\footnote{While \cite{SW20} works over $\bQ_p$, the definition of the $B_{\dR}$-affine Grassmannian and its basic properties hold over any nonarchimedean local field. Indeed, this is implicitly used in \cite{FS21}.} When $I_z=\{i\}$ is a singleton, recall from \cite[p.~184]{SW20} the closed affine Schubert variety $\Gr_{G,\mu_z}|_{\Spd{\breve{F}_{z_i}}}$ in $\Gr_G|_{\Spd{\breve{F}_{z_i}}}$ and the open affine Schubert variety $\Gr_{G,\mu_z}^\circ|_{\Spd{\breve{F}_{z_i}}}$ in $\Gr_{G,\mu_z}|_{\Spd{\breve{F}_{z_i}}}$. The proof of \cite[Proposition 23.3.1]{SW20} shows that $\Gr_{G,\mu_z}|_{\Spd{\breve{F}_{z_i}}}$ is naturally isomorphic to
\begin{align*}
\varprojlim_{m\geq1}\LocSht^{I_z}_{G,\mu_z,m}|_{\Spd{\breve{F}_{z_i}}}.
\end{align*}
The proof of \cite[Proposition VI.2.3]{FS21} implies that
\begin{align*}
\Gr_G|_{\Spa\bC_z}=\bigcup_{\mu_z\in X_*^+(T)}\Gr_{G,\mu_z}|_{\Spa\bC_z},
\end{align*}
\cite[Proposition III.3.1]{FS21} indicates that the natural morphism $\Gr_G|_{\Spa{\breve{F}_z}}\ra\Bun_{G,F_z}$ is surjective in the pro-\'etale topology, and the proof of \cite[Theorem IV.1.19]{FS21} shows 
\begin{align*}
\coprod_{\mu_z\in X_*^+(T)}\big(\Gr_{G,\mu_z}^\circ|_{\Spd\breve{F}_{z_i}}\big)\big/\ul{G(F_z)}\ra\Bun_{G,F_z}\times\Spd\breve{F}_z
\end{align*}
is a v-cover that is separated, representable in locally spatial diamonds, and cohomologically smooth.

\subsection{}\label{ss:localizationproperties}
Write $\loc_Z:\Bun_{G,U}\ra\textstyle\prod_{z\in Z}\Bun_{G,F_z}$ for the morphism $(\loc_z)_{z\in Z}$, which plays an important role in \S\ref{s:sheaf} and \S\ref{s:langlands}. It enjoys the following finitude properties.
\begin{prop*}
When $Z$ is nonempty, $\loc_Z:\Bun_{G,U}\ra\textstyle\prod_{z\in Z}\Bun_{G,F_z}$ is compactifiable, representable in locally spatial diamonds, and locally of finite $\dimtrg$. Consequently, $\Bun_{G,U}$ is an Artin v-stack.
\end{prop*}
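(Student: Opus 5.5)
The plan is to deduce all three properties of $\loc_Z$ from Theorem \ref{ss:fiberproductconj}, using the Beauville--Laszlo charts of \ref{ss:BdRgrassmannian} to cover the target. The properties in question (compactifiable, representable in locally spatial diamonds, locally of finite $\dimtrg$) can be checked after base change along a v-cover of the target that is separated and representable in locally spatial diamonds. So it suffices to find such a cover of $\prod_{z\in Z}\Bun_{G,F_z}$ whose base change along $\loc_Z$ is a moduli space of global shtukas.

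\textbf{Choosing the cover.} For each $z$ in $Z$, take a singleton leg $I_z=\{i_z\}$, so that $I=Z$. I would use the v-cover
\begin{align*}
\coprod_{\mu_\bullet}\prod_{z\in Z}\big(\Gr^\circ_{G,\mu_z}|_{\Spd\breve F_{z_i}}\big)\big/\ul{G(F_z)}\ra\prod_{z\in Z}\Bun_{G,F_z}\times\prod_{z\in Z}\Spd\breve F_z
\end{align*}
from \ref{ss:BdRgrassmannian}. By the same reference it is separated, representable in locally spatial diamonds and cohomologically smooth. The factor $\prod_z\Spd\breve F_z$ is harmless: $\prod_z\Bun_{G,F_z}\times\prod_z\Spd\breve F_z$ is a v-cover of $\prod_z\Bun_{G,F_z}$ that is itself representable and separated. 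Here the hypothesis that $Z$ is nonempty is used.

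\textbf{Identifying the base change.} Write $\Gr_{G,\mu_z}\cong\varprojlim_m\LocSht^{I_z}_{G,\mu_z,m}$ as in \ref{ss:BdRgrassmannian}. The quotient by $\ul{G(F_z)}$ can be rewritten as a quotient of $\LocSht$ at level $K_z^{m}$ by the finite-level action, up to the standard Hecke correspondences. Taking the product over $z$ and pulling back along $\loc_Z$, Theorem \ref{ss:fiberproductconj} identifies the base change with
\begin{align*}
\varprojlim_{N}(\Sht^I_{G,\mu_\bullet,N}|_{\prod^\circ\Spa\cO_{z_i}})^\Diamond
\end{align*}
(restricted to the generic point $\prod_z\Spd\breve F_z$), taken modulo $\ul{G(F_Z)}$. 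Here $N$ is supported on $Z$, and the factor $*/\ul{G(\bO_U)}$ in Theorem \ref{ss:fiberproductconj} is absorbed by also taking infinite level away from $Z$: enlarge $N$ along all of $C$ and pass to the limit. Proposition \ref{ss:shtukadiamond}.2) then says this limit is separated, representable in locally spatial diamonds and locally of finite $\dimtrg$ over $\varprojlim(\prod_iC_i\ssm(N_a)_i)^\Diamond$. The latter is itself a locally spatial diamond.

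\textbf{Main obstacle and conclusion.} The key difficulty is the bookkeeping of level structures. One must match the quotient by $G(F_Z)$ and the $*/\ul{G(\bO_U)}$ factor with the infinite-level limit of Proposition \ref{ss:shtukadiamond}.2). One must also check that the resulting fiber product is compactifiable, not merely separated. For compactifiability I would argue via the $\dimtrg$-finite separated map to the base. A canonical compactification should come from the relative compactification over the quasiprojective charts $\Sht^{\leq t}$ (Proposition \ref{ss:shtukadiamond}.1)), glued over $t$. Finally, the Artin v-stack claim follows by composing $\loc_Z$ with the cohomologically smooth charts of \ref{ss:BdRgrassmannian}. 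The pullback of the chart gives a cohomologically smooth surjection from a locally spatial diamond onto $\Bun_{G,U}$, and the diagonal is handled by the same representability.
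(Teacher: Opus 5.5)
Your skeleton matches the paper's: singleton legs, Theorem \ref{ss:fiberproductconj} with level at $Z$, infinite level, Proposition \ref{ss:shtukadiamond}.2), descent along a Beauville--Laszlo chart, and pulling back charts for the Artin property. However, the step you call the main obstacle is mishandled. With $N=mZ$ one has $N\cap U=\varnothing$, hence $K_{N\cap U}=G(\bO_U)$, and the right vertical arrow of Theorem \ref{ss:fiberproductconj} is $\id\times\BL$. So that square is already a base change of $\loc_Z$ itself. Taking $\varprojlim_m$ gives directly the cartesian square with $\varprojlim_m(\Sht^I_{G,\mu_\bullet,mZ}|_{\prod_{i\in I}\Spa\breve{F}_{z_i}})^\Diamond$ over $\varprojlim_m\LocSht^I_{G,\mu_\bullet,mZ}|_{\prod_{i\in I}\Spd\breve{F}_{z_i}}$. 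There is nothing to absorb. Enlarging $N$ into $U$, as you propose, instead computes the base change of the full map $\loc$ along $*\times\varprojlim_m\LocSht^I_{G,\mu_\bullet,mZ}\ra*/\ul{G(\bO_U)}\times\prod_z\Bun_{G,F_z}$. That is a $\ul{G(\bO_U)}$-torsor over the fiber product you actually want, which you would then have to quotient again. Also, nonemptiness of $Z$ is not used where you say; the factor $\prod_z\Spd\breve{F}_z$ is harmless for any $Z$. It is what lets the level $mZ$ go to infinity, as Proposition \ref{ss:shtukadiamond}.2) requires. Infinite level at $Z$ is what kills the finite automorphism groups that make $\loc_\varnothing$ non-representable (Example \ref{exmp:BunGC}).

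Second, the top arrow is not actually handled. Proposition \ref{ss:shtukadiamond}.2) is relative to $\varprojlim_a(\prod_iC_i\ssm(N_a)_i)^\Diamond$. Contrary to your claim, this is not a locally spatial diamond: for $N_a=aZ$ it contains closed subsheaves $\Spd k$ with $k$ a finite field, which are not diamonds. So you must base change to $\prod_i\Spd\breve{F}_{z_i}$. You also need that $\varprojlim_m\LocSht^I_{G,\mu_\bullet,mZ}|_{\prod_i\Spd\breve F_{z_i}}$ (a product of the $\Gr_{G,\mu_z}$) is a locally spatial diamond, separated and locally of finite $\dimtrg$ over the same base \cite[Proposition 20.2.3]{SW20}. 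Only then is the top arrow separated, locally of finite $\dimtrg$, and representable by \cite[Proposition 13.4 (ii)]{Sch17}.

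For the descent, the paper does not use your open-cell chart modulo $\ul{G(F_Z)}$; that chart is only recorded as a v-cover and is the one used later for smoothness. It uses $\coprod_{\mu_\bullet}\varprojlim_m\LocSht^I_{G,\mu_\bullet,mZ}|_{\prod_z\Spa\bC_z}\ra\prod_z\Bun_{G,F_z}$, which is surjective in the pro-\'etale topology by \ref{ss:BdRgrassmannian}. Along it, separatedness descends by \cite[Proposition 10.11 (ii)]{Sch17} and then representability by \cite[Proposition 13.4 (iv)]{Sch17}. Your blanket claim that all three properties descend along any separated representable v-cover is not justified.

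Finally, the paper constructs no compactification: compactifiability is read off together with separatedness and representability. Your plan to glue compactifications of the nested quasiprojective $\Sht^{I,\leq t}_{G,\mu_\bullet,N}$ over $t$ would not work as stated, since those compactifications need not be compatible.
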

When $Z$ is empty, Example \ref{exmp:BunGC} shows that $\loc_\varnothing:\Bun_{G,C}\ra*$ is \emph{not} representable in locally spatial diamonds but that $\Bun_{G,C}$ remains an Artin v-stack.
\begin{proof}
  Let $m$ be a positive integer. In Theorem \ref{ss:fiberproductconj}, take $I_z=\{i\}$ to be a singleton for all $z$ in $Z$, and take $N$ to be $mZ$. Then taking $\varprojlim_{m\geq1}$ yields a cartesian square
  \begin{align*}
      \xymatrix{\displaystyle\varprojlim_{m\geq1}(\Sht^I_{G,\mu_\bullet,mZ}|_{\prod_{i\in I}\Spa{\breve{F}_{z_i}}})^\Diamond\ar[r]\ar[d] & \displaystyle\varprojlim_{m\geq1}\LocSht^I_{G,\mu_\bullet,mZ}|_{\prod_{i\in I}\Spd{\breve{F}_{z_i}}}\ar[d]\\
      \Bun_{G,U}\ar[r]^-{\loc_Z} & \displaystyle\prod_{z\in Z}\Bun_{G,F_z}.}
  \end{align*}
  Proposition \ref{ss:shtukadiamond}.2) indicates that $\varprojlim_{m\geq1}(\Sht^I_{G,\mu_\bullet,mZ}|_{\prod_{i\in I}\Spa{\breve{F}_{z_i}}})^\Diamond$ is a locally spatial diamond that is separated and locally of finite $\dimtrg$ over $\prod_{i\in I}\Spd{\breve{F}_{z_i}}$. By \ref{ss:BdRgrassmannian} and \cite[Proposition 20.2.3]{SW20}, we see that $\varprojlim_{m\geq1}\LocSht^I_{G,\mu_\bullet,mZ}|_{\prod_{i\in I}\Spd{\breve{F}_{z_i}}}$ is also a locally spatial diamond that is separated and locally of finite $\dimtrg$ over $\prod_{i\in I}\Spd{\breve{F}_{z_i}}$, so the top arrow is separated, representable in locally spatial diamonds by \cite[Proposition 13.4 (ii)]{Sch17}, and locally of finite $\dimtrg$.

  Next, \ref{ss:BdRgrassmannian} implies that the morphism
  \begin{align*}
\coprod_{\mu_\bullet\in X_*^+(T)^I}\varprojlim_{m\geq1}\LocSht^I_{G,\mu_\bullet,mZ}|_{\prod_{z\in Z}\Spa\bC_z}\ra\prod_{z\in Z}\Bun_{G,F_z}
  \end{align*}
  is surjective in the pro-\'etale topology. Hence the bottom arrow is separated by \cite[Proposition 10.11 (ii)]{Sch17}, representable in locally spatial diamonds by \cite[Proposition 13.4 (iv)]{Sch17} and locally of finite $\dimtrg$, as desired.

Finally, \cite[Proposition IV.1.8 (i)]{FS21} and \cite[Theorem IV.1.19]{FS21} show $\prod_{z\in Z}\Bun_{G,F_z}$ is an Artin v-stack, so the same holds for $\Bun_{G,U}$ \cite[Proposition IV.1.8 (iii)]{FS21}.
\end{proof}

\subsection{}
At this point, we can finish the proof of Theorem A. For any finite set $Q$ of closed points of $C$, write $\cO_Q$ for $\prod_{v\in Q}\cO_v$, and write $F_Q$ for $\prod_{v\in Q}F_v$. Write $2\rho$ in $X^*(T)$ for the sum of all positive roots, and write $d_{\mu_\bullet}$ for $\sum_{i\in I}\ang{2\rho,\mu_i}$.
\begin{prop*}
The Artin v-stack $\Bun_{G,U}$ is cohomologically smooth over $\Spd\ov\bF_q$, and its dualizing complex over $\Spd\ov\bF_q$ with $\bF_\ell$-coefficients is isomorphic to $\bF_\ell$.
\end{prop*}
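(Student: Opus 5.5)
We will prove this using the charts from Theorem \ref{ss:fiberproductconj}, in the form of the cartesian square in the proof of Proposition \ref{ss:localizationproperties}. The idea is to pull back the cohomologically smooth Beauville--Laszlo covers of $\prod_{z\in Z}\Bun_{G,F_z}$ along $\loc_Z$. First we dispose of the case $Z=\varnothing$. By Lemma \ref{ss:BunGpullback}, $\Bun_{G,C}$ is glued from pieces of $\Bun_{G,U}$ after removing finitely many points. More simply, the statement is local on $\Bun_{G,C}$, and $\Bun_{G,C}\ra\Bun_{G,U}$ is an open embedding when $U\subsetneq C$: it is the preimage of the open point $\ast/\ul{G(\cO_z)}\subseteq\Bun_{G,F_z}$ by Lemma \ref{ss:BunGpullback} and Proposition \ref{ss:localBunG}. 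So we may assume $Z$ is nonempty. Alternatively, Example \ref{exmp:BunGC} gives the case $Z=\varnothing$ directly, since $\Bun_{G,C}$ is a disjoint union of $\ast/\ul{\Ga}$ for finite groups $\Ga$, and these are cohomologically smooth of dimension $0$ with trivial dualizing complex.

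Fix $\mu_\bullet$ with $I_z=\{i\}$ a singleton for each $z\in Z$. Using \ref{ss:BdRgrassmannian}, consider the map
\begin{align*}
\coprod_{\mu_\bullet}\prod_{z\in Z}\big(\Gr^\circ_{G,\mu_z}|_{\Spd\breve{F}_{z}}\big)/\ul{G(F_z)}\ra\prod_{z\in Z}\Bun_{G,F_z}\times\prod_{z\in Z}\Spd\breve F_z.
\end{align*}
This is a cohomologically smooth v-cover that is separated and representable in locally spatial diamonds. Its dualizing complex is $\bF_\ell[2d_{\mu_\bullet}](d_{\mu_\bullet})$, which is the local computation in the proof of \cite[Theorem IV.1.19]{FS21}. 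Pull it back along $\loc_Z$. By Theorem \ref{ss:fiberproductconj}, passing to infinite level $N=mZ$ with $m\to\infty$ and restricting to the open legs identifies the pullback with
\begin{align*}
\Big(\varprojlim_{m}(\Sht^{I,\circ}_{G,\mu_\bullet,mZ}|_{\prod_{i}\Spa\breve F_{z_i}})^\Diamond\Big)\big/\ul{G(F_Z)}.
\end{align*}
The quotient by $\ul{G(F_Z)}$ is taken in the stacky sense, compatibly with the level structures.

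Cohomological smoothness is v-local on the source, and $\ast/\ul{K}$ for a pro-$p$ open compact $K$ is cohomologically smooth of dimension $0$ with trivial dualizing complex (as $p\neq\ell$). It therefore suffices to show that $\Sht^{I,\circ}_{G,\mu_\bullet,mZ}|_{\prod\Spa\breve F_{z_i}}$ is $\ell$-cohomologically smooth over $\prod_{i\in I}\Spd\breve F_{z_i}$ with dualizing complex $\bF_\ell[2d_{\mu_\bullet}](d_{\mu_\bullet})$. For this we use that $\Sht^{I,\circ}$ is smooth over $\prod_i C_i\ssm N_i$ of relative dimension $d_{\mu_\bullet}$, together with Proposition \ref{ss:shtukadiamond}.1) and the comparison of $(-)^\Diamond$ of smooth rigid spaces with cohomological smoothness \cite{Sch17}.

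Next we compare the two maps to $\prod_z\Spd\breve F_z$. Since $\Spd\breve F_z\ra\Spd\ov\bF_q$ is cohomologically smooth of dimension $1$ with dualizing complex $\bF_\ell[2](1)$, the two sides differ by the same twist. Because $\loc_Z$ is representable, cohomological smoothness descends: in the cartesian square, the top arrow is smooth with dualizing complex equal to the pullback of that of the right arrow, and v-descent along the surjective right arrow then shows $\Bun_{G,U}$ is smooth over $\Spd\ov\bF_q$. Its dualizing complex is computed by cancelling the twists $[2d_{\mu_\bullet}](d_{\mu_\bullet})$ and $[2|Z|](|Z|)$ on the chart. This gives an invertible object $K$ on $\Bun_{G,U}$ whose pullback to every chart is $\bF_\ell$.

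The main obstacle is passing from ``locally $\bF_\ell$'' to $K\cong\bF_\ell$ globally, since the identifications on charts must glue. We would imitate \cite[Theorem IV.1.19]{FS21} and \cite[Proposition 3.18]{HI25}. First, $K$ is a rank-one local system placed in degree $0$. Its automorphism character on stabilizers is trivial, because on each chart the action of $G(F_z)$ on $\Gr^\circ_{G,\mu_z}$ preserves the canonical trivialization $\bF_\ell$, which is given by the Tate twist of the top exterior power of the tangent bundle. To get a canonical global trivialization, we would use the description of the dualizing complex of $\Sht$ via its canonical orientation coming from the smooth structure; this trivialization is Hecke- and level-equivariant. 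Alternatively, we can note that twisting via Proposition \ref{ss:twistingb} and Theorem \ref{ss:triviallocus} reduces to checking triviality near the open $\ast/\ul{G(A)}$, where $K$ is visibly trivial because $\ul{G(A)}$ is locally profinite. If gluing proves delicate, we would instead argue as Hamann--Imai, using that $\Bun_{G,F_z}$ has trivial dualizing complex and that $\loc_Z$ has a relative dualizing complex trivialized by the shtuka charts.
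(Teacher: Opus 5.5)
This is essentially the paper's proof. You pull back the Fargues--Scholze chart $\coprod_{\mu_\bullet}[\prod_{z\in Z}\Gr^\circ_{G,\mu_z}]/\ul{G(F_Z)}\to\prod_{z\in Z}(\Bun_{G,F_z}\times\Spd\breve{F}_z)$ along $\loc_Z$ via Theorem \ref{ss:fiberproductconj}, get smooth charts for the infinite-level shtuka quotient from the smoothness of $\Sht^{I,\circ}$, match the twists $\bF_\ell(d_{\mu_\bullet})[2d_{\mu_\bullet}]$ over $\prod_{z\in Z}\Spd\breve{F}_z$, descend to $\Spd\ov\bF_q$ (the paper uses \cite[Proposition IV.1.8 (ii)]{FS21} and \cite[Theorem 19.5 (ii)]{Sch17}), and read off $Z=\varnothing$ from Example \ref{exmp:BunGC}.

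For the step you flag as the main obstacle, the paper imports from the proof of \cite[Lemma 8.3.4]{DvHKZ24} the $\ul{G(F_Z)}$-equivariant identification of the dualizing complex of that quotient with $\bF_\ell(d_{\mu_\bullet})[2d_{\mu_\bullet}]$; this is your ``canonical orientation'' option, whereas the stabilizer-character and trivial-locus fallbacks would not by themselves give global triviality. Two small corrections: $\Bun_{G,C}\to\Bun_{G,U}$ is \'etale (a base change of $*/\ul{G(\cO_z)}\to\Bun_{G,F_z}$, which has discrete fibers) rather than an open embedding, though this still suffices; and cohomological smoothness is local on the source for smooth (here \'etale) covers, not for v-covers.
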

\begin{proof}
  When $Z$ is empty, this follows immediately from Example \ref{exmp:BunGC}. When $Z$ is nonempty, take $I_z=\{i\}$ to be a singleton for all $z$ in $Z$. Then \ref{ss:BdRgrassmannian} and the cartesian square from the proof of Proposition \ref{ss:localizationproperties} induce a cartesian square
  \begin{align*}
    \xymatrix{\Big[\displaystyle\varprojlim_{m\geq1}(\Sht^{I,\circ}_{G,\mu_\bullet,mZ}|_{\prod_{i\in I}\Spa{\breve{F}_{z_i}}})^\Diamond\Big]\Big/\ul{G(F_Z)}\ar[r]\ar[d]^-g &\Big[\displaystyle\prod_{z\in Z}\Gr^\circ_{G,\mu_z}|_{\Spd\breve{F}_{z_i}}\Big]\Big/\ul{G(F_Z)}\ar[d]^-f\\
  \Bun_{G,U}\times\displaystyle\prod_{z\in Z}\Spd\breve{F}_{z_i}\ar[r]^-{\loc_Z} & \displaystyle\prod_{z\in Z}\big(\Bun_{G,F_z}\times\Spd\breve{F}_{z_i}\big).}
\end{align*}
Now \ref{ss:BdRgrassmannian} indicates that $f$ is separated, representable in locally spatial diamonds, and cohomologically smooth, and the proof of \cite[Theorem IV.1.19]{FS21} indicates that $f^!\bF_\ell$ is isomorphic to $\bF_\ell(d_{\mu_\bullet})[2d_{\mu_\bullet}]$. Hence the same holds for $g$ and $g^!\bF_\ell$.

Because $\Sht^{I,\circ}_{G,\mu_\bullet,mZ}|_{\prod_{i\in I}C_i\ssm Z_i}$ is smooth over $\Spec\bF_q$, Proposition \ref{ss:shtukadiamond} implies
\begin{align}\label{eq:openglobalshtuka}
\Big[\displaystyle\varprojlim_{m\geq1}(\Sht^{I,\circ}_{G,\mu_\bullet,mZ}|_{\prod_{i\in I}\Spa{\breve{F}_{z_i}}})^\Diamond\Big]\Big/\ul{G(F_Z)}\tag{$\circ$}
\end{align}
admits a separated, representable in locally spatial diamonds, and cohomologically smooth v-cover from a locally spatial diamond $V$ such that $V$ is cohomologically smooth over $\Spd\ov\bF_q$. Moreover, the proof of \cite[Lemma 8.3.4]{DvHKZ24} shows that the dualizing complex of (\ref{eq:openglobalshtuka}) over $\textstyle\prod_{i\in I}\Spd\breve{F}_{z_i}$ with $\bF_\ell$-coefficients is isomorphic to $\bF_\ell(d_{\mu_\bullet})[2d_{\mu_\bullet}]$.

Next, \ref{ss:BdRgrassmannian} implies that the morphism
\begin{align*}
\coprod_{\mu_\bullet\in X_*^+(T)^I}\Big[\displaystyle\varprojlim_{m\geq1}(\Sht^{I,\circ}_{G,\mu_\bullet,mZ}|_{\prod_{i\in I}\Spa{\breve{F}_{z_i}}})^\Diamond\Big]\Big/\ul{G(F_Z)}\ra\Bun_{G,U}\times\prod_{z\in Z}\Spd\breve{F}_{z_i}
\end{align*}
is a v-cover. Therefore the above shows that $\Bun_{G,U}\times\prod_{z\in Z}\Spd\breve{F}_{z_i}$ is cohomologically smooth over $\prod_{z\in Z}\Spd\breve{F}_{z_i}$ and that its dualizing complex over $\prod_{z\in Z}\Spd\breve{F}_{z_i}$ with $\bF_\ell$-coefficients is isomorphic to $\bF_\ell$. By \cite[Proposition IV.1.8 (ii)]{FS21}, this implies that $\Bun_{G,U}$ is cohomologically smooth over $\Spd\ov\bF_q$, and by \cite[Theorem 19.5 (ii)]{Sch17}, this implies that its dualizing complex over $\Spd\ov\bF_q$ with $\bF_\ell$-coefficients is isomorphic to $\bF_\ell$.
\end{proof}

\subsection{}\label{ss:igusavarieties}
We will use the following analogue of (perfect) Igusa varieties for shtukas. For all $z$ in $Z$, let $b_z$ be an element in $B(F_z,G)$, and write $b_\bullet$ for $(b_z)_{z\in Z}$.
\begin{defn*}
  Write $\Ig_G^{b_\bullet}$ for the prestack on $\{\mbox{affine schemes over }\ov\bF_q\}$ whose $\Spec{B}$-points parametrize data consisting of
  \begin{enumerate}[i)]
  \item a $G$-torsor $\sG$ on $U_B$,
  \item an isomorphism of $G$-torsors $\phi:\sG\ra^\sim\Frob^*_B\sG$,
  \item for all $z$ in $Z$, an isomorphism of $G$-torsors $\psi_z:\sG|_{\Spec F_z\wh\otimes_{\bF_z}B}\ra^\sim G$ such that
    \begin{align*}
      \xymatrixcolsep{1.5cm}
      \xymatrix{\sG|_{\Spec F_z\wh\otimes_{\bF_z}B}\ar[r]^-{(\phi^{\deg{z}})_{F_z}}\ar[d]^-{\psi_z} & (\Frob_B^{\deg{z}})^*\sG|_{\Spec F_z\wh\otimes_{\bF_z}B}\ar[d]^-{\psi_z}\\
      G\ar[r]^-{(\Frob_B^*)^{-\deg{z}}\circ b_z} & G}
    \end{align*}
    commutes.
  \end{enumerate}
For any $\mu_\bullet=(\mu_z)_{z\in Z}$ in $X_*^+(T)^Z$, write $\Ig^{b_\bullet}_{G,\mu_\bullet}$ for the closed subprestack of $\Ig^{b_\bullet}_G$ whose $\Spec{B}$-points consist of $(\sG,\phi,\{\psi_z\}_{z\in Z})$ such that the relative position of $\phi$ along $\{z_B\}_{z\in Z}$ is bounded by $\mu_\bullet$.
\end{defn*}
Write $d$ for the least common multiple of $\{\deg{z}\}_{z\in Z}$. Then Definition \ref{ss:igusavarieties}.iii) implies that the morphism $\Frob_{\Ig_{G,\mu_\bullet}^{b_\bullet}}^d:\Ig^{b_\bullet}_{G,\mu_\bullet}\ra(\Frob_{\ov\bF_q}^d)^*\Ig^{b_\bullet}_{G,\mu_\bullet}$ is naturally equivalent to the isomorphism sending $(\sG,\phi,\{\psi_z\}_{z\in Z}\})\mapsto(\sG,\phi,\{\psi_z\circ(\phi^{(d/\!\deg{z})})_{F_z}\}_{z\in Z})$. In particular, $\Ig_{G,\mu_\bullet}^{b_\bullet}$ and $\Ig_G^{b_\bullet}=\varinjlim_{\mu_\bullet\in X_*^+(T)^Z}\Ig_{G,\mu_\bullet}^{b_\bullet}$ are perfect.

\begin{prop}\label{ss:igusavarietyindscheme}
When $Z$ is nonempty, $\Ig_{G,\mu_\bullet}^{b_\bullet}$ is a filtered open union of schemes that are cofiltered limits of disjoint unions of quasiprojective schemes over $\ov\bF_q$. In particular, $\Ig_G^{b_\bullet}$ is an ind-scheme.
\end{prop}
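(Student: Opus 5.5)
The plan is to realize $\Ig^{b_\bullet}_{G,\mu_\bullet}$ as a cofiltered limit of closed subfunctors of perfected moduli spaces of global shtukas with legs at $Z$, and then to take the union over the Harder--Narasimhan truncations of \ref{ss:globalshtukatruncation}. Throughout, take $I_z=\{i\}$ to be a singleton for all $z$ in $Z$, so that $x_i$ is the point $z_i$.

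First I construct a map out of $\Ig^{b_\bullet}_{G,\mu_\bullet}$ by Beauville--Laszlo gluing. Let $(\sG,\phi,\{\psi_z\}_z)$ be a $\Spec{B}$-point. For each $z$, use $\psi_z$ to glue $\sG$ on $U_B$ with the trivial $G$-torsor on $\Spec\cO_z\wh\otimes_{\bF_q}B$. This produces a $G$-torsor $\wh\sG$ on $C_B$. Under $\psi_z$, the restriction of $\phi$ near $z$ becomes $(\Frob_B^*)^{-1}\circ b_z$ (up to the $\deg{z}$-twist as in \cite[Lemma 5.11]{LH23}). The boundedness condition defining $\Ig^{b_\bullet}_{G,\mu_\bullet}$ then says that $\phi$ extends to a modification of $\wh\sG$ along the graphs $z_B$ that is bounded by $\mu_\bullet$. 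Moreover, $\psi_z$ restricts to a trivialization of $\wh\sG$ on every infinitesimal neighborhood $mz\times_{\bF_q}\Spec{B}$.

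So for each $m\geq1$ we obtain a point of $(\Sht^I_{G,\mu_\bullet,mZ}|_{\prod_{i\in I}z_i})^{\perf}$, together with the extra condition that the level structure $\psi$ intertwines $\phi$ with $(\Frob_B^*)^{-1}\circ b_z$ rather than with $(\Frob_B^*)^{-1}$. In this level structure the leg meets $N=mZ$, so it is not the standard one. I would make it rigorous by passing through Theorem \ref{ss:fiberproductred}, or rather its version where $N$ meets $Z$, obtained from Theorem \ref{ss:fiberproductconj} by taking reductions. The required condition is then the fiber over the point $b_z$ of the map $\sLocSht_{G,\mu_z,m}\ra\sIsoc_{G,F_z}$ at finite level $m$.

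Conversely, given a compatible system over all $m$, the $\psi_z$ on the $mz$ assemble to a trivialization over $\Spec\cO_z\wh\otimes B$, because $\sG$ is a $G$-torsor and $G$ is affine of finite type. Restricting this trivialization to $F_z\wh\otimes B$ recovers the data of $\Ig^{b_\bullet}_{G,\mu_\bullet}$. Hence
\begin{align*}
\Ig^{b_\bullet}_{G,\mu_\bullet}\cong\varprojlim_{m\geq1}\mathcal{S}_m,
\end{align*}
where $\mathcal{S}_m\subseteq(\Sht^I_{G,\mu_\bullet,mZ}|_{\prod_{i\in I}z_i})^{\perf}$ is the closed substack cut out by the $b_z$-compatibility of the level-$mZ$ structure. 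The transition maps are induced by the finite forgetful maps of \ref{ss:globalshtuka}, hence are affine.

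Next I intersect with the Harder--Narasimhan truncations. Let $\mathcal{S}_m^{\leq t}$ be the intersection of $\mathcal{S}_m$ with the preimage of $\Sht^{I,\leq t}_{G,\mu_\bullet,mZ}$; this is compatible in $m$ by the preimage statement used in the proof of Proposition \ref{ss:shtukadiamond}. For $\deg(mZ)$ large, which holds for $m\gg0$ because $Z$ is nonempty, \ref{ss:globalshtukatruncation} shows that $\Sht^{I,\leq t}_{G,\mu_\bullet,mZ}$ is a disjoint union of quasiprojective schemes. Closed subschemes and perfections of quasiprojective schemes remain quasiprojective up to perfection, so each $\mathcal{S}^{\leq t}_m$ is a disjoint union of (perfections of) quasiprojective schemes. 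Since the transition maps are affine, $\varprojlim_m\mathcal{S}_m^{\leq t}$ is a scheme. Taking $\bigcup_t$, a filtered union of open subfunctors, exhibits $\Ig^{b_\bullet}_{G,\mu_\bullet}$ as stated. Finally, $\Ig^{b_\bullet}_G=\varinjlim_{\mu_\bullet}\Ig^{b_\bullet}_{G,\mu_\bullet}$ is a filtered union along the closed embeddings given by the boundedness conditions, so it is an ind-scheme.

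I expect the main obstacle to be the first step: checking that the gluing is an equivalence with the limit of the $b_z$-twisted closed conditions. This requires verifying that the compatibility square in Definition \ref{ss:igusavarieties}.iii) matches, at each finite level, a closed condition on shtukas with level structure at the legs, and that the $\deg{z}$-Frobenius bookkeeping agrees. I would first try to deduce this directly from the Beauville--Laszlo gluing in the proof of Theorem \ref{ss:fiberproductconj}, applied to $\Spd{B}$-points, and then descend using Theorem \ref{ss:BunGred}.
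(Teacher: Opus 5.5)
\textbf{There is a genuine gap: the spaces $\mathcal{S}_m$ in your first step do not exist.}

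\begin{itemize}
\item The moduli $\Sht^I_{G,\mu_\bullet,N}$ of \ref{ss:globalshtuka} is only defined over $\prod_{i\in I}C_i\ssm N_i$. The Frobenius-compatibility of $\psi$ in iv) needs $\phi$ to be defined on $N_B$. With legs at $z$ and $N=mZ$, however, $\phi$ is only defined off $z_B\subseteq N_B$. So there is no $(\Sht^I_{G,\mu_\bullet,mZ}|_{\prod_{i\in I}z_i})^{\perf}$, and the quasiprojectivity statement of \ref{ss:globalshtukatruncation} says nothing over $\prod_{i\in I}z_i$.
\item Passing through Theorem \ref{ss:fiberproductconj} does not help. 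For $n_z\geq1$ its local factor is $\LocSht^{I_z}_{G,\mu_z,n_z}|_{\prod_{i\in I_z}\Spd F_{z_i}}$, whose legs lie in the generic fiber. Also $\prod^\circ_{i\in I}\Spa\cO_{z_i}$ omits the locus where a leg lies over $z$. So its base change to $\prod_{i\in I}z_i$ is empty; this is exactly why Theorem \ref{ss:fiberproductred} assumes $N\cap Z=\varnothing$.
\item More fundamentally, condition iii) of Definition \ref{ss:igusavarieties} lives on the punctured disk $\Spec F_z\wh\otimes_{\bF_z}B$ and does not truncate to level $m$. Suppose $\psi_z$ transports $\phi^{\deg z}$ to $(\Frob_B^*)^{-\deg z}\circ b_z$. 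Then $k\psi_z$, with $k\equiv1\bmod z^m$, transports it to the analogous map with $b_z$ replaced by $k\,b_z\,\sigma(k)^{-1}$ (for the relevant Frobenius $\sigma$). This is in general not $b_z$. Hence the $b_z$-locus is not stable under the level-$m$ congruence subgroup, and it is not cut out by any condition on level-$m$ structures. The isomorphism $\Ig^{b_\bullet}_{G,\mu_\bullet}\cong\varprojlim_m\mathcal{S}_m$, with $\mathcal{S}_m$ closed in shtuka spaces, therefore has no meaning.
\end{itemize}

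\textbf{The repair is to decouple the level structure from $\phi$, which is what the paper does.}
\begin{itemize}
\item After the Beauville--Laszlo step, which matches yours, the paper maps $\Ig^{b_\bullet}_{G,\mu_\bullet}$ to $\cB_{G,\infty Z}$. This is the moduli of $G$-bundles $\sG$ on $C_B$ with trivializations $\psi_z$ on $\Spec\cO_z\wh\otimes_{\bF_z}B$ and no Frobenius structure at all.
\item $\cB_{G,\infty Z}$ is the limit of the spaces $\cB_{G,mZ}$ of bundles trivialized along $mZ_B$. Via Harder--Narasimhan truncations it is a filtered open union of schemes that are cofiltered limits of disjoint unions of quasiprojective schemes \cite[p.~15]{AH13}.
\item The paper then shows that $\Ig^{b_\bullet}_{G,\mu_\bullet}\ra\cB_{G,\infty Z}$ is schematic and projective. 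The datum of $\phi$ bounded by $\mu_\bullet$ is handled by \cite[Proposition 3.12]{AH13}, and the $b_z$-compatibility is a closed condition imposed only at infinite level.
\end{itemize}

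In your language, this means taking $\mathcal{S}_m$ to be shtukas with legs at $Z$ and empty level, together with an arbitrary trivialization along $mZ_B$ that is not required to be compatible with $\phi$. The $b_z$-condition is then imposed only on $\varprojlim_m\mathcal{S}_m$. The finiteness input must come from $\cB^{\leq t}_{G,mZ}$ for $m\gg0$, not from \ref{ss:globalshtukatruncation}. With that change, your remaining steps go through: the union over $t$, the limit along affine transition maps, and the ind-scheme conclusion for $\Ig^{b_\bullet}_G$.
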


When $Z$ is empty, Example \ref{exmp:BunGC} shows that $\Ig_G^{()}$ is the constant stack over $\ov\bF_q$ associated with the groupoid (\ref{eq:automorphicspace}). In particular, $\Ig_G^{()}$ is \emph{not} an ind-scheme.

\begin{proof}
By Beauville--Laszlo gluing, a $\Spec{B}$-point of $\Ig^{b_\bullet}_G$ is equivalent to the data:
  \begin{enumerate}[i')]
  \item a $G$-torsor $\sG$ on $C_B$,
  \item an isomorphism of $G$-torsors $\phi:\sG|_{U_B}\ra^\sim\Frob^*_B\sG|_{U_B}$,
  \item for all $z$ in $Z$, an isomorphism of $G$-torsors $\psi_z:\sG|_{\Spec\cO_z\wh\otimes_{\bF_z}B}\ra^\sim G$ such that
    \begin{align*}
      \xymatrixcolsep{1.5cm}
      \xymatrix{\sG|_{\Spec F_z\wh\otimes_{\bF_z}B}\ar[r]^-{(\phi^{\deg{z}})_{F_z}}\ar[d]^-{(\psi_z)_{F_z}} & (\Frob_B^{\deg{z}})^*\sG|_{\Spec F_z\wh\otimes_{\bF_z}B}\ar[d]^-{(\psi_z)_{F_z}}\\
      G\ar[r]^-{(\Frob_B^*)^{-\deg{z}}\circ b_z} & G}
    \end{align*}
    commutes.
  \end{enumerate}
Next, write $\cB_{G,\infty Z}$ for the prestack on $\{\mbox{affine schemes over }\ov\bF_q\}$ whose $\Spec{B}$-points parametrize data consisting of
  \begin{enumerate}
  \item[i')] a $G$-torsor $\sG$ on $C_B$,
  \item[iii'')] for all $z$ in $Z$, an isomorphism of $G$-torsors $\psi_z:\sG|_{\Spec\cO_z\wh\otimes_{\bF_z}B}\ra^\sim G$.
  \end{enumerate}
  Recall that $\cB_{G,\infty Z}$ is a filtered open union of schemes that are cofiltered limits of disjoint unions of quasiprojective schemes \cite[p.~15]{AH13}. Given a $\Spec{B}$-point $(\sG,\{\psi_z\}_{z\in Z})$ of $\cB_{G,\infty Z}$, the additional data of 
  \begin{enumerate}
  \item[ii'')] an isomorphism of $G$-torsors $\phi:\sG|_{U_B}\ra^\sim\Frob^*_B\sG|_{U_B}$ whose relative position along $\{z_B\}_{z\in Z}$ is bounded by $\mu_\bullet$
  \end{enumerate}
corresponds to a projective scheme over $\Spec{B}$ \cite[Proposition 3.12]{AH13}, and the commutativity condition of iii') corresponds to a closed subscheme. Therefore the morphism $\Ig^{b_\bullet}_{G,\mu_\bullet}\ra\cB_{G,\infty Z}$ that sends $(\sG,\phi,\{\psi_z\}_{z\in Z})\mapsto(\sG,\{\psi_z\}_{z\in Z})$ is schematic and projective, which yields the desired result.
\end{proof}

\subsection{}
Similarly to Definition \ref{defn:shtukadiamond}, we now convert Igusa varieties into v-stacks.
\begin{defn*}
Write $(\Ig^{b_\bullet}_G)^\Diamond$ and $(\Ig^{b_\bullet}_{G,\mu_\bullet})^\Diamond$ for the prestack on $\Perf_{\ov\bF_q}$ whose $S$-points are given by $\Ig^{b_\bullet}_G(R)$ and $\Ig^{b_\bullet}_{G,\mu_\bullet}(R)$, respectively.
\end{defn*}
When $Z$ is empty, Example \ref{exmp:BunGC} shows that $(\Ig^{b_\bullet}_G)^\Diamond$ is a small v-stack. When $Z$ is nonempty, Lemma \ref{ss:nosheafification} and Proposition \ref{ss:igusavarietyindscheme} imply that
\begin{align*}
(\Ig^{b_\bullet}_{G,\mu_\bullet})^\Diamond\mbox{ and }(\Ig^{b_\bullet}_G)^\Diamond=\varinjlim_{\mu_\bullet\in X_*^+(T)^Z}(\Ig^{b_\bullet}_{G,\mu_\bullet})^\Diamond
\end{align*}
are small v-sheaves.

\subsection{}\label{ss:igusavarsfiber}
As suggested by Theorem \ref{ss:fiberproductconj}, the fibers of $\loc_Z$ are given by Igusa varieties in the following sense. We get a morphism $(\Ig^{b_\bullet}_G)^\Diamond\ra\Bun_{G,U}$ by sending
\begin{align*}
(\sG,\phi,\{\psi_z\}_z)\mapsto(\sG^{\an},\phi^{\an}).
\end{align*}
The element $b_z$ induces a morphism $b_z:\ast\ra\Bun_{G,F_z}$, so as $z$ varies we also get a morphism $b_\bullet:\ast\ra\textstyle\prod_{z\in Z}\Bun_{G,F_z}$.
\begin{prop*}
The square of small v-stacks
  \begin{align*}
    \xymatrix{(\Ig^{b_\bullet}_G)^\Diamond\ar[r]\ar[d] & \ast\ar[d]^-{b_\bullet} \\
    \Bun_{G,U}\ar[r]^-{\loc_Z} & \displaystyle\prod_{z\in Z}\Bun_{G,F_z}}
  \end{align*}
is cartesian.
\end{prop*}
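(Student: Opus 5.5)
We would prove this by identifying $S$-points of both sides and comparing them via GAGA, in the same way as in the proof of Theorem \ref{ss:fiberproductconj}. An $S$-point of the fiber product $\Bun_{G,U}\times_{\prod_z\Bun_{G,F_z}}\ast$ consists of an object $(\sG,\phi)$ of $\Bun_{G,U}(S)$ together with, for all $z$ in $Z$, an isomorphism in $\Bun_{G,F_z}(S)$ between $\loc_z(\sG,\phi)$ and the object $(G,(\Frob_S^{-1})^*\circ b_z)$. Under Proposition \ref{ss:localBunG} and the identification of Proposition \ref{ss:localbundles} (passing from $q$-Frobenius on $\Spa F_z\times S$ to $q^{\deg z}$-Frobenius on $\Spa F_z\times_{\bF_z}S$), such an isomorphism is exactly a trivialization $\psi_z^{\an}:\sG|_{\Spa F_z\times_{\bF_z}S}\ra^\sim G$ intertwining $\phi^{\deg z}$ with $(\Frob_S^*)^{-\deg z}\circ b_z$. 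So the plan is to show that the data $(\sG,\phi,\{\psi_z^{\an}\})$ are equivalent to the algebraic data $(\sG,\phi,\{\psi_z\})$ over $R$ in Definition \ref{ss:igusavarieties}, naturally in $S$.

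First, we would pass to $C$ by Beauville--Laszlo-type gluing on the analytic side. Gluing along the open cover $\{U^{\an}_S\ssm\bigcup_zD_z^{\cup}\}\cup\{\Spa\cO_z\times S\}_z$ of $C^{\an}_S$, as in the proof of Theorem \ref{ss:fiberproductconj}, we would glue $\sG$ with the trivial torsor on each $\Spa\cO_z\times S$ via $\psi_z^{\an}$. This should show that the analytic data are equivalent to:
\begin{itemize}
\item a $G$-torsor $\sG^{\an}$ on $C^{\an}_S$;
\item an isomorphism $\phi$ over $U^{\an}_S$;
\item trivializations of $\sG^{\an}$ over $\Spa\cO_z\times S$ satisfying the Frobenius compatibility over $\Spa F_z\times S$.
\end{itemize}
Here $\phi$ is meromorphic along $Z$, because the compatibility with $b_z$ bounds its pole near $z$. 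This is the analytic analogue of the data i')--iii') in the proof of Proposition \ref{ss:igusavarietyindscheme}.

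Next, we would algebraize. Proposition \ref{ss:Tannakian} together with Theorem \ref{ss:appliedGAGA} (GAGA for $C_R$) identifies $\sG^{\an}$ with an algebraic $G$-torsor $\sG'$ on $C_R$, and identifies the isomorphism $\phi$, which is meromorphic along $Z$, with an algebraic isomorphism over $U_R$. The trivializations over $\Spa\cO_z\times S$ correspond to trivializations over $\Spec\cO_z\wh\otimes_{\bF_z}R$. To see this, note that $\sG'|_{\Spec\cO_z\wh\otimes R}$ pulls back to $\sG^{\an}|_{\Spa\cO_z\times S}$. We would then need full faithfulness for $G$-torsors between $\Spec\cO_z\wh\otimes_{\bF_z}R$ and $\Spa\cO_z\times_{\bF_z}S$, which we expect to get by writing both as limits over $n_zz$ and using Proposition \ref{ss:Tannakian}. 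Finally, applying algebraic Beauville--Laszlo gluing (as in the proof of Proposition \ref{ss:igusavarietyindscheme}) returns exactly an $R$-point of $\Ig_G^{b_\bullet}$. All steps are visibly natural in $S$, which gives the cartesian square.

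The main obstacle is the comparison of trivializations over the punctured discs. On the analytic side, $\psi_z^{\an}$ lives on $\Spa F_z\times_{\bF_z}S$, which is not quasicompact, whereas $\psi_z$ lives on $\Spec F_z\wh\otimes_{\bF_z}R$. We would first try to handle this using the Frobenius equivariance: a trivialization on an annulus of the form $\{\lvert\pi\rvert^{q^{\deg z}}\le\cdot\le\lvert\pi\rvert\}$, propagated by $\phi^{\deg z}$ and $b_z$, determines $\psi_z^{\an}$ everywhere. On the glued $C^{\an}_S$ this equivariance also extends the trivialization across the full disc $\Spa\cO_z\times S$. The extension exists only after enlarging $D_z$, which should be harmless since $\abs{S}$ is quasicompact. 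With the trivialization on the full disc in hand, GAGA and the $\cO_z$-level comparison above apply, avoiding any separate comparison on the punctured disc.
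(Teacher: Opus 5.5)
There is a genuine gap at the step you call the $\cO_z$-level comparison. You assert that trivializations of $\sG'$ over $\Spec(\cO_z\wh\otimes_{\bF_z}R)=\Spec R\lb{\pi}$ correspond to trivializations of $\sG^{\an}$ over $\Spa\cO_z\times_{\bF_z}S$, by writing both as limits over $n_zz$. This is false.

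The open disc $\Spa\cO_z\times_{\bF_z}S$ is not the limit of the thickenings $n_zz\times_{\bF_z}S$; that limit is the formal completion along the origin. Its ring of functions consists of power series $\sum_i a_i\pi^i$ with $a_i\in R$ that converge on $\abs{\pi}<1$, and this is a proper subring of $R\lb{\pi}$. For example, $1+\sum_{i\geq1}\vpi^{-i^2}\pi^i$ is a unit of $R\lb{\pi}$, hence an automorphism of the trivial $\bG_m$-torsor over $\Spec R\lb{\pi}$. But it diverges at every point with $\pi\neq0$, so it does not come from the disc.

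In particular, there is no morphism $\Spa\cO_z\times_{\bF_z}S\ra\Spec R\lb{\pi}$ along which $\sG'|_{\Spec\cO_z\wh\otimes R}$ could be pulled back. The natural comparison only runs from analytic to formal, via $\cO(\Spa\cO_z\times_{\bF_z}S)\hookrightarrow R\lb{\pi}$, and it is injective but not surjective. So you cannot produce $\psi_z^{\an}$ from an algebraic $\psi_z$. You need exactly that, both to define the map out of $(\Ig^{b_\bullet}_G)^\Diamond$ and to show that every $R$-point of $\Ig^{b_\bullet}_G$ arises from the fiber product.

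The missing input is that a formal trivialization compatible with $\phi^{\deg z}$ and $b_z$ automatically converges on the whole disc $\Spa\cO_z\times_{\bF_z}S$. Your Frobenius-propagation paragraph does not supply this. It moves a trivialization that is already analytic across radii, but never starts from a formal one; it also treats the disc-level comparison as unproblematic, when that is where the difficulty actually lies.

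The paper handles this point as follows.
\begin{enumerate}[1)]
\item It applies Theorem \ref{ss:fiberproductconj} with $N=\varnothing$, base-changed to $\Spd\ov\bF_q$, lifts $b_\bullet$ to $\LocSht$, and pastes cartesian squares. Your gluing and GAGA steps are already packaged in this.
\item It observes that everything is independent of $R^+$, so one may take $R^+=R^\circ$.
\item It reduces to $S$ a product of points, which form a basis of the v-topology; both sides are v-sheaves.
\item It invokes \cite[Corollary 1.9]{GIZ25} to identify analytic Frobenius-compatible trivializations over $\Spa\cO_z\times_{\bF_z}S$ with algebraic ones.
\end{enumerate}

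To repair your argument, you would need either this reduction together with \cite[Corollary 1.9]{GIZ25}, or a direct convergence proof. A direct proof would require, for instance, a coefficient estimate from the Frobenius relation giving convergence on a small closed disc. You would then use that relation to propagate convergence outward toward $\abs{\pi}\to1$.
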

\begin{proof}
In Theorem \ref{ss:fiberproductconj}, take $I_z=\{i\}$ to be a singleton for all $z$ in $Z$, take $N$ to be $\varnothing$, and base change the top arrow along the morphism $\Spd\ov\bF_q\ra\textstyle\prod_{i\in I}z_i\ra\textstyle\prod_{i\in I}\Spd\cO_{z_i}$ corresponding to the embeddings $\bF_z\ra\ov\bF_q$. This yields a cartesian square
  \begin{align*}
    \xymatrix{(\Sht^I_{G,\mu_\bullet}|_{\ov\bF_q})^\Diamond\ar[r]\ar[d] & \LocSht^I_{G,\mu_\bullet}|_{\Spd\ov\bF_q}\ar[d]\\
    \Bun_{G,U}\ar[r]^-{\loc_Z}& \displaystyle\prod_{z\in Z}\Bun_{G,F_z}.}
  \end{align*}
  Note that the morphism $b_\bullet:*\ra\textstyle\prod_{z\in Z}\Bun_{G,F_z}$ naturally lifts to a morphism
  \begin{align*}
  b_\bullet:*\ra\LocSht^I_{G,\mu_\bullet}|_{\Spd\ov\bF_q},
  \end{align*}
  so it suffices to prove that the square of small v-stacks
  \begin{align*}
    \xymatrix{(\Ig^{b_\bullet}_{G,\mu_\bullet})^\Diamond\ar[r]\ar[d] & \ast\ar[d]^-{b_\bullet}\\
    (\Sht^I_{G,\mu_\bullet}|_{\ov\bF_q})^\Diamond\ar[r] & \LocSht^I_{G,\mu_\bullet}|_{\Spd\ov\bF_q}}
  \end{align*}
is cartesian. An $S$-point in the fiber product parametrizes data consisting of
\begin{enumerate}[i)]
\item a $G$-torsor $\sG$ on $C_R$,
\item an isomorphism of $G$-torsors $\phi:\sG|_{U_R}\ra^\sim\Frob_R^*\sG|_{U_R}$ whose relative position along $\{z_R\}_{z\in Z}$ is bounded by $\mu_\bullet$,
\item for all $z$ in $Z$, an isomorphism of $G$-torsors $\psi_z^{\an}:\sG|_{\Spa\cO_z\times_{\bF_z}S}\ra^\sim G$ such that
  \begin{align*}
    \xymatrixcolsep{1.5cm}
    \xymatrix{\sG|_{\Spa F_z\times_{\bF_z}S}\ar[r]^-{(\phi^{\deg{z}})_{F_z}}\ar[d]^-{\psi_z^{\an}} & (\Frob_S^{\deg{z}})^*\sG|_{\Spa F_z\times_{\bF_z}S}\ar[d]^-{\psi_z^{\an}}\\
    G\ar[r]^-{(\Frob_S^*)^{-\deg{z}}} & G.}
  \end{align*}
\end{enumerate}
Note that everything is independent of $R^+$, so we can assume that $R^+$ equals $R^\circ$. Because products of points as in \cite[Definition 1.2]{Gle20} form a basis for the v-topology \cite[Example 1.1]{Gle20}, we can assume that $S$ is a product of points. Then \cite[Corollary 1.9]{GIZ25} implies that the above corresponds to an object in $(\Ig^{b_\bullet}_{G,\mu_\bullet})^\Diamond(S)$.
\end{proof}

\subsection{}\label{ss:basiclocus}
Let us recall some more notation concerning the Kottwitz set. Let $K$ be one of $\{F,F_v\}$, write $\bX_K$ for the $W_K$-module given by
\begin{align*}
  \bX_K\coloneqq
  \begin{cases}
  \bZ  & \mbox{if }K=F_v, \\
  \Div(\ov{F})_0\mbox{ as in \cite[p.~75]{HK21}} & \mbox{if }K=F,
  \end{cases}
\end{align*}
and write $\ka:B(K,G)\ra(\pi_1(G)\otimes_\bZ\bX_K)_{W_K}$ for the Kottwitz map as in \cite[Subsection 11.5]{Kot14}. Recall that $\ka$ restricts to a bijection $B(K,G)_{\basic}\ra^\sim(\pi_1(G)\otimes_\bZ\bX_K)_{W_K}$ \cite[Proposition 13.1, Proposition 15.1]{Kot14}.

Recall the global semistable locus from Definition \ref{ss:globalsemistable}, which was defined \emph{in terms of} its local analogue. We now prove that it admits the following \emph{intrinsic} description; this is Theorem B. For all $b$ in $B(F,G)_{\basic}$, write $\Bun^b_{G,F}$ for the substack of $\Bun_{G,F}$ whose $S$-points consist of objects such that, for all geometric points $\ov{s}$ of $S$, its image in $\Bun_{G,F}(\ov{s})$ is isomorphic to $(G,(\Frob^*_S)^{-1}\circ b)$.
\begin{thm*}
The substack $\Bun^b_{G,F}\subseteq\Bun_{G,F}$ is open and isomorphic to $*/\ul{G_b(F)}$. Moreover, the open substack $\Bun^{\semis}_{G,F}\subseteq\Bun_{G,F}$ equals $\coprod_{b\in B(F,G)_{\basic}}\Bun^b_{G,F}$.
\end{thm*}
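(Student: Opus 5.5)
The proof has three parts: first the basic loci, then the inclusion of their union into the semistable locus, and finally the reverse inclusion, which is the hard direction.

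\emph{Step 1: $\Bun^b_{G,F}$ is open and equals $*/\ul{G_b(F)}$.} Fix a representative $b\in G(F\otimes_{\bF_q}\ov\bF_q)$ of the given basic class. It lies in $G(U_{\ov\bF_q})$ for some dense open $U$, and we may shrink $U$ further so that $G_b$ extends to a parahoric group scheme over $U$. By Lemma \ref{ss:BunGpullback}, $\Bun^b_{G,F}$ pulls back to $\Bun^b_{G,U'}$ for every smaller $U'$. So it suffices to show that $\Bun^b_{G,U}\subseteq\Bun_{G,U}$ is open and isomorphic to $*/\ul{G_b(\sO(U))}$, and then take $\varinjlim_U$. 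Indeed $\varinjlim_U G_b(\sO(U))=G_b(F)$, and openness is preserved because the transition maps are compatible with the open embeddings. The twisting isomorphism of Proposition \ref{ss:twistingb} carries $\Bun^b_{G,U}$ to $\Bun^1_{G_b,U}$. Theorem \ref{ss:triviallocus} applied to $G_b$ then gives both openness and the identification with $*/\ul{G_b(\sO(U))}$. Distinct classes $b$ give disjoint loci, by Theorem C evaluated at geometric points. Hence $\coprod_b\Bun^b_{G,F}$ is an open substack.

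\emph{Step 2: $\coprod_b\Bun^b_{G,F}\subseteq\Bun^{\semis}_{G,F}$.} Both sides are open and defined pointwise, so it suffices to check at geometric points $\ov s$. The image of $(G,(\Frob^*)^{-1}\circ b)$ under $\loc_z$ is the $G$-bundle attached to $\loc_z(b)\in B(F_z,G)$. Basicness is preserved by localization, by Kottwitz's compatibility of Newton points \cite{Kot14}. By \cite[Theorem III.4.5]{FS21}, basic local classes correspond exactly to semistable bundles.

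\emph{Step 3: the reverse inclusion.} Since $\Bun^{\semis}_{G,F}$ is open, it suffices to show that every geometric point $x=\Spa(K,K^+)\to\Bun^{\semis}_{G,U}$ lies in some $\Bun^b$. Its localizations $\loc_z(x)$ are basic, say with classes $b_z$. By \ref{ss:BdRgrassmannian}, after enlarging $K$ each $\loc_z(x)$ lifts to a point of $\Gr_{G,\mu_z}$, hence of some $\LocSht^{I_z}_{G,\mu_z}$ with suitable $\mu_z$ and a single leg. Theorem \ref{ss:fiberproductconj} turns $x$ together with these lifts into a point of $(\Sht^I_{G,\mu_\bullet})^\Diamond$. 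By Corollary \ref{ss:shtukadiamondred}, this point specializes to, or is related to, $\ov\bF_q$-points of the shtuka stack, since every point of a scheme locally of finite type specializes to a closed point.

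The main obstacle is showing that $x$ itself, and not merely a specialization of it, has an $\ov\bF_q$-model. I would use Proposition \ref{ss:igusavarsfiber}: $x$ lies in the fiber $(\Ig^{b_\bullet}_G)^\Diamond$ over $b_\bullet$. For basic $b_\bullet$ I would argue that $\Ig^{b_\bullet}_{G}$ is pro-finite étale over $\ov\bF_q$, i.e.\ zero-dimensional, using that the basic Newton stratum of $\Sht$ is zero-dimensional. For this I would apply the standard nonemptiness and dimension criterion for $\Sht^I_{G,V}$, via the Langlands--Rapoport description in Theorem \ref{ss:fiberproductred}. Then every $K$-point of $\Ig^{b_\bullet}_G$ is locally constant and comes from an $\ov\bF_q$-point. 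So $x$ factors through $\Bun_{G,U}(\ov\bF_q)$, whose isomorphism classes are $B(F,G)$ by Theorem C.

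It remains to see that the resulting global class $b$ is basic. Its localizations are basic at all $z\in Z$, and it is unramified (trivial) at all $u\in U$. A class all of whose localizations are basic is itself basic, since the global Newton point is determined by the local ones \cite[Kot14]. Therefore $x\in\Bun^b_{G,F}$, which completes the reverse inclusion.
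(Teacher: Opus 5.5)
Your Steps 1 and 2 agree with the paper. The gap is in Step 3, at the point you yourself flag as the main obstacle.

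\textbf{The gap.} You assert that $\Ig^{b_\bullet}_G$ is pro-finite \'etale over $\ov\bF_q$ for basic $b_\bullet$, but nothing you cite establishes this, and the proposed justification is false. The basic Newton stratum of a shtuka space is not zero-dimensional in general. By Theorem \ref{ss:fiberproductred} and Proposition \ref{ss:igusavarsfiber}, that stratum is built, up to the action of the groups $J_{b_z}(F_z)$, from $\Ig^{b_\bullet}_G$ and the affine Deligne--Lusztig varieties $X_{\mu_z}(b_z)$. These have positive dimension in general; already for $\mathrm{GL}_4$ with $\mu=(1,1,0,0)$ the basic one is a curve, just as supersingular loci of Shimura varieties are positive-dimensional. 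What is zero-dimensional is the Igusa variety (central leaf). Extracting that from a dimension count would require a dimension formula for Newton strata of $\Sht$ that is not available.

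The route is also essentially circular. By Proposition \ref{ss:igusavarsfiber}, $(\Ig^{b_\bullet}_G)^\Diamond$ is the fibre of $\loc_Z$ over $b_\bullet$. The statement you are proving is equivalent to this fibre being the constant sheaf on $\coprod_b G_b(A)\backslash G_b(F_Z)$, so you would be assuming it. Finally, the remark that points of a finite type scheme specialize to closed points does not help, since specialization changes the point of $\Bun_{G,U}$.

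\textbf{The missing idea.} The rigidity input you need is Lang's lemma for $\Bun_{G',C}$ (Example \ref{exmp:BunGC}). To reach it, you must first remove the local obstructions by twisting. The paper proceeds as follows.
\begin{enumerate}
\item It uses the lift of $\ov s$ to a $K$-point $(\sG,\phi)$ of $\Sht^I_{G,\mu_\bullet,\varnothing}$ only to extract a numerical constraint. $\Frob_K$ acts trivially on $\pi_0(\cB_G)=\pi_1(G)_{W_F}$, and $\phi$ is a modification of type $\sum_z\mu_z$. Hence $\sum_z\kappa(b_z)$ vanishes in $\pi_1(G)_{W_F}$.
\item \cite[Proposition 15.6]{Kot14} then yields a basic $b\in B(F,G)$ with $\loc_z(b)=b_z$ for $z\in Z$ and $\loc_u(b)=1$ for $u\in U$.
\item Twisting by $b$ (Proposition \ref{ss:twistingb}) puts every $\loc_z(\ov s)$ in $\Bun^1_{G_b,F_z}$. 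These lift to $\Bun_{G_b,\cO_z}$ and glue, by Lemma \ref{ss:BunGpullback}, to a point of $\Bun_{G_b,C}$.
\item By Theorem \ref{ss:appliedGAGA} and \cite[Lemma 3.3 b)]{Var04}, this point is constant: it comes from a $G_b$-torsor on $C$.
\item The class of that torsor under $H^1(F,G_b)\hookrightarrow B(F,G_b)_{\basic}\cong B(F,G)_{\basic}$ is the basic $b'$ with $\ov s\in\Bun^{b'}_{G,F}$.
\end{enumerate}
Your closing observation, that a global class with basic localizations is basic, is correct. On this route it is simply not needed.
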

\begin{proof}
  Theorem \ref{ss:triviallocus} implies that $\Bun^1_{G,F}\subseteq\Bun_{G,F}$ is open and naturally isomorphic to $*/\ul{G(F)}$. Proposition \ref{ss:twistingb} yields an isomorphism $\Bun_{G,F}\ra^\sim\Bun_{G_b,F}$ of v-stacks such that the image of $\Bun^b_{G,F}$ equals $\Bun^1_{G_b,F}$, so applying the above to $G_b$ shows that $\Bun^b_{G,F}\subseteq\Bun_{G,F}$ is open and isomorphic to $*/\ul{G_b(F)}$. Theorem \ref{ss:BunGred} identifies the isomorphism classes in $\Hom_{\ov\bF_q}(\Spd\ov\bF_q,\Bun_{G,F})$ with $B(F,G)$, which indicates that the $\Bun^b_{G,F}$ are disjoint for distinct $b$. Hence $\coprod_{b\in B(F,G)_{\basic}}\Bun^b_{G,F}$ is an open substack of $\Bun_{G,F}$, and it evidently lies in $\Bun^{\semis}_{G,F}$.

  For the reverse inclusion, let $\ov{s}=\Spa(K,K^+)$ be a geometric point of $\Bun^{\semis}_{G,U}$, where $U$ is some dense open subscheme of $C$. Then \cite[Theorem III.4.5]{FS21} implies that, for all $z$ in $Z$, the image of $\ov{s}$ in $\Bun_{G,F_z}$ lies in $\Bun^{b_z}_{G,F_z}$ for some $b_z$ in $B(F_z,G)_{\basic}$. Take $I_z=\{i\}$ to be a singleton, and take $\mu_z$ to be a lift of $\ka(b_z)$ to $X_*^+(T)$. Then \ref{ss:BdRgrassmannian} and the proof of \cite[Proposition A.9]{Sch18}\footnote{While \cite[Appendix A]{Sch18} works over $p$-adic local fields and assumes that the cocharacter is minuscule, this is not used in the proof of \cite[Proposition A.9]{Sch18}.} indicate that the image of $\ov{s}$ in $\Bun_{G,F_z}$ lifts to a $\Spa(K,K^+)$-point of $\varprojlim_{m\geq1}\LocSht^{I_z}_{G,\mu_z,mz}|_{\Spd{F_{z_i}}}$. As $z$ varies, the cartesian square from the proof of Proposition \ref{ss:localizationproperties} shows that $\ov{s}$ lifts to a $\Spa(K,K^+)$-point of $\varprojlim_{m\geq1}(\Sht^I_{G,\mu_\bullet,mZ}|_{\prod_{i\in I}\Spa F_{z_i}})^\Diamond$, which yields a $\Spec{K}$-point $(\sG,\phi)$ of $\Sht^I_{G,\mu_\bullet,\varnothing}|_{\prod_{i\in I}C_i}$.

Recall that the algebraic stack $\cB_G$ over $\bF_q$ of $G$-bundles on $C$ satisfies $\pi_0(\cB_G)=\pi_1(G)_{W_F}$. Since absolute $q$-Frobenius induces the identity on topological spaces, $\sG$ and $\Frob_K^*\sG$ have the same image in $\pi_0(\cB_G)$. Now $\phi$ exhibits $\Frob^*_K\sG$ as a modification of $\sG$ of relative position $\sum_{z\in Z}\mu_z$, so the image of $\Frob_K^*\sG$ in $\pi_0(\cB_G)$ equals the image of $\sG$ in $\pi_0(\cB_G)$ plus the image of $\sum_{z\in Z}\mu_z$ in $\pi_1(G)_{W_F}$. Therefore the image of $\sum_{z\in Z}\mu_z$ in $\pi_1(G)_{W_F}$ is trivial. By \cite[Proposition 15.6]{Kot14}, this shows that there exists a $b$ in $B(F,G)_{\basic}$ such that $\loc_z(b)=b_z$ for all $z$ in $Z$ and $\loc_u(b)=1$ for all closed points $u$ of $U$.

Let $U'$ be a dense open subscheme of $U$ such that $b$ has a representative in $G(U'_{\ov\bF_q})$, and fix such a representative. By setting $b_c=1$ for all $c$ in $U\ssm U'$, we may replace $U$ with $U'$. Then Proposition \ref{ss:twistingb} yields an isomorphism $\Bun_{G,U}\ra^\sim\Bun_{G_b,U}$ such that, for all $z$ in $Z$, the image of $\ov{s}$ in $\Bun_{G_b,F_z}$ lies in $\Bun^1_{G_b,F_z}$. After using fpqc descent to extend $G_b$ to a parahoric group scheme over $C$, Proposition \ref{ss:localBunG} implies that $\Bun_{G_b,\cO_z}\ra\Bun^1_{G_b,F_z}$ is surjective in the pro-\'etale topology for all $z$ in $Z$. Hence the image of $\ov{s}$ in $\Bun_{G_b,F_z}$ lifts to a $\Spa(K,K^+)$-point of $\Bun_{G_b,\cO_z}$, and as $z$ varies, Lemma \ref{ss:BunGpullback} shows that $\ov{s}$ lifts to a $\Spa(K,K^+)$-point $(\sG_b,\phi_b)$ of $\Bun_{G_b,C}$.

Proposition \ref{ss:Tannakian} and Theorem \ref{ss:appliedGAGA} indicate that $(\sG_b,\phi_b)$ is the analytification of a $G_b$-torsor $\sG^{\alg}_b$ on $C_K$ equipped with an isomorphism $\phi^{\alg}_b:\sG^{\alg}_b\ra^\sim\Frob_K^*\sG^{\alg}_b$. Applying \cite[Lemma 3.3 b)]{Var04} to $\cB_{G_b}$ shows that $(\sG^{\alg}_b,\phi^{\alg}_b)$ is isomorphic to
\begin{align*}
(\ov\sG^{\alg}_b,\id_{\ov\sG^{\alg}_b}\otimes_{\bF_q}(\Frob_K^{-1})^*)
\end{align*}
for some $G_b$-torsor $\ov\sG^{\alg}_b$ on $C$. Write $b'$ for the image of $(\ov\sG^{\alg}_b)_F$ under
\begin{align*}
H^1(F,G_b)\hra B(F,G_b)_{\basic}\ra^\sim B(F,G)_{\basic},
\end{align*}
where the left arrow denotes the injection from \cite[(10.5)]{Kot14}, and the right arrow denotes the bijection from \cite[Lemma 5.3]{HK21}. After shrinking $U$ such that $b'$ has a representative in $G(U_{\ov\bF_q})$, we see that $\ov{s}$ lies in $\Bun_{G,U}^{b'}$, as desired.
\end{proof}

\section{Sheaf-theoretic consequences}\label{s:sheaf}
Our goal in this section is to prove Theorem E, which heavily relies on material from Appendix \ref{s:berkovichmotives}. We start by briefly recalling the theory of motivic sheaves and explaining why we need it. Next, we explain a general mechanism for constructing (derived) Hecke actions, which we use in both \S\ref{s:sheaf} and \S\ref{s:langlands}. We then state a forthcoming result of Eteve--Gaitsgory--Genestier--Lafforgue, which is necessary to even formulate Theorem E.

At this point, we can prove Theorem E. As an application, we explain how Theorem E can be used to recover all of the representation-theoretic results of \cite{LH23}; for this, it suffices to use work of Xue instead of the forthcoming result of Eteve--Gaitsgory--Genestier--Lafforgue.

\subsection{}\label{ss:sheafexplain}
We begin by recalling the sheaf theories that we will use. Let $L/\bQ_\ell$ be a finite extension containing a fixed $\sqrt{q}$. Write $\cO_L$ for the ring of integers of $L$, and write $\bF_\la$ for the residue field of $\cO_L$. Let $\La$ be one of $\{L,\cO_L,\bF_\la\}$.

Recall from \ref{ss:ladicrealization} the $\bZ[\frac1q]$-linear $6$-functor formalism $D_{\mot}(-)$ on
\begin{align*}
\{\mbox{small v-stacks over }\ov\bF_q\},
\end{align*}
and recall from \ref{ss:classifyingstack} the $\La$-linear $6$-functor formalism
\begin{align*}
D(-,\La)\coloneqq D_{\mot}(-)\otimes_{D_{\mot}(*)}D(\La).
\end{align*}
We use (this base change of) the theory of motivic sheaves because it enjoys $!$-pushforwards and encodes ``non-completed'' sheaves even when $\La\neq\bF_\la$ (e.g. see Corollary \ref{ss:classifyingstack}). However, to facilitate comparisons with classical \'etale $\ell$-adic sheaf theories for algebraic varieties, we will also use the various $3$-functor formalisms from Definition \ref{ss:ladicrealizationvarieties}, Proposition \ref{prop:ladicrealizationvarieties}, and \ref{ss:indextension}.

If one is only interested in the $\La=\bF_\la$ case, one can instead take
\begin{align*}
D(-,\bF_\la)\coloneqq D_{\et}(-,\bF_\la)
\end{align*}
and just use the \'etale sheaf theory of \cite{Sch17}. In fact, since all the sheaves that we use are overconvergent as in \cite[Proposition IV.2.4]{FS21}, the theory of motivic sheaves specializes to this when $\La=\bF_\la$; see \ref{ss:ladicrealization}.

\subsection{}\label{ss:derivedHeckeformalism}
We will use the following mechanism for constructing actions of (derived) endomorphism algebras on both the automorphic and spectral sides. Let $(\cC,E)$ be a geometric setup as in \cite[Definition 2.1.1]{HM24}, and let $D(-)$ be a $\La$-linear $3$-functor formalism on $(\cC,E)$. Consider a diagram in $\cC$
\begin{align*}
  \xymatrix{Y_O\ar[r]^-{g_O}\ar[d]^-{j_Y} & O\times X\ar[r]^-{p_1}\ar[d]^-{j_X} & O\ar[d]^-j\\
  Y_F\ar[r]^-{g_F} & F \times X\ar[r]^-{p_1}\ar[d]^-{p_2} & F\\
  & X, &}
\end{align*}
where the squares are cartesian, and the morphisms are all $!$-able. Write $f$ for the morphism $Y_O\ra X$, and let $A$ be an object in $D(Y_F)$.
\begin{lem*}
The object $f_!j_Y^*A$ in $D(X)$ is naturally a module for the $\bE_1$-algebra $\End_{D(F)}(j_!\mathbf{1})$ over $\La$.
\end{lem*}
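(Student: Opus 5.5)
The plan is to exhibit $f_!j_Y^*A$ as the image of $j_!\mathbf{1}$ under a $\La$-linear functor $\Phi:D(F)\ra D(X)$, and then let $\End_{D(F)}(j_!\mathbf{1})$ act through $\Phi$. Set
\begin{align*}
\Phi(M)\coloneqq p_{2,!}\big(g_{F,!}A\otimes p_1^*M\big).
\end{align*}
Every morphism involved is $!$-able, and $D(-)$ is a $\La$-linear $3$-functor formalism, so $\Phi$ is a $\La$-linear functor of stable $\infty$-categories.

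First I would compute $\Phi(j_!\mathbf{1})$ by base change and the projection formula.
\begin{enumerate}[1)]
\item The right square is cartesian, so proper base change gives $p_1^*j_!\mathbf{1}\cong j_{X,!}\mathbf{1}$.
\item The projection formula then gives $g_{F,!}A\otimes j_{X,!}\mathbf{1}\cong j_{X,!}j_X^*g_{F,!}A$.
\item The left square is cartesian, so base change gives $j_X^*g_{F,!}A\cong g_{O,!}j_Y^*A$.
\item Applying $p_{2,!}$ and using $p_2\circ j_X\circ g_O=f$ gives $\Phi(j_!\mathbf{1})\cong f_!j_Y^*A$.
\end{enumerate}
These identifications are natural because they are built from the structural equivalences of the $3$-functor formalism on the geometric setup $(\cC,E)$ of \cite{HM24}. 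In particular they are coherent, and no choices beyond the formalism are needed.

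Next I would produce the module structure. For any object $P$ of a $\La$-linear stable $\infty$-category $\cD$, the endomorphism object $\End_{\cD}(P)$ is an $\bE_1$-algebra in $D(\La)$, and $P$ is canonically a left module over it. This is the standard endomorphism-algebra construction for the tautological action of $\cD$ on itself, for instance via Lurie's endomorphism objects in \emph{Higher Algebra}, \S4.7.1. A $\La$-linear functor $\Phi$ induces a map of $\bE_1$-algebras
\begin{align*}
\End_{D(F)}(j_!\mathbf{1})\ra\End_{D(X)}(\Phi(j_!\mathbf{1})).
\end{align*}
Restricting the tautological module structure on $\Phi(j_!\mathbf{1})$ along this map, and transporting it across the isomorphism of step 4), gives the desired $\End_{D(F)}(j_!\mathbf{1})$-module structure on $f_!j_Y^*A$.

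The main obstacle is making the $\bE_1$-functoriality precise: one must check that $\Phi$ really induces an $\bE_1$-map on endomorphism algebras, and not merely a map of mapping spectra. I would handle this by noting that $\Phi$ is an exact $\La$-linear functor. It therefore induces a $\La$-linear functor between the full subcategories on the single objects $j_!\mathbf{1}$ and $\Phi(j_!\mathbf{1})$, and these are $D(\La)$-enriched one-object categories, that is, $\bE_1$-algebras. Everything else consists of formal base-change manipulations.
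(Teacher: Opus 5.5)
Your proposal is correct and follows the paper's proof. The paper likewise identifies $f_!j_Y^*A$ with $p_{2,!}(p_1^*j_!\mathbf{1}\otimes g_{F,!}A)$ using proper base change and the projection formula, applying the projection formula along $j_Y$ and $g_F$ rather than along $j_X$, which is an inessential reordering. It then transports the tautological $\End_{D(F)}(j_!\mathbf{1})$-module structure on $j_!\mathbf{1}$ through the functor $p_{2,!}(p_1^*-\otimes g_{F,!}A)$, exactly as you do with $\Phi$.
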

\begin{proof}
Since $f=p_2\circ g_F\circ j_Y$, the projection formula and proper base change yield
\begin{align*}
  f_!j_Y^*A &= p_{2,!}g_{F,!}j_{Y,!}j_Y^*A = p_{2,!}g_{F,!}(A\otimes j_{Y,!}\mathbf{1}) = p_{2,!}g_{F,!}(A\otimes j_{Y,!}g_O^*p_1^*\mathbf{1}) \\
  &= p_{2,!}g_{F,!}(A\otimes g_F^*p_1^*j_!\mathbf{1}) = p_{2,!}(p_1^*j_!\mathbf{1}\otimes g_{F,!}A).
\end{align*}
Because $j_!\mathbf{1}$ is naturally a module for the $\bE_1$-algebra $\End_{D(F)}(j_!\mathbf{1})$ over $\La$, the same holds for $f_!j_Y^*A$ by applying the functor $p_{2,!}(p_1^*-\otimes g_{F,!}A)$.
\end{proof}

\subsection{}\label{ss:derivedHeckeaction}
For places in $U$, we have the following (derived) Hecke algebra action on $\loc_{Z,!}\!\La$. Let $Q$ be a finite nonempty set of closed points of $U$, and write
\begin{align*}
\loc_Q:\Bun_{G,U}\ra*/\ul{G(\cO_Q)}
\end{align*}
for the morphism $(\loc_u)_{u\in Q}$. Then Lemma \ref{ss:BunGpullback} implies that the square
\begin{align*}
  \xymatrix{\Bun_{G,U}\ar[r]^-{(\loc_Q,\loc_Z)}\ar[d] & \ast/\ul{G(\cO_Q)}\times\displaystyle\prod_{z\in Z}\Bun_{G,F_z}\ar[d]\\
  \ast/\ul{G(F_Q)}\times_{\prod_{u\in Q}\Bun_{G,F_u}}\Bun_{G,U\ssm Q} \ar[r] & \ast/\ul{G(F_Q)}\times\displaystyle\prod_{z\in Z}\Bun_{G,F_z}}
\end{align*}
is cartesian. Proposition \ref{ss:localizationproperties} implies that the bottom arrow is $!$-able, so the same holds for the top arrow. Therefore Lemma \ref{ss:derivedHeckeformalism} and Corollary \ref{ss:classifyingstack} endow $\loc_{Z,!}\!\La$ in $D(\prod_{z\in Z}\Bun_{G,F_z},\La)$ with the structure of a module for the $\bE_1$-algebra
\begin{align*}
\End_{G(F_Q)}(\cInd_{G(\cO_Q)}^{G(F_Q)}\La) = \bigotimes_{u\in Q}\End_{G(F_u)}(\cInd^{G(F_u)}_{G(\cO_u)}\La)
\end{align*}
over $\La$, which we remind the reader can have cohomology in nonzero degrees when $\La\neq L$. Taking the colimit over $Q$ shows that $\loc_{Z,!}\!\La$ is naturally a module for the $\bE_1$-algebra $\bigotimes_u\End_{G(F_u)}(\cInd^{G(F_u)}_{G(\cO_u)}\La)$ over $\La$, where $u$ runs over closed points of $U$.

\subsection{}
Let us recall some notation on Langlands dual groups. \textbf{For the rest of this paper, assume that $G$ is reductive over $U$.} Write $W_U$ for the absolute Weil group of $U$ with respect to $\ov{F}$, write $\wt{F}/F$ for the finite Galois extension such that $\Gal(\wt{F}/F)$ equals the image of the $W_U$-action on $X_*^+(T)$, and write $\wt{C}$ for the normalization of $C$ in $\wt{F}$. Write $\wt{U}$ for the preimage of $U$ in $\wt{C}$, and write $\wt{Z}$ for the closed complement $\wt{C}\ssm\wt{U}$. Write $\wh{G}$ for the dual group of $G$ over $\La$, and write $\prescript{L}{}{G}$ for $\wh{G}\rtimes\ul{\Gal(\wt{F}/F)}$. Similarly, for all closed points $v$ of $C$, write $\wt{F}_v/F_v$ for the finite Galois extension such that $\Gal(\wt{F}_v/F_v)$ equals the image of the $W_{F_v}$-action on $X_*^+(T)$, and write $\prescript{L}{}{G}_v$ for $\wh{G}\rtimes\ul{\Gal(\wt{F}_v/F_v)}$.

Using $\prescript{L}{}G$, we re-index stacks of global shtukas as follows. Recall from \ref{ss:globalshtuka} the Deligne--Mumford stack $\Sht^I_{G,\mu_\bullet,N}|_{\prod_{i\in I}C_i\ssm N_i}$. Let $\dot{V}$ be an object in $\Rep_\La(\prescript{L}{}G)^I$, write $\wt{N}$ for the preimage of $N$ in $\wt{C}$, and note that the disjoint union
\begin{align*}
\coprod_{\substack{\mu_\bullet\in X_*^+(T)^I\\\text{arising\,in\,}\dot{V}|_{\wh{T}^I}}}\Sht^I_{G,\mu_\bullet,N}|_{(\wt{C}\ssm\wt{N})^I}
\end{align*}
naturally descends to a Deligne--Mumford stack $\Sht^I_{G,\dot{V},N}|_{(C\ssm N)^I}$ that is separated and locally of finite type over $(C\ssm N)^I$. Write $\pi_N:\Sht^I_{G,\dot{V},N}|_{(C\ssm N)^I}\ra(C\ssm N)^I$ for the structure morphism.

\subsection{}\label{ss:ladicrealizationglobalshtukas}
Briefly, write $\Sht$ for $\Sht^I_{G,\dot{V},N}|_{(C\ssm N)^I_{\ov\bF_q}}$. Using \ref{ss:globalshtuka}, Proposition \ref{ss:shtukadiamond}, and finite \'etale descent, we can extend the $\La$-linear $3$-functor formalisms from Proposition \ref{prop:ladicrealizationvarieties} and \ref{ss:indextension} to the mildly stacky setting of $\pi_N:\Sht\ra(C\ssm N)^I_{\ov\bF_q}$. More precisely, we have a natural diagram
  \begin{align*}
    \xymatrix{\Ind D_{\mot}^{(C\ssm N)^I_{\ov\bF_q}\bs\prod_{i\in I}\Spa\bC_{z_i}}\big((\Sht|_{\prod_{i\in I}\Spa\bC_{z_i}})^\Diamond\big)\ar[r]^-\Ups\ar[d]^-{r_{\ell,(\Sht|_{\prod_{i\in I}\Spa\bC_{z_i}})^\Diamond}} & D_{\mot}\big((\Sht|_{\prod_{i\in I}\Spa\bC_{z_i}})^\Diamond\big)\\
    \Ind D_{\et}^{(C\ssm N)^I_{\ov\bF_q}\bs\prod_{i\in I}\Spa\bC_{z_i}}\big((\Sht|_{\prod_{i\in I}\Spa\bC_{z_i}})^\Diamond,\La\big) & \ar[l]_-{\rho}D(\Sht,\bZ_\ell)}
  \end{align*}
  such that $!$-pushforward along $\pi_N$ yields a morphism of diagrams to
  \begin{align*}
    \xymatrix{\Ind D_{\mot}^{(C\ssm N)^I_{\ov\bF_q}\bs\prod_{i\in I}\Spa\bC_{z_i}}\big(\prod_{i\in I}\Spa\bC_{z_i}\big)\ar[r]^-\Ups\ar[d]^-{r_{\ell,\prod_{i\in I}\Spa\bC_{z_i}}} & D_{\mot}\big(\prod_{i\in I}\Spa\bC_{z_i}\big)\\
    \Ind D_{\et}^{(C\ssm N)^I_{\ov\bF_q}\bs\prod_{i\in I}\Spa\bC_{z_i}}\big(\prod_{i\in I}\Spa\bC_{z_i},\La\big) & \ar[l]_-{\rho}D((C\ssm N)^I,\bZ_\ell).}
  \end{align*}

\subsection{}\label{ss:EGGL}
We now state a forthcoming result of Eteve--Gaitsgory--Genestier--Lafforgue. Roughly, it asserts that the cohomology of stacks of global shtukas satisfies the conclusion of Drinfeld's lemma even on the level of derived categories. On the level of cohomology groups, this is already known by work of Xue; see Remark \ref{rem:Xue} below.

For any separated finite type scheme $X$ over $\ov\bF_q$, write $D_{\lis}(X,\La)\subseteq D_{\cons}(X,\La)$ for the full subcategory of lisse objects. Since $\Frob_{U\ssm N}$ is a universal homeomorphism, the functor
\begin{align*}
(\Frob_{U\ssm N})_{\ov\bF_q}^*:D_{\cons}((U\ssm N)_{\ov\bF_q},\La)\ra D_{\cons}((U\ssm N)_{\ov\bF_q},\La)
\end{align*}
restricts to an equivalence $(\Frob_{U\ssm N})_{\ov\bF_q}^*:D_{\lis}((U\ssm N)_{\ov\bF_q},\La)\ra^\sim D_{\lis}((U\ssm N)_{\ov\bF_q},\La)$.

Write $\cS_{\dot{V}}$ for the object in $D_{\cons}(\Sht^I_{G,\dot{V},N}|_{(U\ssm N)^I_{\ov\bF_q}},\Lambda)$ associated with $\dot{V}$ via geometric Satake. Unless otherwise specified, all Lurie tensor products are over $D(\La)$.
\begin{thm*}[Eteve--Gaitsgory--Genestier--Lafforgue]
  The object $\pi_{N,!}\cS_{\dot{V}}$ in
  \begin{align*}
D((U\ssm N)_{\ov\bF_q}^I,\La)
  \end{align*}
  lies in the image of the fully faithful external tensor product functor
  \begin{align*}
    \Big[\Ind D_{\lis}((U\ssm N)_{\ov\bF_q},\La)\Big]^{\otimes I}\hra D((U\ssm N)_{\ov\bF_q}^I,\La)
  \end{align*}
and naturally lifts to an object of $\Big[\big(\Ind D_{\lis}((U\ssm N)_{\ov\bF_q},\La)\big)^{(\Frob_{U\ssm N})_{\ov\bF_q}^*}\Big]^{\otimes I}$.
\end{thm*}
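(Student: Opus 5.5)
The statement is a derived-category form of Drinfeld's lemma for the cohomology of stacks of shtukas. The plan follows V. Lafforgue and Xue: establish the two key properties at the level of sheaves rather than cohomology groups, then invoke a derived Drinfeld lemma. We first reduce to the truncated pieces. Write $\pi_{N,!}\cS_{\dot V}=\varinjlim_t\pi_{N,!}^{\leq t}\cS_{\dot V}$ using the open exhaustion from \ref{ss:globalshtukatruncation}. By Proposition \ref{ss:shtukadiamond}, each $\Sht^{I,\leq t}$ is, for $\deg N$ large, a disjoint union of quasiprojective schemes. Hence each $\pi^{\leq t}_{N,!}\cS_{\dot V}$ lies in $D_{\cons}((U\ssm N)^I_{\ov\bF_q},\La)$. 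It then suffices to show that the ind-object $\varinjlim_t\pi^{\leq t}_{N,!}\cS_{\dot V}$ is equivalent to an ind-system of external products of lisse objects, carrying a compatible partial-Frobenius structure.

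The second step constructs the partial Frobenius morphisms at the level of complexes. For each $i\in I$, the partial Frobenius $\Frob_{\{i\}}:\Sht^I\ra\Sht^I$ modifies $\sG$ at the $i$-th leg only. It lies over $\Frob_{\{i\}}$ on $(U\ssm N)^I$ and carries $\cS_{\dot V}$ to itself by geometric Satake. It sends $\Sht^{\leq t}$ into $\Sht^{\leq t+c}$ for a constant $c$ depending on $\dot V$. This yields isomorphisms of ind-objects $F_{\{i\}}:\Frob_{\{i\}}^*\pi_{N,!}\cS_{\dot V}\ra^\sim\pi_{N,!}\cS_{\dot V}$. These commute with each other, and their composite is the total Frobenius structure. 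Commutativity and the coherence data must be recorded $\infty$-categorically, and we do this using the diagram formalism of \ref{ss:ladicrealizationglobalshtukas}.

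The third step is smoothness, meaning lisseness in each variable. Following Xue, we use creation and annihilation operators together with excursion/Hecke finiteness. Hecke correspondences at an auxiliary point $v\notin N$ act on $\pi_{N,!}\cS_{\dot V}$. The Eichler–Shimura relation, proved as in Lafforgue's argument via the partial Frobenius at legs near $v$, shows that the ind-object is, up to the Hecke action, generated by a constructible truncation. We then apply a derived Drinfeld lemma: a constructible complex on $(U\ssm N)^I_{\ov\bF_q}$ with commuting partial Frobenii that is ind-lisse along each factor lies in the essential image of $\big[(\Ind D_{\lis})^{\Frob^*}\big]^{\otimes I}$. Full faithfulness of the external tensor product is formal from the Künneth formula for lisse sheaves on products of curves.

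The main obstacle is this derived Drinfeld lemma together with the lisseness input. Xue's argument works on cohomology sheaves and uses Noetherianity of Hecke algebras over a field. Over $\La=\cO_L$ or $\bF_\la$ at the derived level, one needs a t-structure-free substitute. My first attempt would be to prove lisseness on each cohomology sheaf using Xue's results. I would then bootstrap via the filtration by cohomological truncations, which is compatible with partial Frobenii. Finally, I would check that $(\Ind D_{\lis})^{\Frob^*}$ is closed under the required extensions and colimits.
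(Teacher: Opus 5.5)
The paper does not prove this statement. It is quoted as a forthcoming result of Eteve--Gaitsgory--Genestier--Lafforgue, and Remark~\ref{rem:Xue} notes two things: \cite[Theorem 3.2.3]{Xue20b} already gives the first assertion, and \cite[Corollary 3.2.6]{Xue20b} gives the second only after taking cohomology groups. Your steps on truncation, Hecke finiteness and creation/annihilation operators essentially summarize Xue's route to the first assertion. So your concern about Noetherianity of Hecke algebras over $\cO_L$ or $\bF_\la$ is not where the difficulty lies. There are also two small inaccuracies in step 1:
\begin{itemize}
\item No lattice in the center is quotiented out, so the truncated stacks $\Sht^{I,\leq t}$ generally have infinitely many connected components (indexed by degree). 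Hence $\pi^{\leq t}_{N,!}\cS_{\dot V}$ is an infinite direct sum of constructible objects, and only ind-constructible.
\item The goal is membership in the colimit-closure of external products of lisse objects, not an ind-system of such products.
\end{itemize}

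The genuine gap is the second assertion, and you have placed its difficulty in the wrong spot. Granting the first assertion, the ``derived Drinfeld lemma'' you invoke is formal. The functor $\boxtimes$ is fully faithful and intertwines $(\Frob_{\{i\}})^*$ with $\Frob^*$ on the $i$-th factor. Hence an object of $D((U\ssm N)^I_{\ov\bF_q},\La)^{h\bZ^I}$ whose underlying object lies in the image lifts to $\big([\Ind D_{\lis}]^{\otimes I}\big)^{h\bZ^I}$. This category is $\big[(\Ind D_{\lis})^{\Frob^*}\big]^{\otimes I}$, for two reasons. For a group acting by equivalences, the limit over $B\bZ^I$ of presentable categories coincides with the colimit, computed along the inverse equivalences. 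Colimits commute with the relative Lurie tensor product in each variable.

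What is really needed, and what your proposal does not provide, is a homotopy-coherent $\bZ^I$-structure on $\pi_{N,!}\cS_{\dot V}$ itself. This consists of:
\begin{itemize}
\item the isomorphisms $F_{\{i\}}$;
\item homotopies $F_{\{i\}}\circ(\Frob_{\{i\}})^*F_{\{j\}}\simeq F_{\{j\}}\circ(\Frob_{\{j\}})^*F_{\{i\}}$;
\item all higher coherences among subsets of $I$.
\end{itemize}
These must be built from the partial Frobenius correspondences between stacks of shtukas indexed by ordered partitions of $I$, compatibly with fusion and geometric Satake.

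Your step 2 assigns this to \ref{ss:ladicrealizationglobalshtukas}. That diagram only compares motivic sheaves on the v-stack with $\ell$-adic sheaves on the scheme, and contains no Frobenius data.

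The bootstrap through cohomological truncations cannot supply the structure either. Cohomology sheaves remember the partial Frobenii only on the homotopy category. Reassembling the complex in the equivariant category requires lifting each truncation extension class to an equivariant one. Equivariant mapping spaces are the $h\bZ^I$-fixed points of the ordinary ones, so the obstructions and ambiguities live in $H^{\geq1}(\bZ^I,-)$ of the ordinary $\Ext$ groups. Thus $\bZ^I$-invariant classes need not lift, and lifts are not unique. You would obtain neither existence nor a natural lift. Producing this coherent structure geometrically is exactly the content the paper defers to the forthcoming work.
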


\begin{rem}\label{rem:Xue}
  Work of Xue \cite[Theorem 3.2.3]{Xue20b}\footnote{While \cite[Theorem 3.2.3]{Xue20b} assumes that $G$ is split, its proof does not use this. See \cite[\S4]{Xue20b}.} already implies the first statement in Theorem \ref{ss:EGGL}, as well as the second statement after taking cohomology groups \cite[Corollary 3.2.6]{Xue20b}. In particular, if one is only interested in Theorem \ref{ss:cohomologyformula} below after taking cohomology groups, one does not need to appeal to Theorem \ref{ss:EGGL}. For example, this is the case for Corollary \ref{ss:gl_fs} below.
\end{rem}

\subsection{}
For places in $U$, we have the following (derived) Hecke algebra action on the cohomology of stacks of global shtukas. Let $Q$ be a finite set of closed points of $U$. Write
\begin{align*}
\pi_{\infty(Q\cup Z)}:\varprojlim_{m\geq1}\Sht^I_{G,\dot{V},m(Q\cup Z)}|_{(U\ssm Q)_{\ov\bF_q}^I}\ra(U\ssm Q)_{\ov\bF_q}^I
\end{align*}
for the structure morphism, and recall that $\varprojlim_{m\geq1}\Sht^I_{G,\dot{V},m(Q\cup Z)}|_{(U\ssm Q)_{\ov\bF_q}^I}$ has a natural $\ul{G(F_{Q\cup Z})}$-action over $(U\ssm Q)_{\ov\bF_q}^I$ \cite[3.9]{LH21}. Because the square
\begin{align*}
  \xymatrixcolsep{1.5cm}
  \hspace{-.5cm}\xymatrix{\Big[\displaystyle\varprojlim_{m\geq1}\Sht^I_{G,\dot{V},mZ}|_{(U\ssm Q)_{\ov\bF_q}^I}\Big]\Big/\ul{G(F_Z)}\ar[r]^-{\pi_{\infty Z}}\ar[d] & \ast/\ul{G(\cO_Q)}\times\ast/\ul{G(F_Z)}\times(U\ssm Q)_{\ov\bF_q}^I\ar[d]\\
  \Big[\displaystyle\varprojlim_{m\geq1}\Sht^I_{G,\dot{V},m(Q\cup Z)}|_{(U\ssm Q)_{\ov\bF_q}^I}\Big]\Big/\ul{G(F_{Q\cup Z})}\ar[r]^-{\pi_{\infty(Q\cup Z)}} & \ast/\ul{G(F_Q)}\times\ast/\ul{G(F_Z)}\times(U\ssm Q)^I_{\ov\bF_q} }
\end{align*}
is cartesian, applying Lemma \ref{ss:derivedHeckeformalism} and taking the colimit over $Q$ endow
\begin{align*}
\pi_{\infty Z,!}\cS_{\dot{V}}=\varinjlim_{m\geq1}\pi_{mZ,!}\cS_{\dot{V}}\in D(*/\ul{G(F_Z)},\La)\otimes\Big[\big(\Ind D_{\lis}(U_{\ov\bF_q},\La)\big)^{(\Frob_U)_{\ov\bF_q}^*}\Big]^{\otimes I}
\end{align*}
with the structure of a module for the $\bE_1$-algebra $\bigotimes_u\End_{G(F_u)}(\cInd^{G(F_u)}_{G(\cO_u)}\La)$.

\subsection{}
Our formula for $\pi_{\infty Z,!}\cS_{\dot{V}}$ will involve the following \emph{Hecke operators} of Fargues--Scholze. For all closed points $v$ of $C$, recall from \ref{ss:localbundles} the relative Fargues--Fontaine curve $X_{S,F_v}$. Write $\Div^1_v(S)$ for the set of degree $1$ effective Cartier divisors on $X_{S,F_v}$ as in \cite[Definition II.1.19]{FS21}, so that $\Div_v^1$ is a small v-sheaf \cite[p.~52]{FS21}. For all objects $V_v$ in $\Rep_\La(\prescript{L}{}{G}_v)^I$, recall that we have a natural functor \cite[p.~18]{Sch25}
\begin{align*}
T_{V_v}:D(\Bun_{G,F_v},\La)\ra D(\Bun_{G,F_v},\La)\otimes D(\Div^1_v,\La)^{\otimes I}.
\end{align*}

For all $z$ in $Z$, let $\dot{V}_z$ be an object in $\Rep_\La(\prescript{L}{}G)^{I_z}$, and take $\dot{V}$ to be the object $\boxtimes_{z\in Z}\dot{V}_z$ in $\Rep_\La(\prescript{L}{}G)^I$. Write $V_z$ for the restriction of $\dot{V}_z$ to $(\prescript{L}{}G_z)^{I_z}$, and write $V$ for the restriction of $\dot{V}$ to $\prod_{z\in Z}(\prescript{L}{}G_z)^{I_z}$. Then $\bigotimes_{z\in Z}T_{V_z}$ yields a natural functor
\begin{align*}
T_V:\bigotimes_{z\in Z}D(\Bun_{G,F_z},\La)\ra\bigotimes_{z\in Z}\Big[D(\Bun_{G,F_z},\La)\otimes D(\Div^1_z,\La)^{\otimes I_z}\Big].
\end{align*}
Recall from \cite[Proposition 3.2]{Sch25} that $\bigotimes_{z\in Z}D(\Bun_{G,F_z},\La)=D(\prod_{z\in Z}\Bun_{G,F_z},\La)$.

\subsection{}\label{ss:choices}
We restrict $\ell$-adic sheaves on $U_{\ov\bF_q}$ to local Galois representations as follows. Write $\breve{F}_v$ for the completion of the maximal unramified extension of $F_v$. Fix an isomorphism $\ov\bF_q^\times\cong\bQ/\bZ[\frac1p]$. For all $z$ in $Z$, choose a uniformizer $\pi$ of $\cO_z$, and choose a compatible system $\{\pi^{1/n}\}_{p\nmid n}$ of $n$-th roots of $\pi$ in $\ov{F}_z$. These choices induce
\begin{enumerate}[$\bullet$]
\item group schemes $\ID_{F_z}$ and $\WD_{F_z}$ over $\La$ by \ref{ss:WeilDeligne},
\item a functor $\Psi_z:D(\Spa\bC_z,\La)\ra D(*,\La)$ by \ref{ss:nearbycycles},
\item compatible identifications
\begin{align*}
  D(\Spd\breve{F_z},\La)\cong D_{\qcoh}\big((\Spec\La)/\!\ID_{F_z}\!\big)\mbox{ and }D(\Div^1_z,\La)\cong D_{\qcoh}\big((\Spec\La)/\!\WD_{F_z}\!\big)
\end{align*}
such that the composition $D(\Spd\breve{F}_z,\La)\lra D(\Spa\bC_z,\La)\lra^{\Psi_z} D(*,\La)$ corresponds to pullback along $\Spec\La\ra(\Spec\La)/\!\ID_{F_z}$, by Lemma \ref{ss:WeilDeligne}.
\end{enumerate}
Moreover, Grothendieck's $\ell$-adic monodromy theorem implies that these choices also induce a functor $D_{\cons}(U_{\ov\bF_q},\La)\ra D_{\perf}((\Spec\La)/\!\ID_{F_z}\!)$, arising from pullback along $\Spec\breve{F}_z\ra U_{\ov\bF_q}$. $\Ind$-extending and taking Frobenius-equivariant objects yields a functor $D(U_{\ov\bF_q},\La)^{(\Frob_U)_{\ov\bF_q}^*}\ra D_{\qcoh}\big((\Spec\La)/\!\WD_{F_z}\!\big)$.

\subsection{}\label{ss:cohomologyformula}
Finally, we prove Theorem E. Recall from \ref{ss:localBunG} the natural open embedding $i_1:*/\ul{G(F_z)}\hra\Bun_{G,F_z}$, so that taking the product over all $z$ in $Z$ yields an open embedding $i_1:*/\ul{G(F_Z)}\hra\textstyle\prod_{z\in Z}\Bun_{G,F_z}$.
\begin{thm*}
The image of $\pi_{\infty Z,!}\cS_{\dot{V}}$ in
\begin{align*}
D(*/\ul{G(F_Z)},\La)\otimes\bigotimes_{z\in Z}D_{\qcoh}\big((\Spec\La)/\!\WD_{F_z}\!\big)^{\otimes I_z}
\end{align*}
is naturally isomorphic to $i^*_1(T_V(\loc_{Z,!}\!\La))$ as modules for $\bigotimes_u\End_{G(F_u)}(\cInd^{G(F_u)}_{G(\cO_u)}\La)$.
\end{thm*}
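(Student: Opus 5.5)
We deduce the formula from the fiber product square of Theorem \ref{ss:fiberproductconj}, specialized to infinite level at $Z$, by computing $i_1^*(T_V(\loc_{Z,!}\La))$ through base change. Take $N=mZ$ and let $m\to\infty$. Restricted to the legs lying over $\prod_{i\in I}\Spa\bC_{z_i}$, the cartesian square from the proof of Proposition \ref{ss:localizationproperties} identifies
\begin{align*}
\Big[\varprojlim_{m\geq1}(\Sht^I_{G,\dot V,mZ}|_{\prod_{i\in I}\Spa\bC_{z_i}})^\Diamond\Big]\Big/\ul{G(F_Z)}
\end{align*}
with the fiber product of $\loc_Z$ and the product over $z$ of the local Hecke correspondences $\big[\varprojlim_m\LocSht^{I_z}_{G,V_z,mz}\big]/\ul{G(F_z)}$. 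Via \ref{ss:BdRgrassmannian}, the latter are the $B_{\dR}$-Grassmannian Hecke stacks over $*/\ul{G(F_z)}$, i.e.\ the restrictions along $i_1$ of the correspondences $\Bun_{G,F_z}\leftarrow\mathrm{Hck}_{G,F_z}\to\Bun_{G,F_z}\times(\Div^1_z)^{I_z}$ that define $T_{V_z}$. By proper base change and the projection formula, $i_1^*T_V(\loc_{Z,!}\La)$ is then the $!$-pushforward to $*/\ul{G(F_Z)}\times\prod_{i\in I}\Spa\bC_{z_i}$ of the pullback of the Satake kernel $\cS_{V}$ along $\pi_{\HT}$. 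Under the geometric Satake compatibility for $B_{\dR}$-Grassmannians versus global affine Grassmannians (local models), this pullback is the analytification of the intersection cohomology sheaf $\cS_{\dot V}$ on the global shtuka stack.

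The next step converts this motivic/v-stack pushforward into the classical $\ell$-adic pushforward $\pi_{\infty Z,!}\cS_{\dot V}$. For this we use the diagram of \ref{ss:ladicrealizationglobalshtukas}. Its compatibility of $\Ups$, $r_\ell$, and $\rho$ with $!$-pushforward along $\pi_N$, applied at each finite level $mZ$ and passed to the colimit in $m$, identifies the two sides after restricting to $\prod_{i\in I}\Spa\bC_{z_i}$. Theorem \ref{ss:EGGL} places $\pi_{\infty Z,!}\cS_{\dot V}$ in the Frobenius-equivariant external tensor product of ind-lisse sheaves. Applying the functors of \ref{ss:choices} and the compatible identifications $D(\Div^1_z,\La)\cong D_{\qcoh}((\Spec\La)/\WD_{F_z})$, we match the $\Frob$-equivariance on the global side with the $\Div^1_z=\Spd\breve F_z/\Frob^{\bZ}$ structure on the local Hecke side. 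Concretely, the inertia action corresponds via Lemma \ref{ss:WeilDeligne} and $\Psi_z$, and the partial Frobenii correspond to descent from $\Spd\breve F_z$ to $\Div^1_z$.

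Finally, we check the compatibility with the Hecke algebras $\bigotimes_u\End_{G(F_u)}(\cInd\La)$. Both actions arise from Lemma \ref{ss:derivedHeckeformalism}, applied to the squares of \ref{ss:derivedHeckeaction} and its shtuka analogue. The fiber product square of Theorem \ref{ss:fiberproductconj}, with level at $Q$ included via the $*/\ul{K_{N\cap U}}$ factor, is compatible with the maps $*/\ul{G(\cO_Q)}\to*/\ul{G(F_Q)}$, so the two module structures are intertwined by base change.

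The main obstacle is the second step: the comparison between motivic sheaves on v-stacks and classical $\ell$-adic sheaves on the algebraic shtuka stacks, including the non-completed $\La=L,\cO_L$ cases. A second obstacle is identifying the Frobenius/Weil-Deligne equivariance structures coherently, and not merely on objects. Here I would rely on the Appendix \ref{s:berkovichmotives} tools and on the overconvergence of all sheaves involved, first treating $\La=\bF_\la$, where $D_{\mot}$ reduces to $D_{\et}$, and then bootstrapping.
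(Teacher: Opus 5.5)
Your proposal follows essentially the same route as the paper: you use the cartesian squares from Theorem \ref{ss:fiberproductconj} and the Hecke stack, apply proper base change to write $i_1^*T_V(\loc_{Z,!}\!\La)$ as a motivic pushforward, compare through the Zariski-constructible diagram of \ref{ss:ladicrealizationglobalshtukas} together with Theorem \ref{ss:EGGL}, identify the result via $\Psi_z$ and Lemma \ref{ss:WeilDeligne}, and recover the partial Frobenii by working over $\Spd\breve{F}_z$. Two points in your sketch need adjusting: the paper first uses Theorem \ref{ss:EGGL} to restrict $V_z$ to the diagonal, so that each $I_z$ is a singleton and only single copies of $\Spa\bC_z$ appear, which is where Propositions \ref{ss:controlSpaC} and \ref{ss:nearbycycles} apply; and it treats all $\La$ at once through the Zariski-constructible lift, matching $\cS_V$ with $\cS_{\dot{V}}$ only after $\ell$-adic realization (via the proof of \cite[Lemma 6.4]{LH23}), rather than bootstrapping from $\bF_\la$, which would by itself only yield $\ell$-completed information.
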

\begin{proof}
  After pulling back along $\Spec\La\ra(\Spec\La)/\!\WD_{F_z}$, we claim that the images of $\pi_{\infty Z,!}\cS_{\dot{V}}$ and $i_1^*(T_V(\loc_{Z,!}\!\La))$ in $D(*/\ul{G(F_Z)},\La)$ are naturally isomorphic as modules for $\bigotimes_u\End_{G(F_u)}(\cInd^{G(F_u)}_{G(\cO_u)}\La)$. To see this, first let us write
  \begin{align*}
    \Sht\coloneqq\Big[\displaystyle\varprojlim_{m\geq1}\Sht^I_{G,\dot{V},mZ}|_{(U\ssm Q)_{\ov\bF_q}^I}\Big]\Big/\ul{G(F_Z)}
  \end{align*}
  for convenience.

Next, Theorem \ref{ss:EGGL} enables us, after replacing $V_z$ with its restriction to the diagonal, to assume that $I_z$ is a singleton for all $z$ in $Z$. Consider the diagram
  \begin{align*}
    \xymatrix{(\Sht|_{\prod_{z\in Z}\Spa\bC_z})^\Diamond\ar[r]\ar[dd] &\displaystyle\Big[\prod_{z\in Z}\Gr_{G,V_z}|_{\Spa\bC_z}\Big]\Big/\ul{G(F_Z)}\ar@{^{(}->}[d]\ar[r] & \ast/\ul{G(F_Z)}\times\displaystyle\prod_{z\in Z}\Spa\bC_z\ar@{^{(}->}[d]^-{i_1\times\id}\\
    & \displaystyle\prod_{z\in Z}\Hck_{G,V_z}|_{\Spa\bC_z}\ar[d]^-{h_1}\ar[r]^-{h_2} & \displaystyle\prod_{z\in Z}\Bun_{G,F_z}\times\prod_{z\in Z}\Spa\bC_z\\
  \Bun_{G,U}\ar[r]^-{\loc_Z} & \displaystyle\prod_{z\in Z}\Bun_{G,F_z},}
  \end{align*}
  where $\Hck_{G,V_z}|_{\Spa\bC_z}$ denotes the Hecke stack over $\Spa\bC_z$ as in \cite[p.~16]{FS21}, and $h_1$ and $h_2$ denote the morphisms as in \cite[p.~317]{FS21}. The left square is cartesian by \ref{ss:BdRgrassmannian} and Theorem \ref{ss:fiberproductconj}, and the right square is cartesian by \cite[p.~97]{FS21}. Note that the composition in the top row equals the restriction of $\pi_{\infty Z}^\Diamond$ to $\textstyle\prod_{z\in Z}\Spa\bC_z$.

Proper base change and the above diagram imply that the image of $i^*_1(T_V(\loc_{Z,!}\!\La))$ in $D(*/\ul{G(F_Z)},\La)$ has the following description. For all $z$ in $Z$, write $\cS_{V_z}$ for the object in $D_{\mot}(\Hck_{G,V_z}|_{\Spa\bC_z})$ associated with $V_z$ via motivic geometric Satake as in \cite[Theorem 5.7]{Sch25}, and write $\cS_V$ for the pullback of $\boxtimes_{z\in Z}\cS_{V_z}$ to $(\Sht|_{\prod_{z\in Z}\Spa\bC_z})^\Diamond$. The discussion on \cite[p.~18]{Sch25} shows that $\pi_{\infty Z,!}^\Diamond\cS_V$ lies in the full subcategory
\begin{align*}
  D_{\mot}(*/\ul{G(F_Z)},\La)\otimes_{D_{\mot}(*)}\bigotimes_{z\in Z}D_{\mot}(\Spa\bC_z)\hra D_{\mot}\big(\!*\!/\ul{G(F_Z)}\times\prod_{z\in Z}\Spa\bC_z\big),
\end{align*}
where $\bigotimes_{z\in Z}$ is over $D_{\mot}(*)$, and Proposition \ref{ss:nearbycycles} implies that applying 
\begin{align*}
\bigotimes_{z\in Z}\Psi_z:D_{\mot}(*/\ul{G(F_Z)})\otimes_{D_{\mot}(*)}\bigotimes_{z\in Z}D_{\mot}(\Spa\bC_z)\ra D_{\mot}(*/\ul{G(F_Z)},\La)
\end{align*}
yields an object whose image under
\begin{align*}
D_{\mot}(*/\ul{G(F_Z)})\ra D(*/\ul{G(F_Z)},\La)
\end{align*}
is precisely the image of $i_1^*(T_V(\loc_{Z,!}\!\La))$ in $D(*/\ul{G(F_Z)},\La)$.

We lift this to a Zariski-constructible version as follows. Note that $\cS_V$ lies in 
\begin{align*}
D_{\mot}^{U^I_{\ov\bF_q}\bs\prod_{z\in Z}\Spa\bC_z}\big((\Sht|_{\prod_{z\in Z}\Spa\bC_z})^\Diamond\big)\subseteq D_{\mot}\big((\Sht|_{\prod_{z\in Z}\Spa\bC_z})^\Diamond\big),
\end{align*}
so \ref{ss:ladicrealizationglobalshtukas} yields an object 
\begin{gather*}\label{eq:pre-Ups}
\pi_{\infty Z,!}^\Diamond\cS_{V}\in D_{\mot}(*/\ul{G(F_Z)})\otimes_{D_{\mot}(*)}\Ind D_{\mot}^{U^I_{\ov\bF_q}\bs\prod_{z\in Z}\Spa\bC_z}\big(\prod_{z\in Z}\Spa\bC_z\big)\tag{$\vartriangle$}
\end{gather*}
whose image under $\Ups$ is identified with the previously mentioned 
\begin{gather*}
\pi^\Diamond_{\infty Z,!}\cS_{V}\in D_{\mot}(*/\ul{G(F_Z)})\otimes_{D_{\mot}(*)}D_{\mot}\big(\prod_{z\in Z}\Spa\bC_z\big).
\end{gather*}
The Zariski-constructible version enables us to compare with $\ell$-adic realizations. More precisely, the proof of \cite[Lemma 6.4]{LH23} identifies $r_{\ell,(\Sht|_{\prod_{z\in Z}\Spa\bC_z})^\Diamond}(\cS_V)$ with $\rho(\cS_{\dot{V}})$, so \ref{ss:ladicrealizationglobalshtukas} identifies the image of (\ref{eq:pre-Ups}) under
\begin{align*}
  r_{\ell,\prod_{z\in Z}\Spa\bC_z}\!:\,&D_{\mot}(*/\ul{G(F_Z)})\otimes_{D_{\mot}(*)}\Ind D_{\mot}^{U^I_{\ov\bF_q}\bs\prod_{z\in Z}\Spa\bC_z}\big(\prod_{z\in Z}\Spa\bC_z\big)\\
  &\ra D(*/\ul{G(F_Z)},\La)\otimes\Ind D^{U^I_{\ov\bF_q}\bs\prod_{z\in Z}\Spa\bC_z}_{\et}\big(\prod_{z\in Z}\Spa\bC_z,\La\big)
\end{align*}
with $\rho(\pi_{Z\infty,!}\cS_{\dot{V}})$. Theorem \ref{ss:EGGL} indicates that $\pi_{Z\infty,!}\cS_{\dot{V}}$ lies in the full subcategory
\begin{align*}
D(*/\ul{G(F_Z)},\La)\otimes D(U_{\ov\bF_q},\La)^{\otimes I}\hra D(*/\ul{G(F_Z)},\La)\otimes D(U^I_{\ov\bF_q},\La).
\end{align*}

Finally, consider the diagram
\begin{align*}
\xymatrixcolsep{1.5cm}
  \xymatrix{\displaystyle\bigotimes_{z\in Z}\Ind D_{\mot}^{U_{\ov\bF_q}\bs\Spa\bC_z}(\Spa\bC_z)\ar[r]^-{\bigotimes_{z\in Z}\Ups}\ar[d]^-{\bigotimes_{z\in Z}r_{\ell,\Spa\bC_z}} & \displaystyle\bigotimes_{z\in Z}D_{\mot}(\Spa\bC_z)\\
  \displaystyle\bigotimes_{z\in Z}\Ind D_{\et}^{U_{\ov\bF_q}\bs\Spa\bC_z}(\Spa\bC_z,\La) &\ar[l]_-{\bigotimes_{z\in Z}\rho} D(U_{\ov\bF_q},\La)^{\otimes I},}
\end{align*}
where the top arrow is an equivalence by Proposition \ref{ss:controlSpaC} and \cite[Proposition 10.1]{Sch24}, and the left arrow is identified with $\bigotimes_{z\in Z}\Psi_z$ by Proposition \ref{ss:nearbycycles}. Then the claim follows from the observation that external tensor product induces a morphism from this diagram to
\begin{align*}
\xymatrixcolsep{1.5cm}
  \xymatrix{\Ind D_{\mot}^{U_{\ov\bF_q}^I\bs\prod_{z\in Z}\Spa\bC_z}\big(\displaystyle\prod_{z\in Z}\Spa\bC_z\big)\ar[r]^-{\Ups}\ar[d]^-{r_{\ell,\prod_{z\in Z}\Spa\bC_z}} & D_{\mot}\big(\displaystyle\prod_{z\in Z}\Spa\bC_z\big)\\
  \Ind D_{\et}^{U_{\ov\bF_q}\bs\Spa\bC_z}\big(\displaystyle\prod_{z\in Z}\Spa\bC_z,\La\big) &\ar[l]_-{\rho} D(U_{\ov\bF_q}^I,\La).}
\end{align*}
By instead working over $\prod_{z\in Z}\Spd\breve{F}_z$ and keeping track of the partial Frobenii, we obtain the stated result.
\end{proof}

\subsection{}\label{ss:gl_fs}
As an application of Theorem \ref{ss:cohomologyformula}, we can give a quick proof that the excursion operators constructed by V. Lafforgue and Xue agree with those constructed by Fargues--Scholze. We proved this originally in \cite[Theorem 6.13]{LH23}.

Recall from Example \ref{exmp:BunGC} the set $\ker^1(F,G)$. For any finite set $I$, object $\dot{V}$ in $\Rep_\La(\prescript{L}{}G)^I$, morphism $x:\mathbf1\ra\dot{V}|_{\De(\wh{G})}$, morphism $\xi:\dot{V}|_{\De(\wh{G})}\ra\mathbf1$, and element $\ga_\bullet$ in $W_F$, recall that work of V. Lafforgue and Xue defines a $\La$-linear endomorphism $S_{I,\dot{V},x,\xi,\ga_\bullet}$ of $C_c^\infty(\coprod_{\al\in\ker^1(F,G)}G_\al(F)\bs G(\bA),\La)$ \cite[Proposition 2.2.1]{Xue20b}.

Write $\fz(G(F_v),\La)$ for the Bernstein center of $G(F_v)$ over $\La$, and write $V$ for the restriction of $\dot{V}$ to $(\prescript{L}{}G_v)^I$. When $\ga_\bullet$ lies in $W_{F_v}$, recall that work of Fargues--Scholze defines an element $\fz_{I,V,x,\xi,\ga_\bullet}$ of $\fz(G(F_v),\La)$ \cite[Theorem VIII.4.1]{FS21}.
\begin{cor*}
The element $\fz_{I,V,x,\xi,\ga_\bullet}$ acts on $C_c^\infty(\coprod_{\al\in\ker^1(F,G)}G_\al(F)\bs G(\bA),\La)$ via $S_{I,\dot{V},x,\xi,\ga_\bullet}$.
\end{cor*}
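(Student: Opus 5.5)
The plan is to deduce the corollary from Theorem \ref{ss:cohomologyformula} in the unramified situation, with the place $v$ playing the role of $Z$. Fix $v$ and set $U=C\ssm\{v\}$, $Z=\{v\}$. All $I$ legs sit at $v$, so $I_v=I$.

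First I would identify the global side. By Xue's construction, $S_{I,\dot V,x,\xi,\ga_\bullet}$ is the composite
\begin{align*}
H_{\{0\}}\xrightarrow{x}H_{I,\dot V}\xrightarrow{\ga_\bullet}H_{I,\dot V}\xrightarrow{\xi}H_{\{0\}}.
\end{align*}
Here $H_{I,\dot V}$ is the degree-zero cohomology of $\pi_{\infty Z,!}\cS_{\dot V}$ at infinite level at $v$. Each arrow is applied after restricting to the diagonal, and the action of $\ga_\bullet$ comes from Drinfeld's lemma. Remark \ref{rem:Xue} says we only need cohomology groups, so Xue's result replaces Theorem \ref{ss:EGGL}. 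For $I=\{0\}$ and trivial $\dot V$, Example \ref{exmp:BunGC} together with level structure identifies $H_{\{0\}}$ with $C_c^\infty(\coprod_{\al}G_\al(F)\bs G(\bA),\La)$. This identification is compatible with the $G(F_v)$-action and with the unramified Hecke algebra away from $v$.

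Next I would transport the construction to the local side. Since $\ga_\bullet\in W_{F_v}^I$, we may restrict to $W_{F_v}^I$. Theorem \ref{ss:cohomologyformula} then gives a natural isomorphism between the image of $\pi_{\infty Z,!}\cS_{\dot V}$ and $i_1^*T_V(\loc_{Z,!}\La)$. This isomorphism should be functorial in $(I,\dot V)$ and compatible with the $W_{F_v}^I$-actions. It should also respect the fusion isomorphisms $T_{V}\circ T_{W}\cong T_{V\otimes W}$ and the restriction to the diagonal, together with the maps induced by $x$ and $\xi$. Taking these compatibilities, Xue's composite above becomes the endomorphism of $i_1^*\loc_{Z,!}\La$ given by
\begin{align*}
\loc_{Z,!}\La\xrightarrow{x}T_{V}(\loc_{Z,!}\La)\xrightarrow{\ga_\bullet}T_V(\loc_{Z,!}\La)\xrightarrow{\xi}\loc_{Z,!}\La,
\end{align*}
followed by $i_1^*$. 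By Fargues--Scholze \cite[Theorem VIII.4.1]{FS21}, this composite is exactly the action of $\fz_{I,V,x,\xi,\ga_\bullet}$ on the object $\loc_{Z,!}\La$. The action arises from the Bernstein center acting on all of $D(\Bun_{G,F_v},\La)$, and that action commutes with $i_1^*$. Taking cohomology then gives the claim.

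The main obstacle is the naturality step. The proof of Theorem \ref{ss:cohomologyformula} produces a single isomorphism for fixed $V$, whereas we need the isomorphisms to be compatible with several pieces of structure:
\begin{enumerate}[$\bullet$]
\item the maps $x$ and $\xi$, equivalently compatibility with geometric Satake in $V$;
\item the diagonal restriction $I\to\{0\}$, i.e. fusion;
\item the $W_{F_v}^I$-actions, which come from partial Frobenii on the global side and from Weil-group equivariance of Hecke operators on the local side.
\end{enumerate}
I would first verify all of these at the level of the fiber-product diagram in the proof of Theorem \ref{ss:cohomologyformula}. Both sides are built from the same Hecke-stack correspondence via Theorem \ref{ss:fiberproductconj}, and the local-to-global compatibility of Satake sheaves comes from the proof of \cite[Lemma 6.4]{LH23}. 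Once these compatibilities hold, the rest is formal bookkeeping.
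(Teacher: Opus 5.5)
Your overall strategy is the paper's. You compare the two sides via Theorem \ref{ss:cohomologyformula}, and you use that the Fargues--Scholze excursion operator is natural with respect to the counit $i_{1,!}i_1^*\loc_{Z,!}\La\ra\loc_{Z,!}\La$. That naturality is exactly the paper's commutative diagram, combined with $i_1^*i_{1,!}\cong\mathrm{id}$ and $T_{\mathbf1}\cong\mathrm{id}$, and it is what your phrase ``commutes with $i_1^*$'' should mean.

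The gap is the choice $Z=\{v\}$, $U=C\ssm\{v\}$, for two reasons.
\begin{enumerate}
\item In this section Theorem \ref{ss:cohomologyformula} is only set up when $G$ is reductive over $U$. This fails for $C\ssm\{v\}$ whenever $G$ has bad reduction at another place.
\item More seriously, even when the theorem applies, it has no level structure at places of $U$. The shtukas $\Sht^I_{G,\dot V,mZ}$ have level only along $mZ$. Accordingly, $H^0(i_1^*\loc_{Z,!}\La)$ is $C_c^\infty(\coprod_\al G_\al(F)\bs G(\bA)/G(\bO_U),\La)$, the functions right-invariant under $\prod_{u\neq v}G(\cO_u)$. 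It is not all of $C_c^\infty(\coprod_\al G_\al(F)\bs G(\bA),\La)$.
\end{enumerate}
So your claim that Example \ref{exmp:BunGC} ``together with level structure'' identifies $H_{\{0\}}$ with the whole space is unjustified. As written, with $Z=\{v\}$ you only prove the corollary on the part that is spherical away from $v$.

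The repair is what the paper does.
\begin{enumerate}
\item Shrink $U$ so that $G$ is reductive over $U$ and $v\in Z$.
\item Take $I_z=\varnothing$ for $z\in Z\ssm\{v\}$. Then $T_V$ is the identity in those factors, and $\fz_{I,V,x,\xi,\ga_\bullet}$ acts through the $G(F_v)$-factor of $G(F_Z)$.
\item Obtain the equality on the $G(F_v)$-stable subspace $C_c^\infty(\coprod_\al G_\al(F)\bs G(\bA)/G(\bO_U),\La)$.
\item Take the union over $Z$. This uses that both the Bernstein-center action and Xue's operators are compatible with these inclusions as $U$ shrinks.
\end{enumerate}
The naturality issues you list are in $x$, $\xi$, fusion and the $W_{F_v}^I$-actions. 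The paper uses them only implicitly, through the naturality of Theorem \ref{ss:cohomologyformula} together with Xue's result on cohomology groups. Flagging them is reasonable, but they are not where your argument breaks.
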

\begin{proof}
After shrinking $U$, we can assume that $v$ lies in $Z$. Take $I_z$ to be empty for all $z$ in $Z\ssm v$, so that $I_v=I$. The counit $i_{1,!}i^*_1\ra\id$ induces a commutative diagram
\begin{align*}
  \hspace{-.2cm}\xymatrix{i^*_1T_{\mathbf1}i_{1,!}i^*_1\!\loc_{Z,!}\!\La\ar[r]^-x\ar[d] & i^*_1T_{V}i_{1,!}i^*_1\!\loc_{Z,!}\!\La\ar[r]^-{\ga_\bullet}\ar[d] & i^*_1T_{V}i_{1,!}i^*_1\!\loc_{Z,!}\!\La\ar[r]^-\xi\ar[d] & i^*_1T_{\mathbf1}i_{1,!}i^*_1\!\loc_{Z,!}\!\La\ar[d]\\
  i^*_1T_{\mathbf1}\!\loc_{Z,!}\!\La\ar[r]^-x & i^*_1T_{V}\!\loc_{Z,!}\!\La\ar[r]^-{\ga_\bullet} & i^*_1T_{V}\!\loc_{Z,!}\!\La\ar[r]^-\xi & i^*_1T_{\mathbf1}\!\loc_{Z,!}\!\La.}
\end{align*}
Because $T_{\mathbf1}$ is naturally isomorphic to the identity, the left and right arrows are isomorphisms, and the corners are identified with $i^*_1\!\loc_{Z,!}\!\La$. By using Lemma \ref{ss:BunGpullback} and arguing as in Example \ref{exmp:BunGC}, we see that $H^0(i^*_1\!\loc_{Z,!}\!\La)$ is naturally isomorphic to $C_c^\infty(\coprod_{\al\in\ker^1(F,G)}G_\al(F)\bs G(\bA)/G(\bO_U),\La)$. After applying $H^0(-)$ to the above diagram, the top arrow equals the action of $\fz_{I,V,x,\xi,\ga_\bullet}$ by construction, and the bottom arrow equals $S_{I,\dot{V},x,\xi,\ga_\bullet}$ by Theorem \ref{ss:cohomologyformula}. This indicates that the restrictions of $\fz_{I,V,x,\xi,\ga_\bullet}$ and $S_{I,\dot{V},x,\xi,\ga_\bullet}$ to $C_c^\infty(\coprod_{\al\in\ker^1(F,G)}G_\al(F)\bs G(\bA)/G(\bO_U),\La)$ coincide. Finally, taking $\bigcup_Z$ yields the desired result.
\end{proof}
\begin{rems}\label{rems:gl_fs}\hfill
  \begin{enumerate}[1)]
  \item The proof of Corollary \ref{ss:gl_fs} only relies on Theorem \ref{ss:cohomologyformula} after taking cohomology groups. In particular, one does not need to appeal to Theorem \ref{ss:EGGL}; see Remark \ref{rem:Xue}.
  \item The main ingredient for the representation-theoretic results in \cite{LH23} is \cite[Theorem 6.13]{LH23}. Therefore one can use Corollary \ref{ss:gl_fs} to prove all the representation-theoretic results in \cite{LH23}; see the proof of \cite[Theorem 6.16]{LH23}.
  \end{enumerate}
\end{rems}

\section{Relation with Langlands duality}\label{s:langlands}
In this section, we discuss the role that $\Bun_{G,F}$ plays in the Langlands correspondence. First, we recall the moduli of Galois representations and their derived categories of ind-coherent sheaves, which are the main players on the Galois side. By restricting Galois representations to decomposition groups, we obtain a map $\lres_Z$ analogous to $\loc_Z$; we study the pushforward of the coherent dualizing complex $\om$ along $\lres_Z$.

Next, we recall the spectral action of Fargues--Scholze, which enables us to state their categorical Langlands conjecture. We can then state Conjecture F, and we explain how Conjecture F implies conjectures of Arinkin--Gaitsgory--Kazhdan--Raskin--Rozenblyum--Varshavsky and Zhu. Finally, we prove Conjecture F when $G$ is commutative; this is Theorem G.

\subsection{}\label{ss:LS}
We start by recalling the stacks of Galois representations that we will use. \textbf{For the rest of this paper, assume that $\ell\geq3$.}\footnote{This arises from the fact that de Jong's conjecture \cite[Conjecture 1.1]{deJ01}, which is used to prove basic properties of $\LS_{\prescript{L}{}G,U}$ \cite[Theorem 3.29]{Zhu20}, is only known when $\ell\geq3$ \cite{Ga07}.} Write $\LS_{\prescript{L}{}{G},U}$ for the derived algebraic stack over $\La$ of continuous $\prescript{L}{}{G}$-valued representations of $W_U$
\begin{enumerate}[$\bullet$]
\item as in \cite[(3.24)]{Zhu20} when $Z$ is nonempty, or
\item as in \cite[Remark 3.36]{Zhu20} when $Z$ is empty,
\end{enumerate}
using our choice of $\sqrt{q}$ to replace $\prescript{c}{}{G}$ with $\prescript{L}{}{G}$ \cite[Remark 3.1]{Zhu20}. When $Z$ is nonempty, write $\LS^{\square}_{\prescript{L}{}{G},U}$ for its framed version as in \cite[(3.24)]{Zhu20}, which the proof of \cite[Theorem 3.29]{Zhu20} shows is a disjoint union of derived affine schemes almost of finite type over $\La$.

Similarly, for all closed points $v$ of $C$, write $\LS_{\prescript{L}{}{G},F_v}$ for the algebraic stack over $\La$ of continuous $\prescript{L}{}{G}_v$-valued representations of $W_{F_v}$ as in \cite[(3.3)]{Zhu20}, and write $\LS_{\prescript{L}{}{G},F_v}^{\square}$ for its framed version as in \cite[(3.3)]{Zhu20}. Then \cite[Corollary 4.5]{Sch25} implies that $\LS_{\prescript{L}{}{G},F_v}$ is the base change to $\La$ of the algebraic stack from \cite[Definition 4.3]{Sch25}, and \cite[Theorem VIII.1.3]{FS21} shows that $\LS_{\prescript{L}{}{G},F_v}^{\square}$ is a disjoint union of affine schemes of finite type over $\La$.

For all closed points $u$ of $U$, write $\LS_{\prescript{L}{}{G},\cO_u}\subseteq\LS_{\prescript{L}{}{G},F_u}$ for the closed substack of unramified $\prescript{L}{}{G}_v$-valued representations of $W_{F_v}$ as in \cite[p.~82]{Zhu20}. Write $\LS_{\prescript{L}{}{G},\cO_u}^{\square}$ for its framed version, which is naturally isomorphic to $\wh{G}$.

\subsection{}
We will consider ind-coherent sheaves on these stacks. Namely, recall the $\La$-linear ind-coherent $6$-functor formalism $\IndCoh(-)$ from \cite[Lecture VIII.7]{Sch22} on
\begin{align*}
\{\mbox{derived qcqs schemes almost of finite type over }\La\},
\end{align*}
where we use the notation of Gaitsgory--Rozenblyum \cite[p.~273]{GR17a} for the associated functors instead of the notation of \cite[Definition A.5.6]{Man22}. Then \cite[Theorem 3.4.11]{HM24} extends this to a $6$-functor formalism on
\begin{align*}
\{\mbox{derived algebraic stacks over }\La\}.
\end{align*}
Let $D$ be one of $\{U\neq C,F_v\}$. Then \ref{ss:LS} implies that the value of $\IndCoh$ on $\LS_{\prescript{L}{}{G},D}$ is naturally equivalent to $\Ind D_{\coh}^{\qc}(\LS_{\prescript{L}{}{G},D})$.

\subsection{}\label{ss:cohversusperf}
When restricted to bounded below objects, there is no difference between ind-perfect and ind-coherent objects on $\LS_{\prescript{L}{}G,D}$. More precisely, applying $\Ind$ to the inclusion $D_{\perf}^{\qc}(\LS_{\prescript{L}{}{G},D})\subseteq D_{\coh}^{\qc}(\LS_{\prescript{L}{}{G},D})$ yields a fully faithful functor
\begin{align*}
\Xi:\Ind D_{\perf}^{\qc}(\LS_{\prescript{L}{}{G},D})\hra\Ind D_{\coh}^{\qc}(\LS_{\prescript{L}{}{G},D}).
\end{align*}
Write $\Psi:\Ind D_{\coh}^{\qc}(\LS_{\prescript{L}{}{G},D})\ra\Ind D_{\perf}^{\qc}(\LS_{\prescript{L}{}{G},D})$ for its right adjoint. Then arguing as in the proof of \cite[Proposition 1.2.4]{Gai13} shows that $\Psi$ restricts to an equivalence
\begin{align*}
\bigcup_{n\in\bZ}\big[\Ind D_{\coh}^{\qc}(\LS_{\prescript{L}{}{G},D})\big]^{\geq n}\ra^\sim\bigcup_{n\in\bZ}\big[\Ind D_{\perf}^{\qc}(\LS_{\prescript{L}{}{G},D})\big]^{\geq n},
\end{align*}
which we use to identify $D^{\qc}_{\coh}(\LS_{\prescript{L}{}{G},D})$ with its image under $\Psi$.

\subsection{}
Restriction induces the following morphisms between our stacks of Galois representations. For all closed points $u$ of $U$ and $z$ in $Z$, write
\begin{align*}
\lres_u:\LS_{\prescript{L}{}{G},U}\ra\LS_{\prescript{L}{}{G},\cO_u}\mbox{ and }\lres_z:\LS_{\prescript{L}{}{G},U}\ra\LS_{\prescript{L}{}{G},F_z}
\end{align*}
for the resulting morphisms over $\La$. Write $\lres_Z:\LS_{\prescript{L}{}{G},U}\ra\textstyle\prod_{z\in Z}\LS_{\prescript{L}{}{G},F_z}$ for the morphism $(\lres_z)_{z\in Z}$, where all products are over $\La$. By \ref{ss:LS}, the value of $\IndCoh$ on $\prod_{z\in Z}\LS_{\prescript{L}{}{G},F_z}$ is naturally equivalent to
\begin{align*}
\bigotimes_{z\in Z}\Ind D_{\coh}^{\qc}(\LS_{\prescript{L}{}{G},F_z}).
\end{align*}

\subsection{}
For places in $U$, we have the following (derived) \emph{spectral} Hecke algebra action on $\lres_{Z,*}(\om_{\LS_{\prescript{L}{}G,U}})$. Let $Q$ be a finite nonempty set of closed points of $U$, and write
\begin{align*}
\lres_Q:\LS_{\prescript{L}{}{G},U}\ra\textstyle\prod_{u\in Q}\LS_{\prescript{L}{}{G},\cO_u}
\end{align*}
for the morphism $(\lres_u)_{u\in U}$. Then \cite[Lemma 3.34]{Zhu20} implies that the square
\begin{align*}
  \xymatrixcolsep{1.5cm}
  \xymatrix{\LS_{\prescript{L}{}{G},U}\ar[r]^-{(\lres_Q,\lres_Z)}\ar[d] & \displaystyle\big(\prod_{u\in Q}\LS_{\prescript{L}{}{G},\cO_u}\big)\times\big(\prod_{z\in Z}\LS_{\prescript{L}{}{G},F_z}\big)\ar[d]\\
 \LS_{\prescript{L}{}{G},U\ssm Q}\ar[r] & \big(\displaystyle\prod_{u\in Q}\LS_{\prescript{L}{}{G},F_u}\big)\times\big(\prod_{z\in Z}\LS_{\prescript{L}{}{G},F_z}\big)& }
\end{align*}
is derived cartesian. Moreover, \ref{ss:LS} implies that bottom arrow is $*$-able, so the same holds for the top arrow. Therefore Lemma \ref{ss:derivedHeckeformalism} endows $\lres_{Z,*}(\om_{\LS_{\prescript{L}{}{G},U}})$ in $\bigotimes_{z\in Z}\Ind D_{\coh}^{\qc}(\LS_{\prescript{L}{}{G},F_z})$ with the structure of a module for the $\bE_1$-algebra
\begin{align*}
\End_{D_{\coh}(\prod_{u\in Q}\LS_{\prescript{L}{}{G},F_u})}(\sO_{\prod_{u\in Q}\LS_{\prescript{L}{}{G},\cO_u}}) = \bigotimes_{u\in Q}\End_{D_{\coh}(\LS_{\prescript{L}{}G,F_u})}(\sO_{\LS_{\prescript{L}{}{G},\cO_u}})
\end{align*}
over $\La$. Taking the colimit over $Q$ shows that $\lres_{Z,*}(\om_{\LS_{\prescript{L}{}{G},U}})$ is naturally a module for the $\bE_1$-algebra $\bigotimes_u\End_{D_{\coh}(\LS_{\prescript{L}{}G,F_u})}(\LS_{\prescript{L}{}{G},\cO_u})$, where $u$ runs over closed points of $U$.

\subsection{}
Write $D_{\coh}^{\qc}(\LS_{\prescript{L}{}{G},F_v})_{\Nilp}\subseteq D_{\coh}^{\qc}(\LS_{\prescript{L}{}{G},F_v})$ for the full subcategory of objects with nilpotent singular support as in \cite[VIII.2.2.2]{FS21}. It is $\Ind D_{\coh}^{\qc}(\LS_{\prescript{L}{}{G},F_v})_{\Nilp}$ that appears in the categorical local Langlands conjecture, so we need to check that $\lres_{Z,*}(\LS_{\prescript{L}{}G,U})$ lies in this subcategory.
\begin{prop*}
  The object $\lres_{Z,*}(\om_{\LS_{\prescript{L}{}{G},U}})$ in $\bigotimes_{z\in Z}\Ind D_{\coh}^{\qc}(\LS_{\prescript{L}{}{G},F_z})$ lies in
  \begin{align*}
    \bigotimes_{z\in Z}\Ind D_{\coh}^{\qc}(\LS_{\prescript{L}{}{G},F_z})_{\Nilp}.
  \end{align*}
\end{prop*}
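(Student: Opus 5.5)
The plan is to bound singular support using the general functoriality of singular support for ind-coherent sheaves along $\lres_Z$, and then prove nilpotence by a Poitou--Tate-type argument combined with Frobenius-eigenvalue scaling. I first reduce to the framed versions. Both $\LS^{\square}_{\prescript{L}{}{G},U}$ and $\prod_{z\in Z}\LS^{\square}_{\prescript{L}{}{G},F_z}$ are disjoint unions of (derived) affine schemes almost of finite type over $\La$ by \ref{ss:LS}. Singular support and the condition $\Nilp$ are smooth-local and compatible with the tensor product decomposition of $\IndCoh$ on the product. So it suffices to show that, after smooth pullback, every point of the singular support of $\lres_{Z,*}(\om_{\LS_{\prescript{L}{}{G},U}})$ lies in $\Nilp$. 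When $Z$ is empty there is nothing to prove, so I assume $Z$ is nonempty.

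\textbf{Step 1: functoriality.} I will use the Arinkin--Gaitsgory estimate for pushforward. For a morphism $f:X\ra Y$ of quasi-smooth derived stacks, with induced map $\mathrm{Sing}(f):X\times_Y\mathrm{Sing}(Y)\ra\mathrm{Sing}(X)$ on $\mathrm{Sing}=\Spec\Sym H^1(\bT)$, one has
\[
\mathrm{SingSupp}(f_*\cF)\subseteq\mathrm{pr}_Y\big(\mathrm{Sing}(f)^{-1}(\mathrm{SingSupp}\,\cF)\big).
\]
This needs quasi-smoothness of $\LS_{\prescript{L}{}{G},U}$ for $Z\neq\varnothing$. I expect this to follow from Zhu's construction, since the relevant obstruction group is $H^2(W_U,\mathrm{ad})$, which is the only nonvanishing higher term. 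Granting it, $\om_{\LS_{\prescript{L}{}{G},U}}$ is a shifted line bundle, so its singular support is the zero section. Hence $\mathrm{SingSupp}(\lres_{Z,*}\om)$ consists of pairs $(\rho,(\xi_z)_z)$ with $\xi_z\in\mathrm{Sing}(\LS_{\prescript{L}{}{G},F_z})_{\rho|_{W_{F_z}}}$ whose image in $\mathrm{Sing}(\LS_{\prescript{L}{}{G},U})_\rho$ vanishes. If quasi-smoothness fails, I would instead work with the filtration by truncations and use the same estimate for coherent pieces.

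\textbf{Step 2: identify the covectors.} At a point $\rho$, local duality identifies $\mathrm{Sing}(\LS_{\prescript{L}{}{G},F_z})_{\rho}$ with $H^0(W_{F_z},\wh{\fg}^*(1))$. The global fiber is the dual of $H^2(W_U,\mathrm{ad}\rho)$, which by Poitou--Tate duality is a quotient of $H^0_c$-type data. The map $\mathrm{Sing}(\lres_Z)$ is dual to the restriction $H^2(W_U,\mathrm{ad})\leftarrow\bigoplus_z H^2(W_{F_z},\mathrm{ad})$-connecting map. The Poitou--Tate exact sequence then shows that $(\xi_z)$ maps to zero exactly when it comes from a global class $\xi\in H^0(W_U,\wh{\fg}^*(1))$ restricted to each $W_{F_z}$. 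This identification is the main obstacle: the derived stack $\LS_{\prescript{L}{}{G},U}$ is defined via Zhu's pro-étale/continuous cohomology, and both the cotangent complex computation and the duality must be matched with \cite{Zhu20}. I would handle this first using the framed presentation, where the tangent complex is explicitly the continuous cochains $C^\bullet(W_U,\mathrm{ad}\rho)[1]$.

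\textbf{Step 3: nilpotence.} Given a global $\xi$ fixed by $W_U$ acting through $\mathrm{Ad}^*\rho\otimes\chi_{\mathrm{cyc}}$, I pick any closed point $u\in U$. Then $\mathrm{Ad}^*(\rho(\Frob_u))\xi=q^{\deg u}\xi$ (up to the normalization sign). Hence every invariant polynomial $P$ on $\wh{\fg}^*$ of degree $d$ satisfies $P(\xi)=q^{d\deg u}P(\xi)$, so $P(\xi)=0$, and $\xi$ is nilpotent. Its restrictions $\xi_z$ are therefore nilpotent, which places the singular support inside $\Nilp$ and proves the proposition.
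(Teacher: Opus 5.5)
Your Steps 1--2 give a plausible upper bound for the singular support of the ind-coherent pushforward. The gap is in Step 3: it does not prove nilpotence when the coefficients have characteristic $\ell$. The paper allows $\La\in\{L,\cO_L,\bF_\la\}$, and over $\cO_L$ you must also handle points of characteristic $\ell$.

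Frobenius scaling only gives $(q^{d\deg u}-1)P(\xi)=0$. In characteristic $\ell$ one has $q^{d\deg u}=1$ for every $u$ as soon as $d$ is divisible by the order of $q$ in $\bF_\ell^\times$. In the extreme case $\ell\mid q-1$, the twist $(1)$ is trivial on $W_U$ modulo $\ell$, so your bound only says that $\xi$ is $\mathrm{Ad}^*\rho(W_U)$-invariant. For $G$ split and $\rho$ trivial, every $\xi\in\mathrm{Lie}(\wh{G})^*$ restricted diagonally to the places in $Z$ lies in your bound, including regular semisimple ones. In characteristic $0$ the same scaling argument applied to $H^0(W_{F_z},\mathrm{Lie}(\wh{G})^*(1))$ shows that $\Nilp$ imposes no condition at all. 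So the content of the proposition lies exactly in the case your argument does not cover.

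No choice of $u$ repairs this, because the bound is attained. Take $C=\bP^1$, $U=C\ssm\{\infty\}$, $T=\bG_m$ and $\La=\bF_\la$ with $\ell\mid q-1$. Then $W_U^{\mathrm{ab}}\cong F_\infty^\times/\bF_q^\times$. Hence $\lres_\infty$ is $B\bG_m$ times the closed immersion $\ul{\Hom}(F_\infty^\times/\bF_q^\times,\bG_m)\hookrightarrow\ul{\Hom}(F_\infty^\times,\bG_m)$. This immersion is the unit section of the non-reduced factor $\ul{\Hom}(\bF_q^\times,\bG_m)=\mu_{q-1}$, times the identity on the remaining factors. The honest ind-coherent pushforward of $\om\cong\sO$ is therefore a non-perfect coherent sheaf. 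Its singular support is the whole one-dimensional fiber along its support, whereas $\Nilp$ is the zero section for tori. So the statement cannot be proved by bounding singular supports of ind-coherent pushforwards.

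The statement has to be read through \ref{ss:cohversusperf}, and that is how the paper argues. It uses \cite[Theorem 3.29]{Zhu20} and \cite[Corollary 2.2.8]{AG15}, plus smooth descent from the framed stack, to see that $\om_{\LS_{\prescript{L}{}{G},U}}$ is a shifted line bundle. Thus $\lres_{Z,*}(\om_{\LS_{\prescript{L}{}{G},U}})$ is a bounded-below pushforward of a perfect object. Under the identification of bounded-below ind-coherent and ind-perfect objects from \ref{ss:cohversusperf}, it is ind-perfect, so it has singular support in the zero section, which lies in $\Nilp$. This argument is formal, uniform in $\La$, and involves no Galois cohomology. It is also the reading used in the proof of Theorem G, where $\lres_{Z,*}(\om_{\LS_{\prescript{L}{}{T},U}})$ is computed inside $\Ind D^{\qc}_{\perf}$.
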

\begin{proof}
  When $Z$ is empty, this condition is also empty. When $Z$ is nonempty, \cite[Theorem 3.29]{Zhu20} implies that $\LS_{\prescript{L}{}G,U}^{\square}$ is quasismooth over $\La$, so \cite[Corollary 2.2.8]{AG15}\footnote{While \cite{AG15} works over an algebraically closed field of characteristic zero, the proof of \cite[Corollary 2.2.8]{AG15} does not use this assumption.} shows that its dualizing complex $\om_{\LS_{\prescript{L}{}G,U}^{\square}}$ is a shift of a line bundle. By smooth descent, the same holds for $\om_{\LS_{\prescript{L}{}G,U}}$, so the result follows from \ref{ss:cohversusperf}.
\end{proof}

\subsection{}\label{ss:spectralaction}
The local automorphic and spectral sides are related by the following \emph{spectral action} of Fargues--Scholze. Write $Z_G$ for the center of $G$ over $F$, and \textbf{for the rest of this paper, assume that $\pi_0((Z_G)_{\ov{F}})$ is invertible in $\La$.} Then \cite[Theorem 6.1]{Sch25} and \ref{ss:LS} show that there is a natural $\La$-linear action of $D_{\perf}(\LS_{\prescript{L}{}{G},F_v})$ on $D(\Bun_{G,F_v},\La)^\om$. Write $V_v$ for the vector bundle on $\LS_{\prescript{L}{}{G},F_v}$ associated with the object $V_v$ in $\Rep_\La(\prescript{L}{}{G})^I$, so that by construction $V_v$ in $D_{\perf}(\LS_{\prescript{L}{}{G},F_v})$ acts on $D(\Bun_{G,F_v},\La)^\om$ via $T_{V_v}$.

By restricting to $D_{\perf}^{\qc}(\LS_{\prescript{L}{}{G},F_v})$ and applying $\Ind$, the above naturally extends to a $\La$-linear colimit-preserving action of $\Ind D_{\perf}^{\qc}(\LS_{\prescript{L}{}{G},F_v})$ on $D(\Bun_{G,F_v},\La)$.

\subsection{}\label{ss:FS'sconjecture}
We now use the spectral action to state the categorical local Langlands conjecture of Fargues--Scholze. \textbf{For the rest of this paper, assume that $G$ is quasisplit over $F$.} Then we can choose $B$ to be a Borel subgroup of $G$ over $F$ and $T$ to be a maximal subtorus of $B$ over $F$. Write $N$ for the unipotent radical of $B$, and let $\psi:N(\bA)\ra\La^\times$ be a continuous homomorphism trivial on $N(F)$ such that, for all closed points $v$ of $C$, its restriction $\psi_v$ to $N(F_v)$ is generic.

Consider the colimit-preserving functor $a_{\psi_v}:\Ind D_{\perf}^{\qc}(\LS_{\prescript{L}{}{G},F_v})\ra D(\Bun_{G,F_v},\La)$ given by acting as in \ref{ss:spectralaction} on $i_{1,!}\!\cInd_{N(F_v)}^{G(F_v)}\psi_v$. Denote its right adjoint by
\begin{align*}
c_{\psi_v}:D(\Bun_{G,F_v},\La)\ra\Ind D_{\perf}^{\qc}(\LS_{\prescript{L}{}{G},F_v}).
\end{align*}
\begin{conj*}[{\cite[Conjecture X.3.5]{FS21}}]
The functor $c_{\psi_v}$ restricts to an equivalence
  \begin{align*}
D(\Bun_{G,F_v},\La)^\om\ra^\sim D_{\coh}^{\qc}(\LS_{\prescript{L}{}{G},F_v})_{\Nilp}.
  \end{align*}
Consequently, applying $\Ind$ yields an equivalence
  \begin{align*}
\bL_{\psi_v}:D(\Bun_{G,F_v},\La)\ra^\sim\Ind D_{\coh}^{\qc}(\LS_{\prescript{L}{}{G},F_v})_{\Nilp}.
  \end{align*}
\end{conj*}
Conjecture \ref{ss:FS'sconjecture} is known when $G$ is a torus \cite[Theorem 6.4.1]{Zou24}.

\subsection{}\label{ss:Zhu'sconjecture}
To compare the actions of the (derived) automorphic and spectral Hecke algebras for places in $U$, we consider the following conjecture of Zhu.
\begin{conj*}[{\cite[Conjecture 4.11]{Zhu20}}]Let $u$ be a closed point of $U$.
  \begin{enumerate}[1)]
  \item Assume Conjecture \ref{ss:FS'sconjecture} for $u$. Then $\bL_{\psi_u}(i_{1,!}\!\cInd_{G(\cO_u)}^{G(F_u)}\La)\cong\sO_{\LS_{\prescript{L}{}{G},\cO_u}}$.
  \item There is a natural isomorphism of $\bE_1$-algebras over $\La$
\begin{align*}
\End_{G(F_u)}(\cInd_{G(\cO_u)}^{G(F_u)}\La)\cong\End_{D_{\coh}(\LS_{\prescript{L}{}{G},F_u})}(\sO_{\LS_{\prescript{L}{}{G},\cO_u}}).
\end{align*}
\end{enumerate}
\end{conj*}
Note that Conjecture \ref{ss:Zhu'sconjecture}.1) implies Conjecture \ref{ss:Zhu'sconjecture}.2). The proof of \cite[Theorem 6.4.1]{Zou24} shows that Conjecture \ref{ss:Zhu'sconjecture}.1) is known when $G$ is a torus; see \cite[Proposition 4.13]{Zhu20} for an explicit description of the resulting isomorphism in Conjecture \ref{ss:Zhu'sconjecture}.2).

Conjecture \ref{ss:Zhu'sconjecture}.2) is known when $\La$ equals $L$ \cite[Theorem 5.3 (2)]{Zhu25}.

\subsection{}\label{ss:globalconjecture}
Finally, we arrive at Conjecture F.
\begin{conj*}
  Assume Conjecture \ref{ss:FS'sconjecture} for all $z$ in $Z$, which yields an equivalence
  \begin{align*}
    \bigotimes_{z\in Z}\bL_{\psi_z}:\bigotimes_{z\in Z}D(\Bun_{G,F_z},\La)\ra^\sim\bigotimes_{z\in Z}\Ind D^{\qc}_{\coh}(\LS_{\prescript{L}{}G,F_z})_{\Nilp}.
  \end{align*}
  Assume Conjecture \ref{ss:Zhu'sconjecture}.2) for all closed points $u$ in $U$, which yields an isomorphism
  \begin{align*}
    \bigotimes_u\End_{G(F_u)}(\cInd_{G(\cO_u)}^{G(F_u)}\La)\cong\bigotimes_u\End_{D_{\coh}(\LS_{\prescript{L}{}{G},F_u})}(\sO_{\LS_{\prescript{L}{}{G},\cO_u}}).
  \end{align*}
  Then the $\bigotimes_u\End_{G(F_u)}(\cInd_{G(\cO_u)}^{G(F_u)}\La)$-module
  \begin{align*}
    \loc_{Z,!}\!\La\in\bigotimes_{z\in Z}D(\Bun_{G,F_z},\La)
  \end{align*}
  corresponds under $\bigotimes_{z\in Z}\bL_{\psi_z}$ to the $\bigotimes_u\End_{D_{\coh}(\LS_{\prescript{L}{}{G},F_u})}(\sO_{\LS_{\prescript{L}{}{G},\cO_u}})$-module
  \begin{align*}
    \lres_{Z,*}(\om_{\LS_{\prescript{L}{}{G},U}})\in\bigotimes_{z\in Z}\Ind D_{\coh}^{\qc}(\LS_{\prescript{L}{}{G},F_z})_{\Nilp}.
  \end{align*}
\end{conj*}

\begin{exmp}\label{exmp:AGKRRV}
Assume that $Z$ is empty. Then Example \ref{exmp:BunGC} identifies $\Bun_{G,C}$ with the constant stack over $\ov\bF_q$ associated with the groupoid (\ref{eq:automorphicspace}), so 
\begin{align*}
\loc_{\varnothing,!}\!\La=\bigoplus_{\al\in\ker^1(F,G)}\Ga_c(\ul{G_\al(F)\bs G(\bA)/G(\bO)},\La),
\end{align*}
where we remind the reader that $\Ga_c(\ul{G_\al(F)\bs G(\bA)/G(\bO)},\La)$ can have cohomology in nonzero degrees when $\La\neq L$ \cite[Remark 0.0.3]{Xue20b}. On the other hand, by definition 
\begin{align*}
\lres_{\varnothing,*}(\om_{\LS_{\prescript{L}{}{G},C}}) = \Ga(\LS_{\prescript{L}{}{G},C},\om_{\LS_{\prescript{L}{}{G},C}}).
\end{align*}
Altogether, we see that Conjecture \ref{ss:globalconjecture} specializes to \cite[Conjecture 24.8.6]{AGKRRV20a} when $G$ is split, $\La=L$, and $Z$ is empty.
\end{exmp}

\subsection{}\label{ss:spectralcohomologyformula}
More generally, Conjecture \ref{ss:globalconjecture} implies the following formula for the cohomology of stacks of global shtukas for \emph{extended pure inner twists} of $G$, which is a conjecture of Zhu \cite[Conjecture 4.49]{Zhu20}. Let $b$ be an element in $G(U_{\ov\bF_q})$ whose image in $B(F,G)$ is basic, and recall from \ref{ss:twistingb} the parahoric group scheme $G_b$ over $U$. Using fpqc descent, extend $G_b$ to a parahoric group scheme over $C$. Since $G_b$ is an inner twist of $G$ over $F$, we have compatible identifications $\prescript{L}{}G=\prescript{L}{}G_b$ and $\prescript{L}{}G_v=\prescript{L}{}G_{b,v}$. Write
\begin{align*}
\pi_{\infty Z}^{G_b}:\varprojlim_{m\geq1}\Sht^I_{G_b,\dot{V},mZ}|_{(U\ssm Q)^I_{\ov\bF_q}}\ra(U\ssm Q)_{\ov\bF_q}^I
\end{align*}
for the structure morphism.

Recall the open embedding $i_b:*/\ul{G_b(F_z)}\hra\Bun_{G,F_z}$ \cite[Theorem III.4.5]{FS21}, so that taking the product over all $z$ in $Z$ yields an open embedding
\begin{align*}
i_b:*/\ul{G_b(F_Z)}\hra\textstyle\prod_{z\in Z}\Bun_{G,F_z}.
\end{align*}
For all $z$ in $Z$, let $K_z$ be a compact open subgroup of $G_b(F_z)$. Write $K$ for $\prod_{z\in Z}K_z$, and consider the structure morphism
\begin{align*}
\pi_K^{G_b}:\Big[\varprojlim_{m\geq1}\Sht^I_{G_b,\dot{V},mZ}|_{(U\ssm Q)^I_{\ov\bF_q}}\Big]\Big/\ul{K}\ra(U\ssm Q)_{\ov\bF_q}^I.
\end{align*}

Write $\dot{V}$ for the vector bundle on $\LS_{\prescript{L}{}G,U}$ associated with the object $\dot{V}$ in $\Rep_\La(\prescript{L}{}G)^I$, and write $\bD(-)$ for the Grothendieck--Serre dual.
\begin{prop*}
  Assume Conjecture \ref{ss:globalconjecture}. Then the image of $\pi_{K,!}^{G_b}\cS_{\dot{V}}$ in
  \begin{align*}
    \bigotimes_{z\in Z}D_{\qcoh}\big((\Spec\La)/\!\WD_{F_z}\big)^{\bigotimes I_z}
  \end{align*}
  is isomorphic as modules for $\bigotimes_u\End_{G(F_u)}(\cInd_{G(\cO_u)}^{G(F_u)}\La)$ to
  \begin{align*}
\Ga\big(\LS_{\prescript{L}{}G,U},\dot{V}\otimes\lres_Z^!(\boxtimes_{z\in Z}\bD(\bL_{\psi_z}(i_{b,!}\cInd_{K_z}^{G_b(F_z)})))\big).
  \end{align*}
\end{prop*}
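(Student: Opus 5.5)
We reduce to Theorem \ref{ss:cohomologyformula} for $G_b$, then transport the resulting formula to the spectral side using Conjecture \ref{ss:globalconjecture}, the spectral action, and Serre duality.

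\textbf{Step 1: pass to $G_b$.} Proposition \ref{ss:twistingb} identifies $\Bun_{G,U}\cong\Bun_{G_b,U}$. Likewise, the local twisting isomorphisms $\Bun_{G,F_z}\cong\Bun_{G_b,F_z}$ of \cite[Corollary III.4.3]{FS21} are compatible with $\loc_Z$, with the Hecke operators, and hence with the spectral action. Under these identifications $i_b$ corresponds to the trivial-locus embedding $i_1$ for $G_b$. Theorem \ref{ss:cohomologyformula} applied to $G_b$ then identifies $\pi^{G_b}_{\infty Z,!}\cS_{\dot V}$ with $i_b^*T_V(\loc_{Z,!}\La)$. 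This identification is compatible with the $\bigotimes_u\End_{G(F_u)}(\cInd_{G(\cO_u)}^{G(F_u)}\La)$-actions, because $G_b=G$ away from $Z$ after shrinking $U$, as in the proof of Theorem \ref{ss:basiclocus}. Taking $K$-coinvariants, i.e.\ pushing forward along $*/\ul{K}\ra*/\ul{G_b(F_Z)}$ and then to the point, gives
\[
\pi^{G_b}_{K,!}\cS_{\dot V}\cong\Hom_{D(\prod_z\Bun_{G,F_z},\La)}\big(i_{b,!}\textstyle\boxtimes_z\cInd_{K_z}^{G_b(F_z)}\La,\;T_V(\loc_{Z,!}\La)\big)^{\vee\text{-twisted}}.
\]
The superscript indicates that, more precisely, this is the internal-Hom or pairing form obtained from adjunction $i_{b,!}\dashv i_b^*$ and compact induction. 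I will write it as $\Hom(P,T_V\loc_{Z,!}\La)$ with $P=i_{b,!}\boxtimes_z\cInd_{K_z}$, keeping track of the Weil-group structure through $\prod_z\Div^1_z$.

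\textbf{Step 2: transport to the spectral side.} Apply $\bigotimes_z\bL_{\psi_z}$. It is $\La$-linear, commutes with $T_V$, since $V$ acts through the spectral action as tensoring with the vector bundle $V$ by \ref{ss:spectralaction}, and is fully faithful. Using Conjecture \ref{ss:globalconjecture}, the Hom in Step 1 becomes
\[
\Hom\big(\textstyle\boxtimes_z\bL_{\psi_z}(P_z),\;V\otimes\lres_{Z,*}\om_{\LS_{\prescript{L}{}G,U}}\big).
\]
By the projection formula, $V\otimes\lres_{Z,*}\om=\lres_{Z,*}(\dot V\otimes\om)$, since $\lres_Z^*V=\dot V|$ restricted along local Weil groups. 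Adjunction $\lres_Z^*\dashv\lres_{Z,*}$ then yields $\Hom(\lres_Z^*\boxtimes\bL_{\psi_z}(P_z),\dot V\otimes\om)$. Serre duality on $\LS_{\prescript{L}{}G,U}$ rewrites this as $\Ga(\dot V\otimes\bD(\lres_Z^*\boxtimes\bL_{\psi_z}P_z))$, and $\bD\lres_Z^*=\lres_Z^!\bD$ gives the stated formula. The bookkeeping with \ref{ss:cohversusperf} will be used to make sense of $\lres_Z^*$ on the (bounded) coherent objects $\bL_{\psi_z}(P_z)$. These objects are compact because $P_z$ is compact. The $\bigotimes_u$-module structures are matched by construction via Lemma \ref{ss:derivedHeckeformalism} on both sides.

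\textbf{Main obstacle.} The hard part is the Galois-equivariance: making the identification compatible with the $\prod_z W_{F_z}^{I_z}$-actions, i.e.\ the passage to $D_{\qcoh}((\Spec\La)/\WD_{F_z})$. On the automorphic side this structure comes from $\Div^1_z$. On the spectral side it should come from the $\WD$-action on the tautological bundles $V$. I would first prove the isomorphism after pulling back along $\Spec\La\ra(\Spec\La)/\WD_{F_z}$, exactly as in the proof of Theorem \ref{ss:cohomologyformula}. I would then upgrade it by invoking that the spectral action of Fargues--Scholze intertwines $T_{V}$, with its $W_{F_z}$-structure, with the universal $\WD$-equivariant bundle \cite[Theorem 6.1]{Sch25}. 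A second, more technical issue is justifying the duality and adjunction steps for the non-quasicompact stack $\LS_{\prescript{L}{}G,U}$. I would handle this by working on the framed cover $\LS^\square_{\prescript{L}{}G,U}$, which is a disjoint union of quasismooth affines, and descending.
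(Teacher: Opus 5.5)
Your argument is the paper's proof. It applies Theorem~\ref{ss:cohomologyformula} to $G_b$ via Proposition~\ref{ss:twistingb}, writes $K$-invariants as $\Hom_{G_b(F_Z)}(\cInd_K^{G_b(F_Z)}\La,-)$, and uses $i_{b,!}\dashv i_b^*$ to reach $\Hom(i_{b,!}\cInd_K^{G_b(F_Z)}\La,T_V(\loc_{Z,!}\La))$, followed by Conjecture~\ref{ss:globalconjecture} and the projection formula; the paper takes invariants directly, so your ``coinvariants''/``$\vee$-twisted'' hedge in Step~1 can simply be dropped. The only deviation is the order of the duality step: you use $\lres_Z^*\dashv\lres_{Z,*}$, Serre duality on $\LS_{\prescript{L}{}G,U}$ and $\bD\lres_Z^*=\lres_Z^!\bD$, whereas the paper uses $\bD\lres_{Z,*}=\lres_{Z,!}\bD$, $\bD(\dot{V}\otimes\om_{\LS_{\prescript{L}{}G,U}})=\dot{V}^\vee$ and $\lres_{Z,!}\dashv\lres_Z^!$, which is the same formal manipulation run in the other direction.
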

\begin{proof}
Because $b$ is basic, Proposition \ref{ss:twistingb} and Theorem \ref{ss:cohomologyformula} identify
  \begin{align*}
    \pi_{K,!}^{G_b}\cS_{\dot{V}} = (\pi_{\infty Z,!}^{G_b}\cS_{\dot{V}})^K &= \Hom_{G_b(F_Z)}\!\big(\cInd_K^{G_b(F_Z)}\La,\pi_{\infty Z,!}^{G_b}\cS_{\dot{V}}\big)\\
                                                                           &= \Hom_{G_b(F_Z)}\!\big(\cInd_K^{G_b(F_Z)}\La,i_b^*(T_V(\loc_{Z,!}\!\La))\big) \\
    &= \Hom\!\big(i_{b,!}\cInd_K^{G_b(F_Z)}\La,T_V(\loc_{Z,!}\!\La)\big).
  \end{align*}
By Conjecture \ref{ss:globalconjecture} and the projection formula, this is isomorphic to
  \begin{align*}
    &\Hom\!\big(\boxtimes_{z\in Z}\bL_{\psi_z}(i_{b,!}\cInd_{K_z}^{G_b(F_z)}\La),V\otimes\lres_{Z,*}(\om_{\LS_{\prescript{L}{}G,U}})\big)\\
    =\,&\Hom\!\big(\boxtimes_{z\in Z}\bL_{\psi_z}(i_{b,!}\cInd_{K_z}^{G_b(F_z)}\La),\lres_{Z,*}(\dot{V}\otimes\om_{\LS_{\prescript{L}{}G,U}})\big).
  \end{align*}
  Finally, applying Grothendieck--Serre duality yields
  \begin{align*}
    &\Hom\!\big(\lres_{Z,!}(\bD(\dot{V}\otimes\om_{\LS_{\prescript{L}{}G,U}})),\boxtimes_{z\in Z}\bD(\bL_{\psi_z}(i_{b,!}\cInd_{K_z}^{G_b(F_z)}))\big)\\
    =\,&\Hom\!\big(\lres_{Z,!}(\dot{V}^\vee),\boxtimes_{z\in Z}\bD(\bL_{\psi_z}(i_{b,!}\cInd_{K_z}^{G_b(F_z)}))\big) \\
    =\,&\Ga\big(\LS_{\prescript{L}{}G,U},\dot{V}\otimes\lres_Z^!(\boxtimes_{z\in Z}\bD(\bL_{\psi_z}(i_{b,!}\cInd_{K_z}^{G_b(F_z)})))\big).\qedhere
  \end{align*}
\end{proof}
\begin{rem}
When $b=1$, it is expected that $\bL_{\psi_z}(i_{1,!}\cInd_{K_z}^{G(F_z)}\La)$ is isomorphic to its Grothendieck--Serre dual \cite[Remark 4.27]{Zhu20}.
\end{rem}

\subsection{}\label{ss:toruscohomology}
In this rest of this section, our goal is to prove Conjecture \ref{ss:globalconjecture} when $G=T$ is a torus. We start by recalling work of Langlands \cite{Lan97} on his conjectures for tori. Write $W_{\wt{F}/F}\coloneqq W_F/\ov{[W_{\wt{F}},W_{\wt{F}}]}$ for the relative Weil group of $\wt{F}/F$, so that global class field theory gives a natural short exact sequence of locally profinite groups
\begin{align*}
\xymatrix{1\ar[r] & \wt{F}^\times\bs\bA_{\wt{F}}^\times\ar[r] & W_{\wt{F}/F}\ar[r] & \Gal(\wt{F}/F)\ar[r] & 1.}
\end{align*}

Write $\bT$ for the locally profinite group $\big[T(\wt{F})\bs T(\bA_{\wt{F}})\big]^{\Gal(\wt{F}/F)}$. Then the long exact sequence for group cohomology yields a natural short exact sequence
\begin{align*}
\xymatrix{1\ar[r] & T(F)\bs T(\bA)\ar[r] & \bT\ar[r] & \ker^1(F,T)\ar[r] & 1}
\end{align*}
of locally profinite groups. Recall that the transfer map induces an isomorphism
\begin{align*}
  H_1(W_{\wt{F}/F},X_*(T))\ra^\sim H_1(\wt{F}^\times\bs\bA_{\wt{F}}^\times,X_*(T))^{\Gal(\wt{F}/F)} = \bT
\end{align*}
\cite[p.~233]{Lan97}, and for all $\Gal(\wt{F}/F)$-stable open subgroups $P$ of $\wt{F}^\times\bs\bA_{\wt{F}}^\times$, write
\begin{align*}
\Te(P)\coloneqq\ker\big[H_1(W_{\wt{F}/F},X_*(T))\ra H_1(W_{\wt{F}/F}/P,X_*(T))\big].
\end{align*}

\begin{lem*}
Under the identification $H_1(W_{\wt{F}/F},X_*(T))\ra^\sim\bT$, the subgroup $\Te(P)$ corresponds to the image of $P\otimes_\bZ X_*(T)\subseteq T(\wt{F})\bs T(\bA_{\wt{F}})$ under the norm map.
\end{lem*}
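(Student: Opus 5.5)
The plan is to compute both sides explicitly through the Hochschild--Serre/Lyndon spectral sequence in group homology, reducing the question to a diagram chase with the transfer and the norm. Write $W\coloneqq W_{\wt{F}/F}$, $J\coloneqq\wt{F}^\times\bs\bA_{\wt{F}}^\times$, $\Ga\coloneqq\Gal(\wt{F}/F)$ and $X\coloneqq X_*(T)$. Since $P\subseteq J$ is $\Ga$-stable and $J$ is normal in $W$, $P$ is normal in $W$. Both the transfer $W\ra J$ and the projection $W\ra W/P$ are functorial, so the identification $H_1(W,X)\ra^\sim H_1(J,X)^\Ga$ from \cite[p.~233]{Lan97} should be compared with the analogous transfer map $H_1(W/P,X)\ra H_1(J/P,X)^\Ga$.

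The first step is to build the commutative square whose horizontal arrows are transfers
\begin{align*}
H_1(W,X)\ra H_1(J,X)^\Ga,\qquad H_1(W/P,X)\ra H_1(J/P,X)^\Ga,
\end{align*}
and whose vertical arrows are the maps induced by $W\ra W/P$ and $J\ra J/P$. Compatibility of transfer with quotients by a normal subgroup contained in the finite-index subgroup gives commutativity. Since $J$ acts trivially on $X$, we have $H_1(J,X)=J\otimes_\bZ X$ and $H_1(J/P,X)=(J/P)\otimes_\bZ X$. Therefore $\Te(P)$ is the set of elements of $\bT=(J\otimes X)^\Ga$ whose image in $((J/P)\otimes X)^\Ga$ is zero, provided the bottom transfer is injective. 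That injectivity need not hold in general, because $J/P$ is a finitely generated abelian group with torsion. This is the main obstacle, and I would work around it rather than through it.

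The second step is to describe the transfer concretely. On $H_1(J,X)$, the corestriction $H_1(J,X)\ra H_1(W,X)$ followed by the transfer (restriction) $H_1(W,X)\ra H_1(J,X)$ equals the norm $N_\Ga=\sum_{\si\in\Ga}\si$ on $J\otimes X$. Since the transfer is an isomorphism onto $\bT$, we get
\begin{align*}
\text{transfer}\circ\cor=N_\Ga:J\otimes X\ra(J\otimes X)^\Ga.
\end{align*}
Now $P\otimes X\ra J\otimes X$ is injective, because $X$ is free. Applying the inflation--restriction/Lyndon exact sequence for $1\ra P\ra W\ra W/P\ra1$ in low degrees gives
\begin{align*}
H_1(P,X)_{W}\ra H_1(W,X)\ra H_1(W/P,X)\ra0.
\end{align*}
Hence $\Te(P)$ is the image of $\cor:(P\otimes X)_W\ra H_1(W,X)$. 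Pushing this through the transfer and using the formula above, applied to $P\subseteq J$ (corestriction from $P$ to $W$ factors through corestriction from $P$ to $J$, which is the inclusion $P\otimes X\ra J\otimes X$), identifies $\Te(P)$ with $N_\Ga(P\otimes X)$. Under the embedding $T(\wt{F})\bs T(\bA_{\wt{F}})=J\otimes X$, this is the image of $P\otimes_\bZ X$ under the norm map, as claimed.

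The key verifications are the low-degree exact sequence for homology of profinite/locally profinite groups with the discrete module $X$, which I would justify by writing $W$ as a limit or by using that $P$ is open of finite index in $J$, and the identity $\text{res}\circ\cor=N_\Ga$ on homology. The latter is the standard double coset formula, using that $J$ is normal with quotient $\Ga$. The main subtlety I expect is topological: making sure that homology here means continuous/condensed homology, so that Langlands' transfer isomorphism and the exact sequence are both valid. I would handle this by reducing to the discrete quotients $W/P'$ for $P'\subseteq P$ open and taking limits.
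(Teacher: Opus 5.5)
Your second step is exactly the paper's argument. The five-term exact sequence for $1\ra P\ra W_{\wt{F}/F}\ra W_{\wt{F}/F}/P\ra1$ identifies $\Te(P)$ with the image of corestriction from $H_1(P,X_*(T))=P\otimes_\bZ X_*(T)$, and transfer composed with corestriction is the norm, so the square of transfers in your first step is an unneeded detour. One side remark is off: $P$ need not have finite index in $\wt{F}^\times\bs\bA_{\wt{F}}^\times$ (in the application $P=\bO_{\wt{U}}^\times K$ is compact, so the quotient surjects onto $\bZ$ via the degree), but nothing in your main argument uses this.
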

\begin{proof}
  The five term exact sequence for group homology identifies $\Te(P)$ with the image of $H_1(P,X_*(T))=P\otimes_\bZ X_*(T)$ under the corestriction map. Postcomposing the corestriction map with the transfer map yields the norm map, as desired.
\end{proof}

\subsection{}\label{ss:boundedramification}
Assume that $Z$ is nonempty. We will stratify $\LS_{\prescript{L}{}T,U}$ by bounding ramification as follows. Let $K$ be a $\Gal(\wt{F}/F)$-stable compact open subgroup of $\wt{F}_{\wt{Z}}^\times$, and write $\LS_{\prescript{L}{}T,K}^{\square}$ for the presheaf on $\{\mbox{affine schemes over }\La\}$ whose $\Spec{A}$-points consist of $1$-cocycles $W_{\wt{F}/F}/\bO_{\wt{U}}^\times K\ra\wh{T}(A)$. Since $W_{\wt{F}/F}/\bO_{\wt{U}}^\times K$ is finitely generated, $\LS_{\prescript{L}{}T,K}^{\square}$ is an affine scheme of finite type over $\La$. Write $\LS_{\prescript{L}{}T,K}$ for the algebraic stack $\LS_{\prescript{L}{}T,K}^{\square}\!/\wh{T}$ over $\La$.

For any finitely generated abelian group $M$, write $\ul{\Hom}(M,\bG_m)$ for its Cartier dual over $\La$. Note that $\ul{\Hom}(M,\bG_m)$ is a flat affine scheme of finite type over $\La$.
\begin{prop*}
  The algebraic stack $\LS_{\prescript{L}{}T,K}$ is naturally a gerbe over
  \begin{align*}
  \ul{\Hom}\big(\bT/\bO_U^\times\!\Nm_{\wt{F}/F}(K\otimes_\bZ X_*(T)),\bG_m\big)
  \end{align*}
  banded by $\wh{T}^{\Gal(\wt{F}/F)}$.
\end{prop*}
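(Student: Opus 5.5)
The plan is to identify the coarse quotient of $\LS^{\square}_{\prescript{L}{}T,K}$ by the conjugation action of $\wh{T}$, and then to identify the stabilizers. Write $\Ga\coloneqq W_{\wt{F}/F}/\bO_{\wt{U}}^\times K$. This is an extension of $\Gal(\wt{F}/F)$ by the finitely generated abelian group $\wt{F}^\times\bs\bA_{\wt{F}}^\times/\bO_{\wt{U}}^\times K$. The group $\wh{T}$ acts on $1$-cocycles $\Ga\ra\wh{T}(A)$ by changing them by coboundaries $\ga\mapsto t\cdot\ga(t)^{-1}$. The stabilizer of any cocycle is $\wh{T}(A)^{\Ga}=\wh{T}^{\Gal(\wt{F}/F)}(A)$, and this is independent of the cocycle and central. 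Hence $\LS_{\prescript{L}{}T,K}$ is a gerbe banded by $\wh{T}^{\Gal(\wt{F}/F)}$ over the fppf quotient sheaf $A\mapsto Z^1(\Ga,\wh{T}(A))/B^1(\Ga,\wh{T}(A))$, sheafified, which is the fppf sheafification of $A\mapsto H^1(\Ga,\wh{T}(A))$. So the whole proposition reduces to the following claim: the functor $A\mapsto H^1(\Ga,\wh{T}(A))$ sheafifies to $\ul{\Hom}(H_1(\Ga,X_*(T)),\bG_m)$, and $H_1(\Ga,X_*(T))$ equals $\bT/\bO_U^\times\Nm_{\wt{F}/F}(K\otimes_\bZ X_*(T))$.

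For the homological identification, use Lemma \ref{ss:toruscohomology} with $P=\bO_{\wt{U}}^\times K$, which is $\Gal(\wt{F}/F)$-stable. The group $H_1(W_{\wt{F}/F},X_*(T))$ is identified with $\bT$, and the kernel of the surjection $H_1(W_{\wt{F}/F},X_*(T))\ra H_1(\Ga,X_*(T))$ is $\Te(P)$. Surjectivity follows from the five term exact sequence, since $H_0$ of the kernel maps onto the kernel. By the lemma, $\Te(P)$ is $\Nm(P\otimes X_*(T))$. It remains to check that $\Nm(\bO_{\wt{U}}^\times\otimes X_*(T))$ is the image of $T(\bO_U)$, written $\bO_U^\times$ in the statement. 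This is the standard fact that the norm on integral points at places where $T$ splits over an unramified extension surjects onto $T(\bO_U)$, because $\wt{F}/F$ is unramified over $U$ and $T$ is a torus over $U$. This gives $H_1(\Ga,X_*(T))=\bT/\bO_U^\times\Nm(K\otimes X_*(T))$. Finite generation follows because $\bT$ modulo an open subgroup is finitely generated, as a finite extension of a class group quotient.

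For the cohomological identification, the key input is the pairing $H^1(\Ga,\wh{T}(A))\times H_1(\Ga,X_*(T))\ra A^\times$, using $\wh{T}(A)=X_*(T)\otimes A^\times$ in the form $\Hom(X^*(\wh{T}),A^\times)$. I would prove that it induces an isomorphism after fppf sheafification, following Langlands' argument \cite[p.~233]{Lan97}. Choose a free resolution of $\bZ$ over $\bZ[\Ga]$ by finitely generated modules, which is possible since $\Ga$ is virtually finitely generated abelian, hence of type $FP_\infty$. Then $C^\bullet(\Ga,\wh{T}(A))=\Hom_{\Ga}(P_\bullet\otimes X^*(\wh{T}),\bG_m(A))$, and the cohomology in degree $1$ of $\ul{\Hom}$ of a complex of finitely generated abelian groups into $\bG_m$ is computed by the Ext spectral sequence. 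The obstruction is $\ul{\Ext}^1(H_0,\bG_m)$. Since $H_0(\Ga,X_*(T))$ is finitely generated, this obstruction vanishes fppf-locally: $\ul{\Ext}^1(\bZ/n,\bG_m)=0$ as an fppf sheaf, because $n$-th roots exist fppf-locally. This leaves $\ul{\Hom}(H_1,\bG_m)$. I expect this sheafification step to be the main obstacle, specifically the presence of torsion in $H_0$ and $H_1$ when $\La=\cO_L$ or $\bF_\la$, so that $\ul{\Hom}(M,\bG_m)$ is non-reduced. I would handle it by working with the explicit finite free resolution and the fppf-exactness of $\ul{\Hom}(-,\bG_m)$ on finitely generated groups. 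Finally, I would check that the gerbe is banded correctly, which reduces to the stabilizer computation in the first step.
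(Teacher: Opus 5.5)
Your proposal is correct and follows the paper's proof step for step: the stabilizer computation making $\LS_{\prescript{L}{}T,K}$ a gerbe banded by $\wh{T}^{\Gal(\wt{F}/F)}$ over the sheafification of $A\mapsto H^1(\Ga,\wh{T}(A))$ with $\Ga=W_{\wt{F}/F}/\bO_{\wt{U}}^\times K$, the comparison of this with $\ul{\Hom}(H_1(\Ga,X_*(T)),\bG_m)$ via the natural pairing (note $\wh{T}(A)=X^*(T)\otimes_\bZ A^\times$, not $X_*(T)\otimes_\bZ A^\times$), and Lemma \ref{ss:toruscohomology} applied to $P=\bO_{\wt{U}}^\times K$. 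The only difference is in the sheafification step. The paper notes that the comparison map is bijective whenever $A^\times$ is divisible and invokes \cite[Lemma 4.1.1]{Zou24} fpqc-locally. Your universal coefficient sequence $0\ra\operatorname{Ext}^1_\bZ(H_0(\Ga,X_*(T)),A^\times)\ra H^1(\Ga,\wh{T}(A))\ra\Hom(H_1(\Ga,X_*(T)),A^\times)\ra0$, whose first term dies after Kummer covers, gives the result fppf-locally; in particular torsion in $H_1$ causes no difficulty, and no finiteness of the resolution is needed. Your verification that norms from $\bO_{\wt{U}}^\times\otimes_\bZ X_*(T)$ exhaust the image of $T(\bO_U)$ makes explicit a point the paper leaves implicit.
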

\begin{proof}
  Write $H^{\mathrm{pre}}$ for the presheaf on $\{\mbox{affine schemes over }\La\}$ whose $\Spec{A}$-points consist of the set-theoretic quotient
\begin{align*}
\LS_{\prescript{L}{}T,K}^{\square}(A)\big/\wh{T}(A) = H^1\big(W_{\wt{F}/F}/\bO_{\wt{U}}^\times K,\wh{T}(A)\big),
\end{align*}
and write $H$ for its fpqc-sheafification. Using the presentation $\LS_{\prescript{L}{}T,K} = \LS_{\prescript{L}{}T,K}^{\square}\!/\wh{T}$, we see the inertia stack of $\LS_{\prescript{L}{}T,K}$ is naturally isomorphic to $\wh{T}^{\Gal(\wt{F}/F)}\times\LS_{\prescript{L}{}T,K}$ over $\LS_{\prescript{L}{}T,K}$. Therefore \cite[Tag 06QJ]{stacks-project} shows that $\LS_{\prescript{L}{}T,K}$ is naturally a gerbe over $H$ banded by $\wh{T}^{\Gal(\wt{F}/F)}$.

Since $X^*(T)\otimes_\bZ A^\times=\wh{T}(A)$, the natural map
  \begin{align*}
    H^1\big(W_{\wt{F}/F}/\bO_{\wt{U}}^\times K,X^*(T)\otimes_\bZ A^\times\big)\ra\Hom\big(H_1\big(W_{\wt{F},F}/\bO_{\wt{U}}^\times K,X_*(T)\big),A^\times\big)
  \end{align*}
  yields a morphism $\io^{\mathrm{pre}}:H^{\mathrm{pre}}\ra\ul{\Hom}(H_1(W_{\wt{F},F}/\bO_{\wt{U}}^\times K,X_*(T)),\bG_m)$. When $A^\times$ is divisible, the above map is an isomorphism, so \cite[Lemma 4.1.1]{Zou24} implies that $\io^{\mathrm{pre}}$ fpqc-sheafifies to an isomorphism $\io:H\ra^\sim\ul{\Hom}(H_1(W_{\wt{F},F}/\bO_{\wt{U}}^\times K,X_*(T)),\bG_m)$. Finally, Lemma \ref{ss:toruscohomology} shows that the transfer map yields an isomorphism
\begin{gather*}
H_1\big(W_{\wt{F},F}/\bO_{\wt{U}}^\times K,X_*(T)\big)\ra^\sim\bT\big/\bO_U^\times\!\Nm_{\wt{F}/F}(K\otimes_\bZ X_*(T)).\qedhere
\end{gather*}
\end{proof}

\subsection{}\label{ss:LStorus}
Assume that $Z$ is nonempty. Using work of Langlands, we can prove the following explicit description of $\LS_{\prescript{L}{}T,U}$.
\begin{thm*}
  The derived algebraic stack $\LS_{\prescript{L}{}T,U}$ is classical and equals
  \begin{align*}
    \bigcup_K\LS_{\prescript{L}{}T,K},
  \end{align*}
where $K$ runs over $\Gal(\wt{F}/F)$-stable pro-$p$ open subgroups of $\wt{F}_{\wt{Z}}^\times$ and the transition morphisms are clopen embeddings.
\end{thm*}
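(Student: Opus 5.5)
We work on the framed side throughout: the framing is equivariant for the $\wh{T}$-conjugation action, so it suffices to identify $\LS^{\square}_{\prescript{L}{}T,U}$ with $\bigcup_K\LS^{\square}_{\prescript{L}{}T,K}$ and then pass to the quotient by $\wh{T}$.

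\emph{Step 1 (underlying functor).} By Zhu's construction, the classical truncation of $\LS^{\square}_{\prescript{L}{}T,U}$ parametrizes continuous $1$-cocycles $W_U\ra\wh{T}(A)$, where the action is through $\Gal(\wt{F}/F)$ and $\wh{T}(A)$ carries its $\ell$-adic topology. Since the coefficients are abelian and the action factors through $\Gal(\wt{F}/F)$, such a cocycle restricted to $W_{\wt{U}}$ is a homomorphism, hence factors through the abelianization. Inflation–restriction then shows the cocycle factors through $W_{\wt{F}/F}$ modulo the image of $\bO_{\wt{U}}^\times$, which is the unramified condition away from $\wt{Z}$.

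\emph{Step 2 (bounded ramification).} Continuity on the pro-finite group $\wt{F}^\times_{\wt{Z}}\cap(\text{units})$ forces the cocycle to kill some open subgroup, after the usual reduction to $A$ of finite type over $\La$. The image of the pro-$p$ part of inertia in an $\ell$-adic group is finite, because $p\neq\ell$. We may therefore shrink to a $\Gal(\wt{F}/F)$-stable pro-$p$ open subgroup $K$, obtained by intersecting conjugates, so that the cocycle factors through $W_{\wt{F}/F}/\bO_{\wt{U}}^\times K$. This identifies the classical truncation with the union $\bigcup_K\LS^{\square}_{\prescript{L}{}T,K}$.

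\emph{Step 3 (clopen transitions).} For $K'\subseteq K$, the closed immersion $\LS^{\square}_{K}\hookrightarrow\LS^{\square}_{K'}$ should be clopen. Proposition \ref{ss:boundedramification} presents both stacks as $\wh{T}^{\Gal}$-gerbes over Cartier duals, so the claim reduces to the dual groups. Here $\bT/\bO_U^\times\Nm(K\otimes X_*(T))$ is a quotient of $\bT/\bO_U^\times\Nm(K'\otimes X_*(T))$ by the finite group $\Nm(K)/\Nm(K')$. That group is a $p$-group, and $p$ is invertible in $\La$, so its Cartier dual is finite étale. Hence the transition morphism is a clopen immersion, namely the locus where the character is trivial on that finite group.

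\emph{Step 4 (classicality), the main obstacle.} The derived structure in Zhu's definition comes from the cotangent complex, governed by $H^2(W_U,\hat{\mathfrak t})$. The plan is to show that each $\LS^{\square}_{K}$ is already the full derived moduli of its points. Concretely, we compare Zhu's derived presentation, which uses a finitely presented discretization of $W_U$ with tame-at-$Z$ bounded ramification, to the group $W_{\wt{F}/F}/\bO_{\wt{U}}^\times K$. This group is finitely generated abelian-by-finite; it is $\bZ^r$ times a finite group of order prime to $\ell$ up to $\ell$-torsion issues, using Lemma \ref{ss:toruscohomology}.

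I would try first to show that the derived cocycle scheme for $W_{\wt{F}/F}/\bO_{\wt{U}}^\times K$ is flat, hence classical, over $\La$. This follows from the Cartier dual of a finitely generated abelian group being flat, combined with the gerbe description and $\ell\nmid|\Gal|$-type vanishing for the finite part. The next step is to check, via global class field theory, that the derived discrepancy in Zhu's construction vanishes: the relevant $H^2$ of the Weil group of $U$ with abelian torus coefficients should match $H^2$ of this discrete quotient. Concretely this amounts to checking Euler characteristics and tangent complexes. I expect this comparison of the derived structures to be the delicate point.
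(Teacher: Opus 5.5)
Your Steps 1 and 2 match the paper's argument. Step 3 is a legitimate substitute for the paper's appeal to the proof of \cite[Theorem VIII.1.3]{FS21}: the transition map respects the gerbe structures of Proposition \ref{ss:boundedramification}, which share the band $\wh{T}^{\Gal(\wt{F}/F)}$. So it is pulled back from the inclusion of Cartier duals, which is the fiber over the unit section of a map to the finite \'etale Cartier dual of a finite $p$-group. The genuine gap is Step 4: it is not an argument, and the route it sketches does not work.

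\textbf{Why the sketched route fails.} First, Proposition \ref{ss:boundedramification} and flatness of Cartier duals only describe the classical stacks $\LS_{\prescript{L}{}T,K}$. They carry no information about the higher homotopy of Zhu's derived structure, so ``flat, hence classical'' is circular here. Second, that derived structure is governed by $C^*(W_U,\mathrm{Lie}\,\wh{T})$, not by the cohomology of the discrete quotient $\Gamma_K\coloneqq W_{\wt{F}/F}/\bO_{\wt{U}}^\times K$, and the two genuinely differ. The group $W_U$ has $\ell$-cohomological dimension $2$. By contrast, $\Gamma_K$ has infinite $\ell$-cohomological dimension as soon as $\ell$ divides the order of its torsion. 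Nothing excludes this when $\La$ is $\cO_L$ or $\bF_\la$, and no condition like $\ell\nmid[\wt{F}:F]$ is available. For instance, take $T=\bG_m$, $C=\bP^1$, $Z$ two $\bF_q$-points and $\ell\mid q-1$. Then $\Gamma_K\cong\bZ\times\bF_q^\times$, so $\dim H^2(\Gamma_K,\bF_\ell)=2$. On the other hand, $\chi(W_U,\bF_\ell)=0$ gives $\dim H^2(W_U,\bF_\ell)=\dim H^1(W_U,\bF_\ell)-\dim H^0(W_U,\bF_\ell)=1$. So the $H^2$-matching you hope for is false in general.

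\textbf{The missing idea.} What closes the proof is a dimension count, which is how the paper concludes.
\begin{enumerate}
\item By \cite{Zhu20}, $\LS_{\prescript{L}{}T,U}$ is quasismooth of relative virtual dimension $0$ over $\La$.
\item The group $\bT/\bO_U^\times\Nm_{\wt{F}/F}(K\otimes_\bZ X_*(T))$ has rank $\dim\wh{T}^{\Gal(\wt{F}/F)}$, so its Cartier dual has relative dimension $\dim\wh{T}^{\Gal(\wt{F}/F)}$.
\item Proposition \ref{ss:boundedramification} then shows that each $\LS_{\prescript{L}{}T,K}$ has relative dimension $0$, hence so does the classical truncation $\bigcup_K\LS_{\prescript{L}{}T,K}$.
\item A quasismooth derived stack whose classical truncation has dimension equal to its virtual dimension is classical \cite[Proposition B.0.1]{Han23}.
\end{enumerate}
The Euler characteristic enters only through the virtual dimension. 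No comparison with any derived structure attached to $\Gamma_K$ is needed.
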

\begin{proof}
Recall that the $\Spec{A}$-points of $\LS_{\prescript{L}{}T,U}^{\square,\cl}$ consist of $1$-cocycles $\vp:W_U\ra\wh{T}(A)$ that are continuous with respect to the ind-$\ell$-adic topology on $A$ as in \cite[Remark 2.54]{Zhu20}. Because $\vp|_{W_{\wt{U}}}$ is a homomorphism and $\wh{T}(A)$ is abelian, $\vp$ factors through $W_U\ra W_{\wt{F}/F}/\bO_{\wt{U}}^\times$. Since $\wt{F}^\times\bs\bA_{\wt{F}}^\times/\bO_{\wt{U}}^\times$ is locally pro-$p$, there exists a $\Gal(\wt{F}/F)$-stable pro-$p$ open subgroup $K$ of $\wt{F}^\times_{\wt{Z}}$ such that $\vp$ factors through $W_{\wt{F}/F}/\bO^\times_{\wt{U}}K$. This shows that
\begin{align*}
\LS^{\square,\cl}_{\prescript{L}{}T,U} = \bigcup_{K}\LS_{\prescript{L}{}T,K}^{\square},
\end{align*}
where $K$ runs over $\Gal(\wt{F}/F)$-stable pro-$p$ open subgroups of $\wt{F}_{\wt{Z}}^\times$, and the proof of \cite[Theorem VIII.1.3]{FS21} implies that the transition morphisms are clopen embeddings. Quotienting by $\wh{T}$ yields an analogous description 
\begin{align*}
\LS_{\prescript{L}{}T,U}^{\cl} = \bigcup_{K}\LS_{\prescript{L}{}T,K}.
\end{align*}

Because the finitely generated abelian group $\bT/\bO_U^\times\!\Nm_{\wt{F}/F}(K\otimes_\bZ X_*(T))$ has rank $\dim\wh{T}^{\Gal(\wt{F}/F)}$, its Cartier dual $\ul{\Hom}(\bT/\bO_U^\times\!\Nm_{\wt{F}/F}(K\otimes_\bZ X_*(T)),\bG_m)$ has relative dimension $\dim\wh{T}^{\Gal(\wt{F}/F)}$ over $\La$. Hence Proposition \ref{ss:boundedramification} indicates that $\LS_{\prescript{L}{}T,K}$ has relative dimension $0$ over $\La$. Finally, $\LS_{\prescript{L}{}T,U}$ also has relative virtual dimension $0$ over $\La$, so \cite[Proposition B.0.1]{Han23} implies that $\LS_{\prescript{L}{}T,U}$ is classical.
\end{proof}

\subsection{}
Finally, we prove Theorem G.
\begin{thm*}
Conjecture \ref{ss:globalconjecture} holds when $G=T$ is a torus.
\end{thm*}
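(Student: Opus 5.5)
Both sides of Conjecture \ref{ss:globalconjecture} for $G=T$ can be computed explicitly as sheaves on the Cartier duals appearing in Proposition \ref{ss:boundedramification}, and then matched via class field theory. The case $Z=\varnothing$ I would treat separately: by Example \ref{exmp:BunGC}, $\loc_{\varnothing,!}\La$ is $\Gamma_c$ of the discrete groupoid $\coprod_\alpha T(F)\backslash T(\bA)/T(\bO)$, i.e.\ functions on $\bT/T(\bO)$ (with the derived group cohomology of the finite stabilizers). On the other side, $\LS_{\prescript{L}{}T,C}$ is the gerbe over $\ul{\Hom}(\bT/T(\bO),\bG_m)$ banded by $\wh{T}^{\Gal}$, whose dualizing complex has global sections matching. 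So assume $Z\neq\varnothing$.

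\textbf{Spectral side.} By Theorem \ref{ss:LStorus}, $\LS_{\prescript{L}{}T,U}=\bigcup_K\LS_{\prescript{L}{}T,K}$ is classical, a disjoint union of $\wh{T}^{\Gal(\wt F/F)}$-gerbes over the Cartier duals $\ul{\Hom}(\bT/\bO_U^\times\Nm(K\otimes X_*(T)),\bG_m)$, each flat of relative dimension $\dim\wh{T}^{\Gal}$. Similarly, by \cite[Theorem 6.4.1]{Zou24}, $\LS_{\prescript{L}{}T,F_z}$ is a gerbe over $\ul{\Hom}(T(F_z),\bG_m)$ (a union over compact opens). Under these descriptions, $\lres_Z$ is induced by the map of groups $\prod_z T(F_z)\to\bT/\bO_U^\times\Nm(K\otimes X_*(T))$, together with the map of bands $\wh{T}^{\Gal}\to\prod_z\wh{T}^{\Gal_z}$. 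I would first check that $\om_{\LS_{\prescript{L}{}T,U}}$ is trivial, up to the shift coming from the gerbe. This follows because the Cartier dual of a finitely generated abelian group is a group scheme with trivial dualizing sheaf, and because the gerbe $B\wh{T}^{\Gal}$ contributes a shift by $-\dim$. Then $\lres_{Z,*}\om$ is computed as pushforward of the structure sheaf along a homomorphism of diagonalizable groups (and gerbes), which is given explicitly as the module of functions.

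\textbf{Automorphic side.} Here $\Bun_{T,F_z}=\coprod_{B(F_z,T)}*/\ul{T(F_z)}$, and Zou's $\bL_{\psi_z}$ is essentially Fourier–Mukai/Cartier duality: it sends $D(T(F_z),\La)$-representations to quasicoherent sheaves on $\ul{\Hom}(T(F_z),\bG_m)$, and the grading by $\pi_1(T)_{\Gal_z}\cong B(F_z,T)$ to the $Z(\wh T)^{\Gal}$-weights on the gerbe. On the global side, by Theorem \ref{ss:basiclocus}, $\Bun_{T,F}=\Bun^{\semis}_{T,F}=\coprod_{b\in B(F,T)}*/\ul{T(F)}$, with all elements basic. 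Using Lemma \ref{ss:BunGpullback}, $\Bun_{T,U}$ is $\coprod_b */\ul{T(\cO_U)}$-type data fibered over the adelic quotient. So $\loc_Z$ is a map of classifying stacks of locally profinite groups, and $\loc_{Z,!}\La$ is the $\prod_z T(F_z)$-representation given by compactly supported (derived) functions on $T(F)\backslash T(\bA)/T(\bO_U)$, graded by $B(F,T)\to\prod_z B(F_z,T)$. Here I would use Corollary \ref{ss:classifyingstack} to compute $!$-pushforward along $*/\Gamma\to */H$ as compact induction, with derived invariants for the finite parts.

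\textbf{Matching.} Applying $\bigotimes_z\bL_{\psi_z}$ turns compact induction from a closed subgroup into pushforward of $\sO$ along the dual map of Cartier duals. Lemma \ref{ss:toruscohomology} and the Langlands isomorphism $H_1(W_{\wt F/F},X_*(T))\cong\bT$ identify $T(F)\backslash T(\bA)/T(\bO_U)$ (with its $\ker^1$ and $B(F,T)$ enhancements) with the group whose Cartier duals compose $\LS_{\prescript{L}{}T,U}$, and the grading by $B(F,T)\cong\pi_1(T)\otimes\bX_F$ with the band characters. The Hecke compatibility at $u\in U$ then reduces to \cite[Proposition 4.13]{Zhu20}: both actions are through $\La[T(F_u)/T(\cO_u)]$ acting by translation. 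The \textbf{main obstacle} I expect is the derived and integral bookkeeping. When $\La\neq L$, the finite groups $T(F)\cap T(\bO_U)$ and the gerbes $B\wh T^{\Gal}$ have nontrivial cohomology, and one must check that the derived invariants on the automorphic side match $\om$ including shifts and the Serre twist, i.e.\ Cartier duality between $B\Gamma$ for finite $\Gamma$ and $\ul\Hom(\Gamma,\bG_m)$. I would handle this by reducing to a compatibility of Zou's equivalence with restriction along group homomorphisms, and checking it first for $\bG_m$ and for induced tori.
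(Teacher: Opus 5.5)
\textbf{The gap is the case $Z=\varnothing$.} Theorem \ref{ss:LStorus} is stated and proved only for $Z\neq\varnothing$. For $U=C$, the derived stack $\LS_{\prescript{L}{}T,C}$ is in general \emph{not} the classical gerbe over $\ul{\Hom}(\bT/T(\bO),\bG_m)$ that you describe. Your own automorphic computation shows the claimed match fails. Take $T=\bG_m$, $\La=\bF_\la$ and $\ell\mid q-1$. Then $\loc_{\varnothing,!}\La=\bigoplus_{\Pic C}\Gamma_c(*/\ul{\bF_q^\times},\La)$ has nonzero cohomology in infinitely many degrees. On the classical gerbe over $\ul{\Hom}(\Pic C,\bG_m)$, by contrast, one has $\om\cong\sO$, with global sections $\La[\Pic C]$ in degree $0$.

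The missing classes live in the derived structure. Consider the derived cartesian square $\LS_{\prescript{L}{}T,C}\simeq\LS_{\prescript{L}{}T,C\ssm v}\times_{\LS_{\prescript{L}{}T,F_v}}\LS_{\prescript{L}{}T,\cO_v}$ from \cite[Lemma 3.34]{Zhu20}. The map $\cO_v^\times\to\bA^\times/F^\times\bO_{C\ssm v}^\times$ has kernel $\bF_q^\times$. So the fibre product contributes $\mathrm{Tor}$-terms $\La\otimes^{L}_{\La[\bF_q^\times]}\La$, which are exactly the stabilizer (co)homology on the automorphic side.

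The paper never computes $\LS_{\prescript{L}{}T,C}$. It reduces to $Z\neq\varnothing$ via Proposition \ref{ss:spectralcohomologyformula}, using three inputs:
\begin{itemize}
\item Lemma \ref{ss:BunGpullback} on the automorphic side;
\item the above derived cartesian square on the spectral side;
\item Conjecture \ref{ss:Zhu'sconjecture}.1) for tori (known by \cite{Zou24}), to match $i_{1,!}\cInd_{T(\cO_v)}^{T(F_v)}\La$ with $\sO_{\LS_{\prescript{L}{}T,\cO_v}}$.
\end{itemize}
You should replace your direct $Z=\varnothing$ argument by such a reduction.

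\textbf{The case $Z\neq\varnothing$.} Here your outline is the paper's argument: the gerbe over the Cartier dual and its decomposition by characters of the band; Theorem \ref{ss:basiclocus} with Lemma \ref{ss:BunGpullback}; Kottwitz's lifting criterion \cite[Proposition 15.6]{Kot14} and the $\ker^1(F,T)$-torsor combined with Lemma \ref{ss:toruscohomology}; and Zou's explicit description of $\bigotimes_z\bL_{\psi_z}$. However, the obstacle you single out does not occur.

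Since $Z\neq\varnothing$ and $T(F)\hookrightarrow T(F_z)$, the group $T(F)$ acts freely on $T(\bA)/T(\bO_U)$. The groups $T(F)\cap T(\bO_U)$ do not occur as stabilizers here, and they need not be finite. Hence $\loc_{Z,!}\La$ is the honest smooth representation $\bigoplus_{b'}C_c^\infty(T(F)\backslash T(\bA)/T(\bO_U),\La)$ in degree $0$. Likewise, pushforward along maps of gerbes banded by diagonalizable groups is exact.

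The point you do need to get right is the shift. The Cartier dual has relative dimension $r=\dim\wh{T}^{\Gal(\wt{F}/F)}$, so its dualizing complex is $\sO[r]$, and the band contributes $[-r]$. Hence $\om_{\LS_{\prescript{L}{}T,U}}\cong\sO$ with no residual shift, which is needed to match a degree-$0$ object. Your phrase ``trivial up to the shift coming from the gerbe'' leaves this ambiguous.

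Finally, for $\La\neq L$ the derived Hecke algebra $\End_{T(F_u)}(\cInd_{T(\cO_u)}^{T(F_u)}\La)$ has higher cohomology coming from $T(\cO_u)$. So Hecke compatibility is not merely translation by $T(F_u)/T(\cO_u)$.
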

\begin{proof}
Using Proposition \ref{ss:spectralcohomologyformula}, we can assume that $Z$ is nonempty. Then Theorem \ref{ss:LStorus} and Proposition \ref{ss:boundedramification} indicate that $\LS_{\prescript{L}{}T,U}$ is naturally a gerbe over
  \begin{align*}
    \ul{\Hom}(\bT/\bO_U^\times,\bG_m)\coloneqq\bigcup_K\ul{\Hom}\big(\bT/\bO_U^\times\!\Nm_{\wt{F}/F}(K\otimes_\bZ X_*(T)),\bG_m\big)
  \end{align*}
  banded by $\wh{T}^{\Gal(\wt{F}/F)}$, where $K$ runs over $\Gal(\wt{F}/F)$-stable pro-$p$ open subgroups of $\wt{F}_{\wt{Z}}^\times$. This implies that $\om_{\LS_{\prescript{L}{}T,U}}$ is isomorphic to $\sO_{\LS_{\prescript{L}{}T,U}}$. Using \cite[Proposition 4.3.2]{Zou24}, this also implies that we have a decomposition
  \begin{align*}
    D_{\perf}^{\qc}(\LS_{\prescript{L}{}T,U}) = \bigoplus_{\chi\in X_*(T)_{W_F}}D_{\perf}^{\qc}(\LS_{\prescript{L}{}T,U})_\chi
  \end{align*}
  such that $D_{\perf}^{\qc}(\LS_{\prescript{L}{}T,U})_{\mathbf1}$ is naturally equivalent to $D_{\perf}^{\qc}(\ul{\Hom}(\bT/\bO_U^\times,\bG_m))$, the latter of which \cite[Lemma 4.2.1]{Zou24} identifies with $D(*/\ul{\bT/\bO_U^\times},\La)^\om$. For all $\chi$ in $X_*(T)_{W_F}$, tensoring with the line bundle associated with $\chi$ also yields an equivalence $D_{\perf}^{\qc}(\LS_{\prescript{L}{}T,U})_{\mathbf1}\ra^\sim D_{\perf}^{\qc}(\LS_{\prescript{L}{}T,U})_\chi$.

For all $z$ in $Z$, \cite[Lemma 6.2.3]{Zou24} and \cite[Lemma 4.2.1]{Zou24} indicate that we have an analogous decomposition
\begin{align*}
D_{\perf}^{\qc}(\LS_{\prescript{L}{}T,F_z}) = \bigoplus_{\chi_z\in X_*(T)_{W_{F_z}}}D_{\perf}^{\qc}(\LS_{\prescript{L}{}T,F_z})_{\chi_z},
\end{align*}
where $D_{\perf}^{\qc}(\LS_{\prescript{L}{}T,F_z})_{\chi_z}$ is naturally equivalent to $D(*/\ul{T(F_z)},\La)^\om$ for all $\chi_z$ in $X_*(T)_{W_{F_z}}$. This induces a decomposition
\begin{align}\label{eq:LST}
\bigotimes_{z\in Z}\Ind D^{\qc}_{\perf}(\LS_{\prescript{L}{}T,F_z}) = \prod_{\chi_\bullet\in\bigoplus_{z\in Z}X_*(T)_{W_{F_z}}}D(*/\ul{T(F_Z)},\La).\tag{$\clubsuit$}
\end{align}
Under these identifications, $\lres_{Z,*}(\om_{\prescript{L}{}T,U})$ corresponds to $C_c^\infty(\bT/\bO_U^\times,\La)$ in the factors where $\chi_\bullet$ lies in $\ker(\bigoplus_{z\in Z}X_*(T)_{W_{F_z}}\ra X_*(T)_{W_F})$ and $0$ in the other factors.

Since $T$ is a torus, every element in the Kottwitz set is basic. Therefore $\Bun_{T,U}$ equals $\Bun_{T,U}^{\semis}$, so Theorem \ref{ss:basiclocus} and Lemma \ref{ss:BunGpullback} naturally identify $\Bun_{T,U}$ with
\begin{align*}
\coprod_b\ul{T(F)}\bs\ul{T(\bA)}/\ul{T(\bO_U)T(F_Z)},
\end{align*}
where $b$ runs over elements in $B(F,T)$ satisfying $\loc_u(b)=1$ for all closed points $u$ of $U$. Hence, for all $b_\bullet$ in $\prod_{z\in Z}B(F_z,T)$, the corresponding factor of $\loc_{Z,!}\!\La$ in
\begin{align}\label{eq:BunT}
\bigotimes_{z\in Z}D(\Bun_{T,F_z},\La) = \prod_{b_\bullet\in\prod_{z\in Z}B(F_z,T)}D(*/\ul{T(F_Z)},\La)\tag{$\spadesuit$}
\end{align}
equals $\bigoplus_{b'}C_c^\infty(T(F)\bs T(\bA)/T(\bO_U),\La)$, where $b'$ runs over elements in $B(F,T)$ satisfying $\loc_z(b)=b_z$ for all $z$ in $Z$ and $\loc_u(b)=1$ for all closed points $u$ of $U$. By \cite[Proposition 15.6]{Kot14}, the set of such $b'$ is nonempty if and only if $(\ka(b_z))_z$ lies in $\ker(\bigoplus_{z\in Z}X_*(T)_{W_{F_z}}\ra X_*(T)_{W_F})$, and when this occurs, \cite[p.~83]{Kot14} shows that this set is a $\ker^1(F,T)$-torsor. Using \ref{ss:toruscohomology}, this implies that the images of $\loc_{Z,!}\!\La$ in (\ref{eq:BunT}) and $\lres_{Z,*}(\om_{\prescript{L}{}T,U})$ in (\ref{eq:LST}) are isomorphic. Finally, \cite[Remark 6.4.6]{Zou24} indicates that $\bigotimes_{z\in Z}\bL_{\psi_z}$ is given precisely by composing the equivalences (\ref{eq:BunT}) and (\ref{eq:LST}).
\end{proof}

\appendix

\section{GAGA over sousperfectoid spaces}\label{s:GAGA}
Our goal in this section is to prove Theorem H. Actually, we prove a generalization of Theorem H over any complete Tate Huber pair, though stating it requires the analytic geometry of Clausen--Scholze \cite{Sch20}; see Theorem \ref{ss:pcGAGA}. While we were finalizing this paper, a generalization of Theorem \ref{ss:pcGAGA} was proved independently by Wang \cite[Theorem 4.4.4]{Wan26} along very similar lines.

The proof of Theorem \ref{ss:pcGAGA} closely follows Clausen--Scholze's proof of GAGA for complex analytic spaces \cite[Theorem 13.10]{CS22}. Namely, given an abstract formalism of ``analytic loci'', first we use discrete Huber pairs to prove a GAGA theorem for \emph{all} solid quasi-coherent sheaves. Next, we prove the analogue of Grauert's coherence theorem in our setting. Finally, we use this to prove that the previous GAGA theorem restricts to an equivalence on pseudocoherent objects.

We conclude by applying Theorem H to prove that the analytification of algebraic stacks of bundles agrees with analytic stacks of bundles, answering a question of Heuer--Xu \cite[Remark 8.1.2]{HX24}. This was also proved independently by Wang \cite{Wan26}.

\subsection*{Notation}
For any (animated) condensed ring $\cA$, write $D_{\cond}(\cA)$ for its derived category of condensed $\cA$-modules, and recall that there is a natural fully faithful functor $D(\cA(*))\hra D_{\cond}(\cA)$.

\subsection{}\label{ss:anringpushout}
We start with some notation on \emph{analytic rings}. Write $\AnRing$ for the $\infty$-category of (normalized animated) analytic rings as in \cite[Definition 12.11]{Sch20}, and for any analytic ring $(\cA,\cM)$, write $D(\cA,\cM)\subseteq D_{\cond}(\cA)$ for the full subcategory of complete objects as in \cite[Remark 12.5]{Sch20}. Recall that $D(\cA,\cM)\subseteq D_{\cond}(\cA)$ determines the analytic ring structure $(\cA,\cM)$ on the underlying condensed ring $\cA$.

Recall that $(\cA,\cM)\mapsto D(\cA,\cM)$ yields a functor $D(-):\AnRing\ra\Sym$. 
\begin{lem*}
  The functor $D(-):\AnRing\ra\Sym$ preserves pushouts.
\end{lem*}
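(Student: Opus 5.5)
Given a pushout diagram $(\cB,\cM_\cB)\leftarrow(\cA,\cM_\cA)\to(\cC,\cM_\cC)$ in $\AnRing$, the goal is to show that the natural functor
\begin{align*}
D(\cB,\cM_\cB)\otimes_{D(\cA,\cM_\cA)}D(\cC,\cM_\cC)\to D(\cB\otimes_{\cA}\cC,\cM)
\end{align*}
is an equivalence. Here the target is the analytic ring structure on the pushout. Recall from \cite[Proposition 12.12]{Sch20} that this structure is obtained by completing the condensed animated ring $\cB\otimes^{\bL}_{\cA}\cC$ (more precisely, its completion with respect to the induced structure), and that its complete modules are exactly those condensed modules whose restrictions along $\cB\to\cB\otimes_\cA\cC$ and $\cC\to\cB\otimes_\cA\cC$ are complete. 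The plan is to write both sides as module categories over a single commutative algebra object, and then compare.

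First I will use that $D(\cA,\cM_\cA)$ is compactly generated by the free objects $\cA[S]^{\cM}=\cM_\cA[S]$ for extremally disconnected $S$. The map $(\cA,\cM_\cA)\to(\cB,\cM_\cB)$ induces a symmetric monoidal colimit-preserving functor $\cB\otimes^{\cM}_{\cA}-$, which sends free generators to free generators. I claim that $D(\cB,\cM_\cB)$ is identified with $\mathrm{Mod}_{\cB}(D(\cA,\cM_\cA)')$, where $D(\cA,\cM_\cA)'$ denotes the category of $\cA$-modules that are complete for the structure induced from $\cB$. More precisely, I will factor the map as $(\cA,\cM_\cA)\to(\cA,\cM_{\cB|\cA})\to(\cB,\cM_\cB)$, with the induced analytic structure on $\cA$ as in \cite[Proposition 12.8]{Sch20}. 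The second map is ``steady''/module-like: $D(\cB,\cM_\cB)=\mathrm{Mod}_\cB(D(\cA,\cM_{\cB|\cA}))$. The first map is a localization: $D(\cA,\cM_{\cB|\cA})\subseteq D(\cA,\cM_\cA)$ is a smashing-type Bousfield localization, given by a symmetric monoidal left adjoint with fully faithful right adjoint. I will do the same for $\cC$.

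Next I will compute the relative tensor product in $\Sym$ of localizations and module categories. For module categories this is Lurie's formula
\begin{align*}
\mathrm{Mod}_\cB(\cD)\otimes_\cD\mathrm{Mod}_\cC(\cD)=\mathrm{Mod}_{\cB\otimes\cC}(\cD)
\end{align*}
\cite[Theorem 4.8.4.6]{Lur17}. For localizations, the tensor product of two localizations of $\cD$ is the localization whose local objects are those that are local for both. Combining these, the left-hand side becomes modules over the tensor product of $\cB$ and $\cC$ in the joint localization. By the description of the pushout analytic ring recalled above, this is precisely $D(\cB\otimes_\cA\cC,\cM)$.

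The main obstacle I expect is the localization step. A general analytic ring map need not induce a \emph{smashing} localization, and the completeness condition is only imposed on underlying objects. I will therefore verify directly, by a generators-and-Hom computation, that the functor above is fully faithful on compact generators $\cM_\cB[S]\boxtimes\cM_\cC[T]$ and essentially surjective. For full faithfulness I will compute Homs in the Lurie tensor product via the bar construction and compare with $\cM[S\times T]$. For essential surjectivity I will use that both categories are generated under colimits by these generators. If the localization is not smashing, I would fall back on this generator comparison together with the explicit colimit formula for pushouts in \cite[Proposition 12.12]{Sch20}.
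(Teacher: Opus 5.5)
\textbf{The gap: the factorization step fails.} You write the map as $(\cA,\cM_\cA)\to(\cA,\cM_{\cB|\cA})\to(\cB,\cM_\cB)$ with $D(\cB,\cM_\cB)\simeq\Mod_\cB(D(\cA,\cM_{\cB|\cA}))$. No such factorization exists in general.

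First, \cite[Proposition 12.8]{Sch20} induces analytic structures \emph{forward} along $\cA\to\cB$, not back onto $\cA$. More fundamentally, any identification $D(\cB,\cM_\cB)\simeq\Mod_\cB(\cD')$ with $\cD'$ a full subcategory of $D(\cA,\cM_\cA)$ would mean that $\cM_\cB$-completeness of a $\cB$-module depends only on its underlying $\cA$-module. This already fails for $\bZ_\solid\to\bZ[T]_\solid$:
\begin{itemize}
\item $\bZ\lp{T^{-1}}\cong\bZ[T]\oplus T^{-1}\bZ\lb{T^{-1}}$ and $\bZ[T]\oplus\bZ\lb{T}$ have isomorphic underlying solid abelian groups.
\item The second is $\bZ[T]_\solid$-complete, since $\bZ\lb{T}=\varprojlim_n\bZ[T]/T^n$ is a limit of discrete $\bZ[T]$-modules.
\item The first is a nonzero module over the idempotent algebra $\bZ\lp{T^{-1}}$, whose open embedding is $D_\solid(\bZ[T],\bZ)\to D_\solid(\bZ[T])$ (see \ref{ss:Z[T]facts}). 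So it solidifies to $0$ and is not complete.
\end{itemize}
Hence your combination of Lurie's formula with an intersection of localizations of $D(\cA,\cM_\cA)$ has nothing to apply to. Your fallback (computing Homs between generators by a bar construction) is not a checkable argument either: Hom spaces in a relative Lurie tensor product are not computed that way unless you already have a module-category model.

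\textbf{The repair: factor in the other order.} Use $(\cA,\cM_\cA)\to(\cB,\cM_\cA^{\cB})\to(\cB,\cM_\cB)$.
\begin{itemize}
\item First induce along $\cA\to\cB$. This is legitimate because $\cB$ is $\cM_\cA$-complete, and it gives $D(\cB,\cM_\cA^{\cB})=\Mod_\cB(D(\cA,\cM_\cA))$.
\item Then pass to the full subcategory $D(\cB,\cM_\cB)$. This is the localization at the set of maps $\cB\otimes_{(\cA,\cM_\cA)}\cM_\cA[S]\to\cM_\cB[S]$ for extremally disconnected $S$.
\end{itemize}
Your worry about smashing is a red herring. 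This localization is typically not smashing, but it is symmetric monoidal (its kernel is a tensor ideal), and that alone implies that base change in $\Sym$ turns it into the localization at the image of that set.

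With this, $D(\cB,\cM_\cB)\otimes_{D(\cA,\cM_\cA)}D(\cC,\cM_\cC)$ becomes $\Mod_{\cB\otimes_{(\cA,\cM_\cA)}\cC}(D(\cA,\cM_\cA))$ localized at both sets. Concretely, these are the modules whose restrictions to $\cB$ and to $\cC$ are complete. Its unit is the localization of $\cB\otimes_{(\cA,\cM_\cA)}\cC$, and \cite[Proposition 12.12]{Sch20} identifies this category with $D$ of the pushout. That is exactly the input the paper takes from \cite[Proposition 12.12]{Sch20} before passing to adjoints. With the corrected factorization, your argument becomes an explicit, correct version of the paper's one-line proof.
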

\begin{proof}
  Consider a pushout square in $\AnRing$
  \begin{align*}
    \xymatrix{(\cA,\cM_\cA)\ar[r]\ar[d] & (\cB,\cM_\cB)\ar[d]\\
    (\cC,\cM_\cC)\ar[r] & (\cD,\cM_\cD).}
  \end{align*}
Then \cite[Proposition 12.12]{Sch20} shows that we have a pullback square
  \begin{align*}
    \xymatrix{D(\cA,\cM_\cA) & \ar[l]D(\cB,\cM_B)\\
    D(\cC,\cM_\cC)\ar[u] & \ar[l]\ar[u]D(\cD,\cM_\cD),}
  \end{align*}
  so the result follows from passing to adjoints.
\end{proof}

\subsection{}
Next, let us recall the theory of quasi-coherent modules for analytic rings. For any $(\cA,\cM)$ in $\AnRing$, write $\AnSpec(\cA,\cM)$ for the corresponding object of $\AnRing^{\op}$. Endow $\AnRing^{\op}$ with the topology generated by finite families of jointly conservative steady localizations, as in \cite[p.~95]{Sch20}. Consider the following classes of morphisms $\AnSpec(\cB,\cN)\ra\AnSpec(\cA,\cM)$ in $\AnRing^{\op}$:
\begin{enumerate}[(A)]
\item[($I$)] the functor $D(\cA,\cM)\ra D(\cB,\cN)$ is an open embedding as in \cite[p.~61]{CS22},
\item[($P$)] $(\cB,\cN)$ is induced from $(\cA,\cM)$ along $\cA\ra\cB$ as in \cite[Proposition 12.8]{Sch20}.
\end{enumerate}
Write $E$ for the class of morphisms in $\AnRing^{\op}$ of the form $p\circ i$ for $p$ in $P$ and $i$ in $I$. Then the proof of \cite[Lemma 3.2.5]{RC24}\footnote{While \cite[Lemma 3.2.5]{RC24} restricts to the full subcategory of $\AnRing^{\op}$ consisting of solid affinoids as in \cite[Definition 2.6.6]{RC24}, this hypothesis is not used in the proof of \cite[Lemma 3.2.5]{RC24}.} shows that
\begin{enumerate}[$\bullet$]
\item $I$ and $P$ form a suitable decomposition of $E$ as in \cite[Definition A.5.9]{Man22},
\item the functor $D(-):\AnRing\ra\Sym$ satisfies the conditions in \cite[Proposition  A.5.10]{HM24} with respect to $I$ and $P$,
\end{enumerate}
so \cite[Proposition A.5.10]{HM24} endows $\AnSpec(\cA,\cM)\mapsto D(\cA,\cM)$ with the structure of a 6-functor formalism on $\AnRing^{\op}$ with $E$ as its class of $!$-able morphisms. By \cite[Proposition 12.18]{Sch20}, this $6$-functor formalism is sheafy as in \cite[Definition 3.4.1]{HM24}.

Write $\{\mbox{analytic stacks}\}$ for the $\infty$-category of sheaves of anima on $\AnRing^{\op}$. Then \cite[Theorem 3.4.11]{HM24} extends the above $6$-functor formalism on $\AnRing^{\op}$ to a sheafy $6$-functor formalism on $\{\mbox{analytic stacks}\}$. Note that the $\infty$-category of analytic spaces as in \cite[Definition 13.5]{Sch20} is a full subcategory of $\{\mbox{analytic stacks}\}$.

\subsection{}\label{ss:AnRinginducednuclear}
\emph{Nuclear} modules enjoy the following compatibility with induced analytic ring structures. Let $(\cA,\cM)$ be an analytic ring, let $\cB$ be an algebra in $D(\cA,\cM)$, and write $(\cB,\cN)$ for the analytic ring induced from $(\cA,\cM)$ along $\cA\ra\cB$ as in \cite[Proposition 12.8]{Sch20}. Then Lemma \ref{ss:anringpushout} implies that $D(\cB,\cN)$ is naturally equivalent to $\Mod_\cB(D(\cA,\cM))$, so the image of $-\otimes_{(\cB,\cN)}-$ in $D(\cA,\cM)$ is given by $-\otimes_\cB-$.
\begin{lem*}
  Assume that $\cB$ in $D(\cA,\cM)$ is nuclear as in \cite[Definition 8.5 (1)]{CS22}. Then an object $C$ of $D(\cB,\cN)$ is nuclear if and only if its image in $D(\cA,\cM)$ is nuclear.
\end{lem*}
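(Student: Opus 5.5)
The plan is to use the characterization of nuclear objects in terms of compact objects. By \cite[Definition 8.5 (1)]{CS22}, an object $M$ of $D(\cA,\cM)$ is nuclear when, for every compact $P$ of $D(\cA,\cM)$, the natural map $\ul{\Hom}(\mathbf 1,P^\vee\otimes M)\to\ul{\Hom}(P,M)$ evaluated at $*$ is an isomorphism. Here $P^\vee$ is the internal dual. Equivalently, every map from a compact object to $M$ is trace class. I would carry out the comparison entirely inside $D(\cB,\cN)=\Mod_\cB(D(\cA,\cM))$, using the identification from Lemma~\ref{ss:anringpushout} and the fact that $\otimes_{(\cB,\cN)}$ becomes $\otimes_\cB$.

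The first step is to choose generators. The compact objects of $D(\cB,\cN)$ are generated under finite colimits and retracts by $\cB\otimes P$ for $P$ compact in $D(\cA,\cM)$. For such objects we have the duality $(\cB\otimes P)^\vee=\cB\otimes P^\vee$ internally in $\Mod_\cB$, and the adjunction
\[
\ul{\Hom}_\cB(\cB\otimes P,C)=\ul{\Hom}_\cA(P,C).
\]
Since nuclearity need only be tested on a generating family of compacts, and the defining condition is stable under finite colimits and retracts in $P$, the following holds for $C$ in $D(\cB,\cN)$. The object $C$ is nuclear over $(\cB,\cN)$ if and only if
\[
\ul{\Hom}_\cB(\cB,\cB\otimes P^\vee\otimes_\cB C)(*)\to\ul{\Hom}_\cB(\cB\otimes P,C)(*)
\]
is an isomorphism for all compact $P$ in $D(\cA,\cM)$. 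This map rewrites as
\[
\ul{\Hom}_\cA(\mathbf 1,\cB\otimes_\cB(P^\vee\otimes C))\to\ul{\Hom}_\cA(P,C).
\]
Because $\cB\otimes_\cB(P^\vee\otimes C)=P^\vee\otimes C$, this is exactly the map defining nuclearity of the image of $C$ in $D(\cA,\cM)$. That gives both directions at once, and it is not yet clear where the hypothesis that $\cB$ is nuclear enters.

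The hypothesis enters in the compatibility of internal Hom and duals with the forgetful functor. The point to check carefully, and the main obstacle, is whether the definition in \cite{CS22} uses the internal Hom from the unit, whose underlying object in $\Mod_\cB$ is $\ul{\Hom}_\cA(\cB,-)$ rather than $\ul{\Hom}_\cA(\mathbf 1,-)$. Alternatively, the definition might be phrased via trace-class maps $\mathbf 1_\cB=\cB\to C$. In that version one must compare trace-class maps over $\cB$ with those over $\cA$.

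If this comparison is needed, I would use the nuclearity of $\cB$ as follows. Write $\cB$ as a filtered colimit of basic nuclear objects, that is, sequential colimits along trace-class maps from compacts, as in \cite[Theorem 8.6]{CS22}. A map $\cB\otimes P\to C$ that is trace class over $\cA$ after precomposing with $P\to\cB\otimes P$ then extends to a map that is trace class over $\cB$, because the unit $\mathbf 1\to\cB$ factors through trace-class maps locally. Conversely, $\cB$-trace-class maps give $\cA$-trace-class maps by forgetting, since nuclear $\cB$ makes the forgetful functor compatible with $P^\vee\otimes-$. Combining this with the previous step proves the equivalence. I expect this unit/trace-class bookkeeping to be the only nontrivial point.
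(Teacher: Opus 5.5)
Your skeleton is the same as the paper's. You test nuclearity on the generators $\cB\otimes P$ (the paper uses $\cN[S]=\cB\otimes\cM[S]$), use $\ul{\Hom}_\cB(\cB\otimes P,C)=\ul{\Hom}_\cA(P,C)$, and recognize the resulting map as the one testing nuclearity of the underlying object.

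The gap is the identity you treat as free, $(\cB\otimes P)^\vee=\cB\otimes P^\vee$ in $\Mod_\cB$. This is exactly the step that needs the hypothesis, and it fails in general. The dual in $\Mod_\cB$ is $\ul{\Hom}_\cB(\cB\otimes P,\cB)=\ul{\Hom}_\cA(P,\cB)$. Since compact objects such as $\cM[S]$ need not be dualizable, the comparison map $\ul{\Hom}_\cA(P,\cA)\otimes_{(\cA,\cM)}\cB\to\ul{\Hom}_\cA(P,\cB)$ need not be an isomorphism. For example, over $\bZ_\solid$ with $P=\prod_I\bZ$ ($I$ infinite) and $\cB=\bZ\lb{T}$, it is the non-surjective inclusion $\bigoplus_I\prod_{\bN}\bZ\to\prod_{\bN}\bigoplus_I\bZ$.

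What you need is the internal form of nuclearity of $\cB$: for $P$ compact, $\ul{\Hom}_\cA(P,\cA)\otimes_{(\cA,\cM)}\cB\to\ul{\Hom}_\cA(P,\cB)$ is an isomorphism of objects, not merely after evaluating at $*$. The internal version matters because you then tensor it with $C$ over $\cB$. This is \cite[Proposition 5.35]{And21}, which is what the paper invokes. With it, $\ul{\Hom}_\cB(\cB\otimes P,\cB)\otimes_\cB C\cong P^\vee\otimes_{(\cA,\cM)}C$, and your chain of identifications gives both directions at once.

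Your diagnosis of where the hypothesis enters is off, and your last two paragraphs should be replaced by the argument above.
The internal Hom out of the unit $\cB$ of $\Mod_\cB$ is the identity functor, so $\ul{\Hom}_\cB(\cB,Y)(*)=Y(*)=\ul{\Hom}_\cA(\cA,Y)(*)$. The coinduction $\ul{\Hom}_\cA(\cB,-)$ never appears, and there is no unit issue to resolve.

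The trace-class sketch does not fill the gap either. The claim that forgetting to $D(\cA,\cM)$ is compatible with trace-class maps out of $\cB\otimes P$ already requires comparing $\ul{\Hom}_\cB(\cB\otimes P,\cB)$ with $P^\vee\otimes_{(\cA,\cM)}\cB$, which is the same missing identity. The other direction (``the unit $\mathbf 1\to\cB$ factors through trace-class maps locally'') is not stated precisely enough to be checked.
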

\begin{proof}
  Recall that $C$ in $D(\cB,\cN)$ is nuclear if and only if, for all extremally disconnected $S$, the natural morphism
  \begin{align*}
    (\ul{\Hom}_\cB(\cN[S],\cB)\otimes_\cB C)(*)\ra\ul{\Hom}_\cB(\cN[S],C)(*)
  \end{align*}
  is an isomorphism. Adjunction shows that $\ul{\Hom}_\cB(\cN[S],-)=\ul{\Hom}_\cA(\cM[S],-)$, and because $\cB$ in $D(\cA,\cM)$ is nuclear, \cite[Proposition 5.35]{And21}  shows that $\ul{\Hom}_\cA(\cM[S],\cB)$ is naturally isomorphic to $\ul{\Hom}_\cA(\cM[S],\cA)\otimes_{(\cA,\cM)}\cB$. Therefore the above becomes
  \begin{align*}
    (\ul{\Hom}_\cA(\cM[S],\cA)\otimes_{(\cA,\cM)}C)(*)\ra\ul{\Hom}_\cA(\cM[S],C)(*),
  \end{align*}
and this is an isomorphism for all extremally disconnected $S$ if and only if the image of $C$ in $D(\cA,\cM)$ is nuclear.
\end{proof}

\subsection{}\label{ss:AnRingnucleargluing}
When the analytic rings are all nuclear over a base, we can check nuclearity on a cover as follows. Let $(\cA,\cM)$ be an analytic ring, let $\cB$ be a nuclear algebra in $D(\cA,\cM)$, and write $(\cB,\cN)$ for the analytic ring induced from $(\cA,\cM)$ along $\cA\ra\cB$ as in \cite[Proposition 12.8]{Sch20}.

Let $\{\cB_j\}_{j=1}^r$ be a finite family of nuclear algebras in $D(\cB,\cN)$, and write $(\cB_j,\cN_j)$ for the analytic ring induced from $(\cB,\cN)$ along $\cB\ra\cB_j$ as in \cite[Proposition 12.8]{Sch20} for all $1\leq j\leq r$. Assume that $\big\{(\cB,\cN)\ra(\cB_j,\cN_j)\big\}_{j=1}^r$ is a family of jointly conservative steady localizations.
\begin{prop*}
An object $C$ of $D(\cB,\cN)$ is nuclear if and only if $C\otimes_\cB\cB_j$ in $D(\cB_j,\cN_j)$ is nuclear for all $1\leq j\leq r$.
\end{prop*}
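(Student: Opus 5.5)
The forward direction needs no new idea. If $C$ is nuclear in $D(\cB,\cN)$, then so is $C\otimes_\cB\cB_j$ in $D(\cB_j,\cN_j)$, because base change along a map of analytic rings preserves nuclear objects. Concretely, I will use the criterion recalled in the proof of Lemma \ref{ss:AnRinginducednuclear}. That lemma reduces the question to the image of $C\otimes_\cB\cB_j$ in $D(\cA,\cM)$, which is $C\otimes_\cB\cB_j$ with $\cB_j$ nuclear over $\cB$. Tensor products of nuclear objects are nuclear, so this object is nuclear. By Lemma \ref{ss:AnRinginducednuclear}, everything can be computed in $D(\cA,\cM)$, or in $D(\cB,\cN)$ via the induced structure.

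For the converse, I will use steadiness and joint conservativity to write $C$ as a finite limit of objects built from the $C\otimes_\cB\cB_j$. Because the localizations are steady and jointly conservative, descent (\cite[Proposition 12.18]{Sch20}, already used for sheafiness above) gives
\begin{align*}
C\simeq\lim_{\emptyset\neq J\subseteq\{1,\dotsc,r\}}C\otimes_\cB\cB_J,\qquad \cB_J\coloneqq\textstyle\bigotimes_{j\in J,\cB}\cB_j .
\end{align*}
This is a \v{C}ech limit over a finite cube. Each $C\otimes_\cB\cB_J$ is a base change of some $C\otimes_\cB\cB_j$ with $j\in J$ along the nuclear $\cB_j$-algebra $\cB_J$. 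By the forward direction applied to $\cB_j$, the object $C\otimes_\cB\cB_J$ is nuclear in $D(\cB_J,\cN_J)$. Lemma \ref{ss:AnRinginducednuclear}, applied to $\cB_J$ nuclear over $\cB$, then shows it is nuclear in $D(\cB,\cN)$.

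It remains to see that nuclear objects in $D(\cB,\cN)$ are closed under finite limits. Since the category is stable, closure under fibers (equivalently cofibers and shifts) suffices. In the criterion
\begin{align*}
(\ul{\Hom}_\cB(\cN[S],\cB)\otimes_\cB -)(*)\ra\ul{\Hom}_\cB(\cN[S],-)(*),
\end{align*}
both sides are exact functors of the variable, so the class of objects for which this map is an isomorphism is closed under fibers. Hence $C$ is nuclear.

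The main obstacle is justifying the finite \v{C}ech limit description, that is, that steady jointly conservative covers satisfy descent for every object of $D(\cB,\cN)$, not just for the structure sheaf. I would first try to get this directly from \cite[Proposition 12.18]{Sch20}. If that is not sufficient, I would argue by hand: steadiness makes the localization functors commute with the finite limit, so the map from $C$ to the limit becomes an isomorphism after applying each $-\otimes_\cB\cB_j$ (the cube then has a cone point), and joint conservativity finishes the argument.
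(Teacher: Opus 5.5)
Your proof is correct. The converse is the paper's argument: descent writes $C$ as the finite \v{C}ech limit of the $C\otimes_\cB\cB_J$, these terms are nuclear in $D(\cB,\cN)$, and nuclear objects are closed under finite limits. You prove the last point by hand, from exactness of the criterion, rather than citing \cite[Theorem 8.6 (1)]{CS22}. Your fallback for the descent step is also sound. Note, though, that the work there is done by exactness of $-\otimes_\cB\cB_j$ and by idempotence $\cB_j\otimes_\cB\cB_j\simeq\cB_j$ (which gives the cone point), not by steadiness.

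The forward direction is where you diverge. The paper observes that the symmetric monoidal, colimit-preserving functor $D(\cB,\cN)\ra D(\cB_j,\cN_j)$ preserves compact objects and trace-class maps. It therefore preserves basic nuclear objects, hence all nuclear objects by \cite[Theorem 8.6 (2)]{CS22}. This needs no hypothesis on $\cB_j$, so it applies verbatim to $(\cB_j,\cN_j)\ra(\cB_J,\cN_J)$. You instead combine Lemma \ref{ss:AnRinginducednuclear} with $\otimes$-stability of nuclear objects. That reuses the lemma, but it depends on $\cB_j$ being nuclear and the analytic ring structure being induced. Also, $\otimes$-stability comes out of the same structure theory in \cite{CS22}, so no input is saved.

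Two points of precision:
\begin{itemize}
\item Apply the lemma with base $(\cB,\cN)$. There the image of $C\otimes_\cB\cB_j$ is an honest tensor product of the nuclear objects $C$ and $\cB_j$. In $D(\cA,\cM)$ it is only a relative tensor product over $\cB$, which would need a bar-construction argument.
\item Your assertion that $\cB_J$ is a nuclear $\cB_j$-algebra needs a line of justification. You can skip it entirely: the image of $C\otimes_\cB\cB_J$ in $D(\cB,\cN)$ is $(C\otimes_\cB\cB_j)\otimes_\cB\cB_{J\ssm\{j\}}$, a tensor product of objects already known to be nuclear there.
\end{itemize}
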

\begin{proof}
  Since the functor $D(\cB,\cN)\ra D(\cB_j,\cN_j)$ is symmetric monoidal, it preserves trace-class morphisms as in \cite[Definition 8.1]{CS22}. It also preserves colimits, so it preserves basic nuclear objects as in \cite[Definition 8.5 (2)]{CS22} and hence nuclear objects by \cite[Theorem 8.6 (2)]{CS22}.

  Conversely, assume that $C\otimes_\cB\cB_j$ in $D(\cB_j,\cN_j)$ is nuclear for all $1\leq j\leq r$. Then Lemma \ref{ss:AnRinginducednuclear} indicates that the image of $C\otimes_\cB\cB_j$ in $D(\cB,\cN)$ is nuclear. Descent shows that $C$ is naturally isomorphic to a finite limit with terms of the form $C\otimes_\cB\bigotimes_{j\in J}\cB_j$ for nonempty subsets $J$ of $\{1,\dotsc,r\}$, and because the $C\otimes_\cB\bigotimes_{j\in J}\cB_j$ are nuclear, \cite[Theorem 8.6 (1)]{CS22} implies that $C$ is also nuclear.
\end{proof}

\subsection{}\label{ss:AnRingpseudocompact}
In the following situation, we can check \emph{pseudocompactness} on a cover. Let $(\cA,\cM)$ be an analytic ring. Let $\big\{(\cA,\cM)\ra(\cA_j,\cM_j)\big\}_{j=1}^r$ be a finite family of jointly conservative steady localizations such that, for all $1\leq j\leq r$, the functor $D(\cA,\cM)\ra D(\cA_j,\cM_j)$ is the open embedding as in \cite[p.~61]{CS22} corresponding to an idempotent algebra $\cI_j$ in $D^{\leq0}(\cA,\cM)$.
\begin{prop*}
An object $C$ of $D(\cA,\cM)$ is pseudocompact as in \cite[Definition 9.9]{CS22} if and only if its image in $D(\cA_j,\cM_j)$ is pseudocompact for all $1\leq j\leq r$.
\end{prop*}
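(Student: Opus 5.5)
Recall from \cite[Definition 9.9]{CS22} that $C$ in $D(\cA,\cM)$ is pseudocompact if it is nuclear-compact in the appropriate sense: for every filtered (or countable) system, $\Hom(C,-)$ commutes with the relevant colimits of nuclear objects. Equivalently, $C$ is pseudocompact exactly when $\ul{\Hom}_\cA(C,-)$ commutes with the required colimits, or when $C$ is a retract of a ``basic'' countable limit of compact objects. I would prove each direction separately, using only formal properties of the open embeddings $D(\cA,\cM)\ra D(\cA_j,\cM_j)$.

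For the forward direction, the plan is to observe that each localization $j^*=-\otimes_{(\cA,\cM)}\cA_j$ has a right adjoint $j_*$, the fully faithful inclusion. Because the embedding is open and corresponds to an idempotent algebra $\cI_j$ concentrated in $D^{\leq0}$, the inclusion $j_*$ also preserves colimits. It is moreover $t$-exact up to a bounded shift, since $\cI_j$ lies in $D^{\leq0}$. Then for any system $(M_i)$ in $D(\cA_j,\cM_j)$ we have
\[
\Hom(j^*C,\varinjlim M_i)=\Hom(C,j_*\varinjlim M_i)=\Hom(C,\varinjlim j_*M_i),
\]
so pseudocompactness of $C$ transfers to $j^*C$. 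The one point to check is that $j_*$ sends the class of test objects in the definition (for instance bounded-above or connective objects) into the corresponding class. This is where the hypothesis $\cI_j\in D^{\leq0}$ enters.

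For the converse, I would use descent along the jointly conservative steady localizations. This gives a natural equivalence
\[
C\simeq\varprojlim_{\varnothing\neq J\subseteq\{1,\dotsc,r\}}j_{J,*}j_J^*C,
\]
a finite limit, where $j_J$ is the localization attached to $\bigotimes_{j\in J}\cI_j$. Each $j_J^*C$ is the image of $j^*C$ for some $j\in J$ under a further open localization, so by the forward direction it is pseudocompact. Since pseudocompact objects are closed under finite limits (they form a stable subcategory), it then remains to show that $j_{J,*}$ preserves pseudocompactness. For this I would use the other adjunction: $j_{J,*}$ has a colimit-preserving right adjoint $\ul{\Hom}(\cI_J,-)$. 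That right adjoint also preserves the test class up to bounded shift, because $\cI_J$ is connective. Given this, $\Hom(j_{J,*}P,\varinjlim M_i)=\Hom(P,\varinjlim \ul{\Hom}(\cI_J,M_i))$.

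The main obstacle I expect is precisely this last step: verifying that $\ul{\Hom}(\cI_J,-)$ commutes with the colimits appearing in the definition of pseudocompactness, and respects the required boundedness. I would first try to deduce this from the fact that $\cI_J$ is idempotent and connective. Alternatively, one could express $\cI_J$ as a colimit of compact projective generators, so that internal Hom out of it is controlled by countable limits, which the definition in \cite[Definition 9.9]{CS22} is designed to tolerate.
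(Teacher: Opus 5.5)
Your forward direction matches the paper's. The forgetful functor $j_*\colon D(\cA_j,\cM_j)\to D(\cA,\cM)$ is $t$-exact and preserves direct sums, so adjunction transfers to $j^*C$ the property ``for every $d$, $\Hom(C,-)$ commutes with direct sums of objects of $D^{\geq d}$''. The reason is simply that $j_*$ is the forgetful functor of a localization of analytic rings; the hypothesis $\cI_j\in D^{\leq0}$ plays no role here.

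The converse has a genuine gap. The adjunction you rely on is false: $\ul{\Hom}(\cI_J,-)$ is right adjoint to the inclusion of $\cI_J$-modules (the closed complement), not to $j_{J,*}$. In fact the essential image of $j_{J,*}$ consists exactly of the $M$ with $\ul{\Hom}(\cI_J,M)=0$. Taking the constant system $M_i=j_{J,*}P$ in your formula would therefore give $\Hom(j_{J,*}P,j_{J,*}P)=0$ for every $P$. So your key claim, that $j_{J,*}$ preserves pseudocompact objects, is left unproved. The actual right adjoint of the forgetful functor is a coreflection with no evident control over direct sums or amplitude, and resolving $\cI_J$ by compact projectives does not address this.

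The fix is to apply descent to the target rather than to $C$. For every $M$ one has $M\simeq\varprojlim_J j_{J,*}j_J^*M$, so the adjunction $j_J^*\dashv j_{J,*}$ alone gives
\[
\Hom(C,M)\simeq\varprojlim_{\varnothing\neq J\subseteq\{1,\dotsc,r\}}\Hom(j_J^*C,j_J^*M).
\]
Two properties of $j_J^*$ then finish the argument.
\begin{enumerate}
\item $j_J^*$ commutes with direct sums, being a left adjoint.
\item $j_J^*$ has bounded amplitude on bounded-below objects. This is where $\cI_j\in D^{\leq0}$ really enters: $j_*j^*\simeq\ul{\Hom}_\cA(\fib(\cA\to\cI_j),-)$ with $\fib(\cA\to\cI_j)\in D^{\leq1}$. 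Hence each $j^*$ sends $D^{\geq d}$ into $D^{\geq d-1}$, and $j_J^*$ sends $D^{\geq d}$ into $D^{\geq d-\#J}$.
\end{enumerate}
Each $j_J^*C$ is pseudocompact, by your forward argument applied to a further localization. Since finite limits commute with direct sums, you get $\Hom(C,\bigoplus_\alpha M_\alpha)\simeq\bigoplus_\alpha\Hom(C,M_\alpha)$ for every uniformly bounded-below family $(M_\alpha)_\alpha$. This is the paper's proof.

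Alternatively, you could resolve $C$ by the objects $j_{J,!}j_J^*C$. The right adjoint of $j_{J,!}$ is $j_J^*$, so $j_{J,!}$ does preserve pseudocompactness. It is only with $j_{J,*}$ that your argument breaks.
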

\begin{proof}
  Because $D(\cA_j,\cM_j)\ra D(\cA,\cM)$ is left $t$-exact and preserves direct sums, if $C$ is pseudocompact, then its image in $D(\cA_j,\cM_j)$ is pseudocompact.

  Conversely, assume that the image of $C$ in $D(\cA_j,\cM_j)$ is pseudocompact for all $1\leq j\leq r$. Since the composition $D(\cA,\cM)\ra D(\cA_j,\cM_j)\ra D(\cA,\cM)$ is given by $\ul{\Hom}_\cA(\fib(\cA\ra\cI_j),-)$ and $\cI_j$ lies in $D^{\leq0}(\cA,\cM)$, we see that $D(\cA,\cM)\ra D(\cA_j,\cM_j)$ sends $D^{\leq0}(\cA,\cM)$ to $D^{\leq1}(\cA_j,\cM_j)$. Descent shows that $\Hom_\cA(C,-)$ is naturally isomorphic to a finite limit with terms of the form
  \begin{align*}
\Hom_{\bigotimes_{j\in J}(\cA_j,\cM_j)}(C\otimes_{(\cA,\cM)}\textstyle\bigotimes_{j\in J}(\cA_j,\cM_j),-\otimes_{(\cA,\cM)}\bigotimes_{j\in J}(\cA_j,\cM_j))
  \end{align*}
  for nonempty subsets $J$ of $\{1,\dotsc,r\}$. The above shows that $-\otimes_{(\cA,\cM)}\bigotimes_{j\in J}(\cA_j,\cM_j)$ sends $D^{\leq0}(\cA,\cM)$ to $D^{\leq(\# J)}\big(\!\bigotimes_{j\in J}(\cA_j,\cM_j)\big)$. Because $C\otimes_{(\cA,\cM)}\bigotimes_{j\in J}(\cA_j,\cM_j)$ is pseudocompact, this implies that $C$ is also pseudocompact.
\end{proof}

\subsection{}\label{ss:nonarchcomputationsZ}
Next, let us recall the analytic rings associated with complete Huber pairs. For any complete Huber pair $(A,A^+)$, write $(A,A^+)_\solid$ for the associated analytic ring as in \cite[Proposition 13.16]{Sch20}, write $(A,A^+)_\solid[-]$ for its functor of measures, and write $D_\solid(A,A^+)$ for $D((A,A^+)_\solid)$. When $A^+$ equals $A^\circ$, we omit it from our notation. By \cite[Proposition 3.29]{And21}, we see that $D_\solid(A,A^+)$ is naturally equivalent to $\Mod_A(D_\solid(A^+))$.
\begin{lem*}
We have the following pushout squares in $\AnRing$:
  \begin{gather*}
        \xymatrix{(\bZ[X,Y],\bZ)_\solid\ar[r]\ar[d] & (\bZ[X,Y],\bZ[X])_\solid\ar[d] \\
    (\bZ[X,Y],\bZ[Y])_\solid\ar[r] & \bZ[X,Y]_\solid,} \\
    \xymatrix{(\bZ[XY],\bZ)_\solid\ar[r]\ar[d] & \bZ[XY]_\solid\ar[d] \\
   (\bZ[X,Y],\bZ)_\solid\ar[r] & (\bZ[X,Y],\bZ[XY])_\solid,}     \xymatrix{(\bZ[X+Y],\bZ)_\solid\ar[r]\ar[d] & \bZ[X+Y]_\solid\ar[d] \\
    (\bZ[X,Y],\bZ)_\solid\ar[r] & (\bZ[X,Y],\bZ[X+Y])_\solid.}
  \end{gather*}
\end{lem*}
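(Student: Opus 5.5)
We check each square after applying $D(-)$ and use that, for a discrete ring $A$ and finitely generated $\bZ$-subalgebra $R\subseteq A$, the analytic ring $(A,R)_\solid$ is induced from $(\bZ[\text{generators of }R],\bZ)_\solid$ together with the solid structure on the generators. Concretely, $(A,R)_\solid$ is the analytic ring structure on $A$ induced from $(\bZ,\bZ)_\solid$ in which each generator $T$ of $R$ is additionally required to be ``solid''. Equivalently, $D_\solid(A,R)$ is the full subcategory of $D_{\cond}(A)$ of modules $M$ that are $\bZ_\solid$-complete and satisfy $\ul{\Hom}_{\bZ[T]}((\bZ[T]_\solid\text{-localization fiber}),M)=0$. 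The standard case is \cite[Lecture VIII]{Sch20}: $M$ is $\bZ[T]_\solid$-complete if and only if $\ul{\RHom}_{\bZ[T]}(\bZ((T^{-1})),M)=0$. Since completeness conditions imposed by a family of generators are intersections of full subcategories, and a pushout of analytic rings whose underlying condensed rings are already the pushout is detected by the intersection of the corresponding complete subcategories (in the spirit of \cite[Proposition 12.12]{Sch20} and Lemma \ref{ss:anringpushout}), each square reduces to identifying subcategories of $D_{\cond}(\bZ[X,Y])$.

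\textbf{First square.} The underlying rings are all $\bZ[X,Y]$, so the pushout is $\bZ[X,Y]$ with the analytic structure generated by both $(\bZ[X,Y],\bZ[X])_\solid$ and $(\bZ[X,Y],\bZ[Y])_\solid$. Its complete modules are those that are both $X$-solid and $Y$-solid. That is precisely $\bZ[X,Y]_\solid=\bZ[X]_\solid\otimes\bZ[Y]_\solid$, which is standard \cite[Example 8.7]{Sch19}.

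\textbf{Second and third squares.} The underlying condensed ring of the pushout is $\bZ[X,Y]\otimes_{\bZ[W]}\bZ[W]=\bZ[X,Y]$ for $W=XY$ or $W=X+Y$. The analytic structure is induced along $(\bZ[W],\bZ)_\solid\to(\bZ[X,Y],\bZ)_\solid$, imposing $W$-solidity on $(\bZ[X,Y],\bZ)_\solid$-modules. One checks that the complete objects are exactly those $M$ with $\ul{\RHom}_{\bZ[W]}(\bZ((W^{-1})),M)=0$. This is by definition $D_\solid(\bZ[X,Y],\bZ[W])$, since $(A,A^+)_\solid$ for $A^+$ the integral closure of $\bZ[W]$ depends only on $\bZ[W]$ (\cite[Proposition 13.16]{Sch20} and the fact that solidity is preserved under integral extension). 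The integral closure point requires a remark: the ring $\bZ[X,Y]^+$ generated by $W$ is $\bZ[W]^\sim$, and solid structures see only integral closures.

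\textbf{Main obstacle.} The nontrivial input is the general principle that the pushout $(\cB,\cN)\otimes_{(\cA,\cM)}(\cC,\cM_\cC)$ has as underlying ring the (derived) tensor product. Here that tensor product is discrete and equals $\bZ[X,Y]$ because $\bZ[X,Y]$ is flat over $\bZ[W]$, and $(\bZ[X,Y],\bZ)_\solid\otimes\bZ[W]_\solid$ is computed on the level of condensed rings as $\bZ[X,Y]\otimes^{\solid}_{\bZ[W]}\bZ[W]$. We would establish this via the description of pushouts as induced structures \cite[Proposition 12.12]{Sch20} together with the fact that $\bZ[W]_\solid$ is induced from $\bZ_\solid$ plus the idempotent-type condition given by $\bZ((W^{-1}))$. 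Everything else is bookkeeping.
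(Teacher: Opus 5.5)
Your proposal is correct and follows essentially the same route as the paper. The paper's proof combines \cite[Proposition 12.12]{Sch20}: the complete modules of a pushout are those complete for both factors. It combines this with Andreychev's characterization \cite[Proposition 3.32]{And21}: $(A,A^+)_\solid$-completeness means solidity with respect to generators of $A^+$. Note that the flatness of $\bZ[X,Y]$ over $\bZ[W]$ plays no role, since $(\bZ[W],\bZ)_\solid\to\bZ[W]_\solid$ is the identity on underlying rings and $\bZ[X,Y]$ is automatically complete as the unit.
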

\begin{proof}
This follows from combining \cite[Proposition 12.12]{Sch20} with the characterization of $(-,-)_\solid$ from \cite[Proposition 3.32]{And21}.
\end{proof}

\subsection{}\label{ss:Z[T]facts}
The following analytic rings correspond to the complements of the closed\footnote{``Closed'' in the sense of being defined by a non-strict inequality; our closed unit disks are actually open in the affine line, so their complements in the affine line are actually closed.} unit disk centered at the origin and at infinity. Recall that $\bZ\lp{T^{-1}}$ is an idempotent algebra in $D_\solid(\bZ[T],\bZ)$ \cite[Observation 8.7]{Sch19}, and the corresponding open embedding as in \cite[p.~61]{CS22} is $D_\solid(\bZ[T],\bZ)\ra D_\solid(\bZ[T])$ \cite[Observation 8.10]{Sch19}. Also, recall that the complex
\begin{align*}
\xymatrixcolsep{1.5cm}
\xymatrix{\bZ\lb{U}\otimes_\bZ\bZ[T]\ar[r]^-{UT-1}& \bZ\lb{U}\otimes_\bZ\bZ[T]}
\end{align*}
is a projective resolution of $\bZ\lp{T^{-1}}$ in $D_\solid(\bZ[T],\bZ)$ \cite[Observation 8.6]{Sch19}.

Recall that $\bZ\lb{U}$ is naturally isomorphic to  $\bZ_\solid[\bN\cup\{\infty\}]/\bZ[\{\infty\}]$, where multiplication by $U$ is induced by the endomorphism $n\mapsto n+1$ of $\bN$ \cite[Lemma 3.11]{And21}.
\begin{lem*}
  $\bZ\lb{T}$ is also an idempotent algebra in $D_\solid(\bZ[T],\bZ)$, and we have
  \begin{align*}
    \bZ\lp{T^{-1}}\otimes_{(\bZ[T],\bZ)_\solid}\bZ\lb{T}=0.
  \end{align*}
\end{lem*}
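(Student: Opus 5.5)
The plan is to work entirely inside $D_\solid(\bZ[T],\bZ)$, using the resolutions recalled just above and the fact that $\bZ\lb{T}$ is the free solid module $\bZ_\solid[\bN\cup\{\infty\}]/\bZ[\{\infty\}]$ with $T$ acting by the shift. First I would show that $\bZ\lb{T}$, viewed as a $\bZ[T]$-algebra in $D_\solid(\bZ[T],\bZ)$, is given by the base change $\bZ\lb{T}=\bZ\lb{U}\otimes_{\bZ}\bZ[T]/(U-T)$. That is, there is a cofiber sequence
\begin{align*}
\bZ\lb{U}\otimes_\bZ\bZ[T]\xrightarrow{U-T}\bZ\lb{U}\otimes_\bZ\bZ[T]\ra\bZ\lb{T}.
\end{align*}
The injectivity of $U-T$ and the identification of its cokernel are a direct coefficient computation, exactly parallel to the resolution of $\bZ\lp{T^{-1}}$ via $UT-1$. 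This exhibits $\bZ\lb{T}$ as $\bZ\lb{U}\otimes_{\bZ_\solid}\bZ[T]$ modulo a regular element.

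Idempotence would then follow formally. We have
\begin{align*}
\bZ\lb{T}\otimes_{(\bZ[T],\bZ)_\solid}\bZ\lb{T}\cong\bZ\lb{U_1}\otimes_\solid\bZ\lb{U_2}\otimes\bZ[T]/(U_1-T,U_2-T).
\end{align*}
Using $\bZ\lb{U_1}\otimes_\solid\bZ\lb{U_2}=\bZ\lb{U_1,U_2}$ (solid tensor products of products of $\bZ$ are products), this becomes $\bZ\lb{U_1,U_2}/(U_1-U_2)\otimes\bZ[T]/(U_1-T)$. Since $U_1-U_2$ is a regular element with quotient $\bZ\lb{U}$, we recover $\bZ\lb{T}$, and the multiplication map is the identity on it.

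For the vanishing, I would tensor the two resolutions together. The result is the Koszul complex of $(U-T,\,VT-1)$ on $\bZ\lb{U,V}\otimes_\bZ\bZ[T]$. Modding out first by $U-T$ identifies $T$ with $U$, leaving $\bZ\lb{U,V}/(UV-1)$. But $UV-1=-(1-UV)$ is a unit in $\bZ\lb{U,V}$, so the quotient is $0$, and the Koszul complex is acyclic. This gives $\bZ\lp{T^{-1}}\otimes\bZ\lb{T}=0$.

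The main obstacle I expect is making the first step rigorous in the solid setting. One has to be sure that $\bZ\lb{U}\otimes_\bZ\bZ[T]$ is the correct compact projective object in $D_\solid(\bZ[T],\bZ)$ (namely $(\bZ[T],\bZ)_\solid[\bN\cup\{\infty\}]/(\infty)$), and that derived solid tensor products of these objects are computed by the naive power series rings. For this I would cite \cite[Lemma 3.11]{And21} and the analogous statement used in \cite[Observation 8.6]{Sch19}, and then the remaining checks are elementary power series algebra.
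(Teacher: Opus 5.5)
Your proposal is correct and is essentially the paper's argument: idempotence comes from $\bZ\lb{T_1}\otimes_{\bZ_\solid}\bZ\lb{T_2}=\bZ\lb{T_1,T_2}$ (\cite[Example 6.4]{Sch19}) followed by the quotient by $T_1-T_2$, and the vanishing comes from $UT-1$ being a unit in a two-variable power series ring. The only difference is your extra resolution of $\bZ\lb{T}$ by $U-T$; the paper instead tensors the $UT-1$ resolution of $\bZ\lp{T^{-1}}$ directly with $\bZ\lb{T}$, obtaining $\bZ\lb{U,T}\xrightarrow{UT-1}\bZ\lb{U,T}$, so your Koszul-complex step can be skipped.
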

\begin{proof}
By \cite[Example 6.4]{Sch19}, we obtain
\begin{align*}
  \bZ\lb{T}\otimes_{(\bZ[T]_\solid,\bZ)}\bZ\lb{T} = (\bZ\lb{T_1}\otimes_{\bZ_\solid}\bZ\lb{T_2})/(T_1-T_2) = \bZ\lb{T_1,T_2}/(T_1-T_2) = \bZ\lb{T}.
\end{align*}
Next, using the above projective resolution of $\bZ\lp{T^{-1}}$, we see that
\begin{align*}
\bZ\lp{T^{-1}}\otimes_{(\bZ[T],\bZ)_\solid}\bZ\lb{T}
\end{align*}
is the solidification of the complex
\begin{align*}
\xymatrixcolsep{1.5cm}
\xymatrix{\bZ\lb{U}\otimes_\bZ\bZ\lb{T}\ar[r]^-{UT-1} & \bZ\lb{U}\otimes_\bZ\bZ\lb{T}.}
\end{align*}
After solidification, \cite[Example 6.4]{Sch19} indicates that both terms become $\bZ\lb{U,T}$. Because $UT-1$ is invertible in $\bZ\lb{U,T}$, the resulting complex is indeed exact.
\end{proof}

\subsection{}\label{ss:closedunitdisk}
Let $(R,R^+)$ be a complete Huber pair.
\begin{lem*}We have a pushout square in $\AnRing$
\begin{align*}
    \xymatrix{\bZ_\solid\ar[r]\ar[d] & \bZ[T]_\solid\ar[d] \\
   (R,R^+)_\solid\ar[r] & (R\ang{T},R^+\ang{T})_\solid.}
\end{align*}
\end{lem*}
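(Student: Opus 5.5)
The plan is to use \cite[Proposition 12.12]{Sch20}, exactly as in Lemma \ref{ss:nonarchcomputationsZ}, to compute the pushout in two stages: first the underlying condensed animated ring, then the analytic ring structure. On the level of condensed rings, the pushout of $R\leftarrow\bZ\ra\bZ[T]$ in $\AnRing$ has underlying ring $(R\otimes_{\bZ}\bZ[T])$ completed with respect to the induced analytic structure. Concretely, it is $\bZ[T]_\solid\otimes_{\bZ_\solid}(R,R^+)_\solid$. The modules over this pushout are the objects of $D_\solid(R,R^+)$ with a $\bZ[T]$-action that are moreover $\bZ[T]$-solid. By \cite[Proposition 3.29]{And21}, it suffices to treat $\bZ[T]_\solid\otimes_{\bZ_\solid}R^+_\solid$ and then base change along $R^+\ra R$.

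First, I would identify the underlying ring. I would compute $R\otimes_{(\bZ,\bZ)_\solid}\bZ[T]_\solid$, that is, the solidification of $R[T]$ with respect to the variable $T$. Writing $R^+=\varprojlim_n R^+/I^n$ for an ideal of definition $I$ of a ring of definition, and using that $\bZ[T]_\solid$-completeness of $R^+[T]$ means $T$ acts solidly, one should get the $I$-adic completion $R^+\ang{T}$. This matches \cite[Example 6.4]{Sch19} applied fibrewise: solid tensor products of $\bZ\lb{\cdot}$-type objects compute completed power series. Inverting a pseudouniformizer, or tensoring with $R$ over $R^+$, then gives $R\ang{T}$. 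The key input here is that $\bZ[T]_\solid$ is given by the idempotent algebra $\bZ\lp{T^{-1}}$, via Lemma \ref{ss:Z[T]facts}. Completeness then amounts to $\ul{\Hom}(\bZ\lp{T^{-1}},-)=0$, and on $R^+[T]$-modules this is exactly the condition cutting out $T$-adically convergent sums with coefficients tending to $0$.

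Second, I would match the analytic ring structures. By the characterization in \cite[Proposition 3.32]{And21}, $(R\ang{T},R^+\ang{T})_\solid$ is the analytic ring structure on $R\ang{T}$ induced from $(R,R^+)_\solid$ together with solidity for the additional element $T\in R^+\ang{T}$. Here $R^+\ang{T}$ is topologically generated over $R^+$ by $T$ (its integral closure issues being absorbed by \cite[Proposition 3.32]{And21}). The pushout structure is by definition induced from $(R,R^+)_\solid$ and $\bZ[T]_\solid$, which is exactly solidity for $R^+$ and for $T$. So the two structures agree, provided the underlying rings agree.

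I expect the main obstacle to be the first step, namely proving that the condensed ring $R\otimes_{\bZ_\solid}\bZ[T]_\solid$ is discrete in degree $0$ and equal to $R\ang{T}$, with no higher homotopy. I would first try to reduce to $R^+=\varprojlim R^+/I^n$ and use the resolution of $\bZ\lp{T^{-1}}$ from \ref{ss:Z[T]facts}. This should show that the $T$-solidification of $M[T]$ is $\varprojlim_n (M/I^n)[T]$ for $I$-adically complete $M$. Handling the non-noetherian case, where $\varprojlim$ and the derived completion may differ, may require working with the explicit description $\bZ\lb{U}=\bZ_\solid[\bN\cup\{\infty\}]/\bZ[\{\infty\}]$.
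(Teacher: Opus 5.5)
Your architecture is the paper's. You compute the pushout via \cite[Proposition 12.12]{Sch20}, identify its underlying ring with $R\otimes_{\bZ_\solid}\bZ[T]_\solid=R\ang{T}$, and then match the analytic structures using that $R^+\ang{T}$ is topologically generated by $R^+$ and $T$, together with \cite[Proposition 3.32]{And21}.

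\textbf{Missing justification in the first step.} The underlying ring of a pushout in $\AnRing$ is the completion of $R[T]$ for the \emph{combined} structure: it must be $\bZ[T]_\solid$-complete as a $\bZ[T]$-module and $(R,R^+)_\solid$-complete as an $R$-module. In general this can require alternating the two completions. You pass directly to the single $T$-solidification of $R[T]$ without saying why this suffices. The paper supplies exactly this point: $\bZ_\solid\ra\bZ[T]_\solid$ is steady \cite[Lemma 3.8]{And21}, so by \cite[p.~84]{Sch20} the underlying ring of the pushout is $R\otimes_{\bZ_\solid}\bZ[T]_\solid$. Alternatively, you can note directly that $R\ang{T}$ already lies in $D_\solid(R,R^+)$, so the localization onto doubly complete modules is reached after the $T$-solidification.

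\textbf{The hand computation as sketched fails.} The step you flag as the main obstacle is what the paper simply cites as \cite[Proposition 3.14]{And21}. Your proposed route starts from a false premise: $R^+$ need not be bounded, so the $I^nR^+$ need not form a basis of neighborhoods of $0$, and in general $R^+\neq\varprojlim_nR^+/I^nR^+$. For instance, take $R=\bQ_p[\epsilon]/(\epsilon^2)$ and $R^+=R^\circ=\bZ_p+\bQ_p\epsilon$. Then $\bigcap_np^nR^+=\bQ_p\epsilon$, so the $p$-adic completion of $R^+[T]$ is $\bZ_p\ang{T}$ rather than $R^+\ang{T}$, and inverting $p$ does not recover $R\ang{T}$.

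If you want to avoid the citation, work instead with a ring of definition $R_0\subseteq R^+$, for which $R_0=\varprojlim_nR_0/I^n$ does hold. Then pass to $R^+$ and $R$ using three facts:
\begin{enumerate}
\item $R^+/R_0$ and $R/R_0$ are discrete.
\item Discrete $\bZ[T]$-modules are already $\bZ[T]_\solid$-complete.
\item Consequently, the $T$-solidification of $R^+[T]$ is an extension of $(R^+/R_0)[T]$ by the $T$-solidification of $R_0[T]$, and similarly for $R[T]$.
\end{enumerate}
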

\begin{proof}
Write $(\cA,\cM)$ for the pushout. Since \cite[Lemma 3.8]{And21} indicates that $\bZ_\solid\ra\bZ[T]_\solid$ is steady, we see that
  \begin{align*}
    \cA &= R\otimes_{\bZ_\solid}\bZ[T]_\solid & \mbox{by \cite[p.~84]{Sch20},}\\
    &= R\ang{T} & \mbox{by \cite[Proposition 3.14]{And21}.}
  \end{align*}
  Next, \cite[Proposition 12.12]{Sch20} indicates that $D(\cA,\cM)\subseteq D_{\cond}(R\ang{T})$ is the full subcategory of objects whose images in $D_{\cond}(\bZ[T])$ and $D_{\cond}(R)$ lie in $D_\solid(\bZ[T])$ and $D_\solid(R,R^+)$, respectively. Because $R^+\ang{T}$ is topologically generated by $\bZ[T]$ and $R^+$, \cite[Proposition 3.32]{And21} implies that $D(\cA,\cM)$ equals $D_\solid(R\ang{T},R^+\ang{T})$.
\end{proof}

\subsection{}
One can use analytic rings to define the following version of analytification; remarkably, it still remembers algebraic functions. For any $R$-algebra $A$, write $A^{\alg}$ for the pushout in $\AnRing$
\begin{align*}
  \xymatrix{(R_{\disc},R^+_{\disc})_{\solid}\ar[r]\ar[d] & (A_{\disc},(R^+)_{\disc}^\sim)_{\solid}\ar[d]\\
  (R,R^+)_\solid\ar[r] & A^{\alg}.}
\end{align*}
Since $A_{\disc}$ is nuclear in $D(R_{\disc},R^+_{\disc})$, \cite[Proposition 13.14]{Sch20} shows that the top arrow is steady. Therefore underlying condensed ring of $A^{\alg}$ is $A$, by \cite[p.~84]{Sch20}.

For any $f$ in $A$, write $A_{\abs{f}\leq1}$ and $A_{\abs{f}\geq1}$ for the following pushouts in $\AnRing$:
\begin{gather*}
  \xymatrix{(\bZ[T],\bZ)_\solid\ar[r]\ar[d]^-f & \bZ[T]_\solid\ar[d]\\
  A^{\alg}\ar[r]& A_{\abs{f}\leq1},} \quad\quad  \xymatrix{(\bZ[T],\bZ)_\solid\ar[r]\ar[d]^-f & (\bZ[T^{\pm1}],\bZ[T^{-1}])_\solid\ar[d]\\
  A^{\alg}\ar[r]& A_{\abs{f}\geq1}.}
\end{gather*}
Because the top arrows are steady localizations, the same holds for the bottom arrows.

\subsection{}\label{ss:closureofclosedunitdisk}
The following analytic ring corresponds to the closure of the closed unit disk centered at the origin. For the rest of this section, assume that $R$ is Tate.
\begin{prop*}
The object $R\ang{T}$ in $D_\solid(R,R^+)$ is nuclear as in \cite[Definition 8.5 (1)]{CS22}. Moreover, the analytic ring induced from $(R,R^+)_\solid$ along $R\ra R\ang{T}$ as in \cite[Proposition 12.8]{Sch20} is naturally isomorphic to $(R\ang{T},R^++T\cdot R^{\circ\circ}\ang{T})_\solid$, and the induced morphism $R[T]^{\alg}\ra(R\ang{T},R^++T\cdot R^{\circ\circ}\ang{T})_\solid$ is a steady localization.
\end{prop*}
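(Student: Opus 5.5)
We treat the three assertions in order, reducing each to the case $(R,R^+)=$ a pushout of $(R,R^+)_\solid$ with the universal computations over $\bZ[T]$ from \ref{ss:nonarchcomputationsZ} and \ref{ss:Z[T]facts}.

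\emph{Nuclearity.} Fix a pseudouniformizer $\vpi$ of $R$ lying in $R^{\circ\circ}$. We write $R\ang{T}$ as the completed colimit $(R^+_0\ang{T})[\frac1\vpi]$ for a ring of definition $R_0^+\subseteq R^+$. Then $R\ang{T}=\varinjlim_n \vpi^{-n}\,\varprojlim_m R_0^+/\vpi^m[T]$, i.e.\ a countable colimit along $\vpi$-multiplication of $\vpi$-adically complete modules $\wh\bigoplus_{\bN}R_0^+$. The transition maps $\wh\bigoplus_\bN R_0^+\xrightarrow{\vpi}\wh\bigoplus_\bN R_0^+$ become trace-class after composing two of them, since $\wh\bigoplus_\bN R_0^+ = \ul\Hom(\prod_\bN\bZ,R_0^+)$-type duality for null sequences identifies them with $c_0$-sequence spaces where multiplication by $\vpi$ factors through $\prod$. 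This is the standard argument that Banach $R$-modules are nuclear (as in \cite[Theorem 8.6]{CS22} and its proof for $\bQ_p$-Banach spaces); hence $R\ang{T}$ is basic nuclear.

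\emph{The induced analytic ring.} By \ref{ss:closedunitdisk}, $(R\ang{T},R^+\ang{T})_\solid$ is the pushout of $\bZ[T]_\solid$ and $(R,R^+)_\solid$ over $\bZ_\solid$. The induced structure $(R\ang{T},\cN)$ is instead the pushout of $(\bZ[T],\bZ)_\solid$ with $(R,R^+)_\solid$, computed exactly as in \ref{ss:closedunitdisk} but using \cite[Proposition 3.32]{And21}: the complete objects are those whose restriction to $D_\solid(R,R^+)$ is complete, with no condition on $T$. We must show that this coincides with solidity for $R^++T\cdot R^{\circ\circ}\ang{T}$. Any element $f$ of $R^{\circ\circ}$ satisfies $f^n\to0$, and we check that $fT$ is automatically solid for modules over $R\ang{T}$ complete over $(R,R^+)$: the map $\bZ[U]\to R\ang{T}$, $U\mapsto fT$, extends to $\bZ\lb{U}\to R\ang{T}$ because $f^nT^n\to 0$ in the Banach ring. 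Via the adjunction $\bZ_\solid[\bN\cup\{\infty\}]/\bZ[\{\infty\}]\cong\bZ\lb{U}$ of \ref{ss:Z[T]facts}, this gives the $U$-solidity condition. Conversely, the ring of integral elements determining $\cN$ is the integral closure of $R^+$ and these $fT$, which is $R^++T\cdot R^{\circ\circ}\ang{T}$. Here \cite[Proposition 3.32]{And21} identifies the solid structure with this ring.

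\emph{Steady localization.} Over $\bZ[T]$, the map $(\bZ[T],\bZ)_\solid\to(\bZ\ang{T}$-type$)$ is not directly available. Instead, we observe that $R[T]^{\alg}\to(R\ang{T},\cN)$ is the base change of $(R_\disc[T],R^+_\disc)_\solid\to$ the localization cutting out $\{|T|\leq |\vpi|^{-n}\ \forall n\}$. Concretely, we show that $D(R\ang{T},\cN)\subseteq D(R[T]^{\alg})$ is the open complement of the closed idempotent $\bZ\lp{T^{-1}}\otimes R$ from \ref{ss:Z[T]facts}, base changed to $R$. The vanishing $\bZ\lp{T^{-1}}\otimes\bZ\lb{T}=0$ is the model computation, with $\vpi$ adjoined: $R\lp{T^{-1}}\otimes_{R[T]^{\alg}} R\ang{T}=0$, since $UT-1$ becomes invertible in $R\ang{T}\wh\otimes\bZ\lb{U}$ after inverting $\vpi$. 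Idempotence of $R\ang{T}$ over $R[T]^{\alg}$ follows like $\bZ\lb{T}\otimes\bZ\lb{T}=\bZ\lb{T}$, and steadiness follows from nuclearity via \cite[Proposition 13.14]{Sch20}. The main obstacle is this last identification: showing that $R\ang{T}$ is genuinely an idempotent $R[T]^{\alg}$-algebra, since the tensor product is not the naive one because $R^+_\disc$ is discrete. We would first attempt to compute it via the resolution of \ref{ss:Z[T]facts} applied to $T\vpi^{n}$ and pass to the colimit in $n$.
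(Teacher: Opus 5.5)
\textbf{The gap is in your third step.} You propose to show that $D(R\ang{T},\cN)\subseteq D(R[T]^{\alg})$ is the open complement of the idempotent algebra $\bZ\lp{T^{-1}}\otimes_{\bZ_\solid}(R,R^+)_\solid$, via the vanishing $\bZ\lp{T^{-1}}\otimes_{R[T]^{\alg}}R\ang{T}=0$. Both claims are false.

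By \ref{ss:Z[T]facts} and Lemma \ref{ss:closedunitdisk} (as used in the proofs in \ref{ss:GAGAaxioms} and \ref{ss:uppershriek}), that open complement is $D(R[T]_{\abs{T}\leq1})=D_\solid(R\ang{T},R^+\ang{T})$. It therefore imposes exactly the $T$-solidity that your second step correctly says the induced structure does not impose, so your third step contradicts your second.

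The vanishing already fails at the tensor product. The solid tensor product $\bZ\lb{U}\otimes_{\bZ_\solid}R\ang{T}$ is not $R_0\ang{T}\lb{U}[\frac1\vpi]$. It consists of series $\sum_ng_nT^n$ with $g_n\in R_0\lb{U}[\frac1\vpi]$ and $g_n\to0$ uniformly. In this ring $UT-1$ is not invertible: its formal inverse $-\sum_nU^nT^n$ has coefficients $U^n$ that do not tend to $0$, and inverting $\vpi$ changes nothing. Its cokernel is the nonzero ring of Laurent series $\sum_{k\in\bZ}b_kT^k$ with $(b_k)$ bounded and $b_k\to0$ as $k\to+\infty$.

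This cokernel is an $R^+$-solid $R\ang{T}$-module that is a $\bZ\lp{T^{-1}}$-module. Hence it lies in $D(R\ang{T},\cN)$ but not in $D_\solid(R\ang{T},R^+\ang{T})$. So the two analytic rings genuinely differ, as \ref{ss:nuclearcover} requires, and your approach cannot work.

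\textbf{What the localization claim actually needs.} Since $R[T]^{\alg}$ and $(R\ang{T},\cN)$ are both induced from $(R,R^+)_\solid$, the map is a localization if and only if $R\ang{T}\otimes_{R[T]}R\ang{T}\ra R\ang{T}$ is an isomorphism in $D_\solid(R,R^+)$. You assert this idempotence only by analogy with $\bZ\lb{T}$ and then leave it as the unresolved ``main obstacle'', so it is not proved.

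It does follow from the paper's own inputs. Steadiness of $\bZ_\solid\ra\bZ[T]_\solid$ \cite[Lemma 3.8]{And21} together with Lemma \ref{ss:closedunitdisk} gives $R\ang{T_1}\otimes_{(R,R^+)_\solid}R\ang{T_2}\simeq R\ang{T_1,T_2}$. Then $R\ang{T_1,T_2}/(T_1-T_2)\simeq R\ang{T}$.

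The paper instead obtains the localization property from Lemma \ref{ss:closedunitdisk} applied to the composite $R[T]^{\alg}\ra(R\ang{T},R^++T\cdot R^{\circ\circ}\ang{T})_\solid\ra(R\ang{T},R^+\ang{T})_\solid$. It gets steadiness from \cite[Proposition 13.14]{Sch20} followed by \cite[Proposition 12.15]{Sch20}.

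\textbf{Two smaller points.}
\begin{enumerate}
\item In your first step, no power of $\vpi$ times the identity of $\wh\bigoplus_\bN R_0$ is trace-class. Otherwise $\id_{R\ang{T}}$ would be trace-class, making $R\ang{T}$ dualizable. So your presentation does not exhibit basic nuclearity. The conclusion is true, and the paper simply cites \cite[Lemma 4.4]{And23}.
\item In your second step, the pushout of $(\bZ[T],\bZ)_\solid$ and $(R,R^+)_\solid$ is $R[T]^{\alg}$, whose underlying ring is $R[T]$, not the induced ring. The description of complete objects you then use is nevertheless the correct one, from \cite[Proposition 12.12]{Sch20}. You must also handle all of $T\cdot R^{\circ\circ}\ang{T}$, not just the $fT$ with $f\in R^{\circ\circ}$. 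The same argument works because these elements are topologically nilpotent; the precise input is $\bZ\lp{U^{-1}}\otimes\bZ\lb{U}=0$ from Lemma \ref{ss:Z[T]facts}.
\end{enumerate}
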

\begin{proof}
  Since $R$ is Tate, the first statement follows from \cite[Lemma 4.4]{And23}. For the second statement, write $(\cA,\cM)$ for the analytic ring induced form $(R,R^+)_\solid$ along $R\ra R\ang{T}$ as in \cite[Proposition 12.8]{Sch20}. Because $R\ang{T}$ lies in $D_\solid(R,R^+)$, we see that $\cA=R\ang{T}$.  Next, \cite[Proposition 12.12]{Sch20} indicates that $D(\cA,\cM)\subseteq D_{\cond}(R\ang{T})$ is the full subcategory of objects whose image in $D_{\cond}(R)$ lies in $D_\solid(R,R^+)$. Since $R^++T\cdot R^{\circ\circ}\ang{T}$ is topologically generated by $R^+$, \cite[Proposition 3.32]{And21} implies that $D(\cA,\cM)$ equals $D_\solid(R\ang{T},R^++T\cdot R^{\circ\circ}\ang{T})$, as desired.

  For the third statement, \cite[Proposition 13.14]{Sch20} and the first statement imply
  \begin{align*}
    (R,R^+)_\solid\ra(R\ang{T},R^++T\cdot R^{\circ\circ}\ang{T})_\solid
  \end{align*}
  is steady, so \cite[Proposition 12.15]{Sch20} shows that 
  \begin{align*}
    R[T]^{\alg}\ra(R\ang{T},R^++T\cdot R^{\circ\circ}\ang{T})_\solid
  \end{align*}
  is also steady. Finally, Lemma \ref{ss:closedunitdisk} implies that the composition
    \begin{align*}
    R[T]^{\alg}\ra(R\ang{T},R^++T\cdot R^{\circ\circ}\ang{T})_\solid\ra(R\ang{T},R^+\ang{T})_\solid
    \end{align*}
    is a localization. Because the right arrow is a localization, the same holds for the left arrow.
\end{proof}

\subsection{}\label{ss:nuclearcover}
For any $R$-algebra $A$ and $f$ in $A$, write $\ov{A}_{\abs{f}\leq1}$ for the pushout in $\AnRing$
\begin{align*}
  \xymatrix{R[T]^{\alg}\ar[r]\ar[d]^-f & (R\ang{T},R^++T\cdot R^{\circ\circ}\ang{T})_\solid\ar[d]\\
  A^{\alg}\ar[r] & \ov{A}_{\abs{f}\leq1}.}
\end{align*}
The following analytic spaces correspond the loci in $\AnSpec A^{\alg}$ where $f$ is analytic and where $f$ is analytically invertible, respectively. Choose a pseudouniformizer $\vpi$ of $R$, form the colimits in $\{\mbox{analytic stacks}\}$
\begin{align*}
\an(A,f)\coloneqq\varinjlim_{n\geq0}\AnSpec A_{\abs{\vpi^nf}\leq1}\mbox{ and } D^{\an}(f)\coloneqq\varinjlim_{n\geq0}\AnSpec A_{\abs{\vpi^{-n}f}\geq1},
\end{align*}
and note that they do not depend on the choice of $\vpi$. Moreover, the factorization
\begin{align*}
(R\ang{\vpi T},R^+\ang{\vpi T})_\solid\ra(R\ang{T},R^++T\cdot R^{\circ\circ}\ang{T})_\solid\ra(R\ang{T},R^+\ang{T})_\solid
\end{align*}
indicates that $\an(A,f)$ is naturally isomorphic to $\varinjlim_{n\geq0}\AnSpec\ov{A}_{\abs{\vpi^nf}\leq1}$.

\subsection{}\label{ss:GAGAaxioms}
The analytic loci satisfy the following basic properties.
\begin{prop*}\hfill
  \begin{enumerate}[1)]
  \item For any $r$ in $R$, we have $\an(R,r)=\AnSpec R^{\alg}$.
  \item Write $\an(R[X,Y],X)\cap\an(R[X,Y],Y)$ for the fiber product in $\{\mbox{analytic stacks}\}$
    \begin{align*}
      \xymatrix{\an(R[X,Y],X)\cap\an(R[X,Y],Y)\ar[r]\ar[d] & \an(R[X,Y],X)\ar[d]\\
      \an(R[X,Y],Y)\ar[r] & \AnSpec R[X,Y]^{\alg}.}
    \end{align*}
    Then $\an(R[X,Y],X)\cap\an(R[X,Y],Y)\ra\AnSpec R[X,Y]^{\alg}$ factors through
    \begin{align*}
      \an(R[X,Y],XY)\ra\AnSpec R[X,Y]^{\alg}\mbox{ and }\an(R[X,Y],X+Y)\ra\AnSpec R[X,Y]^{\alg}.
    \end{align*}
  \item The composition $\an(R[T^{\pm1}],T^{-1})\ra\AnSpec R[T^{\pm1}]^{\alg}\ra\AnSpec R[T]^{\alg}$ is naturally isomorphic to $D^{\an}(T)\ra\AnSpec R[T]^{\alg}$ over $\AnSpec R[T]^{\alg}$.
  \item $\big\{\an(R[T],T),D^{\an}(T)\big\}$ forms a cohomologically \'etale (as in \cite[Definition 6.12]{Sch22}) steady localization cover of $\AnSpec R[T]^{\alg}$.
  \item Write $D^{\an}(X)\cup D^{\an}(Y)$ for the pushout in $\{\mbox{analytic stacks}\}$
    \begin{align*}
      \xymatrix{D^{\an}(X)\cap D^{\an}(Y)\ar[r]\ar[d]& D^{\an}(X)\ar[d]\\
      D^{\an}(Y)\ar[r] & D^{\an}(X)\cup D^{\an}(Y).}
    \end{align*}
    Then $D^{\an}(X+Y)\ra\AnSpec R[X,Y]^{\alg}$ factors through
    \begin{align*}
      D^{\an}(X)\cup D^{\an}(Y)\ra\AnSpec R[X,Y]^{\alg}.
    \end{align*}
  \end{enumerate}
\end{prop*}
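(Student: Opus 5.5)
We prove the five parts separately. In each case we compute the relevant pushouts in $\AnRing$ by base change from the universal situations over $\bZ$: Lemma \ref{ss:nonarchcomputationsZ}, Lemma \ref{ss:Z[T]facts}, and Lemma \ref{ss:closedunitdisk}. Since colimits in $\{\mbox{analytic stacks}\}$ commute with fiber products, the $\varinjlim_n$ can be handled termwise.

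For 1), note that $R_{\abs{\vpi^nr}\leq1}$ is the pushout of $R^{\alg}=(R,R^+)_\solid$ along $(\bZ[T],\bZ)_\solid\ra\bZ[T]_\solid$, with $T$ mapping to $\vpi^nr$. For $n\gg0$ we have $\vpi^nr\in R^{\circ\circ}\subseteq R^+$. Then $(R,R^+)_\solid$ is already solid for $T\mapsto\vpi^nr$, by the characterization \cite[Proposition 3.32]{And21}, so the pushout is $(R,R^+)_\solid$ itself. Hence the colimit stabilizes at $\AnSpec R^{\alg}$.

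For 2), the fiber product $\an(R[X,Y],X)\cap\an(R[X,Y],Y)$ is
\begin{align*}
\varinjlim_n\AnSpec\big(R[X,Y]^{\alg}\otimes(\bZ[X,Y],\bZ[\vpi^nX])_\solid\otimes(\bZ[X,Y],\bZ[\vpi^nY])_\solid\big).
\end{align*}
By the first square of Lemma \ref{ss:nonarchcomputationsZ}, this equals the base change of $\bZ[X,Y]_\solid$. The remaining two squares of that lemma show that $\bZ[X,Y]_\solid$ receives maps over $(\bZ[X,Y],\bZ[XY])_\solid$ and $(\bZ[X,Y],\bZ[X+Y])_\solid$. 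Reindexing $n\mapsto 2n$ handles the product $\vpi^{2n}XY$, and $\vpi^n(X+Y)$ is handled directly. This yields the factorizations.

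For 3), unwind the definitions. $\an(R[T^{\pm1}],T^{-1})$ is the colimit of the base changes of $(\bZ[T^{\pm1}],\bZ[\vpi^nT^{-1}])_\solid$. Meanwhile $A_{\abs{\vpi^{-n}T}\geq1}$ is the base change of $(\bZ[S^{\pm1}],\bZ[S^{-1}])_\solid$ along $S\mapsto\vpi^{-n}T$. These agree because $\vpi$ is a unit, and the identification is compatible with the maps to $\AnSpec R[T]^{\alg}$.

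For 4), steadiness and the localization property come from Proposition \ref{ss:closureofclosedunitdisk} and the remarks following the definition of $A_{\abs{f}\geq1}$. The substance is joint conservativity, together with the cohomologically \'etale property. Working at a fixed level $n$ and rescaling by $\vpi^n$, we may take $n=0$ after base change. By Lemma \ref{ss:Z[T]facts}, $\bZ\lp{T^{-1}}$ and $\bZ\lb{T}$ are idempotent algebras in $D_\solid(\bZ[T],\bZ)$ with $\bZ\lp{T^{-1}}\otimes\bZ\lb{T}=0$. The open complement of the closed locus $\bZ\lp{T^{-1}}$ is $D_\solid(\bZ[T])$, so $D_\solid(\bZ[T])$ and the locus $\abs{T}\geq1$ cover $\AnSpec(\bZ[T],\bZ)_\solid$.

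The main obstacle is passing to the limit in $n$. The complement of $\abs{\vpi^nT}\leq1$ must be covered by $\varinjlim_m\abs{\vpi^{-m}T}\geq1$, and the overlaps must be handled. I would first try to do this using the recollement for the idempotent algebra $\bZ\lp{T^{-1}}$, base changed along $T\mapsto\vpi^nT$. Then the strict inequalities $\abs{T}<\abs{\vpi}^{-n}$ versus $\abs{T}\geq\abs{\vpi}^{-n-1}$ overlap, which gives a cover at each finite stage. Cohomological \'etaleness then follows from \cite[Definition 6.12]{Sch22}, since open immersions attached to idempotent algebras are cohomologically \'etale and the property is preserved under colimits of open immersions.

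For 5), the intersection $D^{\an}(X)\cap D^{\an}(Y)$ together with the pushout $D^{\an}(X)\cup D^{\an}(Y)$ give a cover of the union. It therefore suffices, by 4) applied to $X$ and to $Y$, to show the following. Restricted to $\an(X)\cap\an(Y)$ at level $n$, the locus $D^{\an}(X+Y)$ at level $m\gg n$ is empty. On that locus $\abs{X+Y}\leq\abs{\vpi}^{-n}$ by 2), so this is the vanishing statement $\bZ\lp{T^{-1}}\otimes\bZ\lb{T}=0$ of Lemma \ref{ss:Z[T]facts}, applied to $T=\vpi^{n}(X+Y)$. Gluing the cover from 4) then shows that $D^{\an}(X+Y)$ lands in $D^{\an}(X)\cup D^{\an}(Y)$.
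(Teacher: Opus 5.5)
There is a genuine gap in 5). Via the covers from 4) you reduce to showing that $D^{\an}(X+Y)$ becomes empty over $\an(R[X,Y],X)\cap\an(R[X,Y],Y)$ at large level. That is false, because you have the direction of $D^{\an}$ reversed. By definition $A_{\abs{\vpi^{-m}f}\geq1}$ is the locus $\abs{f}\geq\abs{\vpi}^m$. So $D^{\an}(f)$ is where $f$ is bounded away from $0$ (analytically invertible), not a neighborhood of $\abs{f}=\infty$. Heuristically, the point $(X,Y)=(1,0)$ lies in $\an(X)\cap\an(Y)$ at level $0$ and in every level of $D^{\an}(X+Y)$.

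What must be shown on that piece is that $D^{\an}(X+Y)$ lands in $D^{\an}(X)\cup D^{\an}(Y)$. That is essentially the original statement, so the reduction gains nothing, and the vanishing $\bZ\lp{T^{-1}}\otimes\bZ\lb{T}=0$ is not the relevant input.

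The missing idea is the ultrametric inequality, encoded as the morphism of idempotent algebras $\bZ\lb{X+Y}\to\bZ\lb{X,Y}=\bZ\lb{X}\otimes\bZ\lb{Y}$ in $D_\solid(\bZ[X,Y],\bZ)$. The open embedding attached to $\bZ\lb{T}$ is $D_\solid(\bZ[T],\bZ)\to D_\solid(\bZ[T^{\pm1}],\bZ[T^{-1}])$, i.e. the locus $\abs{T}\geq1$. So this morphism says the closed locus $\{\abs{X}<1,\abs{Y}<1\}$ lies in $\{\abs{X+Y}<1\}$. Hence $\{\abs{X+Y}\geq1\}$ lies in the open complement of $\bZ\lb{X}\otimes\bZ\lb{Y}$, which is the pushout of $\{\abs{X}\geq1\}$ and $\{\abs{Y}\geq1\}$ along their intersection. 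Now apply $-\otimes_{\bZ_\solid}(R,R^+)_\solid$ with $X\mapsto\vpi^{-n}X$ and $Y\mapsto\vpi^{-n}Y$, and take $\varinjlim_n$; this is the paper's argument.

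The same reversal shows up in 4). The loci $\abs{T}<\abs{\vpi}^{-n}$ and $\abs{T}\geq\abs{\vpi}^{-n-1}$ are disjoint and leave a gap, since $\abs{\vpi}^{-n-1}>\abs{\vpi}^{-n}$. However, no passage to the limit is needed. Your level-$0$ argument already shows that $\{R[T]_{\abs{T}\leq1},R[T]_{\abs{T}\geq1}\}$ is a cover by open embeddings attached to idempotent algebras. These pieces map into $\an(R[T],T)$ and $D^{\an}(T)$, so those form a cover too. Cohomological \'etaleness then comes from \cite[Lemma 3.2.9]{RC24}, exactly as in the paper.

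Parts 2) and 3) follow the paper. For 1) your route is different and valid. Once $\vpi^nr\in R^+$, the map $(\bZ[T],\bZ)_\solid\to(R,R^+)_\solid$ already factors through the localization $\bZ[T]_\solid$ by \cite[Proposition 3.32]{And21}, so the pushout is $(R,R^+)_\solid$. The paper instead uses $r\in R^{\circ\circ}$ and shows directly that the idempotent algebra $\bZ\lp{T^{-1}}\otimes_{\bZ_\solid}(R,R^+)_\solid$ vanishes after $T\mapsto r$, because $Ur-1$ is invertible. Your version is more conceptual and only needs membership in $R^+$.
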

\begin{proof}
For part 1), it suffices to show that $R_{\abs{r}\leq1}=R^{\alg}$ when $r$ lies in $R^{\circ\circ}$. By \ref{ss:Z[T]facts}, the functor $D(R[T]^{\alg})\ra D(R[T]_{\abs{T}\leq1})$ is the open embedding as in \cite[p.~61]{CS22} corresponding to the idempotent algebra $\bZ\lp{T^{-1}}\otimes_{\bZ_\solid}(R,R^+)_\solid$ in $D(R[T]^{\alg})$. The descriptions of $\bZ\lp{T}$ and $\bZ\lb{U}$ from \ref{ss:Z[T]facts} show that $\bZ\lp{T^{-1}}\otimes_{\bZ_\solid}(R,R^+)_\solid$ is given by the complex
\begin{align*}
\xymatrixcolsep{1.5cm}
\xymatrix{\displaystyle\frac{(R,R^+)_\solid[\bN\cup\{\infty\}]}{R[\{\infty\}]}\otimes_RR[T]\ar[r]^-{UT-1}& \displaystyle\frac{(R,R^+)_\solid[\bN\cup\{\infty\}]}{R[\{\infty\}]}\otimes_RR[T].}
\end{align*}
Since $r$ lies in $R^{\circ\circ}$, multiplication by $Ur-1$ is invertible on $(R,R^+)_\solid[\bN\cup\{\infty\}]/R[\{\infty\}]$. Therefore, after base changing the above complex along the $R$-algebra morphism $R[T]\mapsto R$ that sends $T\mapsto r$, the resulting complex is indeed exact.

For part 2), it suffices to show that
\begin{align*}
R[X,Y]^{\alg}\ra R[X,Y]_{\abs{X}\leq1}\otimes_{R[X,Y]^{\alg}}R[X,Y]_{\abs{Y}\leq1}
\end{align*}
factors through $R[X,Y]^{\alg}\ra R[X,Y]_{\abs{XY}\leq1}$ and $R[X,Y]^{\alg}\ra R[X,Y]_{\abs{X+Y}\leq1}$. This follows from applying $-\otimes_{\bZ_\solid}(R,R^+)_\solid$ to Lemma \ref{ss:nonarchcomputationsZ} and the morphisms
\begin{align*}
(\bZ[X,Y],\bZ[XY])_\solid\ra\bZ[X,Y]_\solid\mbox{ and }(\bZ[X,Y],\bZ[X+Y])_\solid\ra\bZ[X,Y]_\solid.
\end{align*}

For part 3), note that $T$ is invertible in $(\bZ[T^{\pm1}],\bZ[T^{-1}])_\solid$, and $T\mapsto T^{-1}$ induces an isomorphism $(\bZ[T^{\pm1}],\bZ[T])_\solid\ra^\sim(\bZ[T^{\pm1}],\bZ[T^{-1}])_\solid$. Applying $-\otimes_{\bZ_\solid}(R,R^+)_\solid$ to this shows that $R[T]^{\alg}\ra R[T]_{\abs{T}\geq1}$ factors through $R[T]^{\alg}\ra R[T^{\pm1}]^{\alg}$ and that the analytic rings $R[T]_{\abs{T}\geq1}$ and $R[T^{\pm1}]_{\abs{T^{-1}}\leq1}$ are naturally isomorphic over $R[T^{\pm1}]^{\alg}$. This implies the desired result.

For part 4), \cite[Lemma 3.2.9]{RC24} indicates that it suffices to show that
\begin{align*}
\big\{R[T]_{\abs{T}\leq1},R[T]_{\abs{T}\geq1}\big\}
\end{align*}
forms a family of jointly conservative steady localizations of $R[T]^{\alg}$ such that, after applying $D(-)$, the resulting functors are open embeddings as in \cite[p.~61]{CS22}. This follows from applying $-\otimes_{\bZ_\solid}(R,R^+)_\solid$ to Lemma \ref{ss:Z[T]facts}.

For part 5), Lemma \ref{ss:Z[T]facts} shows that $\bZ\lb{T}$ is an idempotent algebra in $D_\solid(\bZ[T],\bZ)$, and \ref{ss:Z[T]facts} implies that $D_\solid(\bZ[T],\bZ)\ra D_\solid(\bZ[T^{\pm1},T^{-1}])$ is the open embedding as in \cite[p.~61]{CS22} corresponding to $\bZ\lb{T}$. Hence applying $-\otimes_{\bZ_\solid}(R,R^+)_\solid$ to the morphism $\bZ\lb{X+Y}\ra\bZ\lb{X,Y}$ of idempotent algebras in $D_{\solid}(\bZ[X,Y],\bZ)$ yields the desired result.
\end{proof}

\subsection{}\label{ss:AnStackGAGA}
Using discrete Huber pairs, one can prove the following GAGA theorem for \emph{all} (solid) quasi-coherent sheaves as in work of Clausen--Scholze \cite{CS22}. Let $A$ be a finitely generated $R$-algebra, and let $A^+\subseteq A$ be the integral closure of a finitely generated $R$-subalgebra. If $A^+$ equals the integral closure of the $R$-subalgebra generated by $f_1,\dotsc,f_m$, write $\an(A,A^+)$ for the intersection $\bigcap_{i=1}^m\an(A,f_i)$ in $\{\mbox{analytic stacks}\}$ over $\AnSpec A^{\alg}$. By Proposition \ref{ss:GAGAaxioms}.1), Proposition \ref{ss:GAGAaxioms}.2), and proof of \cite[Lemma 6.11]{CS22}, this does not depend on the choice of $f_1,\dotsc,f_m$.
\begin{thm*}
  The assignment $\Spa(A_{\disc},A^+_{\disc})\mapsto\an(A,A^+)$ sends rational covers to steady localization covers. Consequently, the assignments $\Spec{A}\mapsto\an(A,R^\sim)$ and $\Spec{A}\mapsto\an(A,A)$ naturally glue to functors
  \begin{align*}
    (-)^{\alg}\mbox{ and }(-)^{\an}:\{\mbox{schemes locally of finite type over }R\}\ra\{\mbox{analytic stacks}\},
  \end{align*}
respectively, along with a natural transformation $(-)^{\an}\ra(-)^{\alg}$. Moreover, $(-)^{\an}$ sends Zariski covers to cohomologically \'etale steady localization covers. Finally, for any proper scheme $X$ over $R$, the morphism $X^{\an}\ra X^{\alg}$ is an isomorphism.
\end{thm*}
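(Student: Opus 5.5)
We follow the strategy of Clausen--Scholze's proof of \cite[Theorem 13.10]{CS22}, treating Proposition \ref{ss:GAGAaxioms} as the abstract list of axioms on ``analytic loci'' that their argument needs. First we show that rational covers go to steady localization covers. A rational cover of $\Spa(A_{\disc},A^+_{\disc})$ is refined by iterated standard covers of the form $\{\abs{g}\leq\abs{h}\}\cup\{\abs{h}\leq\abs{g}\}$ with $(g,h)$ generating the unit ideal. Pulling back along $R[T]\ra A$ with $T\mapsto g/h$ (after passing to the locus where $h$ is invertible), such a cover becomes the pullback of $\{\an(R[T],T),D^{\an}(T)\}$. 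Proposition \ref{ss:GAGAaxioms}.4) says the latter is a cohomologically \'etale steady localization cover, and Proposition \ref{ss:GAGAaxioms}.3) identifies $D^{\an}(T)$ with $\an(R[T^{\pm1}],T^{-1})$, which is what makes the two halves look symmetric. Steady localizations are stable under base change and composition, so this handles standard covers.

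To pass to general rational subsets we need compatibility of $\an(-,-)$ with intersections and sums of functions. By Proposition \ref{ss:GAGAaxioms}.2), $\an(A,f)\cap\an(A,g)$ sits inside $\an(A,fg)$ and $\an(A,f+g)$. Hence $\an(A,A^+)$ depends only on the integral closure of the subring generated by the $f_i$, exactly as in \cite[Lemma 6.11]{CS22}. Integrality is handled by the same argument via monic equations. Together with Proposition \ref{ss:GAGAaxioms}.5) for unions of nonvanishing loci, this shows that $\Spa(A_{\disc},A^+_{\disc})\mapsto\an(A,A^+)$ is compatible with rational localizations. Each rational localization is thus sent to the corresponding intersection of analytic loci, and covers go to covers.

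The gluing statements then follow formally. Zariski opens $D(f)\subseteq\Spec{A}$ are rational subsets of both $\Spa(A_{\disc},R^\sim)$ and $\Spa(A_{\disc},A)$, so $(-)^{\alg}$ and $(-)^{\an}$ glue. The inclusion $R^\sim\subseteq A$ gives the transformation $\an(A,A)\ra\an(A,R^\sim)$. The cohomological \'etaleness of Zariski covers under $(-)^{\an}$ comes from Proposition \ref{ss:GAGAaxioms}.4) by base change. Here Proposition \ref{ss:GAGAaxioms}.1) says $\an(A,R^\sim)=\AnSpec A^{\alg}$.

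For properness, reduce to the case $X$ is projective over $R$ (or use Chow's lemma together with descent, which we expect to be a formal but fiddly step). The point is that $\Spa(A_{\disc},R^\sim)$ and $\Spa(A_{\disc},A)$ for the affine charts of $X$ glue to the same adic space. By the valuative criterion, $X^{\ad}_{/R^\sim}=X^{\ad}_{/X}$ for $X$ proper, a statement about discrete Huber pairs. We cover $\bP^d$ by the charts $\Spa(R[T_j/T_i],R[T_j/T_i])$, i.e.\ the loci $\{\abs{T_j}\leq\abs{T_i}\ \forall j\}$, which form a rational cover of the $R^\sim$-version. By the first part this cover maps to a steady localization cover of $X^{\alg}$. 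Each member is $\an$ of the corresponding chart, so $X^{\an}\ra X^{\alg}$ is a local isomorphism and hence an isomorphism; closed subschemes follow by base change.

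The main obstacle is the first step: checking that an arbitrary rational cover, not just a standard one, goes to a jointly conservative family. This requires the combinatorial reduction to Laurent covers together with the symmetry supplied by Proposition \ref{ss:GAGAaxioms}.3) and 5).
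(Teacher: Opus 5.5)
This is essentially the paper's proof. The paper likewise gets the first statement from the description of rational covers of discrete Huber pairs in \cite[Lemma 6.12]{CS22} (exactly the combinatorial reduction you flag as the main obstacle) together with Proposition \ref{ss:GAGAaxioms}, glues using that Zariski opens go to rational subsets, gets cohomological \'etaleness from Proposition \ref{ss:GAGAaxioms}.4), and deduces the proper case directly from the valuative criterion. So your identification $X^{\ad}_{/R^\sim}=X^{\ad}_{/X}$ already settles every proper $X$, and the detour through projective $X$ or Chow's lemma is unnecessary; it would also not be formal, since a Chow modification is not a cover for the steady-localization topology.

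One point to tighten. In your first step, the localization ``to where $h$ is invertible'' has to be the algebraic locus $\AnSpec A[1/h]^{\alg}$, which together with $\AnSpec A[1/g]^{\alg}$ covers by ordinary Zariski descent. The reason is that Proposition \ref{ss:GAGAaxioms}.5) only controls $D^{\an}(f+g)$, so it yields covers $D^{\an}(f)\cup D^{\an}(g)$ when $f+g=1$, not when merely $(g,h)=1$.
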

\begin{proof}
  The first statement follows from the description of rational covers of
  \begin{align*}
    \Spa(A_{\disc},A^+_{\disc})
  \end{align*}
  from \cite[Lemma 6.12]{CS22} and Proposition \ref{ss:GAGAaxioms}. The second statement follows from the first statement and the fact that the assignments $\Spec{A}\mapsto\Spa(A_{\disc},R^\sim_{\disc})$ and $\Spec{A}\mapsto\Spa(A_{\disc},A_{\disc})$ send rational covers to rational covers. The third statement follows from Proposition \ref{ss:GAGAaxioms}.4). Finally, the fourth statement follows from the valuative criterion for properness.
\end{proof}
\begin{rem*}
By \cite[Proposition 13.6]{Sch20}, the functors $(-)^{\alg}$ and $(-)^{\an}$ are valued in the full subcategory of analytic spaces as in \cite[Definition 13.5]{Sch20}.
\end{rem*}

\subsection{}\label{ss:resolvable}
We will later need the notion of \emph{pseudocoherent} sheaves on schemes, as well as its relation with finite presentation. For any scheme $X$, write $D_{\pc}(X)$ for the full subcategory of $D_{\qc}(X)$ consisting of pseudocoherent objects as in \cite[Tag 08CB]{stacks-project}. When $X=\Spec{A}$ is affine, recall from \cite[Tag 08E7]{stacks-project} that this is equivalent to the full subcategory $D_{\pc}(A)\subseteq D(A)$ of pseudocoherent objects as in \cite[Tag 064Q]{stacks-project}.
\begin{lem*}Let $A$ be a finitely generated $R$-algebra. Then $A$ is a finitely presented $R$-algebra if and only if, for all $R$-algebra surjections $R[T_1,\dotsc,T_m]\twoheadrightarrow A$, the object $A$ lies in $D_{\pc}(R[T_1,\dotsc,T_m])$.
\end{lem*}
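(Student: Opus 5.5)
The plan is to prove the two implications separately, using only the standard dictionary between low-degree pseudocoherence and finite generation and finite presentation of modules. Throughout, fix an $R$-algebra surjection $R[T_1,\dotsc,T_m]\twoheadrightarrow A$ with kernel $I$, and write $P\coloneqq R[T_1,\dotsc,T_m]$.

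\emph{Pseudocoherence implies finite presentation.} Suppose $A$ lies in $D_{\pc}(P)$ for every such surjection; choose one surjection. Pseudocoherence means $m$-pseudocoherence for all $m$. In particular $A$ is $(-1)$-pseudocoherent as a $P$-module. By \cite[Tag 064T]{stacks-project}, for a module in degree $0$ this is equivalent to $A$ being a finitely presented $P$-module. From the exact sequence $0\ra I\ra P\ra A\ra0$, with $P$ free of rank one, it follows that $I$ is a finitely generated ideal. Hence $A$ is a finitely presented $R$-algebra. This direction is formal.

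\emph{Finite presentation implies pseudocoherence.} Assume $A$ is a finitely presented $R$-algebra.
\begin{enumerate}[1)]
\item By \cite[Tag 00R2]{stacks-project}, finite presentation does not depend on the chosen presentation. So for every surjection $P\twoheadrightarrow A$ the kernel $I$ is finitely generated, and $A$ is a finitely presented $P$-module.
\item Choose a finite type $\bZ$-algebra $R_0\subseteq R$ containing the coefficients of a finite generating set of $I$. Set $I_0\subseteq R_0[T_1,\dotsc,T_m]$ to be the ideal generated by those generators and $A_0\coloneqq R_0[T_1,\dotsc,T_m]/I_0$, so that $A=A_0\otimes_{R_0[T_1,\dotsc,T_m]}P$ underived.
\item Since $R_0[T_1,\dotsc,T_m]$ is noetherian, $A_0$ admits a resolution by finite free modules and is therefore pseudocoherent there. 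Derived base change preserves pseudocoherence \cite[Tag 0650]{stacks-project}, so $A_0\otimes^{\bL}_{R_0[T_1,\dotsc,T_m]}P$ lies in $D_{\pc}(P)$.
\item The final step is to compare $A_0\otimes^{\bL}_{R_0[T_1,\dotsc,T_m]}P$ with its $H^0$, which is $A$. This holds exactly when the higher Tor groups vanish. I would try to arrange this by enlarging $R_0$, or by choosing $R_0\ra R$ flat, for instance by passing to a filtered colimit of finite type subrings over which $R$ becomes flat after localization. Then \cite[Tag 064X]{stacks-project} lets one transport pseudocoherence along the flat map.
\end{enumerate}

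\emph{Main obstacle.} Step 4 is where I expect the argument to get stuck. A complete Tate ring $R$ need not be coherent, and $R_0\ra R$ need not be flat. In that case the kernel of a finite free presentation of $A$ can fail to be finitely generated. Then the argument above only gives $(-1)$-pseudocoherence, not full pseudocoherence. If no flat model $R_0\ra R$ can be found, I would first check whether the use of this lemma in the GAGA argument needs only $(-1)$-pseudocoherence, or needs the additional hypothesis that $R$ is coherent, in which case $I$ finitely generated forces all syzygies to be finitely generated. I would settle which version is needed before writing out the final step.
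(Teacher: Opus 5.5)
Your ``if'' direction is correct and is the paper's argument (Tag 064T applied to one presentation). Steps 1)--3) of the converse are also exactly the paper's route. Step 4), however, is a genuine gap, and it cannot be closed.

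The paper passes from $A_0\in D_{\pc}(R_0[T_1,\dotsc,T_m])$ to $A\in D_{\pc}(R[T_1,\dotsc,T_m])$ with a bare ``Hence''. This tacitly identifies $A_0\otimes^{\mathbb{L}}_{R_0[T_1,\dotsc,T_m]}R[T_1,\dotsc,T_m]=A_0\otimes^{\mathbb{L}}_{R_0}R$ with its $H^0=A$, i.e.\ it assumes $\mathrm{Tor}_i^{R_0}(A_0,R)=0$ for $i>0$. In fact the ``only if'' direction is false for complete Tate $R$:
\begin{enumerate}[1)]
\item Let $x\in R$ have a non-finitely-generated annihilator, and let $A=R/xR$, a finitely presented $R$-algebra. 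Suppose $A$ were pseudocoherent over $R[T]$ via $T\mapsto0$.
\item Derived base change along $R[T]\to R$, $T\mapsto0$, gives $A\oplus A[1]$, so $A$ would be pseudocoherent over $R$. The case $m=0$ is the same without the base change.
\item Then $0\to xR\to R\to A\to0$ and $0\to\mathrm{Ann}_R(x)\to R\to xR\to0$ force $\mathrm{Ann}_R(x)$ to be finitely generated, a contradiction.
\end{enumerate}
Such $x$ exist. For a nonarchimedean field $K$ with pseudouniformizer $\varpi$, let $R=\big(K^\circ[Y,Z_1,Z_2,\dotsc]/(YZ_1,YZ_2,\dotsc)\big)^\wedge_\varpi[1/\varpi]$ and $x=Y$. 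Then $\mathrm{Ann}_R(Y)$ is the ideal of convergent series in the $Z$-monomials of positive degree. Let $\mathfrak{n}$ be the kernel of evaluation at $Y=Z_i=0$. The functionals ``coefficient of $Z_i$'' vanish on $\mathfrak{n}\cdot\mathrm{Ann}_R(Y)$ and are linearly independent. Hence $\mathrm{Ann}_R(Y)/\mathfrak{n}\cdot\mathrm{Ann}_R(Y)$ is infinite-dimensional over $R/\mathfrak{n}=K$, whereas a $k$-generated ideal would give dimension at most $k$.

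Regarding your fallbacks:
\begin{enumerate}[$\bullet$]
\item $(-1)$-pseudocoherence does not suffice downstream. Proposition \ref{ss:uppershriek} uses the lemma to make $R\langle T_1,\dotsc,T_m\rangle\otimes_{R[T_1,\dotsc,T_m]}A$ pseudocompact, which involves all syzygies. In fact, for $X=\Spec(R/xR)$ as above, the unit $\sO_{X^{\an}}$ lies in $D_{\pc}(X^{\an})$, but $\pi_*\sO_{X^{\an}}$ is the image of the non-pseudocoherent module $R/xR$. So Corollary \ref{ss:grauert} also fails in this generality.
\item Coherence of $R$ is not the right extra hypothesis. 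You would need $R[T_1,\dotsc,T_m]$ to be coherent, and coherence does not pass to polynomial rings in general (Soublin).
\item Your argument does close if $A$ is flat over $R$ (e.g.\ $A$ smooth). Limit arguments then let you choose $A_0$ flat over $R_0$. So $\mathrm{Tor}^{R_0}_{>0}(A_0,R)=0$, and $A=A_0\otimes^{\mathbb{L}}_{R_0}R$ is pseudocoherent over $R[T_1,\dotsc,T_m]$.
\end{enumerate}
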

\begin{proof}
  Let $R[T_1,\dotsc,T_m]\twoheadrightarrow A$ be an $R$-algebra surjection with $A$ lying in
  \begin{align*}
  D_{\pc}(R[T_1,\dotsc,T_m]).
  \end{align*}
  Then \cite[Tag 064T]{stacks-project} implies that $A$ is a finitely presented $R$-algebra.

  Conversely, assume that $A$ is a finitely presented $R$-algebra, and let
  \begin{align*}
\al:R[T_1,\dotsc,T_m]\twoheadrightarrow A
  \end{align*}
  be an $R$-algebra surjection. By \cite[Tag 00R2]{stacks-project}, $\ker\al$ is finitely generated. Therefore noetherian approximation yields a noetherian ring $R_0$, a surjection
  \begin{align*}
   \al_0:R_0[T_1,\dotsc,T_m]\twoheadrightarrow A_0
  \end{align*}
  of $R_0$-algebras, and a ring homomorphism $R_0\ra R$ such that the base change of $(A_0,\al_0)$ to $R$ is isomorphic to $(A,\al)$. Because $R_0[T_1,\dotsc,T_m]$ is noetherian, $A_0$ lies in $D_{\pc}(R_0[T_1,\dotsc,T_m])$. Hence $A$ lies in $D_{\pc}(R[T_1,\dotsc,T_m])$.
\end{proof}

\subsection{}\label{ss:AnStackpc}
By construction, the analytifications from Theorem \ref{ss:AnStackGAGA} are covered by analytic rings of the following form. For any finitely generated $R$-algebra $A$ with generators $\ul{f}=(f_1,\dotsc,f_m)$, write $A_{\abs{\ul{f}}\leq1}$ and $\ov{A}_{\abs{\ul{f}}\leq1}$ for these pushouts in $\AnRing$:
\begin{gather*}
  \xymatrix{(\bZ[T_1,\dotsc,T_m],\bZ)_\solid\ar[r]\ar[d]^-{\ul{f}} & \bZ[T_1,\dotsc,T_m]_\solid\ar[d]\\
    A^{\alg}\ar[r] & A_{\abs{\ul{f}}\leq1},}\\
  \xymatrix{R[T_1,\dotsc,T_m]^{\alg}\ar[r]\ar[d]^-{\ul{f}} & (R\ang{T_1,\dotsc,T_m},R^++(T_1,\dotsc,T_m)\cdot R^{\circ\circ}\ang{T_1,\dotsc,T_m})_\solid\ar[d]\\
  A^{\alg}\ar[r] & \ov{A}_{\abs{\ul{f}}\leq1}.}
\end{gather*}

For any scheme $X$ locally of finite type over $R$, Theorem \ref{ss:AnStackGAGA} shows that $X^{\an}$ is glued from objects of the form $\AnSpec A_{\abs{\ul{f}}\leq1}$ along covers as in \ref{ss:AnRingpseudocompact}. Therefore Proposition \ref{ss:AnRingpseudocompact} indicates that the full subcategory of $D(A_{\abs{\ul{f}}\leq1})$ consisting of pseudocompact objects as in \cite[Definition 9.9]{CS22} naturally glues to a full subcategory of $D(X^{\an})$. We say that an object of $D(X^{\an})$ is \emph{pseudocompact} if it lies in this full subcategory.

Theorem \ref{ss:AnStackGAGA} and \ref{ss:nuclearcover} show that $X^{\an}$ is also glued from objects of the form $\AnSpec\ov{A}_{\abs{\ul{f}}\leq1}$ along covers as in \ref{ss:AnRingnucleargluing}. Hence Proposition \ref{ss:AnRingnucleargluing} indicates that the full subcategory of $D(\ov{A}_{\abs{f}\leq1})$ consisting of nuclear objects as in \cite[Definition 8.5 (1)]{CS22} naturally glues to a full subcategory of $D(X^{\an})$. We say that an object of $D(X^{\an})$ is \emph{nuclear} if it lies in this full subcategory.

Finally, write $D_{\pc}(X^{\an})$ for the full subcategory of $D(X^{\an})$ consisting of pseudocompact and nuclear objects. Recall from \cite[Theorem 5.50]{And21}\footnote{While \cite[Section 5]{And21} assumes that $(R,R^+)$ is sheafy, this hypothesis is not used in the proof of \cite[Theorem 5.50]{And21}.} that
\begin{align*}
D_{\pc}((\Spec{R})^{\an})\subseteq D((\Spec{R})^{\an}) = D_\solid(R,R^+)
\end{align*}
equals the image of $D_{\pc}(R)\subseteq D(R)\hra D_\solid(R,R^+)$.

\subsection{}\label{ss:uppershriek}
For the analytifications from Theorem \ref{ss:AnStackGAGA}, $!$-pullback of quasi-coherent sheaves commutes with arbitrary direct sums of uniformly bounded below objects. More precisely, for any scheme $X$ locally of finite type over $R$, write
\begin{align*}
\pi:X^{\an}\ra(\Spec R)^{\an}=\AnSpec(R,R^+)_\solid
\end{align*}
for $(-)^{\an}$ applied to the structure morphism $X\ra\Spec R$.
\begin{prop*}
  Assume that $X$ is of finite presentation over $R$, and let $d$ be an integer. Then there exists an integer $e$ such that, for all families $(M_\al)_\al$ of objects of $D_{\solid}^{\geq d}(R,R^+)$, the family $(\pi^!M_\al)_\al$ lies in $D^{\geq e}(X^{\an})$ and the natural morphism
  \begin{align*}
\bigoplus_\al\pi^!M_\al\ra\pi^!\big(\bigoplus_\al M_\al\big)
  \end{align*}
  is an isomorphism.
\end{prop*}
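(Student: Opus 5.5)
The plan is to reduce to the affine case and then to affine space, where $\pi^!$ can be computed explicitly. Since $X$ is of finite presentation over $R$, it is quasicompact, so it has a finite affine open cover. By Theorem \ref{ss:AnStackGAGA}, $(-)^{\an}$ sends Zariski covers to cohomologically \'etale steady localization covers. Hence $D(X^{\an})$ is a finite limit of the categories attached to the finitely many finite intersections of these affines, and $!$-pullback to each of them agrees with $*$-pullback. Both the uniform lower bound and the commutation with direct sums can therefore be checked after restricting to each such intersection: a finite limit of functors commuting with uniformly bounded below sums still does so, at the cost of shifting the bound by the number of terms. So we may assume $X=\Spec A$ is affine, with $A$ a finitely presented $R$-algebra. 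Choose a surjection $R[T_1,\dotsc,T_m]\twoheadrightarrow A$ and factor $\pi$ as a closed immersion $i:X^{\an}\ra(\bA^m_R)^{\an}$ followed by $p:(\bA^m_R)^{\an}\ra(\Spec R)^{\an}$.

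For $p$, I would compute $p^!$ one variable at a time. Write $(\bA^1_R)^{\an}=\varinjlim_n\AnSpec R[T]_{\abs{\vpi^nT}\leq1}$. Each $R[T]_{\abs{\vpi^nT}\leq1}$ is the closed unit disk $(R\ang{\vpi^nT},R^+\ang{\vpi^nT})_\solid$, by Lemma \ref{ss:closedunitdisk} and the base change of $(\bZ[T],\bZ)_\solid\ra\bZ[T]_\solid$. For solid smooth closed disks the relative dualizing complex is $R\ang{T}[2]$ up to twist, by the computation of $f^!$ for $\bZ_\solid\ra\bZ[T]_\solid$ in \cite{Sch19,CS22}. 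Thus on each disk $p^!M=M\otimes_{(R,R^+)_\solid}R\ang{\vpi^nT}[2]$, which is a solid tensor product with a nuclear (Proposition \ref{ss:closureofclosedunitdisk}), hence flat, compact projective $\bZ[T]_\solid$-type object. In particular it commutes with all direct sums and shifts degrees by exactly $-2$. On the colimit, $D((\bA^1_R)^{\an})$ is the limit over $n$, and cohomological bounds and direct sums can be checked termwise after restricting along the open embeddings. Iterating over the $m$ variables gives the statement for $p$ with $e=d-2m$.

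For the closed immersion $i$, I expect $i^!N=\ul{\Hom}(A,N)$, viewed as a module over $A^{\an}$; this is the step I expect to be the main obstacle. Because $i$ is the base change of $\Spec A\ra\bA^m_R$ along the analytic-ring pushouts, it is in class $P$ (induced), so $i_*$ is conservative and its right adjoint is $\ul{\Hom}_{R[T]}(A,-)$. By Lemma \ref{ss:resolvable}, $A$ is pseudocoherent over $R[T_1,\dotsc,T_m]$. So $A$ is resolved by finite free modules in each degree, and $\ul{\Hom}(A,-)$ restricted to $D^{\geq d'}$ only sees a finite stage of that resolution in each degree. This gives $\ul{\Hom}(A,-)$ commuting with uniformly bounded below direct sums, and sending $D^{\geq d'}$ to $D^{\geq d'}$. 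The delicate point is doing this compatibly on the analytic side, i.e.\ on each $\AnSpec\ov{A}_{\abs{\ul f}\leq1}$ after base change, using that the base change of a truncated free resolution remains a resolution. I would try to handle this via steadiness (Proposition \ref{ss:closureofclosedunitdisk}), so that the base change is computed by the underlying tensor product. Combining the two steps gives the claim with $e=d-2m$.
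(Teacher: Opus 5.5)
Your proposal is correct and follows essentially the paper's route: reduce to $X=\Spec A$ and to a single disk, handle the closed-immersion part via $\ul{\Hom}_{R\ang{T_1,\dotsc,T_m}}(R\ang{T_1,\dotsc,T_m}\otimes_{R[T_1,\dotsc,T_m]}A,-)$ using the pseudocoherence of $A$ from Lemma \ref{ss:resolvable}, and identify $!$-pullback from $\AnSpec(R,R^+)_\solid$ to the disk with a shifted $*$-pullback. Your ``delicate point'' needs no Tor-vanishing or steadiness argument, because the derived base change of a pseudocoherent complex is automatically pseudocoherent.

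One correction: in this quasi-coherent formalism the shift is $[1]$ per variable, not $[2]$. The paper proves the shift is $[m]$ directly from $\ul{\Hom}_R(R[T_1,\dotsc,T_m],M)$ and the triangles $M[T_j]\to M\lp{T_j^{-1}}\to M\lp{T_j^{-1}}/M[T_j]$. Also, nuclearity does not by itself give flatness. Neither point affects your conclusion: only the existence of $e$ is claimed, so bounded amplitude of the base change to the disk is all you need.
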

\begin{proof}
  Theorem \ref{ss:AnStackGAGA} indicates that $(-)^{\an}$ sends Zariski covers to cohomologically \'etale steady localization covers, so by replacing $X$ with an open cover, we can assume that $X=\Spec{A}$ for a finitely presented $R$-algebra $A$.

  Let $\ul{f}=(f_1,\dotsc,f_m)$ be generators of $A$. Then $X^{\an}$ is isomorphic to
  \begin{align*}
\varinjlim_{n\geq0}\AnSpec A_{\abs{\vpi^n\ul{f}}\leq1},
  \end{align*}
  where the transition morphisms are cohomologically \'etale, so it suffices to replace $X^{\an}$ with $\AnSpec A_{\abs{\ul{f}}\leq1}$. Write $\ul{T}=(T_1,\dotsc,T_m)$ for the standard generators of $R[T_1,\dotsc,T_m]$, and note we have a pushout square in $\AnRing$
  \begin{align*}
    \xymatrix{(R[T_1,\dotsc,T_m]_{\disc},R^+_{\disc})_\solid\ar[r]\ar[d]^-{\ul{f}} & R[T_1,\dotsc,T_m]_{\abs{\ul{T}}\leq1}\ar[d]^-{\ul{f}}\\
    (A_{\disc},(R^+)^\sim_{\disc})_\solid\ar[r] & A_{\abs{\ul{f}}\leq1}.}
  \end{align*}
  Now \cite[Proposition 3.22]{And21} and the characterization of $(-,-)_\solid$ from \cite[Proposition 3.32]{And21} imply that the left arrow is induced as in \cite[Proposition 12.8]{Sch20}, so the same holds for the right arrow. Therefore the functor
  \begin{align*}
(\AnSpec\ul{f})^!:D(R[T_1,\dotsc,T_m]_{\abs{\ul{T}}\leq1})\ra D(A_{\abs{\ul{f}}\leq1})
  \end{align*}
  is given by $\ul{\Hom}_{R\ang{T_1,\dotsc,T_m}}(R\ang{T_1,\dotsc,T_m}\otimes_{R[T_1,\dotsc,T_m]}A,-)$, where we identify
  \begin{align*}
    R[T_1,\dotsc,T_m]_{\abs{\ul{T}}\leq1}=(R\ang{T_1,\dotsc,T_m},R^+\ang{T_1,\dotsc,T_m})_\solid
  \end{align*}
  using Lemma \ref{ss:closedunitdisk}. Lemma \ref{ss:resolvable} shows that $A$ lies in $D_{\pc}(R[T_1,\dotsc,T_m])$, so
  \begin{align}\label{eq:Zariskiclosedindisk}
   \tag{$\Bumpeq$} R\ang{T_1,\dotsc,T_m}\otimes_{R[T_1,\dotsc,T_m]}A
  \end{align}
  lies in $D_{\pc}(R\ang{T_1,\dotsc,T_m})$. By \cite[Theorem 5.50]{And21}, (\ref{eq:Zariskiclosedindisk}) yields a pseudocompact object of $D(R[T_1,\dotsc,T_m]_{\abs{T}\leq1})$ as in \cite[Definition 9.9]{CS22}, which implies that $(\AnSpec\ul{f})^!$ commutes with direct sums of objects of $D^{\geq n}(R[T_1,\dotsc,T_m]_{\abs{T}\leq1})$ for all integers $n$. Because (\ref{eq:Zariskiclosedindisk}) also lies in $D^{\leq0}(R[T_1,\dotsc,T_m]_{\abs{T}\leq1})$, the functor $(\AnSpec\ul{f})^!$ is also left $t$-exact.

  Hence we can assume that $A=R[T_1,\dotsc,T_m]$. Consider the morphisms
  \begin{align*}
    \xymatrix{\AnSpec A_{\abs{\ul{T}}\leq1}\ar[r]^-j & \AnSpec A^{\alg}\ar[r]^-p & \AnSpec(R,R^+)_\solid,}
  \end{align*}
  and note that the restriction of $\pi$ to $\AnSpec A_{\abs{\ul{T}}\leq1}$ is $p\circ j$. We claim that $j^!p^!$ is naturally isomorphic to $j^*p^*[m]$. First, \cite[Lemma 3.2.9]{RC24} and \ref{ss:Z[T]facts} show that $j$ is cohomologically \'etale, so $j^*=j^!$. Since $p$ is induced, for all objects $M$ of $D_\solid(R,R^+)$ we have $j^!p^!M=j^!\ul{\Hom}_R(A,M)$. Next, note that $\ul{\Hom}_R(A,M)$ is naturally isomorphic to $M\lp{T_1^{-1},\dotsc,T_m^{-1}}/M[T_1,\dotsc,T_m]$, which is a module for the idempotent algebra $\bZ\lp{T_j^{-1}}\otimes_{\bZ_\solid}(R,R^+)_\solid$ in $D(A^{\alg})$ for all $1\leq j\leq r$. By \ref{ss:Z[T]facts}, $j^!$ is the intersection of the open embeddings as in \cite[p.~61]{CS22} corresponding to these idempotent algebras, so applying $j^!$ to the tensor product of the exact triangles
\begin{align*}
\xymatrix{M[T_j]\ar[r] & M\lp{T_j^{-1}}\ar[r] & M\lp{T_j^{-1}}/M[T_j]\ar[r]^-{+1} &}
\end{align*}
implies that
\begin{align*}
  j^!\ul{\Hom}_R(A,M) &= j^!M\lp{T_1^{-1},\dotsc,T_m^{-1}}/M[T_1,\dotsc,T_m] = j^!M[T_1,\dotsc,T_m][m] \\
  &=j^!p^*M[m] = j^*p^*M[m].
\end{align*}
Finally, the desired result follows immediately from the claim.
\end{proof}

\subsection{}\label{ss:grauert}
As a consequence, we prove the following finitude result for proper pushforwards, which is the analogue of Grauert's coherence theorem in our setting.
\begin{cor*}
  For any proper scheme $X$ of finite presentation over $R$, the functor
  \begin{align*}
    \pi_*:D(X^{\an})\ra D_\solid(R,R^+)
  \end{align*}
  sends $D_{\pc}(X^{\an})$ to $D_{\pc}(R)$.
\end{cor*}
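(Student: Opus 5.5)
We follow Clausen--Scholze's proof of Grauert's theorem \cite[Theorem 13.10]{CS22}: by \ref{ss:AnStackpc}, an object of $D_{\solid}(R,R^+)$ lies in $D_{\pc}(R)$ if and only if it is pseudocompact and nuclear. We verify these two properties for $\pi_*C$ separately, for $C$ in $D_{\pc}(X^{\an})$, and conclude using \cite[Theorem 5.50]{And21}.

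\emph{Pseudocompactness.} Since $X$ is proper, Theorem \ref{ss:AnStackGAGA} gives $X^{\an}\ra^\sim X^{\alg}$. Hence $\pi$ is the composition of a morphism induced along $R\ra A$ (affine-locally) with gluing along finite steady covers, so it is $!$-able and proper, and $\pi_*=\pi_!$. Therefore $\pi_*$ has right adjoint $\pi^!$, and
\begin{align*}
\Hom(\pi_*C,M)=\Hom(C,\pi^!M).
\end{align*}
Choose $d$. Proposition \ref{ss:uppershriek} gives an integer $e$ such that $\pi^!$ sends $D^{\geq d}_\solid(R,R^+)$ into $D^{\geq e}(X^{\an})$ and commutes with direct sums of such objects. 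Pseudocompactness of $C$ on $X^{\an}$ is checked on the finite cover by the $\AnSpec A_{\abs{\ul{f}}\leq1}$, as in Proposition \ref{ss:AnRingpseudocompact}, where only finitely many terms occur since $X$ is quasicompact. It follows that $\Hom(C,-)$ commutes with uniformly bounded below direct sums in $D(X^{\an})$. Combining these, $\Hom(\pi_*C,-)$ commutes with direct sums in $D^{\geq d}_\solid(R,R^+)$ for every $d$. Together with the bound on $t$-amplitude, this yields pseudocompactness of $\pi_*C$.

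\emph{Nuclearity.} Cover $X^{\an}$ by finitely many $\AnSpec\ov{A}_{\abs{\ul{f}}\leq1}$, as in \ref{ss:AnStackpc}. By descent, $\pi_*C$ is a finite limit of pushforwards of $C$ restricted to finite intersections of these pieces. Each pushforward is computed as the underlying $R$-module of a nuclear module over a nuclear $R$-algebra. The algebra is nuclear by Proposition \ref{ss:closureofclosedunitdisk}, since base change of nuclear algebras stays nuclear. By Lemma \ref{ss:AnRinginducednuclear}, the module is then nuclear over $R$. Nuclear objects are closed under finite limits \cite[Theorem 8.6 (1)]{CS22}, so $\pi_*C$ is nuclear.

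The main obstacle is the nuclearity step. The covers on which nuclearity is defined (the closures $\ov{A}$) differ from those on which pseudocompactness is defined, so we must check that pushforward along $\AnSpec\ov{A}_{\abs{\ul{f}}\leq1}\ra X^{\an}$ followed by $\pi_*$ really agrees with the forgetful functor to $D_\solid(R,R^+)$. We will use that the morphism $R[T]^{\alg}\ra(R\ang{T},R^++T\cdot R^{\circ\circ}\ang{T})_\solid$ is a steady localization, which makes the morphism induced over $(R,R^+)_\solid$. A second check is that the shift bounds from Proposition \ref{ss:uppershriek} survive passage to a finite cover; we expect this to be routine.
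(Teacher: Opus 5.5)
Your proposal is correct and follows the paper's proof essentially step for step. Nuclearity of $\pi_*C$ comes from the finite cover of $X^{\an}\cong X^{\alg}$ by the induced pieces $\AnSpec\ov{A}_{\abs{\ul{f}}\leq1}$ (closed under intersection since $X$ is separated), together with Lemma \ref{ss:AnRinginducednuclear} and \cite[Theorem 8.6 (1)]{CS22}; pseudocompactness comes from $\pi_*\cong\pi_!$, Proposition \ref{ss:uppershriek}, and pseudocompactness of $C$. The only difference is that the paper justifies $\pi_*\cong\pi_!$ using the same $\ov{A}_{\abs{\ul{f}}\leq1}$-cover (each piece is induced over $(R,R^+)_\solid$, hence cohomologically prim with trivial codualizing complex by \cite[Lemma 4.5.5]{HM24}, globalized via \cite[Remark 4.4.11.(iii)]{HM24}), and the obstacle you anticipate is not a real one, since $*$-pushforward along an induced morphism is simply the forgetful functor.
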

\begin{proof}
  Theorem \ref{ss:AnStackGAGA} indicates that $X^{\an}\ra^\sim X^{\alg}$. Because $X^{\alg}$ is covered by finitely many objects of the form $\AnSpec(\cA,\cM)$, it is quasicompact, so the same holds for $X^{\an}$. Therefore \ref{ss:AnStackpc} shows that $X^{\an}$ is covered by finitely many objects of the form $\AnSpec\ov{A}_{\abs{\ul{f}}\leq1}$. Since $X$ is separated over $R$, the intersection of these objects over $X^{\an}$ are also of this form, so Lemma \ref{ss:AnRinginducednuclear} and \cite[Theorem 8.6 (1)]{CS22} imply that $\pi_*$ sends nuclear objects of $D(X^{\an})$ to nuclear objects of $D_\solid(R,R^+)$ as in \cite[Definition 8.5 (1)]{CS22}.

We claim that $\pi_*$ is naturally isomorphic to $\pi_!$. To see this, note that Proposition \ref{ss:closureofclosedunitdisk} indicates that $(R,R^+)_\solid\ra\ov{A}_{\abs{\ul{f}}\leq1}$ is induced as in \cite[Proposition 12.8]{Sch20}. Hence \cite[Lemma 4.5.5]{HM24} implies that $\AnSpec\ov{A}_{\abs{\ul{f}}\leq1}\ra\AnSpec(R,R^+)_\solid$ is cohomologically prim as in as in \cite[Definition 4.5.1.(b)]{HM24} with trivial codualizing complex, and applying \cite[Remark 4.4.11.(iii)]{HM24} to the cover of $X^{\an}$ by such objects shows that the same holds for $\pi$. This yields the claim.

Let $C$ be a pseudocompact object of $D(X^{\an})$, and let $d$ be an integer. Then Proposition \ref{ss:uppershriek} shows there exists an integer $e$ such that, for all families $(M_\al)_\al$ of objects of $D^{\geq d}_{\solid}(R,R^+)$, the family $(\pi^!M_\al)_\al$ lies in $D^{\geq e}(X^{\an})$ and satisfies
\begin{align*}
\bigoplus_\al\pi^!M_\al=\pi^!\big(\bigoplus_\al M_\al\big).
\end{align*}
Because $C$ is pseudocompact, the claim indicates that
\begin{align*}
  \Hom(\pi_*C,\bigoplus_\al M_\al) &= \Hom(C,\pi^!\bigoplus_\al M_\al) \\
  &= \Hom(C,\bigoplus_\al\pi^!M_\al) = \bigoplus_\al\Hom(C,\pi^!M_\al) = \bigoplus_\al\Hom(\pi_*C,M_\al).
\end{align*}
Therefore $\pi_*C$ is pseudocompact, as desired.
\end{proof}

\subsection{}\label{ss:pcGAGA}
Using Corollary \ref{ss:grauert}, we prove that Theorem \ref{ss:AnStackGAGA} restricts to an equivalence between subcategories of pseudocoherent objects as follows. For any scheme $X$ locally of finite type over $R$, Theorem \ref{ss:AnStackGAGA} shows that $X^{\alg}$ is glued from objects of the form $\AnSpec A^{\alg}$ along covers induced from Zariski covers. Hence \ref{ss:resolvable} indicates that the full subcategory $D_{\pc}(A)\subseteq D(A)\hra D(A^{\alg})$ naturally glues to a full subcategory of $D(X^{\alg})$ that is naturally equivalent to $D_{\pc}(X)$.

Consider the functor $D(X^{\alg})\ra D(X^{\an})$. When $X=\Spec{A}$ is affine, \cite[Theorem 5.50]{And21} and \cite[Theorem 8.6 (1)]{CS22} imply that this sends $D_{\pc}(A)$ to $D_{\pc}((\Spec{A})^{\an})$, so in general this sends $D_{\pc}(X)$ to $D_{\pc}(X^{\an})$.
\begin{thm*}
  For any proper scheme $X$ of finite presentation over $R$, the resulting functor $D_{\pc}(X)\ra D_{\pc}(X^{\an})$ is an equivalence.
\end{thm*}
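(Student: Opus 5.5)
We follow the strategy of Clausen--Scholze's proof of \cite[Theorem 13.10]{CS22}. By Theorem \ref{ss:AnStackGAGA}, $X^{\an}\ra X^{\alg}$ is an isomorphism, so the functor in question is the restriction of the quasi-coherent pullback $D(X^{\alg})\ra D(X^{\an})$, and this is an equivalence on \emph{all} solid quasi-coherent sheaves. What remains is to show that this equivalence identifies the full subcategories $D_{\pc}(X)\subseteq D(X^{\alg})$ and $D_{\pc}(X^{\an})\subseteq D(X^{\an})$. We have already seen that $D_{\pc}(X)$ lands in $D_{\pc}(X^{\an})$, and full faithfulness is automatic. So the content is essential surjectivity: every pseudocompact nuclear object $C$ of $D(X^{\an})=D(X^{\alg})$, viewed on $X^{\alg}$, must be pseudocoherent in the algebraic sense.

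The first step is to reduce to the affine case and to a statement about global sections. Pseudocoherence on $X$ is Zariski local. Since $X$ is proper it is quasicompact, so we can fix a finite affine open cover $\{\Spec A_k\}$. It then suffices to show that the restriction of $C$ to $\Spec A_k$, viewed in $D(A_k^{\alg})$, comes from $D_{\pc}(A_k)$.

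To access this from Corollary \ref{ss:grauert}, which only controls pushforward along proper morphisms to $\Spec R$, I would choose a closed embedding of $X$, or of a compactification of the chart, into a projective space $\bP^N_R$. One then uses Serre twists: pseudocoherence of a complex on $\bP^N_R$ is detected by the finitude of $R\Ga(\bP^N_R, -\otimes\sO(n))$ for all $n$, via a Beilinson-type resolution of the diagonal. Concretely, the plan is to show $C\cong\bigoplus$-free-resolution-style: by Beilinson, any quasi-coherent $M$ on $\bP^N_R$ is built from finitely many terms $R\Ga(\bP^N_R,M\otimes\Om^j(j))\otimes_R\sO(-j)$. Hence $M$ is pseudocoherent as soon as each $R\Ga(\bP^N_R,M\otimes\Om^j(j))$ lies in $D_{\pc}(R)$. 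Proper pushforward along the closed embedding preserves pseudocompactness and nuclearity, because closed immersions are induced and steady. Tensoring with the algebraic vector bundles $\Om^j(j)$ also preserves both conditions. Corollary \ref{ss:grauert}, applied to $\bP^N_R$, then shows that these global sections lie in $D_{\pc}(R)$, and compatibility of $\pi_*$ with $R\Ga$ on $X^{\alg}$ is automatic from the equivalence.

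To avoid assuming projectivity, I would instead cover $X$ by the diagonal method directly. Let $p_1,p_2:X\times X\ra X$ be the projections. One writes $C=p_{2,*}(p_1^*C\otimes\sO_\De)$ and resolves $\sO_\De$ on $X\times_RX$ by a pseudocoherent complex. Applying Corollary \ref{ss:grauert} in the relative form over affine opens of the second factor then gives the result. The relative form is obtained by base changing $R$ to $A_k$ with the induced analytic structure.

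The main obstacle I expect is exactly this step of passing from finiteness of proper pushforwards to $\Spec R$ (Corollary \ref{ss:grauert}) to local pseudocoherence. Beilinson's resolution requires projectivity, so for a general proper $X$ I would try Chow's lemma plus noetherian approximation first. Alternatively, and preferably, I would use the relative Grauert statement over each affine open $\Spec A_k$ with $(A_k,\ldots)$ replacing $(R,R^+)$. Checking that the hypotheses there, namely Tate and finite presentation, survive this base change is where care is needed.
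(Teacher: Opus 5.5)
Your Beilinson argument works when $X$ admits a finitely presented closed immersion $i\colon X\hookrightarrow\bP^N_R$. The resolution of $\sO_\Delta$ by the box products $\sO(-j)\boxtimes\Omega^j(j)$ can be read in $D((\bP^N_R\times_R\bP^N_R)^{\alg})$. It exhibits $i_*C$ as a finite iterated extension of the objects $R\Gamma(X,C\otimes i^*\Omega^j(j))\otimes_R\sO(-j)$. Corollary \ref{ss:grauert}, applied to the twists $C\otimes i^*\Omega^j(j)\in D_{\pc}(X^{\an})$, puts these in $D_{\pc}(R)$.

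But the statement concerns \emph{every} proper $X$ of finite presentation, and Theorem \ref{ss:HX} applies it to smooth proper $X$ that need not be projective. For that case your proposal has a genuine gap, exactly where you flag it, and neither fallback works as stated. Chow's lemma gives a projective $g\colon X'\ra X$ that is an isomorphism only over \emph{some} dense open. So pseudocoherence of $g^*C$ says nothing about $C$ near the exceptional locus unless you add a descent argument, which you do not supply. A ``relative Grauert over $\Spec A_k$'' cannot be obtained by base change. The analytic ring $A_k^{\alg}$ is not the solid analytic ring of a complete Tate Huber pair, so neither Proposition \ref{ss:uppershriek} nor \cite[Theorem 5.50]{And21} is available over it. Moreover, a general pseudocoherent resolution of $\sO_\Delta$ on $X\times_RX$ does not separate variables the way Beilinson's does. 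Finally, the conclusion you would need, that a pushforward to $D(A_k^{\alg})$ is discrete, is essentially the statement being proved.

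The missing idea is to separate finiteness from discreteness. For an affine open $j\colon\Spec A\hookrightarrow X$, the object $j^{\alg,*}C$ is automatically pseudocompact. So it suffices to show that it lies in $D(A)\subseteq D(A^{\alg})$. This can be checked on its underlying object $\pi_*j^{\alg}_*j^{\alg,*}C$ of $D_\solid(R,R^+)$. Using noetherian approximation, blow up a finitely generated ideal whose vanishing locus is $X\ssm\Spec A$; this changes nothing over $\Spec A$. You are reduced to the case where $X\ssm\Spec A$ is an effective Cartier divisor $D$. Then $j^{\alg}_*j^{\alg,*}C=\varinjlim_nC\otimes\sO_X(nD)$, so $\pi_*j^{\alg}_*j^{\alg,*}C=\varinjlim_n\pi_*(C\otimes\sO_X(nD))$. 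By Corollary \ref{ss:grauert} this is a filtered colimit of objects of $D_{\pc}(R)$, hence lies in $D(R)$. The analytic input is the same as in your approach, namely Grauert for twists of $C$ by line bundles. But it needs neither a projective embedding nor a resolution of the diagonal.
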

\begin{proof}
  Full faithfulness follows from Theorem \ref{ss:AnStackGAGA}. For essential surjectivity, let $C$ be an object of $D_{\pc}(X^{\an})$. By full faithfulness, it suffices to prove that, for all affine open subschemes $j:\Spec{A}\hra X$, the image of $C$ under the composition
\begin{align*}
\xymatrix{D_{\pc}(X^{\an})\subseteq D(X^{\an}) &\ar[l]_-\sim D(X^{\alg})\ar[r]^-{j^{\alg,*}} & D(A^{\alg})}
\end{align*}
lies in the image of $D_{\pc}(A)\subseteq D(A)\hra D(A^{\alg})$. Since $j^{\alg,*}$ preserves pseudocompact objects, it suffices to prove that $j^{\alg,*}C$ lies in the image of $D(A)\hra D(A^{\alg})$. Noetherian approximation yields a finitely generated ideal sheaf $\sZ$ of $X$ whose vanishing locus equals the closed subset $X\ssm\Spec{A}\subseteq X$, so after replacing $X$ with its blowup along $\sZ$, we can assume that $\Spec{A}=X\ssm D$ for an effective Cartier divisor $D$ on $X$. Then $j_*^{\alg}j^{\alg,*}C=\varinjlim_{n\geq0}C\otimes\sO_X(-nD)$ and hence
\begin{align*}
\pi_*j_*^{\alg}j^{\alg,*}C = \varinjlim_{n\geq0}\pi_*\big(C\otimes\sO_X(-nD)\big).
\end{align*}
Note that $\pi_*j_*^{\alg}j^{\alg,*}C$ is the image of $j^{\alg,*}C$ in $D_\solid(R,R^+)$. Because $C\otimes\sO_X(-nD)$ lies in $D_{\pc}(X^{\an})$, Corollary \ref{ss:grauert} shows that $\pi_*\big(C\otimes\sO_X(-nD)\big)$ lies in $D_{\pc}(R)$, so the image of $j^{\alg,*}C$ in $D_\solid(R,R^+)$ lies in the image of $D(R)\hra D_\solid(R,R^+)$. Therefore $j^{\alg,*}C$ lies in the image of $D(A)\hra D(A^{\alg})$, as desired.
\end{proof}

\subsection{}\label{ss:appliedGAGA}
Finally, we prove Theorem H. Recall that the assignment
\begin{align*}
\Spa(A,A^+)\mapsto\AnSpec(A,A^+)_\solid
\end{align*}
from Tate affinoid adic spaces to $\{\mbox{analytic stacks}\}$ sends rational covers to steady localization covers \cite[Proposition 2.3.2]{RC24}, so it naturally glues to a functor
\begin{align*}
(-)_\solid:\{\mbox{analytic adic spaces}\}\ra\{\mbox{analytic stacks}\}.
\end{align*}

For the rest of this section, assume that $R$ is sousperfectoid, and write $S$ for $\Spa(R,R^+)$. Then for any smooth scheme $X$ over $R$, the adic space $X^{\an}_S$ as in Definition \ref{ss:Huberanalytification} exists by \cite[Proposition IV.4.9]{FS21}.
\begin{thm*}
  For any smooth proper scheme $X$ over $R$, the functor
  \begin{align*}
    \{\mbox{vector bundles on X}\}\ra\{\mbox{vector bundles on }X^{\an}_S\}
  \end{align*}
is an equivalence of categories.
\end{thm*}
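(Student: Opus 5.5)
The plan is to deduce this from Theorem \ref{ss:pcGAGA} by comparing vector bundles on $X^{\an}_S$ with suitable objects of $D_{\pc}(X^{\an})$. Since $X$ is smooth over $R$, it is locally of finite presentation, and being proper it is of finite presentation over $R$. So Theorem \ref{ss:pcGAGA} applies and gives an equivalence $D_{\pc}(X)\ra^\sim D_{\pc}(X^{\an})$. It remains to identify both sides' vector bundles inside these categories compatibly.

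On the algebraic side, vector bundles on $X$ are exactly the objects of $D_{\pc}(X)$ that are locally (Zariski) finite projective in degree $0$. Equivalently, these are the dualizable objects $E$ with $E$ and $E^\vee$ both connective.

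On the analytic side, I will construct a fully faithful functor
\begin{align*}
\{\mbox{vector bundles on }X^{\an}_S\}\hra D(X^{\an})
\end{align*}
using the functor $(-)_\solid$. Since $X$ is smooth and $R$ is sousperfectoid, the space $X^{\an}_S$ is covered by affinoid sousperfectoid opens $\Spa(B,B^+)$, as in Lemma \ref{ss:sheafytensorproduct} and Proposition \ref{ss:analytification}. On each such open, a vector bundle is a finite projective $B$-module, by \cite{KL15}. Such a module gives a dualizable object of $D_\solid(B,B^+)$, and these glue by \cite[Proposition 2.3.2]{RC24}.

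Comparing $(X^{\an}_S)_\solid$ with $X^{\an}$ is where I expect the real work to lie. Both are glued from the affinoid pieces $\AnSpec A_{\abs{\vpi^n\ul{f}}\leq1}$ and $(R\ang{\vpi^n T}/I,\dots)_\solid$. By Lemma \ref{ss:closedunitdisk} together with the pushout description in \ref{ss:AnStackpc}, I expect $A_{\abs{\ul{f}}\leq1}$ to agree with $(R\ang{\ul{T}}\otimes_{R[\ul{T}]}A,(R^+\ang{\ul{T}})^\sim)_\solid$. This requires knowing that the underlying condensed ring $R\ang{\ul{T}}\otimes_{R[\ul{T}]}A$ is discrete in degree $0$ and equals $R\ang{\ul{T}}/I$. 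That should follow from flatness of $R\ang{\ul{T}}$ over $R[\ul{T}]$ on pseudocoherent modules, in the spirit of Theorem 5.50 of \cite{And21}, combined with the identification of $D_{\pc}$ recalled in \ref{ss:AnStackpc}. Hence $(X^{\an}_S)_\solid\cong X^{\an}$ and $D(X^{\an})$ receives the vector bundles on $X^{\an}_S$.

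I must also check that the resulting objects land in $D_{\pc}(X^{\an})$. Finite projective modules over $B=R\ang{\ul{T}}/I$ are retracts of finite free modules. Finite free $B$-modules are pseudocompact by Theorem 5.50 of \cite{And21}, and nuclear by Proposition \ref{ss:closureofclosedunitdisk} and Lemma \ref{ss:AnRinginducednuclear}. By Propositions \ref{ss:AnRingpseudocompact} and \ref{ss:AnRingnucleargluing}, these properties glue.

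Conversely, I need that an object of $D_{\pc}(X)$ is a vector bundle as soon as its analytification is one. The analytification functor is symmetric monoidal, and by the equivalence it detects dualizability and connectivity of both $E$ and $E^\vee$. Connectivity can be checked affinoid-locally using faithful flatness. Concretely, this amounts to the statement that $A\ra R\ang{\ul{T}}\otimes_{R[\ul{T}]}A$ detects $t$-structure on pseudocoherent objects, since the union of the discs covers $\Spec A$ pointwise; this again rests on \cite{And21}.

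Full faithfulness then follows from that of $D_{\pc}(X)\ra D_{\pc}(X^{\an})$, and essential surjectivity follows from the identification above.
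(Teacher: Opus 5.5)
Your outline matches the paper's: identify $(X^{\an}_S)_\solid$ with $X^{\an}$, apply Theorem \ref{ss:pcGAGA}, and cut out vector bundles inside $D_{\pc}$ on both sides by a monoidal/$t$-structure condition. Your ``dualizable with $E$ and $E^\vee$ connective'' is interchangeable with the paper's ``perfect with $-\otimes C$ $t$-exact''. However, the justification you give for the step you yourself call the crux does not work.

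To match $A_{\abs{\ul{f}}\leq1}$ with the solid ring of an affinoid of $X^{\an}_S$, you need the animated ring $R\ang{\ul{T}}\otimes^L_{R[\ul{T}]}A$ to be static and equal to the Huber ring $R\ang{\ul{T}}/I$. You propose to deduce this from flatness of $R[\ul{T}]\ra R\ang{\ul{T}}$ on pseudocoherent modules. For a non-noetherian Tate ring $R$ (the typical sousperfectoid case) no such flatness is available. Nor does \cite[Theorem 5.50]{And21} provide it: it only identifies pseudocoherent objects of $D_\solid(B,B^+)$ with classical pseudocoherent $B$-modules, and contains no $\mathrm{Tor}$-vanishing along $R[\ul{T}]\ra R\ang{\ul{T}}$. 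Indeed, in Proposition \ref{ss:uppershriek} the paper only uses that $R\ang{\ul{T}}\otimes_{R[\ul{T}]}A$ is pseudocoherent and connective, never that it is static.

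The missing idea is to use smoothness to choose the charts; this is what the paper's ``small enough steady subspaces'' refers to. You use smoothness only to know that the affinoid pieces are sousperfectoid. Zariski-locally $A=(P[y]/(h))_g$ is standard \'etale over $P=R[T_1,\dots,T_d]$. Then $R\ang{\ul{T}}\otimes^L_PA=R\ang{\ul{T}}\otimes_PA$ by flatness of $P\ra A$. The remaining bounds on $y$ and $1/g$ are rational localizations of the finite free $R\ang{\ul{T}}$-algebra $R\ang{\ul{T}}[y]/(h)$, and there \cite[Proposition 2.3.2]{RC24} identifies the solid analytic ring with the static one of the corresponding affinoid of $X^{\an}_S$. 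Gluing such charts gives $X^{\an}\cong(X^{\an}_S)_\solid$, and the rest of your outline then goes through.

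Your converse step also uses the wrong mechanism. The maps $A\ra R\ang{\vpi^n\ul{T}}\otimes_{R[\ul{T}]}A$ are not faithfully flat. The discs also do not cover $\Spec A$ pointwise for general $R$: only primes that are supports of points of $X^{\an}_S$ are hit, and non-closed primes of $R$ never are. What you actually need is weaker: that $E\in D_{\pc}(X)$ is connective whenever $E^{\an}$ is. Equivalently, a finitely presented $\sO_X$-module $M$ (the top cohomology of $E$, resp.\ of $E^\vee$) with $M^{\an}=0$ must vanish. This follows from properness. If $\mathrm{Supp}(M)\neq\varnothing$, it contains a closed point $x$ of $X$, which lies over a maximal ideal $\mathfrak m$ of $R$. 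Maximal ideals of the complete Tate ring $R$ are closed, so $\Spa$ of $R/\mathfrak m$ is nonempty and $\mathfrak m$ is the support of a point of $\Spa(R,R^+)$. Hence some point of $X^{\an}_S$ lies over $x$, and by Nakayama the fiber of $M^{\an}$ there is nonzero.
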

\begin{proof}
  By covering $X^{\an}$ by small enough steady subspaces of the form $\AnSpec A_{\abs{\ul{f}}\leq1}$, we see that $X^{\an}$ is isomorphic to $(X^{\an}_S)_\solid$. Then Theorem \ref{ss:pcGAGA} yields a natural equivalence $D_{\pc}(X)\ra^\sim D_{\pc}((X_S^{\an})_\solid)$, hence a natural equivalence between the full subcategories consisting of perfect objects $C$ as in \cite[Proposition 9.2]{CS22} such that $-\otimes C$ is $t$-exact.

  For $D_{\pc}(X)$, \cite[Tag 0658]{stacks-project} shows that this subcategory is naturally equivalent to $\{\mbox{vector bundles on }X\}$. For $D_{\pc}((X^{\an}_S)_\solid)$, applying \cite[Theorem 5.50]{And21} and \cite[Tag 0658]{stacks-project} to the above cover of $X^{\an}\cong(X^{\an}_S)_\solid$ by objects of the form $\AnSpec A_{\abs{f}\leq1}$ shows that this subcategory is naturally equivalent to $\{\mbox{vector bundles on }X^{\an}_S\}$.  
\end{proof}

\subsection{}\label{ss:HX}
As an application of Theorem \ref{ss:appliedGAGA}, we conclude this section by proving that the analytification of algebraic stacks of bundles agrees with analytic stacks of bundles. This answers a question of Heuer--Xu \cite[Remark 8.1.2]{HX24}.

Let $K$ be an algebraically closed nonarchimedean field over $\bQ_p$, and write $\Perfd_K$ for the category of affinoid perfectoid spaces over $\Spa{K}$. Endow $\Perfd_K$ with the v-topology. Recall from \cite[Definition 8.4.4]{HX24} the diamondification functor
\begin{align*}
(-)^\Diamond:\{\mbox{algebraic stacks over }K\}\ra\{\mbox{small v-stacks on }\Perfd_K\}.
\end{align*}
Let $X$ be a smooth proper scheme over $K$, and let $G$ be a linear algebraic group over $K$. Write $\cB_{G,X}$ and $\cH_{G,X}$ for the algebraic stacks over $K$ of $G$-bundles and $G$-Higgs bundles on $X$, respectively. Write $\Bun_{G,X}$ and $\Hig_{G,X}$ for the small v-stacks on $\Perfd_K$ as in \cite[Definition 7.12.(1)]{Heu24} and \cite[Definition 7.12.(2)]{Heu24}, respectively.
\begin{thm*}
  The natural morphisms of v-stacks on $\Perfd_K$
  \begin{align*}
    \cB_{G,X}^\Diamond\ra\Bun_{G,X}\mbox{ and }\cH_{G,X}^\Diamond\ra\Hig_{G,X}
  \end{align*}
are isomorphisms.
\end{thm*}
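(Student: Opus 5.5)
We compare the two v-stacks on affinoid perfectoid test objects $S=\Spa(R,R^+)$ over $\Spa K$. By the definitions in \cite[Definition 8.4.4]{HX24}, an $S$-point of $\cB_{G,X}^\Diamond$ is, after v-sheafification, a $G$-torsor on $X_R=X\times_K\Spec R$. An $S$-point of $\Bun_{G,X}$ is a $G$-torsor on the adic space $X^{\an}_S$, and similarly for Higgs bundles. The plan is to show that the presheaf $S\mapsto\cB_{G,X}(R)$ already satisfies v-descent and agrees with $S\mapsto\Bun_{G,X}(S)$. No sheafification then intervenes, and the comparison map is an isomorphism.

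\textbf{Step 1 (vector bundles).} Since $R$ is perfectoid and hence sousperfectoid, and $X_R$ is smooth proper over $R$, Theorem \ref{ss:appliedGAGA} shows that analytification
\begin{align*}
\{\mbox{vector bundles on }X_R\}\ra\{\mbox{vector bundles on }X^{\an}_S\}
\end{align*}
is an equivalence. It is an exact tensor functor, because analytification is a symmetric monoidal pullback that preserves short exact sequences.

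\textbf{Step 2 ($G$-bundles).} We pass to $G$-torsors by the Tannakian formalism, as in Proposition \ref{ss:Tannakian}, using that $G$ is linear over a field, so that $\sO_G$ is a filtered colimit of finite-dimensional representations. On the algebraic side, $G$-torsors on $X_R$ (fppf, equivalently étale since $G$ is smooth in characteristic $0$) correspond to exact tensor functors $\Rep_K G\ra\{\mbox{vector bundles on }X_R\}$. On the analytic side, étale $G$-torsors on the sousperfectoid space $X^{\an}_S$ correspond to exact tensor functors into vector bundles on $X^{\an}_S$, by \cite[Theorem 8.2.22]{KL15} and the proof of \cite[Proposition 6.10]{LH24}. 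Composing with Step 1 identifies $\cB_{G,X}(R)$ with $\Bun_{G,X}(S)$ functorially in $S$. Since $\Bun_{G,X}$ is a v-stack \cite[Definition 7.12]{Heu24}, the presheaf $S\mapsto\cB_{G,X}(R)$ is a v-stack. Its sheafification is therefore itself, and $\cB^\Diamond_{G,X}\ra\Bun_{G,X}$ is an isomorphism.

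\textbf{Step 3 (Higgs bundles).} A $G$-Higgs bundle is a $G$-bundle $\sG$ with a section $\theta$ of $\mathrm{ad}(\sG)\otimes\Om^1$ satisfying $[\theta,\theta]=0$. Under Step 1, $\mathrm{ad}(\sG)\otimes\Om^1_{X_R/R}$ analytifies to $\mathrm{ad}(\sG^{\an})\otimes\Om^1_{X^{\an}_S/S}$, since differentials commute with analytification. Full faithfulness of Theorem \ref{ss:appliedGAGA}, applied to $\Hom(\sO,\cE)$, identifies global sections on $X_R$ and on $X^{\an}_S$. It also identifies the integrability condition, which is a further equality of sections. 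This gives $\cH_{G,X}(R)\cong\Hig_{G,X}(S)$, and we conclude as in Step 2.

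\textbf{Main obstacle.} The main point to check is the identification of $S$-points of $(-)^\Diamond$ with $\cB_{G,X}(R)$ for stacks. We expect this to need Lemma \ref{ss:nosheafification}-style arguments, or the precise definition in \cite{HX24} as a sheafification. We also need Heuer's $\Bun_{G,X}$ to use étale torsors, and we will try to match them via \cite[Theorem 8.2.22]{KL15}.
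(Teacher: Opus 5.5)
Your proposal is correct and follows the paper's approach: the paper's proof is the single line that the result follows immediately from Theorem \ref{ss:appliedGAGA}. What you add is the detail that line leaves implicit: the Tannakian passage from vector bundles to $G$-torsors as in Proposition \ref{ss:Tannakian}, the use of full faithfulness of GAGA to match Higgs fields and the integrability condition, and the observation that the presheaf $S\mapsto\cB_{G,X}(R)$ is therefore already a v-stack, so no sheafification intervenes.
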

\begin{proof}
This follows immediately from Theorem \ref{ss:appliedGAGA}.
\end{proof}

\section{$\ell$-adic realizations of Berkovich motives}\label{s:berkovichmotives}
In \S\ref{s:sheaf} and \S\ref{s:langlands}, we use Berkovich (i.e. overconvergent) motivic sheaves as in \cite{Sch24, Sch25} because they enjoy good formal properties. For example, there are $!$-pushforwards, and when evaluated on condensed anima, they can be described in terms of classical sheaves on profinite sets. We start this section by recalling this material.

Difficulties arise when we need to compare Berkovich motives for (analytifications of) moduli stacks of global shtukas $\Sht^I_{G,V}$ with their classical $\ell$-adic sheaves. Ideally, we would start from a motivic intersection cohomology sheaf on $\Sht^I_{G,V}$, analytify it, and then pass to $\ell$-adic realizations. However, such motivic intersection cohomology sheaves are only known when $G$ is split \cite{RS20}, and one would still need to compare Berkovich motives with classical motives. To avoid this, we develop some tools that keep track of the relationship between $\ell$-adic sheaves on varieties, $\ell$-adic sheaves on their analytifications, and motivic sheaves on said analytifications.

We conclude this section by recalling the description of motivic sheaves on $\breve{F}_v$ and $\Div_v^1$ and describing their relationship with motivic nearby cycles.

\subsection*{Notation}
In this section, we work over $\ov\bF_q$.

\subsection{}
We start by recalling the theory of \emph{Berkovich motives} \cite{Sch24}. Endow the category $\{\mbox{perfectoid rings over }\ov\bF_q\}^{\op}$ with the arc-topology as in \cite[Definition 3.1]{Sch24}, which is subcanonical by \cite[Theorem 3.14]{Sch24}. Write $\{\mbox{arc-sheaves over }\ov\bF_q\}$ for the $\infty$-category of sheaves of anima on $\{\mbox{perfectoid rings over }\ov\bF_q\}^{\op}$, and recall from \cite[p.~54]{Sch24} the $\bZ[\frac1q]$-linear overconvergent motivic $6$-functor formalism $D_{\mot}(-)$ on $\{\mbox{arc-sheaves over }\ov\bF_q\}$.

\subsection{}\label{ss:motivesoncondensedgroupoids}
When evaluating on stacks arising from profinite sets (i.e. \emph{condensed anima}), Berkovich motives enjoy the following description. Recall from \cite[Construction 3.5.16]{HM24} the $\bZ[\frac1q]$-linear $6$-functor formalism $D(-,\bZ[\frac1q])$ on $\{\mbox{condensed anima}\}$, and write
\begin{align*}
\ul{(-)}:\{\mbox{condensed anima}\}\ra\{\mbox{arc-sheaves over }\ov\bF_q\}
\end{align*}
for the left Kan extension of the assignment $X\mapsto\ul{X}$ from profinite sets. Then the proof of \cite[Proposition 2.1]{Sch25} implies we have a morphism of $3$-functor formalisms
\begin{align*}
D(-,\bZ[\textstyle\frac1q])\ra D_{\mot}(\ul{(-)})
\end{align*}
on $\{\mbox{condensed anima}\}$ with respect to $!$-able morphisms for $D(-,\bZ[\frac1q])$.
\begin{prop*}
This induces an isomorphism of $6$-functor formalisms
\begin{align*}
D(-,\bZ[\textstyle\frac1q])\otimes_{D(\bZ[\frac1q])}D_{\mot}(*)\ra^\sim D_{\mot}(\ul{(-)})
\end{align*}
on $\{\mbox{condensed anima}\}$ with respect to $!$-able morphisms for $D(-,\bZ[\frac1q])$.
\end{prop*}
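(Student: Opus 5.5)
The plan is to reduce to profinite sets, which generate condensed anima under colimits, and then to check on profinite sets that the natural functor
\begin{align*}
D(S,\bZ[\textstyle\frac1q])\otimes_{D(\bZ[\frac1q])}D_{\mot}(*)\ra D_{\mot}(\ul{S})
\end{align*}
is an equivalence. Both sides should satisfy descent, and compatibility with the six operations should follow formally.

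First, I would justify the reduction. The source is $D(-,\bZ[\frac1q])\otimes_{D(\bZ[\frac1q])}D_{\mot}(*)$, and I would check that it remains a sheaf. The point is that for a profinite set $S$, the category $D(S,\bZ[\frac1q])$ is compactly generated by the objects $\bZ[\frac1q]_U$ for $U\subseteq S$ clopen. It is therefore dualizable over $D(\bZ[\frac1q])$, so $-\otimes_{D(\bZ[\frac1q])}D_{\mot}(*)$ commutes with the limits appearing in descent along profinite hypercovers. For general condensed anima, both sides are defined by right Kan extension or $!$-descent from profinite sets, as in the construction of \cite[Construction 3.5.16]{HM24}. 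Hence an equivalence on profinite sets, compatible with pullbacks, extends to an isomorphism of $6$-functor formalisms. The functors $f_!$ and $f^!$ then match because the comparison map is already a morphism of $3$-functor formalisms, and $f^!$ is determined as the right adjoint of $f_!$.

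Next, I would treat the profinite case by writing $S=\varprojlim_i S_i$ with each $S_i$ finite. For finite $S_i$ both sides are $\prod_{S_i}D_{\mot}(*)$, so the map is an equivalence there. Passing to the limit, the left side is the colimit in $\mathrm{Pr}^L$ of $\prod_{S_i}D_{\mot}(*)$ along pullbacks; equivalently, it consists of locally constant functions $S\ra D_{\mot}(*)$, Ind-completed. The main obstacle is to show that $D_{\mot}(\ul{S})$ has the same description. This amounts to a continuity statement: $D_{\mot}(\varprojlim_i\ul{S_i})=\varinjlim_iD_{\mot}(\ul{S_i})$ in $\mathrm{Pr}^L$. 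I would deduce it from the overconvergent (arc-local) nature of $D_{\mot}$, via \cite[Proposition 2.1]{Sch25} and its proof.

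Concretely, $\ul{S}$ is a disjoint union of points indexed by $S$ in the arc topology, topologized. Compact generators of $D_{\mot}(\ul{S})$ are supported on clopens, and their Hom spaces are computed as colimits over clopen neighborhoods. I expect this finitary-continuity input to be the only non-formal step, and I would take it from \cite{Sch24} (the analogue of $D_{\et}$ of profinite sets). Granting it, full faithfulness follows by comparing Hom spaces between the generators $\bZ[\frac1q]_U\otimes M$, and essential surjectivity follows because these generators generate the target.
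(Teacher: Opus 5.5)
Your proposal follows the paper's proof. Finite sets are immediate. For profinite $X=\varprojlim_iX_i$, one writes $D(X,\bZ[\frac1q])=\varinjlim_iD(X_i,\bZ[\frac1q])$ in $\mathrm{Pr}^L$, commutes $-\otimes_{D(\bZ[\frac1q])}D_{\mot}(*)$ past this colimit, and invokes the continuity statement \cite[Lemma 10.4]{Sch24} for $D_{\mot}(\varprojlim_i\ul{X_i})$. General condensed anima then follow from the uniqueness of sheafy extensions of $6$-functor formalisms.

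One small correction: $-\otimes_{D(\bZ[\frac1q])}D_{\mot}(*)$ commutes with the descent limits because $D_{\mot}(*)$ is compactly generated, hence dualizable. Dualizability of the individual $D(S,\bZ[\frac1q])$ is not what gives this.
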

\begin{proof}
When evaluating on finite sets, this is immediate. For any cofiltered limit $X=\varprojlim_iX_i$ of finite sets, we have $\ul{X}=\varprojlim_i\ul{X_i}$, so \cite[Lemma 10.4]{Sch24} shows that
  \begin{align*}
    D(X,\bZ[\textstyle\frac1q])\otimes_{D(\bZ[\frac1q])}D_{\mot}(*) &= \big[\varinjlim_iD(X_i,\bZ[\textstyle\frac1q])\big]\otimes_{D(\bZ[\frac1q])}D_{\mot}(*) \\
                                                                               &=\varinjlim_i\big[D(X_i,\bZ[\textstyle\frac1q])\otimes_{D(\bZ[\frac1q])}D_{\mot}(*)\big]\\
    &=\varinjlim_i D_{\mot}(\ul{X_i}) = D_{\mot}(\ul{X}).
  \end{align*}
Finally, the result follows from the uniqueness in \cite[Theorem 3.4.11]{Sch25}.
\end{proof}

\subsection{}\label{ss:ladicrealization}
Berkovich motives are related to \'etale sheaves in the following way. Write
\begin{align*}
a'^*:\{\mbox{small v-stacks over }\ov\bF_q\}\ra\{\mbox{arc-sheaves over }\ov\bF_q\}
\end{align*}
for the left Kan extension of the assignment $\Spa(R,R^+)\mapsto\Hom_{\ov\bF_q}(R,-)$ \cite[p.~3]{Sch25}. For all positive integers $m$ and any small v-stack $X$ over $\ov\bF_q$, \cite[Proposition 12.3]{Sch24} yields a natural fully faithful symmetric monoidal functor
\begin{align*}
\Mod_{\bZ/\ell^m}(D_{\mot}(a'^*X))\hra D_{\et}(X,\bZ/\ell^m)
\end{align*}
that is compatible with pullback and whose essential image consists of overconvergent objects as in \cite[Proposition IV.2.4]{FS21}.

Using this, one can define $\ell$-adic realization functors as follows. Write $\wh{D}_{\et}(X,\bZ_\ell)$ for $\varprojlim_{m\geq1}D_{\et}(X,\bZ/\ell^m)$, so that taking $\varprojlim_{m\geq1}$ of the above induces a functor
\begin{align*}
\wh{r}_{\ell,X}:D_{\mot}(a'^*X)\ra\varprojlim_{m\geq1}\Mod_{\bZ/\ell^m}(D_{\mot}(a'^*X))\hra\wh{D}_{\et}(X,\bZ_\ell).
\end{align*}
Write $\wh{D}(\bZ_\ell)$ for $\varprojlim_{m\geq1}D(\bZ/\ell^m)$. Then the natural functor $D_{\perf}(\bZ_\ell)\ra\wh{D}(\bZ_\ell)$ is fully faithful \cite[Lemma 4.2]{Bha16}, and \cite[p.~168]{FS21} identifies $\wh{D}_{\et}(*,\bZ_\ell)$ with $\wh{D}(\bZ_\ell)$. Under this identification, \cite[Theorem 11.1]{Sch24} shows that $\wh{r}_{\ell,*}$ restricts to a functor $D_{\mot}(*)^\om\ra D_{\perf}(\bZ_\ell)$, so \cite[Theorem 11.1]{Sch24} indicates that applying $\Ind$ yields a functor $r_{\ell,*}:D_{\mot}(*)\ra D(\bZ_\ell)$.

\subsection{}\label{ss:classifyingstack}
For the rest of this section, we will use the following base change of Berkovich motives. For any small v-stack $X$ over $\ov\bF_q$, write $D_{\mot}(X)$ for $D_{\mot}(a'^*X)$.

Let $\La$ be a $\bZ_\ell$-algebra, and consider the composition
\begin{align*}
  D_{\mot}(*)\lra^{r_{\ell,*}}(\bZ_\ell)\lra D(\La).
\end{align*}
Write $D(X,\La)$ for the symmetric monoidal $\infty$-category $D_{\mot}(X)\otimes_{D_{\mot}(*)}D(\La)$, so that $D(-,\La)$ is a $\La$-linear $6$-functor formalism on $\{\mbox{small v-stacks over }\ov\bF_q\}$.

As a consequence of Proposition \ref{ss:motivesoncondensedgroupoids}, evaluating $D(-,\La)$ on classifying stacks yields categories of smooth representations. More precisely, let $G$ be a locally pro-$p$ group, let $K$ be a compact open subgroup of $G$, and write $j:*/\ul{K}\ra*/\ul{G}$ for the associated morphism of small v-stacks.
\begin{cor*}
The symmetric monoidal $\infty$-category $D(*/\ul{G},\La)$ is naturally equivalent to the derived category of smooth representations of $G$ over $\La$. Under this identification, $j_!:D(*/\ul{K},\La)\ra D(*/\ul{G},\La)$ corresponds to $\cInd_K^G$.
\end{cor*}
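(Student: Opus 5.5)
We will use Proposition \ref{ss:motivesoncondensedgroupoids} to reduce everything to classical sheaves on condensed anima, and then identify those with smooth representations. Since $*/\ul{G}$ is $\ul{(-)}$ applied to the condensed anima $*/G$, that proposition gives
\begin{align*}
D_{\mot}(*/\ul{G}) \simeq D(*/G,\bZ[\textstyle\frac1q])\otimes_{D(\bZ[\frac1q])}D_{\mot}(*).
\end{align*}
Tensoring over $D_{\mot}(*)$ with $D(\La)$ along $r_{\ell,*}$, and using associativity of the Lurie tensor product, yields
\begin{align*}
D(*/\ul{G},\La)\simeq D(*/G,\bZ[\textstyle\frac1q])\otimes_{D(\bZ[\frac1q])}D(\La).
\end{align*}
Here $\La$ is a $\bZ[\frac1q]$-algebra, because $\ell\neq p$.

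The key steps then run as follows.
\begin{enumerate}[(i)]
\item Identify $D(*/G,\bZ[\frac1q])$ from the $6$-functor formalism of \cite[Construction 3.5.16]{HM24} with the derived category of smooth $G$-representations on $\bZ[\frac1q]$-modules. Since $G$ is locally pro-$p$ and $p$ is invertible, this should follow from the standard computation there. Concretely, $D(*/G)$ is the limit over the Čech nerve of $*\to*/G$ of $D(G^n,\bZ[\frac1q])$, and $D(P,\bZ[\frac1q])$ for a profinite $P$ is $\varinjlim$ over finite quotients. Together these give the derived category of smooth modules.
\item Check that base change along $D(\bZ[\frac1q])\to D(\La)$ takes this to the derived smooth representations over $\La$. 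The category $D(G,\bZ[\frac1q])^{\mathrm{sm}}$ is compactly generated by $\cInd_K^G\bZ[\frac1q]$ for $K$ pro-$p$ compact open. Base change is then $\Mod_\La$ of it, which is compactly generated by $\cInd_K^G\La$ with the same endomorphism algebras tensored with $\La$. This matches derived smooth $\La$-representations, by compact generation and Schwede–Shipley.
\item Identify $j_!$. Under the equivalence, $j^*$ is restriction from $G$ to $K$. The morphism $j$ is étale with fibers the discrete set $\ul{G/K}$, so $j_!$ is left adjoint to $j^!=j^*$. The left adjoint of restriction for smooth representations with $K$ open is $\cInd_K^G$. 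Compatibility of the equivalence with $j^*$ and $j_!$ comes from Proposition \ref{ss:motivesoncondensedgroupoids} being an isomorphism of $6$-functor formalisms.
\end{enumerate}

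The main obstacle is step (i): making precise that $D(*/G,\bZ[\frac1q])$ in the Heyer–Mann formalism is really smooth representations, rather than a completed or condensed variant, and that this is symmetric monoidal. I would first try to invoke the corresponding statement in \cite{HM24} for locally profinite groups with $p$-invertible coefficients. Failing that, I would argue via descent as sketched above, using that $*\to*/\ul{K}$ is $!$-able and cohomologically smooth for $K$ pro-$p$.
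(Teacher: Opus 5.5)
Your proposal is correct and follows the same route as the paper. The paper also views $*/G$ as a condensed anima and uses $\ul{*/G}\cong a'^*(*/\ul{G})$ (which is what your identification of $*/\ul{G}$ with $\ul{(-)}$ of $*/G$ amounts to) in order to apply Proposition \ref{ss:motivesoncondensedgroupoids}; it then cites \cite[Proposition 5.3.10]{HM24} and \cite[Proposition 5.4.4]{HM24} for the identification with smooth representations and of $j_!$ with $\cInd_K^G$. Your base change to $\La$ through the compact projective generators $\cInd_K^G$ for pro-$p$ subgroups $K$, and your adjunction argument for $j_!$, simply spell out details the paper leaves to those citations.
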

\begin{proof}
View $G$ as a condensed set, and consider the condensed anima $*/G$. Then the arc-sheaf $\ul{*/G}$ is naturally isomorphic to $a'^*(*/\ul{G})$, so the result follows from Proposition \ref{ss:motivesoncondensedgroupoids}, \cite[Proposition 5.3.10]{HM24}, and \cite[Proposition 5.4.4]{HM24}.
\end{proof}

\subsection{}\label{ss:ladicrealizationvarieties}
To facilitate comparisons with classical \'etale $\ell$-adic sheaf theories for varieties over $\ov\bF_q$ in \S\ref{s:sheaf}, we will consider the following notion of \emph{\ul{Z}ariski-\ul{c}onstructible} motives (or \'etale sheaves) relative to a base. Let $Z$ be a separated finite type scheme over $\ov\bF_q$, and let $C$ be a small v-stack over $Z^\Diamond$.
\begin{defn*}For any separated finite type scheme $X$ over $Z$,
  \begin{enumerate}[a)]
  \item Write $D_{\mot}^{Z\bs C}(X^\Diamond_C)\subseteq D_{\mot}(X^\Diamond_C)$ for the full subcategory generated under cones and retracts by objects of the form $f_{C,!}^\Diamond\bZ[\frac1q](n)$, where $f:Y\ra X$ runs over separated finite type morphisms and $n$ runs over integers.
  \item For all positive integers $m$, write $D_{\et}^{Z\bs C}(X^\Diamond_C,\bZ/\ell^m)\subseteq D_{\et}(X^\Diamond_C,\bZ/\ell^m)$ for the full subcategory generated under cones and retracts by objects of the form $f_{C,!}^\Diamond(\bZ/\ell^m)$, where $f:Y\ra X$ runs over separated finite type morphisms.
  \item Write $D_{\et}^{Z\bs C}(X^\Diamond_C,\bZ_\ell)\subseteq\wh{D}_{\et}(X^\Diamond_C,\bZ_\ell)$ for the full subcategory
    \begin{align*}
      \varprojlim_{m\geq1}D_{\et}^{Z\bs C}(X^\Diamond_C,\bZ/\ell^m).
    \end{align*}
  \end{enumerate}
\end{defn*}

\subsection{}\label{ss:morphismsof6FFs}
One immediately has the following relationship between \'etale sheaves on varieties over $\ov\bF_q$ and \'etale sheaves on their analytifications. Note that $D_{\mot}((-)^\Diamond_C)$, $\wh{D}_{\et}((-)^\Diamond_C,\bZ_\ell)$ and $D(-,\bZ_\ell)$ are $6$-functor formalisms on
\begin{align*}
\{\mbox{separated finite type schemes over }Z\}.
\end{align*}
Now \cite[Proposition 27.1]{Sch17}, \cite[Proposition 27.2]{Sch17}, and \cite[Proposition 27.4]{Sch17} yield a fully faithful morphism of $3$-functor formalisms
\begin{align*}
c^*:D(-,\bZ_\ell)\hra \wh{D}_{\et}((-)^\Diamond,\bZ_\ell)
\end{align*}
on $\{\mbox{separated finite type schemes over }Z\}$, and proper base change shows that pullback induces a morphism of $3$-functor formalisms
\begin{align*}
|_C:\wh{D}_{\et}((-)^\Diamond,\bZ_\ell)\ra\wh{D}_{\et}((-)^\Diamond_C,\bZ_\ell)
\end{align*}
on $\{\mbox{separated finite type schemes over }Z\}$.

\subsection{}\label{prop:ladicrealizationvarieties}
One also has the following relationship between Zariski-constructible motives and \'etale sheaves.
\begin{prop*}\hfill
  \begin{enumerate}[1)]
  \item $D_{\mot}((-)^\Diamond_C)$ restricts to a $3$-functor formalism $D_{\mot}^{Z\bs C}((-)^\Diamond_C)$.
  \item $\wh{D}_{\et}((-)^\Diamond_C,\bZ_\ell)$ restricts to a $3$-functor formalism $D^{Z\bs C}_{\et}((-)^\Diamond_C,\bZ_\ell)$.
  \item The composition $|_C\circ c^*$ sends $D_{\cons}(-,\bZ_\ell)$ to $D^{Z\bs C}_{\et}((-)^\Diamond_C,\bZ_\ell)$.
  \item The functor $\wh{r}_{\ell,(-)^\Diamond_C}$ sends $D^{Z\bs C}_{\mot}((-)^\Diamond_C)$ to $D^{Z\bs C}_{\et}((-)^\Diamond_C,\bZ_\ell)$ and induces a morphism of $3$-functor formalisms on $\{\mbox{separated finite type schemes over }Z\}$.
  \end{enumerate}
\end{prop*}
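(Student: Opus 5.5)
We treat the four parts in order, reducing each to the behaviour of the generators $f^\Diamond_{C,!}\bZ[\frac1q](n)$ (resp.\ $f^\Diamond_{C,!}(\bZ/\ell^m)$) under the operations of a $3$-functor formalism, namely $\otimes$, $g^*$ and $g_!$ for separated finite type $g:X'\ra X$ over $Z$. Since the subcategories are defined as thick closures (cones and retracts), and the functors in question are exact, it suffices in each case to check that generators are sent to generators.

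For 1) and 2), we argue uniformly. For $g_!$, note that $g^\Diamond_{C,!}f^\Diamond_{C,!}=(g\circ f)^\Diamond_{C,!}$, and $g\circ f$ is again separated of finite type. For $g^*$, proper base change for the cartesian square formed by $Y\times_XX'$ gives $g^{\Diamond,*}_Cf^\Diamond_{C,!}\mathbf1=f'^\Diamond_{C,!}\mathbf1$; here we use that $(-)^\Diamond_C$ preserves fiber products of schemes over $Z$. For $\otimes$, the projection formula and base change give
\[
f_{1,!}\mathbf1\otimes f_{2,!}\mathbf1=(Y_1\times_XY_2\ra X)_!\mathbf1 .
\]
Tate twists simply add. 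Unit objects are the generators attached to $\id_X$.

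For 3), we may pass to each level $\bZ/\ell^m$, since $D_{\cons}(X,\bZ_\ell)$ is the limit of the $D_{\cons}(X,\bZ/\ell^m)$ and the target is defined as the corresponding limit. Constructible $\bZ/\ell^m$-sheaves on $X$ lie in the thick subcategory generated by $f_!(\bZ/\ell^m)$ for $f$ separated finite type. To see this, stratify $X$ so that the sheaf is lisse on the strata, and trivialize each lisse sheaf by a finite étale cover $h$. Then $\bZ/\ell^m$-modules on a stratum are built from the $h_!\bZ/\ell^m$ by cones and retracts, using resolutions by permutation-type modules for the finite monodromy group; alternatively, filter by $\ell$-torsion and use finiteness. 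Finally, $|_C\circ c^*$ commutes with $f_!$ by \ref{ss:morphismsof6FFs}, so it sends these generators to generators.

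For 4), we need that $\wh r_\ell$ is compatible with $f_!$, $g^*$ and $\otimes$ on the relevant objects, and that it sends $\bZ[\frac1q](n)$ to $\bZ_\ell(n)$. Pullback and tensor compatibility come from \cite[Proposition 12.3]{Sch24}. The compatibility with $f_!$ for $f$ separated finite type is the main obstacle. Our first approach is as follows. Motivic $!$-pushforward is compatible with reduction mod $\ell^m$, since $\Mod_{\bZ/\ell^m}(D_{\mot})$ is a base change of a $6$-functor formalism. The étale $!$-pushforward of overconvergent sheaves along $f^\Diamond_C$ agrees with the motivic one under the embedding of \cite[Proposition 12.3]{Sch24}, because $f^\Diamond_C$ is compactifiable and preserves overconvergence. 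One reduces via Nagata compactification to the proper case and the open immersion case. For proper maps, $f_!=f_*$ and one uses that pushforward preserves overconvergent objects. For open immersions, one uses the Kan extension description of $j_!$ in both theories. Taking $\varprojlim_m$ then gives the result for $\wh r_\ell$. Granting this, generators go to generators, and the morphism of $3$-functor formalisms follows from these compatibilities.
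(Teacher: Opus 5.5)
Your plan follows the paper's proof. For 1)–2) you check that the generators are stable under $g_!$, $g^*$ and $\otimes$. For 3) you reduce to level $\bZ/\ell^m$ and use generation by the $f_!(\bZ/\ell^m)$. For 4) you Nagata-compactify $f=\ov f\circ j$. Your argument for the proper factor $\ov f$ works: the embedding of \cite[Proposition 12.3]{Sch24} is fully faithful and compatible with pullback, and $\ov f^\Diamond_{C,*}$ preserves overconvergent objects. This gives exactly what the paper extracts from \cite[Proposition 12.3]{Sch24} for proper maps of finite cohomological dimension.

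\textbf{The gap: the open immersion $j$.} This is the step you yourself call the main obstacle. You justify it by ``the Kan extension description of $j_!$ in both theories'' and by asserting that $f^\Diamond_C$ preserves overconvergence. Neither holds for a general open embedding of diamonds. Take the closed unit disc in the adic affine line over $\Spa C$:
\begin{itemize}
\item On the \'etale side, $j_!\bF_\ell$ is not overconvergent: its stalk is $\bF_\ell$ at the Gauss point and $0$ at the specialization $|T|=1^+$.
\item On the motivic side, $a'^*$ only sees $R$ and not $R^+$, i.e.\ the Berkovich space. So $a'^*$ of such an open is not an open immersion of arc-sheaves, and motivic $j_!$ is not extension by zero at all.
\end{itemize}
The missing input, which the paper uses, is that $j^\Diamond_C$ is a \emph{partially proper} open embedding, because it is the analytification of a Zariski open immersion of schemes. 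For such $j$, both $j_!$'s are left adjoint to $j^*$ and \'etale $j_!$ preserves overconvergent objects. Your adjunction argument then identifies the image of $j^\Diamond_{C,!}\bZ[\frac1q](n)$ with $j^\Diamond_{C,!}(\bZ/\ell^m)$, using that twists are trivial over $\ov\bF_q$.

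\textbf{A smaller point in 3).} Your alternative ``filter by $\ell$-torsion'' does not work. The graded pieces are $\bF_\ell$-sheaves, which are not perfect over $\bZ/\ell^m$. So they do not lie in the thick subcategory generated by the $f_!(\bZ/\ell^m)$, all of which are perfect-constructible. The paper simply recalls the generation statement. A clean justification runs as follows:
\begin{itemize}
\item $D(X_{\et},\bZ/\ell^m)$ is compactly generated by the $j_!(\bZ/\ell^m)$, for $j$ \'etale, separated and of finite type.
\item Its compact objects are exactly $D_{\cons}(X,\bZ/\ell^m)$.
\item Thomason--Neeman then identifies $D_{\cons}(X,\bZ/\ell^m)$ with the thick closure of these generators.
\end{itemize}
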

\begin{proof}
  Part 1) and part 2) follow immediately from proper base change. For part 3), recall that $D_{\cons}(X,\bZ/\ell^m)$ is generated under cones and retracts by objects of the form $f_!(\bZ/\ell^m)$, where $f:Y\ra X$ runs over separated finite type morphisms. Therefore \ref{ss:morphismsof6FFs} and proper base change show that $|_C\circ c^*$ sends $D_{\cons}(X,\bZ/\ell^m)$ to $D_{\et}^{Z\bs C}((-)_C^\Diamond,\bZ/\ell^m)$. Finally, taking $\varprojlim_{m\geq1}$ yields the desired result.

  For the first statement in part 4), let $f:Y\ra X$ be a separated finite type morphism, and let $n$ be an integer. By Nagata compactification, there exists a proper morphism $\ov{f}:\ov{Y}\ra X$ and an open embedding $j:Y\ra\ov{Y}$ such that $f=\ov{f}\circ j$. Then $j^\Diamond_C$ is a partially proper open embedding, so the image of $j_{C,!}^\Diamond\bZ[\textstyle\frac1q](n)$ in $D_{\et}(\ov{Y}^\Diamond_C,\bZ/\ell^m)$ is isomorphic to $j^\Diamond_{C,!}(\bZ/\ell^m)$. Because $f^\Diamond_C$ is proper and of finite cohomological dimension as in \cite[Definition 4.17]{Sch24}, we see from \cite[Proposition 12.3]{Sch24} that the image of $f^\Diamond_{C,!}\bZ[\frac1q](n)=\ov{f}^\Diamond_{C,*}j^\Diamond_{C,!}\bZ[\frac1q](n)$ in $D_{\et}(X^\Diamond_C,\bZ/\ell^m)$ is isomorphic to $\ov{f}^\Diamond_{C,*}j_{C,!}^\Diamond(\bZ/\ell^m) = f^\Diamond_{C,!}(\bZ/\ell^m)$. Hence taking $\varprojlim_{m\geq1}$ yields the desired statement. Finally, the second statement in part 4) follows from \ref{ss:ladicrealization} and the above argument.
\end{proof}

\subsection{}\label{ss:indextension}
Finally, we $\Ind$-extend the preceding discussion. Proposition \ref{prop:ladicrealizationvarieties}.2) implies that $\Ind$-extending yields a morphism
\begin{align*}
\Ups:\Ind D_{\mot}^{Z\bs C}((-)^\Diamond_C)\ra D_{\mot}((-)^\Diamond_C)
\end{align*}
of $3$-functor formalisms on $\{\mbox{separated finite type schemes over }Z\}$, and Proposition \ref{prop:ladicrealizationvarieties}.3) indicates that applying $\Ind$ to $|_C\circ c^*:D_{\cons}(-,\bZ_\ell)\ra D^{Z\bs C}_{\et}((-)^\Diamond_C,\bZ_\ell)$ yields a morphism of $3$-functor formalisms
\begin{align*}
\rho: D(-,\bZ_\ell)\ra\Ind D^{Z\bs C}_{\et}((-)^\Diamond_C,\bZ_\ell)
\end{align*}
on $\{\mbox{separated finite type schemes over }Z\}$. Finally, Proposition \ref{prop:ladicrealizationvarieties}.4) shows that $\wh{r}_{\ell,(-)^\Diamond_C}$ induces a morphism of $3$-functor formalisms
\begin{align*}
D^{Z\bs C}_{\mot}((-)^\Diamond_C)\ra D^{Z\bs C}_{\et}((-)^\Diamond_C,\bZ_\ell)
\end{align*}
on $\{\mbox{separated finite type schemes over }Z\}$, so applying $\Ind$ yields a morphism 
\begin{align*}
r_{\ell,(-)^\Diamond_C}:\Ind D^{Z\bs C}_{\mot}((-)^\Diamond_C)\ra\Ind D^{Z\bs C}_{\et}((-)^\Diamond_C,\bZ_\ell)
\end{align*}
of $3$-functor formalisms on $\{\mbox{separated finite type schemes over }Z\}$.

\subsection{}\label{ss:controlSpaC}
Write $F$ for $\bF_q\lp{\pi}$, fix a separable closure $\ov{F}$ of $F$, and write $C$ for the completion of $\ov{F}$. Then there is a simple description of Zariski-constructible \'etale sheaves on $\Spa{C}$ (relative to any $Z$). More precisely, $\wh{D}_{\et}(\Spa{C},\bZ_\ell)$ is naturally isomorphic to $\wh{D}(\bZ_\ell)$, and under this identification, \ref{ss:morphismsof6FFs} implies that
\begin{align*}
D_{\et}^{Z\bs\!\Spa{C}}(\Spa{C},\bZ_\ell)=D_{\perf}(\bZ_\ell).
\end{align*}
We prove an analogous description of Zariski-constructible motives on $\Spa{C}$:
\begin{prop*}
We have $D_{\mot}^{Z\bs\!\Spa C}(\Spa C)= D_{\mot}(\Spa C)^\om$.
\end{prop*}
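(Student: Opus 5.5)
We need to show $D_{\mot}^{Z\bs\!\Spa C}(\Spa C)=D_{\mot}(\Spa C)^\om$, and we prove the two inclusions separately.

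\textbf{Inclusion of generators into compacts.} The generators are the objects $f^\Diamond_{C,!}\bZ[\frac1q](n)$ for separated finite type $f:Y\ra Z_C$; here $X=Z$ may be replaced by $\Spec\ov\bF_q$ after base change, since $\Spa C$ maps to a point of $Z$. By Nagata compactification we write $f=\ov f\circ j$ with $\ov f$ proper. Then $f^\Diamond_{C,!}=\ov f^\Diamond_{C,*}j^\Diamond_{C,!}$, and $j^\Diamond_{C,!}\bZ[\frac1q](n)$ is compact on $\ov Y^\Diamond_C$: after stratifying $\ov Y\ssm Y$ and using excision triangles, it reduces to Tate twists of units on analytifications of varieties over $C$. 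The pushforward $\ov f^\Diamond_{C,*}$ along a proper morphism of finite cohomological dimension preserves compact objects, by \cite[Proposition 10.1]{Sch24} together with the fact that its right adjoint $\ov f^!$ preserves colimits. Since compact objects are stable under cones and retracts, every object of $D_{\mot}^{Z\bs\!\Spa C}(\Spa C)$ is compact.

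\textbf{Compacts generated by Zariski-constructible objects.} For the converse we use the description of $D_{\mot}(\Spa C)$ from \cite[Proposition 10.1]{Sch24} (or the equivalent generation statement in \cite{Sch24}): $D_{\mot}(\Spa C)$ is compactly generated by motives $g_!\bZ[\frac1q](n)$ of smooth qcqs rigid spaces, or equivalently of Berkovich-type spaces, $g$ over $C$. One then shows these generators lie in the thick subcategory generated by algebraic ones. Every smooth affinoid over $C$ is motivically equivalent to a nearby algebraic one; this holds by overconvergence and approximation, since Berkovich motives of affinoids over $C$ depend only on a small perturbation, in the spirit of Ayoub's rigid analytic motives. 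We can choose the algebraic models over the completion of $\ov\bF_q(\pi)$-type fields, and Zariski-constructibility relative to $Z$ allows arbitrary varieties over $\ov\bF_q$ base changed to $C$. The main subtlety here is that the generators must come from varieties over $\ov\bF_q$ base changed to $C$, rather than from varieties over $C$ itself.

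\textbf{Main obstacle: descending from $C$ to $\ov\bF_q$.} To address this, we first express $\Spa C$ as a limit. Write $\Spa C=\varprojlim\Spa(F')$ over finite separable extensions $F'$ of $F$, and use continuity from \cite[Lemma 10.4]{Sch24}, which gives $D_{\mot}(\Spa C)^\om=\varinjlim D_{\mot}(\Spa F')^\om$. Next, each $\Spa F'$ is the generic fiber of a punctured disk, i.e. an open in the analytification of a curve over $\ov\bF_q$. A variety over $F'$ spreads out to a finite type scheme over a curve $Z'$ over $\ov\bF_q$, and its relative motive is then of the form $f^\Diamond_{C,!}$ pulled back from $Z'$. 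Finally, since $Z'\ra\Spec\ov\bF_q$ is separated of finite type, this lies in $D_{\mot}^{Z\bs\!\Spa C}$. Combining this with retract-closure gives equality. We expect the comparison between analytic and algebraic generators, i.e. the overconvergent approximation step, to be the hardest part, and we would first attempt it by reducing via the hyperplane or \'etale-local structure to tori and polydisks, where the motives are explicit Tate motives.
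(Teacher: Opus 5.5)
The difficulty sits in the opposite direction from where you put it, and the step you treat as formal is a genuine gap. In your first part you assert two things: that $j^\Diamond_{C,!}\bZ[\frac1q](n)$ is compact on $\ov Y^\Diamond_C$, and that $\ov f^\Diamond_{C,*}$ preserves compact objects because $\ov f^!$ preserves colimits. Neither is available.

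\begin{itemize}
\item Finite cohomological dimension in the sense of \cite[Definition 4.17]{Sch24} is a condition on $\ov f_*$ (it is what the paper uses to compare with \'etale realizations), not on $\ov f^!$.
\item Colimit-preservation of $\ov f^!$ is equivalent to $\ov f_*=\ov f_!$ preserving compacts. For singular proper $\ov f$, that is precisely what has to be shown.
\item \cite[Proposition 10.1]{Sch24} and its proof give compactness of $g_!\bZ[\frac1q]$ only for $g$ smooth projective.
\item Your excision step just moves the question to units on singular boundary strata.
\end{itemize}

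You also cannot replace $Z$ by $\Spec\ov\bF_q$. The point $\Spec C\ra Z$ is typically non-closed (e.g.\ $\Spa\bC_z$ lies over the generic point of $U_{\ov\bF_q}$). So the generators are $f^{\an,\Diamond}_!\bZ[\frac1q](n)$ for essentially arbitrary separated finite type $f:Y\ra\Spec C$.

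Proving these compact needs a real argument, which the paper gives. Reduce to $Y$ reduced, and induct on $\dim Y$ using excision. A de Jong alteration \cite[Theorem 4.1]{deJ96} then gives a dense open $V\subseteq Y$ and a finite \'etale cover $\wt V\ra V$, where $i:\wt V\hra\wt Y$ is a dense open of a smooth projective $\wt f:\wt Y\ra\Spec C$. Then split the coefficients.
\begin{itemize}
\item For the primes $\ell\neq p$ dividing $\deg(\wt V\ra V)$, which are not inverted in $\bZ[\frac1q]$, check compactness in $\varprojlim_m\Mod_{\bZ/d^m}(D_{\mot}(\Spa C))=\wh D(\prod_{\ell\mid d}\bZ_\ell)$. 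This uses $f_!\bZ[\frac1q]\otimes\bZ/d^m=f_!(\bZ/d^m)$ and spreading out.
\item After inverting $d$, the trace map exhibits $f_!\bZ[\frac1{dq}]$ as a retract of $(\wt f\circ i)_!\bZ[\frac1{dq}]$. The latter is compact by the proof of \cite[Proposition 10.1]{Sch24}, excision and induction.
\end{itemize}
Your proposal contains none of these ingredients: no alteration, no trace retract, and no separate treatment of the primes dividing the degree.

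Conversely, the inclusion you single out as the main obstacle is immediate. \cite[Proposition 10.1]{Sch24} gives compact generation of $D_{\mot}(\Spa C)$ by Tate twists of the motives of $X_C$ for $X$ smooth projective over $\ov\bF_q$. These are $f^\Diamond_{C,!}\bZ[\frac1q](n)$ for $f:Z\times X\ra Z$. The compact objects are the thick closure of a set of compact generators, so $D_{\mot}(\Spa C)^\omega\subseteq D_{\mot}^{Z\bs\Spa C}(\Spa C)$.

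The route you sketch instead would not work as written:
\begin{itemize}
\item The ``overconvergent approximation'' of smooth affinoids by algebraic models is unsupported.
\item A finite extension $F'$ of the local field $F$ is not finitely generated over $\ov\bF_q$, so varieties over it do not spread out over a curve.
\item Spreading out over some auxiliary finite type $Z'$ only produces pullbacks along a point of $Z'$. Those are not objects $f^\Diamond_{C,!}\bZ[\frac1q](n)$ with $f$ of finite type over the given $Z$ and the given map $\Spa C\ra Z^\Diamond$.
\end{itemize}
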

Proposition \ref{ss:controlSpaC} and \cite[Proposition 10.1]{Sch24} imply that
\begin{align*}
\Ups:\Ind D^{Z\bs\Spa{C}}_{\mot}(\Spa{C})\ra D_{\mot}(\Spa{C})
\end{align*}
is an equivalence.
\begin{proof}
  By considering $f:Z\times X\ra Z$ for smooth projective schemes $X$ over $\ov\bF_q$, \cite[Proposition 10.1]{Sch24} shows that $D_{\mot}(\Spa C)^\om$ lies in $D_{\mot}^{Z\bs\!\Spa{C}}(\Spa C)$. For the reverse inclusion, it suffices to prove that, for any separated finite type morphism $f:Y\ra\Spec C$, the object $f^{\an,\Diamond}_!\bZ[\frac1q]$ in $D_{\mot}(\Spa C)$ is compact. Since $(-)^{\an,\Diamond}$ factors through limit perfection, we can assume that $Y$ is reduced.

  We now induct on $\dim{Y}$, where the $\dim{Y}=0$ case is immediate. By excision \cite[Proposition 4.25]{Sch24} and the induction hypothesis, we can assume that $Y$ is irreducible. Then \cite[Theorem 4.1]{deJ96} yields a smooth projective morphism $\wt{f}:\wt{Y}\ra\Spec C$, a dense open subspace $V\subseteq Y$, a dense open subspace $i:\wt{V}\hra\wt{Y}$, and a finite \'etale morphism $j:\wt{V}\ra V$ over $C$. By excision \cite[Proposition 4.25]{Sch24} and the induction hypothesis, we can assume that $Y=V$. Write $d$ for the product of all primes $\ell\neq p$ dividing $\deg{j}$. Because $(f_!^{\an,\Diamond}\bZ[\frac1q])\otimes_{\bZ[\frac1q]}\bZ/d^m=f_!^{\an,\Diamond}(\bZ/d^m)$, spreading out implies that the image of $f_!^{\an,\Diamond}\bZ[\textstyle\frac1q]$ in
\begin{align*}
  \varprojlim_{m\geq1}\Mod_{\bZ/d^m}(D_{\mot}(\Spa{C})) = \wh{D}\big(\prod_{\ell\mid d}\bZ_\ell\big)
\end{align*}
is compact. Therefore it suffices to prove that the image $f_!^{\an,\Diamond}\bZ[\frac1{dq}]$ of $f_!^{\an,\Diamond}\bZ[\frac1q]$ in $\Mod_{\bZ[\frac1{dq}]}(D_{\mot}(\Spa{C}))$ is compact.

The proof of \cite[Proposition 10.1]{Sch24} shows that $\wt{f}_!^{\an,\Diamond}\bZ[\frac1{dq}]$ is compact in
\begin{align*}
\Mod_{\bZ[\frac1{dq}]}(D_{\mot}(\Spa{C})),
\end{align*}
so excision \cite[Proposition 4.25]{Sch24} and the induction hypothesis imply that the same holds for $(\wt{f}\circ i)_!^{\an,\Diamond}\bZ[\frac1{dq}]$. Finally, the trace map exhibits $f_!^{\an,\Diamond}\bZ[\frac1{dq}]$ as a retract of $(\wt{f}\circ i)_!^{\an,\Diamond}\bZ[\frac1{dq}]$, which yields the desired result.
\end{proof}

\subsection{}\label{ss:WeilDeligne}
Write $\breve{F}$ for $\ov\bF_q\lp{\pi}$, write $\phi:\Spd\breve{F}\ra\Spd\breve{F}$ for the geometric $q$-Frobenius automorphism over $\ov\bF_q$, and write $\Div^1$ for the small v-sheaf $(\Spd\breve{F})/\phi^\bZ$ over $\ov\bF_q$.

Let us recall the following descriptions of motives on $\Spd\breve{F}$ and $\Div^1$. Fix a group isomorphism $\ov\bF_q^\times\cong\bQ/\bZ[\frac1p]$, and choose a compatible system $\{\pi^{1/n}\}_{p\nmid n}$ of $n$-th roots of $\pi$ in $\ov{F}$. Recall that these choices induce a short exact sequence of certain group schemes over $\bZ[\frac1q]$ \cite[p.~10]{Sch25}
\begin{align*}
\xymatrix{1\ar[r]& \ID_{F}\ar[r] & \WD_{F}\ar[r] & \ul{\bZ}\ar[r] & 1.}
\end{align*}
\begin{lem*}
  These choices induce compatible equivalences
  \begin{align*}
    D(\Spd\breve{F},\La)\cong D_{\qcoh}\big((\Spec\La)/\!\ID_{F}\!\big)\mbox{ and }D(\Div^1,\La)\cong D_{\qcoh}\big((\Spec\La)/\!\WD_{F}\!\big).
  \end{align*}
\end{lem*}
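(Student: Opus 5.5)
We would deduce the Lemma from the descriptions in \cite{Sch25} of $D_{\mot}(\Spd\breve{F})$ and $D_{\mot}(\Div^1)$, and then base change along $D_{\mot}(*)\ra D(\La)$ as in \ref{ss:classifyingstack}. \cite[p.~10]{Sch25} shows that, after these choices, $\Spd\breve{F}$ (equivalently, its arc-sheaf $a'^*\Spd\breve{F}$) is the classifying stack of the tame inertia. More precisely, $\Spd\bC\ra\Spd\breve{F}$ is a pro-\'etale torsor for $I_F$. Motives do not see the wild inertia, which is pro-$p$ and $p=0$ in $\bZ[\frac1q]$-coefficients. So $D_{\mot}(\Spd\breve{F})$ becomes a category of $D_{\mot}(\Spa\bC)$-objects with an action of the tame inertia $\prod_{\ell'\neq p}\bZ_{\ell'}(1)$. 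The choice of $\ov\bF_q^\times\cong\bQ/\bZ[\frac1p]$ and the roots $\pi^{1/n}$ trivializes the Tate twist on this group. This is exactly the data encoded by $\ID_F$, giving $D_{\mot}(\Spd\breve{F})\cong D_{\qcoh}((\Spec\bZ[\frac1q])/\ID_F)\otimes D_{\mot}(*)$. For this I would invoke the corresponding statement of \cite{Sch25} directly, with Proposition \ref{ss:motivesoncondensedgroupoids} supplying the condensed-anima comparison for the profinite inertia quotient.

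The steps are as follows. First, identify $D_{\mot}(\Spa\bC)$ with $D_{\mot}(*)$ using Proposition \ref{ss:controlSpaC} and \cite[Proposition 10.1]{Sch24}. Second, apply descent along the $I_F$-torsor $\Spd\bC\ra\Spd\breve{F}$ to get the $\ID_F$-equivariant description. Third, tensor over $D_{\mot}(*)$ with $D(\La)$. Since $D_{\qcoh}$ of a quotient by a flat affine group scheme is compactly generated and dualizable, the Lurie tensor product commutes with this base change, giving $D_{\qcoh}((\Spec\La)/\ID_F)$. Fourth, for $\Div^1=(\Spd\breve{F})/\phi^\bZ$, take $\phi^\bZ$-equivariant objects on both sides. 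On the spectral side, the extension $1\ra\ID_F\ra\WD_F\ra\ul\bZ\ra1$ (with $\phi\mapsto1$) identifies $\ul\bZ$-equivariant objects of $D_{\qcoh}((\Spec\La)/\ID_F)$ with $D_{\qcoh}((\Spec\La)/\WD_F)$. Compatibility of the two equivalences is then automatic, because the second is obtained from the first by descent along $\Spd\breve{F}\ra\Div^1$, matching $\Spec\La/\ID_F\ra\Spec\La/\WD_F$.

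The main obstacle is the third step: checking that the base change $-\otimes_{D_{\mot}(*)}D(\La)$ is compatible with the equivariant descriptions. The motivic side is not simply $D(\bZ[\frac1q])$ but $D_{\mot}(*)$, so one must ensure that the action of $\ID_F$ on the motivic category is $D_{\mot}(*)$-linear and that the $\phi$-fixed points commute with the tensor product. For the fixed points, I would write them as an equalizer, which is a finite limit and hence preserved by tensoring with the dualizable category $D(\La)$. For the inertia part, I would use the explicit generator (the $\cO$ of the regular representation) to reduce to a statement about modules over an explicit algebra.
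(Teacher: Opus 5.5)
Your overall shape (cite \cite{Sch25}, then base change along $D_{\mot}(*)\ra D(\La)$) matches the paper. However, there is a genuine gap exactly where the choices enter.

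\cite[Theorem 4.2]{Sch25} does not give an equivalence $D_{\mot}(\Spd\breve{F})\cong D_{\qcoh}((\Spec\bZ[\frac1q])/\ID_F)\otimes D_{\mot}(*)$. It is relative to the stack $MG_{\ov\bF_q}$ of \cite[p.~62]{Sch24}: it gives compatible equivalences $D_{\qcoh}(\wt{X}_{\Div^1})\otimes_{D_{\qcoh}(MG_{\ov\bF_q})}D_{\mot}(*)\simeq D_{\mot}(\Spd\breve{F})$ and $D_{\qcoh}(X_{\Div^1})\otimes_{D_{\qcoh}(MG_{\ov\bF_q})}D_{\mot}(*)\simeq D_{\mot}(\Div^1)$. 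The groups $\ID_F$ and $\WD_F$ only appear after base changing $\wt{X}_{\Div^1}$ and $X_{\Div^1}$ along the point $\Spec\bZ[\frac1q]\ra MG_{\ov\bF_q}$ determined by $\ov\bF_q^\times\cong\bQ/\bZ[\frac1p]$ (using also the roots $\pi^{1/n}$) \cite[p.~10]{Sch25}. The functor $D_{\qcoh}(MG_{\ov\bF_q})\ra D_{\mot}(*)$ should not be expected to factor through pullback to that point. The choice does not trivialize Tate twists in $D_{\mot}(*)$, whereas tame inertia and monodromy are intrinsically Tate-twisted. So your intermediate formula cannot be extracted from \cite{Sch25}.

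The idea you are missing is that the factorization \emph{does} hold after $\ell$-adic realization. The choice yields a commutative square identifying $D_{\qcoh}(MG_{\ov\bF_q})\ra D_{\mot}(*)\xrightarrow{r_{\ell,*}}D(\bZ_\ell)$ with pullback to $D(\bZ[\frac1q])$ followed by $D(\bZ[\frac1q])\ra D(\bZ_\ell)$. Since $D(-,\La)$ is formed along $r_{\ell,*}$, this gives $D(\Spd\breve{F},\La)\simeq D_{\qcoh}(\wt{X}_{\Div^1})\otimes_{D_{\qcoh}(MG_{\ov\bF_q})}D(\La)\simeq D_{\qcoh}(\wt{X}_{\Div^1}\times_{MG_{\ov\bF_q}}\Spec\La)\cong D_{\qcoh}((\Spec\La)/\ID_F)$. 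The same works for $\Div^1$ and $\WD_F$, with compatibility inherited from \cite[Theorem 4.2]{Sch25} and its proof. Your ``main obstacle'' paragraph (linearity of an $\ID_F$-action, equalizers, the regular representation) targets the wrong problem. The issue is not commuting fixed points with base change. It is that $\ID_F$ only lives over the chosen point of $MG_{\ov\bF_q}$, and $D_{\mot}(*)\ra D(\La)$ factors through that point only after realization.

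Your Steps 1--2, which try to rebuild the input by hand, also fail.
\begin{itemize}
\item $D_{\mot}(\Spa\bC)$ is not $D_{\mot}(*)$. For $f:\Spa\bC\ra*$, the functor $f_*$ identifies $D_{\mot}(\Spa\bC)$ with $\Mod_{f_*\bZ[\frac1q]}(D_{\mot}(*))$. The augmentation $f_*\bZ[\frac1q]\ra\bZ[\frac1q]$ (which needs the roots $\pi^{1/n}$) is shown to be an isomorphism only after $-\otimes\bZ/\ell^m$. This extra structure is exactly what the nearby cycles functor $\Psi$ of \ref{ss:nearbycycles} forgets.
\item Proposition \ref{ss:controlSpaC} together with \cite[Proposition 10.1]{Sch24} only says that $\Ups$ is an equivalence, i.e.\ that compact objects are Zariski-constructible. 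It does not identify $D_{\mot}(\Spa\bC)$ with $D_{\mot}(*)$.
\item Descending $D_{\mot}(*)$ along the profinite $I_F$-torsor $\Spd\bC\ra\Spd\breve{F}$ would, via Proposition \ref{ss:motivesoncondensedgroupoids}, produce only smooth $I_F$-representations. For $\La$ such as $L$ or $\cO_L$ this is too small to be $D_{\qcoh}((\Spec\La)/\ID_F)$. The latter must receive $\ell$-adic local systems with infinite-order unipotent monodromy via Grothendieck's theorem, as in \ref{ss:choices}.
\item Wild inertia is not invisible to motives. Invertibility of $p$ makes its representations semisimple, not trivial.
\end{itemize}
Your Step 4 (taking $\phi^{\bZ}$-fixed points along $\Spd\breve{F}\ra\Div^1$ and matching with $1\ra\ID_F\ra\WD_F\ra\ul{\bZ}\ra1$) is a reasonable way to pass to $\Div^1$. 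The paper, however, gets both equivalences and their compatibility directly from \cite[Theorem 4.2]{Sch25}.
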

\begin{proof}
  Write $MG_{\ov\bF_q}$ for the algebraic stack over $\bZ[\frac1q]$ from \cite[p.~62]{Sch24}, and recall that we have a natural functor $D_{\qcoh}(MG_{\ov\bF_q})\ra D_{\mot}(*)$ \cite[p.~62]{Sch24}. Consider the morphism $\Spec\bZ[\frac1q]\ra MG_{\ov\bF_q}$ induced by the isomorphism $\ov\bF_q^\times\cong\bQ/\bZ[\frac1p]$ as in \cite[p.~10]{Sch25}; note that the isomorphism $\ov\bF_q^\times\cong\bQ/\bZ[\frac1p]$ also induces a commutative square
\begin{align*}
  \xymatrix{D_{\qcoh}(MG_{\ov\bF_q})\ar[r]\ar[d] & D_{\mot}(*)\ar[d]^-{r_{\ell,*}}\\
  D(\bZ[\frac1q])\ar[r] & D(\bZ_\ell).}
\end{align*}
Write $\wt{X}_{\Div^1}$ and $X_{\Div^1}$ for the algebraic stacks over $MG_{\ov\bF_q}$ from \cite[Definition 4.1]{Sch25}. Then \cite[Theorem 4.2]{Sch25} and its proof yield compatible equivalences
\begin{align*}
  D_{\qcoh}(\wt{X}_{\Div^1})\otimes_{D_{\qcoh}(MG_{\ov\bF_q})}D_{\mot}(*)&\ra^\sim D_{\mot}(\Spd\breve{F}),\\
  D_{\qcoh}(X_{\Div^1})\otimes_{D_{\qcoh}(MG_{\ov\bF_q})}D_{\mot}(*)&\ra^\sim D_{\mot}(\Div^1).
\end{align*}
so the above commutative square induces compatible equivalences
\begin{align*}
  D_{\qcoh}(\wt{X}_{\Div^1}\times_{MG_{\ov\bF_q}}\Spec\La)&\ra^\sim D(\Spd\breve{F},\La),\\
  D_{\qcoh}(X_{\Div^1}\times_{MG_{\ov\bF_q}}\Spec\La)&\ra^\sim D(\Div^1,\La).
\end{align*}
Finally, \cite[p.~10]{Sch25} compatibly identifies
\begin{align*}
  \wt{X}_{\Div^1}\times_{MG_{\ov\bF_q}}\Spec\La&\cong(\Spec\La)/\!\ID_{F},\\
  X_{\Div^1}\times_{MG_{\ov\bF_q}}\Spec\La&\cong(\Spec\La)/\!\WD_{F}\!.\qedhere
\end{align*}
\end{proof}

\subsection{}\label{ss:nearbycycles}
We conclude this section by describing how \emph{motivic nearby cycles} behave after $\ell$-adic realization. Write $f:\Spa{C}\ra*$ for the structure morphism. Recall that $f_*$ yields an equivalence $D_{\mot}(\Spa{C})\ra^\sim\Mod_{f_*\bZ[\frac1q]}(D_{\mot}(*))$ and that our choice of $\{\pi^{1/n}\}_{p\nmid n}$ from \ref{ss:WeilDeligne} induces a morphism $f_*\bZ[\frac1q]\ra\bZ[\textstyle\frac1q]$ of $\bE_\infty$-algebras in $D_{\mot}(*)$ \cite[p.~61]{Sch24}. Under this identification, write $\Psi:D_{\mot}(\Spa{C})\ra D_{\mot}(*)$ for the functor corresponding to $-\otimes_{f_*\bZ[\frac1q]}\bZ[\textstyle\frac1q]$ \cite[Definition 11.3]{Sch24}.

Recall that pullback yields an equivalence \cite[p.~168]{FS21}
\begin{align*}
\wh{D}_{\et}(*,\bZ_\ell)\ra^\sim\wh{D}_{\et}(\Spa{C},\bZ_\ell).
\end{align*}
\begin{prop*}
The functor $\Psi$ preserves compact objects. Moreover, we have a commutative square
\begin{align*}
  \xymatrixcolsep{1.5cm}
  \xymatrix{D_{\mot}(\Spa C)\ar[r]^-{\wh{r}_{\ell,\Spa C}}\ar[d]^-{\Psi} & \wh{D}_{\et}(\Spa C,\bZ_\ell)\\
  D_{\mot}(*)\ar[r]^-{\wh{r}_{\ell,*}} & \wh{D}_{\et}(*,\bZ_\ell).\ar[u]_-{\rotatebox{90}{$\sim$}}}
\end{align*}
\end{prop*}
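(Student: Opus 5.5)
The plan is to reduce both claims to explicit descriptions of the motivic structure sheaf $f_*\bZ[\frac1q]$ and of its augmentation $f_*\bZ[\frac1q]\ra\bZ[\frac1q]$, in the sense of \cite[Section 11]{Sch24}. Write $e$ for this augmentation. The equivalence $D_{\mot}(\Spa C)\simeq\Mod_{f_*\bZ[\frac1q]}(D_{\mot}(*))$ turns $\Psi$ into the base change $-\otimes_{f_*\bZ[\frac1q]}\bZ[\frac1q]$ along $e$.

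For compactness, I would note that base change along a map of $\bE_\infty$-algebras is left adjoint to restriction of scalars. Restriction of scalars commutes with all colimits, since colimits in module categories are computed underlying. Hence the left adjoint $\Psi$ preserves compact objects. To double-check this, I would use the generators: compact objects of $D_{\mot}(\Spa C)$ are generated under cones and retracts by free modules $f_*\bZ[\frac1q]\otimes M$ with $M$ compact in $D_{\mot}(*)$ (via \cite[Proposition 10.1]{Sch24}), and $\Psi$ sends these to $M$.

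For the square, I would first check that both paths agree on compact objects and then Ind-extend. If both paths are colimit-preserving, this reduces the statement to compact objects. If $\wh r_\ell$ fails to commute with colimits, I would instead argue modulo $\ell^m$ for each $m$ and take $\varprojlim_m$. Concretely, I would reduce modulo $\ell^m$ using the fully faithful embedding $\Mod_{\bZ/\ell^m}(D_{\mot}(a'^*X))\hra D_{\et}(X,\bZ/\ell^m)$ of \ref{ss:ladicrealization}. This identifies $\Mod_{\bZ/\ell^m}(D_{\mot}(\Spa C))$ with the category of overconvergent \'etale sheaves on $\Spa C$. Via $f_*$, the latter is $\Mod_{f_*\bZ/\ell^m}(D(\bZ/\ell^m))$, where $f_*\bZ/\ell^m=R\Gamma(\Spa C,\bZ/\ell^m)=\bZ/\ell^m$ because $C$ is algebraically closed.

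The key point is then that the $\ell$-adic realization of $e$ is an isomorphism $\bZ/\ell^m\ra\bZ/\ell^m$. This is the content of the \'etale realization statement in \cite[Theorem 11.1]{Sch24}. Granting it, base change along an isomorphism is the identity. So modulo $\ell^m$, $\Psi$ is identified with the inverse of the pullback equivalence $D_{\et}(*,\bZ/\ell^m)\simeq D_{\et}(\Spa C,\bZ/\ell^m)$, and taking $\varprojlim_m$ gives the commutative square.

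I expect the main obstacle to be the identification of $\wh r_{\ell,*}(e)$ with the identity map, compatibly with the chosen system $\{\pi^{1/n}\}$. The motivic $f_*\bZ[\frac1q]$ is not $\bZ[\frac1q]$, since it carries the tame inertia and $\mathbb{G}_m$-type motivic classes, so one has to use that $\ell$-adic realization kills these. My first attempt would be to compute $f_*\bZ[\frac1q]$ explicitly as a module over $D_{\qcoh}(MG_{\ov\bF_q})$ using \ref{ss:WeilDeligne} and \cite[p.~61]{Sch24}, and then realize this description along the commutative square of \ref{ss:WeilDeligne}.
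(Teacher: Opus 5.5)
Your proposal is correct. For the commutative square it follows the paper's route: reduce modulo $\ell^m$, use that $e\otimes\bZ/\ell^m$ is an isomorphism so that $\Psi$ modulo $\ell^m$ is inverse to the pullback equivalence, then pass to $\varprojlim_m$.

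For compactness your route is different and more elementary. You observe that $\Psi$ is left adjoint to restriction of scalars along $e$, and that restriction preserves all colimits. This needs no input from \cite{Sch24}. The paper instead uses $\Psi\circ f^*\simeq\id$ together with \cite[Proposition 10.1]{Sch24} and \cite[Theorem 11.1]{Sch24} to control the compact generators; that is essentially your ``double-check''.

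On the step you grant, the paper extracts it from the proof of \cite[Proposition 11.4]{Sch24}. It does not take it from \cite[Theorem 11.1]{Sch24}, which, as used elsewhere in the paper, concerns compact objects of $D_{\mot}(*)$. In fact your own computation already proves it, so the obstacle you anticipate is not there.

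Write $\eta:\bZ[\frac1q]\ra f_*\bZ[\frac1q]$ for the unit. Since $e$ is a map of $\bE_\infty$-algebras and the unit algebra is initial, $e\circ\eta=\id$.

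Next, $f_*$ commutes with $-\otimes\bZ/\ell^m$, since $\bZ/\ell^m$ is the cofiber of $\ell^m$. Hence $\eta\otimes\bZ/\ell^m$ is the unit $\bZ/\ell^m\ra f_*f^*\bZ/\ell^m$ for $\bZ/\ell^m$-modules. Under the fully faithful embedding of \ref{ss:ladicrealization}, this unit becomes the \'etale unit $\bZ/\ell^m\ra R\Gamma(\Spa C,\bZ/\ell^m)$, which is an isomorphism.

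This uses that the embedding intertwines motivic and \'etale $f_*$. That follows by adjunction from compatibility with pullback, because every object of $D_{\et}(*,\bZ/\ell^m)=D(\bZ/\ell^m)$ is overconvergent; it is even enough that the objects $\bZ/\ell^m[n]$ lie in the image.

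Therefore $e\otimes\bZ/\ell^m=(\eta\otimes\bZ/\ell^m)^{-1}$ is an isomorphism. This holds for any choice of $\{\pi^{1/n}\}$, so no compatibility issue arises, and no computation of $f_*\bZ[\frac1q]$ over $MG_{\ov\bF_q}$ is needed.
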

Proposition \ref{ss:nearbycycles} and \ref{ss:ladicrealization} imply that $\wh{r}_{\ell,\Spa C}$ restricts to a functor
\begin{align*}
D_{\mot}(\Spa C)^\om\ra D_{\perf}(\bZ_\ell),
\end{align*}
so \cite[Proposition 10.1]{Sch24} indicates that applying $\Ind$ yields a functor
\begin{align*}
r_{\ell,\Spa C}:D_{\mot}(\Spa C)\ra D(\bZ_\ell).
\end{align*}
\begin{proof}
  Because $f^*:D_{\mot}(*)\ra D_{\mot}(\Spa{C})$ is left adjoint to $f_*$, it corresponds to $-\otimes_{\bZ[\frac1q]}f_*\bZ[\textstyle\frac1q]$. Therefore $\Psi\circ f^*$ is naturally isomorphic to the identity, so the first statement follows from \cite[Proposition 10.1]{Sch24} and \cite[Theorem 11.1]{Sch24}.

  For the second statement, the proof of \cite[Proposition 11.4]{Sch24} implies that, for all positive integers $m$, applying $-\otimes_{\bZ[\frac1q]}\bZ/\ell^m$ to $f_*\bZ[\frac1q]\ra\bZ[\textstyle\frac1q]$ gives an isomorphism. This yields the desired result.
\end{proof}

\section{Global rigid inner forms, by Peter Dillery}\label{s:dillery}
In the following, $G$ always denotes a connected reductive group over $F$ a global function field associated to the projective curve $C$ with constant field $\mathbb{F}_{q}$. We will break from the notation of the rest of the paper by denoting $F^{s}$ a separable closure of $F$ inside a fixed algebraic closure $\ov{F}$. The set of all places of $F$ will be denoted by $V_{F}$. We denote the Kottwitz gerbe over $F$ by $\tn{Kott}_{F}$, and the Kottwitz gerbe over each $F_{v}$ by $\tn{Kott}_{v}$. For a set of places $\Sigma$ of $F$, we denote by $O_{F,\Sigma}$ the ring of $\Sigma$-integers of $F$.

\subsection{}
Theorem \ref{ss:basiclocus} says that the $B(F,G)_{\tn{basic}}$-inner forms $G_{b}$ of a quasi-split connected reductive group $G$ appear naturally in the description of the semistable locus of $\Bun_{G,F}$. The goal of this appendix is to use results in the main body of the paper to prove fundamental geometric properties of the ``extended'' stack $\tn{Bun}_{G,F}^{e}$ introduced in \cite{Dil26}, which has geometry analogous to that of $\tn{Bun}_{G,F}$ but whose semistable locus encodes global rigid inner forms of $G$, in the sense of \cite{Dillery23b}, rather than just $B(F,G)_{\tn{basic}}$-inner forms. The advantage of using global rigid inner forms is that they enable a statement (\cite[Conjecture 5.7]{Dil26}) of a multiplicity formula for discrete automorphic representations that applies to arbitrary connected reductive groups over $F$. It is reasonable to believe that a $\tn{Bun}_{G,F}^{e}$-adapted version of the machinery in the main part of this paper together with the conjectural $\tn{Kott}_{v} \times_{F_{v}} \tn{Kal}_{v}$-adapted refined Langlands correspondence from \cite[Conjecture 5.1]{Dil26} at each $v$ can be used to prove cases of this multiplicity formula. We emphasize that the construction presented here is inspired by the work of \cite{Fargues} in the local case.

\subsection{}
We review the notion of local and global rigid inner forms. One can use class field theory to construct a global gerbe $\tn{Kal}_{F} \to \tn{Spec}(F)$ (\cite[Proposition 3.21]{Dillery23b}) and for each place $v \in V_{F}$ a local gerbe $\tn{Kal}_{v} \to \tn{Spec}(F_{v})$ (\cite[Theorem 3.4]{Dil23a}) which are banded by profinite multiplicative group schemes $P$ and $u_{v}$, respectively. We refer the reader to \cite[\S 2.1]{Dil26} for a more detailed summary of these two gerbes and their bands. We give these gerbes the \'{e}tale topology inherited from $\tn{Spec}(F)$.

For $? \in \{F, v\}$, the basic cohomology $H^{1}(\tn{Kal}_{?}, G)_{\tn{basic}}$ is defined as all isomorphism classes of \'{e}tale $G$-torsors on $\tn{Kal}_{?}$ whose restriction to the band of $\tn{Kal}_{?}$ is an $F$-rational homomorphism $P \to Z_{G}$ or $u_{v} \to Z_{G}$. A key property of these gerbes is that the natural map
\begin{equation*}
H^{1}(\tn{Kal}_{?}, G)_{\tn{basic}} \to H^{1}(F,G_{\tn{ad}})
\end{equation*}
is always surjective. In other words, any inner form of $G$ can be realized as coming from a $G$-torsor on one of these gerbes. 

There is a variant of this gerbe defined in \cite{Fargues} and \cite{Dil26} which is better--suited to geometry:
\begin{defn*}
Define $B_{e}(G)$ (resp. $B_{e}(G_{F_{v}})$) to be all isomorphism classes of \'{e}tale $G$-torsors on the gerbe $\tn{Kott}_{F} \times_{F} \tn{Kal}_{F}$ (resp. $\tn{Kott}_{v} \times_{F_{v}} \tn{Kal}_{v}$) whose restriction to the band of $\tn{Kal}_{F}$ (resp. $\tn{Kal}_{F_{v}}$) factors through $Z_{G}$. Define the subset $B_{e}(G)_{\tn{basic}}$ (resp. $B_{e}(G_{F_{v}})_{\tn{basic}}$) to be those classes whose restriction to the entire band of the gerbe $\tn{Kott}_{F} \times_{F} \tn{Kal}_{F}$ (resp. $\tn{Kott}_{v} \times_{F_{v}} \tn{Kal}_{v}$) factors through $Z_{G}$.
\end{defn*}

In the same way that there is a bijection $B(F,G)_{\tn{basic}} \xrightarrow{\sim} (\pi_{1}(G)[V_{F^{s}}]_{0})_{\tn{Gal}(F^{s}/F)}$, one has a linear-algebraic description of $B_{e}(G)_{\tn{basic}}$ given by \cite[Theorem 3.10]{Dil26} which recovers the $B(F,G)$-bijection and the $H^{1}(\tn{Kal}_{F}, G)_{\tn{basic}}$-bijection from \cite[Theorem 4.11]{Dillery23b}. We omit the details here for sake of brevity, but remark that the proof relies on an explicit Tannakian description \cite[Proposition 3.1]{Dil26} of the gerbe $\tn{Kott}_{F} \times_{F} \tn{Kal}_{F}$ generalizing the description of $\tn{Kott}_{F}$ using Drinfeld isoshtukas. 

\subsection{}
The key ingredient for defining the stack $\tn{Bun}_{G,F}^{e}$ is a generalization of $\tn{Kal}_{F}$ which is defined over a fixed affine open subset $U = C \setminus |\Sigma|$ of the curve $C$ for a finite set of places $\Sigma$. This gerbe is denoted by $\tn{Kal}_{F,\Sigma} \to U$.

The gerbe $\tn{Kal}_{F,\Sigma}$ is built from finite level gerbes. Fix a set of lifts $\dot{\Sigma}$ of $\Sigma$ in $V_{F^{s}}$ and denote by $F_{\Sigma}$ the maximal $\Sigma$-unramified extension of $F$. Fix also a cofinal system of pairs $\{(E_{i}, n_{i})\}_{i \in \mathbb{N}}$ such that $n_{i} \in \mathbb{N}$, $F \subseteq E_{i} \subseteq F_{\Sigma}$ and if $j \geq i$ then $E_{i} \subseteq E_{j}$ and $n_{i} \mid n_{j}$. Define $P_{\dot{\Sigma},i}$ to be the $O_{F,\Sigma}$-group scheme Cartier dual to  $(\frac{1}{n_{i}}\mathbb{Z}/\mathbb{Z})[\Gal(E_{i}/F) \times \Sigma_{F_{i}}]_{0,0}$ consisting of all elements whose $[(\sigma,w)]$-coefficient is zero unless $\sigma^{-1}(w) \in \dot{\Sigma}_{E_{i}}$. The subscript ``$0,0$'' means we insist that the elements must be killed by both augmentation maps. There is (\cite[Corollary 2.12]{Dil26}) a ``level $i$'' canonical class $\xi_{i} \in H_{\tn{fppf}}^{2}(O_{F,\Sigma}, P_{\dot{\Sigma},i})$, and these classes are compatible via a system of transition maps $\{P_{\dot{\Sigma},j} \to P_{\dot{\Sigma},i} \}$ defined loc. cit. such that $P_{\dot{\Sigma}} = \varprojlim P_{\dot{\Sigma},i}$.

To define the canonical class corresponding to $\tn{Kal}_{F,\Sigma}$, one takes the unique lift of $\varprojlim \xi_{i}$ in $H^{2}_{\tn{fppf}}(F, P_{\dot{\Sigma}})$ which, for each $v \in V_{F}$, localizes in some suitable sense to the class in $H^{2}_{\tn{fppf}}(F_{v}, u_{v})$ corresponding to $\tn{Kal}_{v}$---for the full details see \cite[Theorem 2.16]{Dil26}. One can define a transition map $\tn{Kal}_{F,\Sigma'} \to \tn{Kal}_{F,\Sigma}$ for all $\Sigma \subseteq \Sigma'$ and pullback by these maps induces a bijection
\begin{equation*}
\varinjlim_{\Sigma} H^{1}_{\tn{\'{e}t}}(\tn{Kal}_{F,\Sigma}, G) \xrightarrow{\sim} H^{1}_{\tn{\'{e}t}}(\tn{Kal}_{F}, G).
\end{equation*}

\subsection{}
We now recall the definition of the stack $\Bun_{G,F}^{e}$ from \cite[\S 4]{Dil26}, which is inspired by the ideas of \cite{Fargues}. Each gerbe $\tn{Kal}_{F,\Sigma}$ defines (\cite[Lemma 4.2]{Dil26}) an adic gerbe $\tn{Kal}_{F,\Sigma}^{\ad} \to \tn{Spa}(O_{F,\Sigma})$ (for the v-topology), and, for an affinoid perfectoid space $S$ over $\mathbb{F}_{q}$, define $H^{1}_{\tn{\'{e}t}}(\mathfrak{U}_{S}, G)_{\tn{$\tn{Kal}$-basic}}$ as all isomorphism classes of \'{e}tale $G$-torsors $\mathscr{G}$ on $\mathfrak{U}_{S} := U_{S}^{\tn{an}} \times_{\tn{Spa}(O_{F,\Sigma})} \tn{Kal}_{F,\Sigma}^{\ad}$ whose restriction to the band is a morphism 
\begin{equation*}
(P_{\dot{\Sigma}}^{\tn{ad}})_{S} \xrightarrow{\lambda_{\mathscr{G}}} Z_{G,S}.
\end{equation*}
We denote all such torsors (not isomorphism classes) by $Z^{1}_{\tn{\'{e}t}}(\mathfrak{U}_{S}, G)_{\tn{$\Kal$-basic}}$.

The main definition is then (\cite[Definition 4.6]{Dil26}):
\begin{defn*}
\begin{enumerate}
\item{Given $U$ a dense affine open of $C$, define a functor $\tn{Bun}_{G,U}^{e}$ from affinoid perfectoid spaces over $\mathbb{F}_{q}$ to groupoids by
\begin{equation*}
S \mapsto \{(\mathscr{G}, \phi) | \mathscr{G} \in Z^{1}_{\tn{\'{e}t}}(\mathfrak{U}_{S}, G)_{\tn{$\Kal$-basic}}, \hspace{.2cm} \phi \colon \mathscr{G} \xrightarrow{\sim} (\tn{Frob}_{S} \times \tn{id})^{*}\mathscr{G}\};
\end{equation*}}
\item{Define a functor $\tn{Bun}_{G,F}^{e}$ from affinoid perfectoid spaces over $\mathbb{F}_{q}$ to groupoids by 
\begin{equation*}
S \mapsto \varinjlim_{U} \tn{Bun}_{G,U}^{e}(S).
\end{equation*}}
\end{enumerate}
\end{defn*}

It is proved in \cite[Lemma 4.7]{Dil26} that both of these functors are small v-stacks.

\subsection{}\label{ss:BunGeDecomp}
The assignment $(\mathscr{G}, \phi) \in \tn{Bun}_{G,U}^{e}(S) \mapsto \lambda_{\mathscr{G}}$ defines a map
\begin{equation*}
\tn{Bun}_{G,U}^{e} \xrightarrow{\Lambda} \underline{\tn{Hom}}_{U}(P_{\dot{\Sigma}}, Z_{G}),
\end{equation*}
where $\underline{\tn{Hom}}_{U}(P_{\dot{\Sigma}}, Z_{G})$ is the functor sending $S$ to $\tn{Hom}_{U_{S}^{\tn{an}}}((P_{\dot{\Sigma}}^{\tn{ad}})_{S}, Z_{G,S})$. We now recall one more definition \cite[Definition 4.11]{Dil26}.

Observe that we have a canonical isomorphism for every geometric point $\bar{s}$ of a perfectoid space $S$ over $\mathbb{F}_{q}$
\begin{equation*}
\Hom_{U}(P_{\dot{\Sigma}}, Z_{G}) \xrightarrow{\sim} \Hom_{U_{\bar{s}}^{\tn{an}}}((P_{\dot{\Sigma}}^{\tn{ad}})_{\bar{s}}, Z_{G,\bar{s}}).
\end{equation*}
In view of this, given $\lambda \in \Hom(P_{\dot{\Sigma}}^{\tn{ad}}, Z_{G})$, define $\tn{Bun}^{e,\lambda}_{G,U}$  as the substack characterized by 
\begin{equation*}
\tn{Bun}^{e,\lambda}_{G,U}(S) = \{x \in \tn{Bun}^{e}_{G,U}(S) | \lambda_{\bar{x}} = \lambda \in  \Hom_{U_{\bar{s}}^{\tn{an}}}((P_{\dot{\Sigma}}^{\tn{ad}})_{\bar{s}}, Z_{G,\bar{s}})\},
\end{equation*}
where $\bar{s}$ runs over all geometric points of $S$ and $\bar{x}$ denotes the image of $x$ at $\bar{s}$.

A key result which allows us to use the results in the main body of paper in this new setting is \cite[Corollary 4.13]{Dil26}, which says that $\Lambda$ induces a decomposition (as a v-stack)
\begin{equation*}
\tn{Bun}_{G,U}^{e} \xrightarrow{\sim} \bigsqcup_{\lambda \in \tn{Hom}_{F}(P_{\dot{\Sigma}}, Z_{G})}  \tn{Bun}_{G,U}^{e,\lambda},
\end{equation*}
and analogously for $\tn{Bun}_{G,F}^{e}$.

There is an analogue \cite[Definition 4.9]{Dil26} of the notions introduced in \S \ref{ss:twistingb}:
\begin{defn*}
\begin{enumerate}
\item{Define $\tn{Bun}^{e,1}_{G,F}$ to be the substack characterized by 
\begin{equation*}
\tn{Bun}^{e,1}_{G,F}(S) = \{x \in \tn{Bun}^{e}_{G,F}(S) | \bar{x} = \tn{triv} \in  \tn{Bun}_{G,F}(\ov{s})\},
\end{equation*}
where $\bar{s}$ runs over all geometric points of $S$ and $\bar{x}$ denotes the image of $x$ at $\bar{s}$.}
\item{Given $b$ an element of $B_{e}(G)_{\tn{basic}}$, define $\tn{Bun}^{e,b}_{G,F}$  as the substack characterized by $\tn{Bun}^{e,b}_{G,F}(S) = \{x \in \tn{Bun}^{e}_{G,F}(S) | \bar{x} = b \in  \tn{Bun}_{G,F}(\ov{s})\}$, where $\bar{s}$ runs over all geometric points of $S$ and $\bar{x}$ denotes the image of $x$ at $\bar{s}$.} 
\end{enumerate}
\end{defn*}

The identical twisting argument from the proof of Proposition \ref{ss:twistingb} shows that:

\begin{prop*} \begin{enumerate}
\item{For $b \in B_{e}(G)_{\tn{basic}}$ pulled back from $\tn{Kott}_{F,\Sigma} \times_{U} \tn{Kal}_{F,\Sigma}$, there is a natural isomorphism $\tn{Bun}^{e}_{G,U} \xrightarrow{\sim} \tn{Bun}^{e}_{G_{b},U}$ sending $\tn{Bun}^{e,b}_{G,U}$ to $\tn{Bun}^{e,1}_{G_{b},U}$.}
\item{For $b \in H^{1}(\tn{Kott}_{F,\Sigma} \times_{U} \tn{Kal}_{F,\Sigma})_{\tn{basic}}$ above a fixed $\lambda \in \Hom_{U}(P_{\dot{\Sigma}}, Z_{G})$, we have an isomorphism $\tn{Bun}_{G,U}^{e,\lambda} \xrightarrow{\sim} \tn{Bun}_{G_{b},U}$. }
\end{enumerate}
\end{prop*}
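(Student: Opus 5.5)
For part 1), I would copy the proof of Proposition \ref{ss:twistingb}. Choose a representative of $b$: a $G$-torsor on $\tn{Kott}_{F,\Sigma}\times_U\tn{Kal}_{F,\Sigma}$ whose restriction to the band factors through $Z_G$, pulled back from level $U$. After passing to a Frobenius-trivialization over $\ov\bF_q$, view it as a twisting datum. Concretely, this is an element of $G$ over $U_{\ov\bF_q}$ on the $\tn{Kott}$-part, together with a central character $\lambda_b:P_{\dot\Sigma}\ra Z_G$ on the $\tn{Kal}$-part. Twisting $G$ by the associated inner cocycle gives $G_b$ over $U$. The centrality of the band action means that $G_b$ is an honest reductive (parahoric) group scheme over $U$, and not merely a twisted object on the gerbe.

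Given $(\sG,\phi)$ in $\Bun^e_{G,U}(S)$, I would form $\ul\Isom(\sG,\sG_b)$, where $\sG_b$ denotes the pullback of the fixed torsor $b$ to $\mathfrak{U}_S$. As in Proposition \ref{ss:twistingb}, $\phi$ combined with $b$'s Frobenius structure $(\Frob_S^{-1})^*\circ b$ induces an isomorphism $\ul\Isom(\sG,\sG_b)\ra^\sim(\Frob_S\times\id)^*\ul\Isom(\sG,\sG_b)$. The sheaf $\ul\Isom(\sG,\sG_b)$ is a right torsor for $\ul\Aut(\sG_b)$, which is the pullback of $G_b$. Its band restriction is governed by $\lambda_{\sG}\lambda_b^{-1}$, which is again central, so it lies in $Z^1(\mathfrak{U}_S,G_b)_{\Kal\text{-basic}}$. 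This defines a functor $\Bun^e_{G,U}\ra\Bun^e_{G_b,U}$. The inverse is $\sG'\mapsto\sG'\times^{G_b}\sG_b$, and the two functors are visibly quasi-inverse, exactly as in the non-extended case. Since the trivial object on the target corresponds to $\sG\cong\sG_b$ at every geometric point, the functor sends $\Bun^{e,b}_{G,U}$ onto $\Bun^{e,1}_{G_b,U}$.

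For part 2), I would combine part 1) with the decomposition from \ref{ss:BunGeDecomp}. Take $b$ lying over $\lambda$, so $\lambda_b=\lambda$. The isomorphism of part 1) sends $\Bun^{e,\lambda}_{G,U}$ to $\Bun^{e,1}_{G_b,U}$, meaning the component with trivial band character, since band characters multiply by $\lambda_b^{-1}$ under twisting. It then remains to identify $\Bun^{e,1}_{G_b,U}$, understood as the $\lambda=1$ component, with $\Bun_{G_b,U}$. A $G_b$-torsor on $\mathfrak{U}_S$ whose band acts trivially descends uniquely along the gerbe $\mathfrak{U}_S\ra U^{\an}_S$ to a $G_b$-torsor on $U^{\an}_S$. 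This is the standard fact that torsors on a gerbe with trivial band action are pulled back from the base. The descent is compatible with $\phi$ because $\Frob_S\times\id$ covers $\Frob_S$. This yields $\Bun_{G_b,U}$.

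I expect the main obstacle to be this last descent step in the adic/v-topology setting. The adic gerbe $\tn{Kal}^{\ad}_{F,\Sigma}$ is banded by a profinite group, so the descent has to be justified for v-stacks. I would do this by writing the gerbe as a limit of finite-level gerbes banded by the $P_{\dot\Sigma,i}$, doing the descent at finite level via v-descent (Theorem \ref{ss:vectorbundlesdescent} together with Proposition \ref{ss:Tannakian}), and then passing to the limit.
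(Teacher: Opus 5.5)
Your proposal is correct and is essentially the paper's argument. The paper proves both parts by invoking the twisting argument of Proposition \ref{ss:twistingb}, and your use of $\ul\Isom(-,\sG_b)$ in place of $\ul\Isom(-,G)$, together with (for part 2) identifying the trivial-band-character component with $\Bun_{G_b,U}$ by descending along the gerbe, is exactly the unpacking of that one-line proof. The paper does not treat the gerbe descent as an obstacle; it relies on the foundations of \cite{Dil26}. If you do prove it by hand, note that the trivializing covers live in the $U$-direction. So what you need is the argument in the proof of Theorem \ref{ss:vectorbundlesdescent} (pass to perfections, apply \cite[Lemma 17.1.8]{SW20}, then use the splitting), not its statement, which only concerns covers $S'\ra S$.
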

In the above $\tn{Kott}_{F,\Sigma}$ is a ``$U$-level'' version of $\tn{Kott}_{F}$ defined in \cite[\S 2.3]{Dil26}. Combining this result with \cite[Corollary 4.13]{Dil26} proves that the stack $\tn{Bun}_{G,U}^{e}$ is isomorphic to a disjoint union of the stacks $\tn{Bun}_{G',U}$ for varying inner forms $G'$ of $G$ (this is a global analogue of \cite[Exemple 12.6]{Fargues}).

We therefore deduce from Proposition \ref{ss:localizationproperties} that:
\begin{cor*}
Each $\tn{Bun}_{G,U}^{e}$ is an Artin $v$-stack.
\end{cor*}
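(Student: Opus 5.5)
The plan is to reduce the corollary to Proposition \ref{ss:localizationproperties} through the two structural results just recalled. First, \cite[Corollary 4.13]{Dil26} gives a decomposition
\begin{align*}
\tn{Bun}_{G,U}^{e}\xrightarrow{\sim}\bigsqcup_{\lambda\in\tn{Hom}_{F}(P_{\dot{\Sigma}},Z_{G})}\tn{Bun}_{G,U}^{e,\lambda}
\end{align*}
as v-stacks. Since disjoint unions of Artin v-stacks are Artin v-stacks, it suffices to treat each $\tn{Bun}_{G,U}^{e,\lambda}$ separately. Indeed, charts can be taken componentwise, and the diagonal of a disjoint union is computed componentwise, with empty cross terms.

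Next, I fix $\lambda$ and use part 2) of the Proposition above to identify $\tn{Bun}_{G,U}^{e,\lambda}$ with $\tn{Bun}_{G_b,U}$. Here $b$ is an element of $H^{1}(\tn{Kott}_{F,\Sigma}\times_U\tn{Kal}_{F,\Sigma})_{\tn{basic}}$ lying above $\lambda$. This requires that such a $b$ exists for every $\lambda$ that actually occurs. If the fiber over $\lambda$ is empty, the component is empty and there is nothing to prove. Otherwise, any point of the component furnishes such a $b$ after possibly shrinking $U$, which is harmless by Lemma \ref{ss:BunGpullback}. Here $G_b$ is an inner form of $G$. I would extend it to a parahoric group scheme over $C$ by fpqc descent, as in the proof of Theorem \ref{ss:basiclocus}, so that the setup of \S\ref{s:Gbundles} applies to it.

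Finally, I apply Proposition \ref{ss:localizationproperties} to $G_b$. When $Z$ is nonempty, it asserts directly that $\tn{Bun}_{G_b,U}$ is an Artin v-stack. Since $U=C\ssm|\Sigma|$ is assumed affine, $Z$ is automatically nonempty, so the case $Z=\varnothing$, where one would appeal to Example \ref{exmp:BunGC}, does not arise.

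The main obstacle is the second step. One must check that the twisting isomorphism of part 2) is an isomorphism of v-stacks compatible with the decomposition. One must also check that $G_b$ satisfies the standing hypotheses (connected reductive generic fiber with parahoric integral model) needed for Proposition \ref{ss:localizationproperties}. Both checks reduce to the identical argument of Proposition \ref{ss:twistingb}, so I expect them to be routine.
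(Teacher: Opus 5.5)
Your argument is the paper's: decompose $\tn{Bun}^{e}_{G,U}$ by \cite[Corollary 4.13]{Dil26}, identify each $\tn{Bun}^{e,\lambda}_{G,U}$ with $\tn{Bun}_{G_b,U}$ via part 2) of the twisting Proposition, and apply Proposition \ref{ss:localizationproperties} to each inner form $G_b$. You are right that $Z\neq\varnothing$ because $U$ is affine. The only step that does not hold up is your justification that a suitable $b$ exists for each $\lambda$.

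You say that any point of the component furnishes such a $b$, but not how, and the claim fails on two counts.
\begin{enumerate}
\item A geometric point of $\tn{Bun}^{e,\lambda}_{G,U}$ is a $\tn{Kal}$-basic torsor with Frobenius structure on $\mathfrak{U}_K$ for a perfectoid field $K$. It is not an algebraic class in $H^{1}(\tn{Kott}_{F,\Sigma}\times_U\tn{Kal}_{F,\Sigma})$. Passing to such a class would need the $\tn{Bun}^e$-analogue of Theorem \ref{ss:BunGred}, and the paper's proof of that analogue begins by choosing exactly such a $b$. So this route is circular.
\item Even an $\overline{\mathbb{F}}_q$-point only gives an element of $B_e(G)$ lying over $\lambda$. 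Such an element need not be basic: its restriction to the $\tn{Kott}$-part of the band need not be central. Basicness is precisely what makes $G_b$ an inner form of $G$ over $U$, and it is what part 2) of the Proposition requires.
\end{enumerate}

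The missing input is \cite[Proposition 2.29]{Dil26}. It provides a basic $b\in H^{1}(\tn{Kott}_{F,\Sigma}\times_U\tn{Kal}_{F,\Sigma})_{\tn{basic}}$ above every $\lambda$ at the given level $\Sigma$; this is how the paper chooses $b$ in the reduction theorem following the Corollary. With it, every component is handled directly and no shrinking of $U$ is needed.

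That matters, because your appeal to Lemma \ref{ss:BunGpullback} for shrinking $U$ does not transfer to the extended stacks. That lemma concerns $\tn{Bun}_{G,U}$. For $\tn{Bun}^e$, changing $\Sigma$ changes the gerbe $\tn{Kal}_{F,\Sigma}$, the band $P_{\dot{\Sigma}}$, and the index set of the decomposition. You would need a separate cartesian-square argument there.
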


\subsection{}
We now prove some more refined results about the geometry of $\tn{Bun}^{e}_{G,F}$.

 First, we deduce the following analogue of Theorem \ref{ss:BunGred}, which, in particular, identifies $\Bun_{G,U}^{e}(\Spd \ov{\mathbb{F}_{q}} )$ with $B_{e}(G)$:
\begin{thm*}
  The perfect v-stack $\Bun_{G,U}^{e,\red}$ over $\bF_q$ is the v-sheafification of
  \begin{align}
 \Spec{B}\mapsto\left\{
    \begin{tabular}{c}
      $\tn{Kal}_{F}$-basic $G$-torsors $\mathscr{G}$ on $U_B \times_{U} \tn{Kal}_{F,\Sigma}$ equipped with\\
      an isomorphism $\phi:\mathscr{G}\ra^\sim(\Frob_B \times \tn{id})^*\mathscr{G}$
    \end{tabular}
    \right\}.
  \end{align}
Moreover, when each connected component of $\Spec{B}$ is a valuation ring, no sheafification is needed.
\end{thm*}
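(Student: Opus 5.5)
The plan is to reduce to Theorem \ref{ss:BunGred} via the decomposition of Proposition \ref{ss:BunGeDecomp}. As in the proof of Theorem \ref{ss:BunGred}, I first note that by \cite[Lemma 6.2]{BS17} and taking perfections, affine perfect $\Spec{B}$ whose connected components are valuation rings form a basis of the v-topology. The first statement then follows from the second, so it suffices to identify $\Bun^{e,\red}_{G,U}(\Spec B)=\Hom(\Spd B,\Bun^e_{G,U})$ with the displayed groupoid for such $B$, naturally in $B$.

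Both sides decompose along $\lambda\in\Hom_U(P_{\dot\Sigma},Z_G)$. On the analytic side this is \cite[Corollary 4.13]{Dil26}, using that $\Spd B$ is connected-componentwise and that $\Lambda$ is locally constant. On the algebraic side, the restriction of a $\Kal$-basic torsor on $U_B\times_U\Kal_{F,\Sigma}$ to the band is a homomorphism $P_{\dot\Sigma}\to Z_G$ over $U_B$. This homomorphism is locally constant on $\Spec B$, since $P_{\dot\Sigma}$ is profinite multiplicative and $Z_G$ is of multiplicative type up to a connected unipotent-free part. After passing to a clopen cover of $\Spec B$, it is therefore pulled back from $U$. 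So I may fix $\lambda$ on both sides.

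For a fixed $\lambda$, I choose $b$ over $\lambda$ as in Proposition \ref{ss:BunGeDecomp}.2), after shrinking $U$ so that $b$ is defined over $\Kott_{F,\Sigma}\times_U\Kal_{F,\Sigma}$. That proposition identifies $\Bun^{e,\lambda}_{G,U}\cong\Bun_{G_b,U}$ by twisting by $b$. The same twisting construction, performed algebraically over $U_B\times_U\Kal_{F,\Sigma}$, identifies the $\lambda$-part of the displayed groupoid with $G_b$-torsors on $U_B$ with Frobenius structure. Concretely, $\ul\Isom(\mathscr G,b)$ descends along the gerbe because the band acts through $\lambda$ on both factors. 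Theorem \ref{ss:BunGred}, applied to the parahoric $G_b$ over $U$, then gives the equivalence for $\lambda$.

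The main obstacle is checking that these two twisting isomorphisms are compatible with analytification, meaning the square of functors commutes. This amounts to verifying that analytification of torsors over $\Kal_{F,\Sigma}$ commutes with descent along the gerbe $\Kal^{\ad}_{F,\Sigma}$. I would handle this by writing $\Kal_{F,\Sigma}=\varprojlim$ of finite-level gerbes banded by the finite flat $P_{\dot\Sigma,i}$, and then using fppf descent for vector bundles together with Theorem \ref{ss:BunUschemepoints} applied to the finite flat covers of $U$ trivializing each level. One must also check that the $\lambda$-part is independent of the choice of $b$ and of the shrinking of $U$. I expect this to follow from the transition maps $\Kal_{F,\Sigma'}\to\Kal_{F,\Sigma}$ and from Lemma \ref{ss:BunGpullback}.
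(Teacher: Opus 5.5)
Your route is the paper's route: split both sides along $\lambda$, twist by a basic class $b$ over $\lambda$ to reach $\Bun_{G_b,U}$ and $G_b$-torsors on $U_B$ with Frobenius structure, and conclude by Theorem \ref{ss:BunGred}. The genuine gap is the choice of $b$.

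Proposition \ref{ss:BunGeDecomp}.2) is conditional. It takes as input some $b\in H^1(\tn{Kott}_{F,\Sigma}\times_U\tn{Kal}_{F,\Sigma},G)_{\tn{basic}}$ lying over $\lambda$, and you never explain why such a $b$ exists. Choosing $b$ and then shrinking $U$ until it is defined does not fix this, for two reasons:
\begin{enumerate}
\item It still presupposes that $\lambda$ is hit by some basic class at all, which is a nontrivial surjectivity statement.
\item Shrinking changes the object you are computing. The stack $\Bun^e_{G,U}$, the gerbe $\tn{Kal}_{F,\Sigma}$, its band $P_{\dot\Sigma}$ and the index set $\Hom_U(P_{\dot\Sigma},Z_G)$ all depend on $\Sigma$. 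An identification at a larger level $\Sigma'\supseteq\Sigma$ does not by itself give one at level $\Sigma$. You would need a gluing comparison between the two levels, on both the analytic and the algebraic side, with local conditions at $\Sigma'\ssm\Sigma$ for torsors on two different gerbes.
\end{enumerate}
Lemma \ref{ss:BunGpullback} only concerns $\Bun_{G,U}$ and does not provide this comparison; independence of the choice of $b$ is not the relevant issue. The paper closes the gap with \cite[Proposition 2.29]{Dil26}. That result produces, for every $\lambda\in\Hom_U(P_{\dot\Sigma},Z_G)$, a basic class $b$ over $\lambda$ already at level $\Sigma$. With that input no shrinking is needed, and the rest of your argument goes through as in the paper.

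Two smaller remarks. First, what you call the main obstacle is essentially formal. Both twisting isomorphisms are given by $\ul\Isom(-,b)$ followed by descent along the gerbe, and both operations commute with pullback from $U_B\times_U\tn{Kal}_{F,\Sigma}$ to the analytic side, so the finite-level fppf argument is not needed. Second, in the algebraic decomposition, local constancy only places $\lambda_{\mathscr G}$ in $\Hom_{U_B}(P_{\dot\Sigma},Z_G)$. This can be strictly larger than $\Hom_U(P_{\dot\Sigma},Z_G)$ when these groups are not split over $U$ (e.g.\ for $B$ over $\ov\bF_q$). What makes $\lambda_{\mathscr G}$ come from $U$ is $\phi$, which forces $(\Frob_B\times\tn{id})^*\lambda_{\mathscr G}=\lambda_{\mathscr G}$.
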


\begin{proof}
\cite[Corollary 4.13]{Dil26} implies that $\Bun_{G,U}^{e,\red} \xrightarrow{\sim} \bigsqcup_{\lambda \in \tn{Hom}_{F}(P_{\dot{\Sigma}}, Z_{G})}  \tn{Bun}_{G,U}^{e,\lambda,\red}$. We have seen in Proposition \ref{ss:BunGeDecomp} that picking $b \in H^{1}(\text{Kott}_{F,\Sigma} \times \text{Kal}_{F,\Sigma})_{\text{basic}}$ above a fixed $\lambda$ (we can do this by \cite[Proposition 2.29]{Dil26}) gives an identification $ \tn{Bun}_{G,U}^{e,\lambda} \xrightarrow{\sim} \tn{Bun}_{G_{b},U}$. A similar (but easier) argument as in the proof of \cite[Proposition 4.12]{Dil26} gives an analogous disjoint union decomposition of the functor (1) according to $\Hom_{U}(P_{\dot{\Sigma}}, Z_{G})$, and twisting by $b$ gives an isomorphism from the presheaf mapping $\Spec{B}$ to all $\mathscr{G}$ as in (1) such that $\lambda_{\mathscr{G}} = \lambda$ at all geometric points of $B$ to the functor in \eqref{eq:BunGred}. We can then use Theorem \ref{ss:BunGred} to deduce the result.
\end{proof}

\subsection{}
This subsection is purely review from \cite{Dil26} and concerns the localization properties of the stack $\Bun^{e}_{G,F}$. First, we define for a place $v \in V_{F}$ the stack $\tn{Bun}_{G,F_{v}}^{e}$ to be the same as $\tn{Bun}_{G,U}^{e}$ but with $F_{v}$ in place of $O_{F,\Sigma}$ and $\tn{Kal}_{v}$ in place of $\tn{Kal}_{F,\Sigma}$, as defined in \cite[\S 12]{Fargues}.

We recall from \cite[\S 4.2]{Dil26} the functor $\tn{Bun}^{e}_{G,\mathbb{A}}$ defined by  
\begin{equation*}
S \mapsto \varinjlim_{\Sigma} \prod_{v \notin \Sigma} \tn{Bun}_{G,O_{v}}(S) \times \prod_{v \in \Sigma} \tn{Bun}^{e}_{G,F_{v}}(S),
\end{equation*}
where the limit is over all finite subsets of places $\Sigma$. This defines a small v-stack, and we have a morphism
\begin{equation*}
\tn{Bun}^{e}_{G,F} \to \tn{Bun}^{e}_{G,\mathbb{A}}
\end{equation*}
induced by a localization map $\tn{Kal}_{v} \to \tn{Kal}_{F}$ at each place as discussed in \cite[\S 2.1]{Dil26}.

One then has \cite[Proposition 4.16]{Dil26}:
\begin{prop*}
There is a morphism 
\begin{equation*}
\tn{Bun}^{e}_{G,U} \to \prod_{v \notin \Sigma} \tn{Bun}_{G,O_{v}} \times \prod_{v \in \Sigma} \tn{Bun}^{e,(\Sigma)}_{G,F_{v}} \subseteq \prod_{v \notin \Sigma} \tn{Bun}_{G,O_{v}} \times \prod_{v \in \Sigma} \tn{Bun}^{e}_{G,F_{v}}
\end{equation*}
which makes the square
\[
\begin{tikzcd}
\tn{Bun}^{e}_{G,U} \arrow{r} \arrow{d} &  \prod_{v \notin \Sigma} \tn{Bun}_{G,O_{v}} \times \prod_{v \in \Sigma} \tn{Bun}^{e,(\Sigma)}_{G,F_{v}}  \arrow{d} \\
\tn{Bun}^{e}_{G,F} \arrow{r} &  \tn{Bun}^{e}_{G,\mathbb{A}}
\end{tikzcd}
\]
Cartesian.
\end{prop*}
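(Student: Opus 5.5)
We will reduce the statement to its non-extended analogue, Lemma \ref{ss:BunGpullback}, using the decomposition of \ref{ss:BunGeDecomp}. First we construct the top horizontal morphism. For $v\notin\Sigma$, the point $\Spa O_v\to U$ lies over $U$. Restricting $\tn{Kal}_{F,\Sigma}$ along $\Spa O_v\to U$ gives a gerbe banded by $P_{\dot\Sigma}$. Since the canonical class is unramified away from $\Sigma$, this gerbe should be neutral over $O_v$, and we expect the Kal-basic torsors to restrict to honest $G$-torsors on $\Spa O_v\times S$ with Frobenius. This yields the map to $\Bun_{G,O_v}$, which is $*/\ul{G(O_v)}$ by Proposition \ref{ss:localBunG}. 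For $v\in\Sigma$, the localization map $\tn{Kal}_v\to\tn{Kal}_{F}$ of \cite[\S 2.1]{Dil26} factors through $\tn{Kal}_{F,\Sigma}$. Pulling back along $\Spa F_v\times S\to\mathfrak U_S$ gives an object of $\Bun^{e}_{G,F_v}$. We take $\Bun^{e,(\Sigma)}_{G,F_v}$ to be the union of components whose band character $u_v\to Z_G$ is pulled back from some $\lambda:P_{\dot\Sigma}\to Z_G$.

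Next we check the cartesian property componentwise in $\lambda$. By \cite[Corollary 4.13]{Dil26}, both $\Bun^e_{G,U}$ and $\Bun^e_{G,F}$ decompose over $\lambda\in\Hom_F(P_{\dot\Sigma},Z_G)$. The right-hand side decomposes compatibly via the band characters. It then suffices to show, for each $\lambda$, that the square of $\lambda$-components is cartesian. Fix $b$ above $\lambda$ as in Proposition \ref{ss:BunGeDecomp}.2), which exists by \cite[Proposition 2.29]{Dil26}. Twisting by $b$ identifies $\Bun^{e,\lambda}_{G,U}$ with $\Bun_{G_b,U}$. We must also twist by the localizations of $b$ to identify the local $\lambda$-components with $\Bun_{G_b,F_v}$ and $\Bun_{G_b,O_v}$. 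We use the identical twisting argument from Proposition \ref{ss:twistingb}, applied over $\Spa F_v\times S$ and $\Spa O_v\times S$. Passing to the colimit over $U'\subseteq U$ is compatible, since $b$ pulls back along the transition maps $\tn{Kal}_{F,\Sigma'}\to\tn{Kal}_{F,\Sigma}$.

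After twisting, the square becomes the second cartesian square of Lemma \ref{ss:BunGpullback} for the group $G_b$. Lemma \ref{ss:BunGpullback} is proved by analytic gluing along the open cover $\{U'^{\an}_S\}\cup\{\Spa O_c\times S\}_c$. Alternatively, one can rerun that gluing directly on $\mathfrak U_S$, because $\mathfrak U_S$ is a gerbe over $U^{\an}_S$ and gluing of torsors descends along such covers.

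The main obstacle is the compatibility of the twists at places $c\in\Sigma'\smallsetminus\Sigma$. There $b$ must be trivialized over $O_c$ so that the twisted local stack is $\Bun_{G_b,O_c}$. This step also needs $G_b$ to extend to a parahoric model, which we obtain by fpqc descent as in the proof of Theorem \ref{ss:basiclocus}. We would first verify this triviality using the unramifiedness of the level-$i$ classes $\xi_i$ from \cite[Corollary 2.12]{Dil26}.
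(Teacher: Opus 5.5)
The paper gives no argument for this statement; it is quoted from \cite[Proposition 4.16]{Dil26}. Judged on its own terms, your proposal has a genuine gap, located exactly where the extended setting differs from Lemma \ref{ss:BunGpullback}.

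You assert that $\Bun^e_{G,F}$ decomposes over $\Hom_F(P_{\dot\Sigma},Z_G)$. But $\Bun^e_{G,F}=\varinjlim_{U'}\Bun^e_{G,U'}$, and an object over $U'=C\ssm\Sigma'$ lives on $\mathfrak U'_S=U'^{\an}_S\times_{\Spa O_{F,\Sigma'}}\tn{Kal}^{\ad}_{F,\Sigma'}$, with band character in $\Hom_F(P_{\dot\Sigma'},Z_G)$. So $\Bun^e_{G,F}$ decomposes over $\varinjlim_{\Sigma'}\Hom_F(P_{\dot\Sigma'},Z_G)$, which is in general strictly larger. Your twisting step only applies to characters $\lambda'$ inflated along $P_{\dot\Sigma'}\to P_{\dot\Sigma}$, because your $b$ lives on $\tn{Kott}_{F,\Sigma}\times_U\tn{Kal}_{F,\Sigma}$.

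The essential content of the cartesian property is therefore a step you never address. Suppose $x\in\Bun^{e,\lambda'}_{G,U'}(S)$ has localizations in $\Bun_{G,\cO_c}$ for all $c\in\Sigma'\ssm\Sigma$ and in $\Bun^{e,(\Sigma)}_{G,F_v}$ for all $v\in\Sigma$. You must show that $\lambda'$ is then inflated from a single $\lambda\in\Hom_F(P_{\dot\Sigma},Z_G)$; otherwise the fiber product must be shown empty over that component. This is a local--global statement about the bands of Dillery's gerbes: the localizations $u_v\to P_{\dot\Sigma'}$ must jointly detect inflation from $P_{\dot\Sigma}$. That involves both the support condition on places and the passage from fields inside $F_{\Sigma'}$ to fields inside $F_\Sigma$. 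It is the reason the statement uses $\Bun^{e,(\Sigma)}_{G,F_v}$, defined in \cite[\S4.2]{Dil26} via the completion of $F_\Sigma$ at the fixed place above $v$, rather than $\Bun^e_{G,F_v}$. Replacing that definition by your ad hoc union of components does not remove the problem. You would still have to show that local characters inflated from possibly different $\lambda_v$ at the various $v\in\Sigma$ come from one global $\lambda$ whose inflation is $\lambda'$.

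The same omission undermines your alternative of gluing directly on $\mathfrak U_S$. The family $\{\mathfrak U'_S\}\cup\{\Spa\cO_c\times S\}_c$ is not an open cover of $\mathfrak U_S$. You must first descend along the relative gerbe $\tn{Kal}_{F,\Sigma'}\to\tn{Kal}_{F,\Sigma}|_{U'}$, which is possible precisely when the kernel of $P_{\dot\Sigma'}\to P_{\dot\Sigma}$ acts trivially, i.e.\ again when $\lambda'$ is inflated.

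Moreover, the top arrow at $v\notin\Sigma$ is not actually constructed; your text only says the gerbe "should be neutral" and that you "expect" a restriction. Neutrality of $\tn{Kal}_{F,\Sigma}$ over $\cO_v$ does not by itself give a functor to $\Bun_{G,\cO_v}$. Changing the neutralization by a $P_{\dot\Sigma}$-torsor $T$ over $\cO_v$ changes the restricted $G$-torsor by the $Z_G$-torsor $\lambda_*T$. You need the specific localization data of \cite[\S2.1]{Dil26}, and the gluing with plain $G$-torsors from $\Bun_{G,\cO_c}$ at $c\in\Sigma'\ssm\Sigma$ depends on the same choice.

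Once the band statement and these localization data are in place, your twisting reduction to Lemma \ref{ss:BunGpullback} for $G_b$ is a sensible way to finish. That includes the triviality of $b$ over $\cO_c$ that you flag as the main obstacle.
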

In the above, $\tn{Bun}^{e,(\Sigma)}_{G,F_{v}}$ is a minor modification of the stack $\tn{Bun}^{e}_{G,F_{v}}$ adapted to the completion of $F_{\Sigma}$ at a fixed place above $v$, see \cite[\S 4.2]{Dil26} for the full details.

\subsection{}
There is an analogue of the semistable locus of $\Bun_{G,F}$:

\begin{defn*}
Set $\tn{Bun}_{G,F}^{e,\tn{ss}}$ to be the preimage of 
\begin{equation*}
\varinjlim \prod_{v \notin \Sigma} \tn{Bun}_{G,O_{v}} \times \prod_{v \in \Sigma} \tn{Bun}^{e,\tn{ss}}_{G,F_{v}}
\end{equation*}
in $\tn{Bun}^{e}_{F,G}$, where $\tn{Bun}^{e,\tn{ss}}_{G,F_{v}}$ is the local semistable locus as defined in \cite{Fargues}. This does not depend on the choice of localization maps, is a small v-stack, and is an open substack of $\tn{Bun}_{G,F}^{e,\tn{ss}}$.
\end{defn*}
One can explicitly describe the semistable locus of $\Bun_{G,F}^{e}$ using $B_{e}(G)_{\tn{basic}}$. 

\begin{thm*}
\begin{enumerate}
\item{The substack $\tn{Bun}^{e,1}_{G,U} \subseteq \tn{Bun}^{e}_{G,U}$ is open, and is isomorphic to $*/\underline{G(O_{F,\Sigma})}$. }
\item{For $b \in B_{e}(G)_{\tn{basic}}$, the substack $\tn{Bun}_{G,F}^{e,b} \subseteq \tn{Bun}_{G,F}^{e}$ is open and isomorphic to $\ast/\underline{G_{b}(F)}$.}
\item{We have an equality of stacks $\tn{Bun}_{G,F}^{e,\tn{ss}} = \bigsqcup_{b \in B_{e}(G)_{\tn{basic}}} \tn{Bun}_{G,F}^{e,b}$.}
\end{enumerate}
\end{thm*}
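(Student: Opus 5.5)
The plan is to reduce all three statements to their non-extended counterparts, Theorem \ref{ss:triviallocus} and Theorem \ref{ss:basiclocus}, by means of the decomposition of \cite[Corollary 4.13]{Dil26} and the twisting isomorphisms of Proposition \ref{ss:BunGeDecomp}.

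For part 1), I first note that a point lying in $\tn{Bun}^{e,1}_{G,U}$ is trivial at every geometric point. Hence its band morphism $\lambda$ is trivial at every geometric point, so $\tn{Bun}^{e,1}_{G,U}$ lies in the clopen summand $\tn{Bun}^{e,\lambda=1}_{G,U}$ of the decomposition $\tn{Bun}^e_{G,U}\cong\bigsqcup_\lambda\tn{Bun}^{e,\lambda}_{G,U}$. When $\lambda=1$, a Kal-basic torsor with trivial band morphism descends uniquely along the gerbe $\mathfrak{U}_S\ra U^{\an}_S$; this is the usual fact that torsors on a gerbe with trivial band action come from the base. It follows that $\tn{Bun}^{e,1}_{G,U}=\tn{Bun}^{1}_{G,U}$ inside $\tn{Bun}^{e,\lambda=1}_{G,U}\cong\Bun_{G,U}$. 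This is also part 2) of Proposition \ref{ss:BunGeDecomp} with $b$ trivial. Theorem \ref{ss:triviallocus} then shows that this substack is open and isomorphic to $*/\ul{G(O_{F,\Sigma})}$. Since the summand is clopen, openness in $\tn{Bun}^e_{G,U}$ follows.

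For part 2), I take $b\in B_e(G)_{\tn{basic}}$ and enlarge $\Sigma$ so that $b$ is pulled back from $\tn{Kott}_{F,\Sigma}\times_U\tn{Kal}_{F,\Sigma}$. Part 1) of Proposition \ref{ss:BunGeDecomp} gives an isomorphism $\tn{Bun}^e_{G,U}\cong\tn{Bun}^e_{G_b,U}$ carrying $\tn{Bun}^{e,b}_{G,U}$ to $\tn{Bun}^{e,1}_{G_b,U}$. By part 1) applied to $G_b$, the latter is open and isomorphic to $*/\ul{G_b(O_{F,\Sigma})}$. Passing to $\varinjlim_U$ gives $*/\ul{G_b(F)}$, using that the transition maps are compatible, as in Definition \ref{ss:BunG}.b) together with Lemma \ref{ss:BunGpullback}. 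For disjointness across distinct $b$, I use the analogue of Theorem \ref{ss:BunGred} above, which identifies isomorphism classes of $\Bun^e_{G,F}(\Spd\ov\bF_q)$ with $B_e(G)$. Together these show that $\bigsqcup_b\tn{Bun}^{e,b}_{G,F}$ is an open substack. It lies in the semistable locus because, after twisting, its points are locally trivial and trivial points are semistable.

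For part 3), the inclusion of $\bigsqcup_b\tn{Bun}^{e,b}_{G,F}$ into $\tn{Bun}^{e,\tn{ss}}_{G,F}$ was just noted, so it remains to prove the reverse inclusion. I work summand by summand in $\lambda$. I choose $b_\lambda$ above $\lambda$ using \cite[Proposition 2.29]{Dil26}, so that $\tn{Bun}^{e,\lambda}_{G,U}\cong\Bun_{G_{b_\lambda},U}$. This isomorphism is compatible with the local analogues $\tn{Bun}^{e,\lambda_v}_{G,F_v}\cong\Bun_{G_{b_\lambda},F_v}$ of \cite[Exemple 12.6]{Fargues}. The main obstacle is to check that this local compatibility matches the extended semistable loci with the ordinary ones, i.e.\ that $\tn{Bun}^{e,\tn{ss}}_{G,F_v}$ corresponds to $\Bun^{\semis}_{G_{b_\lambda},F_v}$. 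I would try to deduce this from the definition in \cite{Fargues}, where semistability is tested on the underlying $G^{\ad}$-bundle, which is unchanged by the twist. Granting this matching, $\tn{Bun}^{e,\tn{ss},\lambda}_{G,F}\cong\Bun^{\semis}_{G_{b_\lambda},F}$. By Theorem \ref{ss:basiclocus}, the latter is $\bigsqcup_{b'\in B(F,G_{b_\lambda})_{\basic}}\Bun^{b'}_{G_{b_\lambda},F}$. Finally, the bijection of \cite[Theorem 3.10]{Dil26} identifies $B(F,G_{b_\lambda})_{\basic}$ with the fiber of $B_e(G)_{\tn{basic}}$ over $\lambda$, compatibly with the strata, which completes part 3).
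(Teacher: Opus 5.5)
This is essentially the paper's argument: you split off the band morphism $\lambda$ via \cite[Corollary 4.13]{Dil26}, twist via Proposition \ref{ss:BunGeDecomp} to reduce to $\tn{Bun}_{G_b,U}$, and conclude with Theorem \ref{ss:triviallocus} and Theorem \ref{ss:basiclocus}; your appeal to the extended analogue of Theorem \ref{ss:BunGred} for disjointness just repackages the paper's reduction to the disjointness of the ordinary strata $\Bun^b_{G,F}$. The step you single out, that the twist matches the extended semistable loci with the ordinary ones, is likewise only asserted as an observation in the paper; note only that the adjoint bundle is not literally unchanged by the twist but is modified by a basic class, and such a twist still preserves semistability as in \cite[Corollary III.4.3]{FS21}.
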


\begin{proof}
For the first statement, we observe that the image of $\tn{Bun}^{e,1}_{G,U}$ via $\Lambda$ is determined by its value at any geometric point, and is thus the trivial morphism $0_{P}$. By Proposition \ref{ss:BunGeDecomp} we can identify the inclusion $\tn{Bun}^{e,1}_{G,U} \hookrightarrow \text{Bun}_{G,U}^{e, 0_{P}}$ with the inclusion $\text{Bun}_{G,U}^{1} \hookrightarrow \text{Bun}_{G,U}$, so that the desired result is true by combining \cite[Proposition 4.12]{Dil26} with Theorem \ref{ss:localBunG}.

The second part is a consequence of the first statement and Proposition \ref{ss:BunGeDecomp}. The disjointness part of the second statement can be deduced from the disjointness of the $\tn{Bun}_{G,F}^{b}$ after decomposing $\tn{Bun}_{G,F}^{e}$ using \cite[Corollary 4.13]{Dil26}. Similarly, the verification that this union is exhaustive can be done using the decomposition (as stacks)
\begin{equation*}
\tn{Bun}_{G,U}^{e,\tn{ss}} \xrightarrow{\sim} \bigsqcup_{\lambda \in \tn{Hom}_{F}(P_{\dot{\Sigma}}, Z_{G})}  \tn{Bun}_{G,U}^{e,\tn{ss},\lambda}
\end{equation*}
induced by \cite[Corollary 4.13]{Dil26} and then observing that the identification $\tn{Bun}_{G,U}^{e,\lambda} \xrightarrow{\sim} \tn{Bun}_{G_{b},U}^{e}$ for $b \in B_{e}(G)_{\text{basic}}$ identifies $\tn{Bun}_{G,U}^{e,\tn{ss},\lambda}$ with $\tn{Bun}_{G_{b},U}^{\tn{ss}}$. Now one can use Theorem \ref{ss:basiclocus}.
\end{proof}

\bibliographystyle{../habbrv}
\bibliography{biblio}
\end{document}